\def\@tocline#1#2#3#4#5#6#7{\relax
	\ifnum #1>\c@tocdepth % then omit
	\else
	\par \addpenalty\@secpenalty\addvspace{#2}%
	\begingroup \hyphenpenalty\@M
	\@ifempty{#4}{%
		\@tempdima\csname r@tocindent\number#1\endcsname\relax
	}{%
		\@tempdima#4\relax
	}%
	\parindent\z@ \leftskip#3\relax \advance\leftskip\@tempdima\relax
	\rightskip\@pnumwidth plus4em \parfillskip-\@pnumwidth
	#5\leavevmode\hskip-\@tempdima
	\ifcase #1
	\or\or \hskip 1em \or \hskip 2em \else \hskip 3em \fi%
	#6\nobreak\relax
	\dotfill\hbox to\@pnumwidth{\@tocpagenum{#7}}\par
	\nobreak
	\endgroup
	\fi}
\numberwithin{equation}{section}
\newcommand{\R}{{\mathbb R}}       % Field of real numbers
\newcommand{\Z}{{\mathbb Z}}       % Ring of integer numbers
\newcommand{\DD}{{\mathcal D}}
\newcommand{\HH}{{\mathcal H}}
\newcommand{\LL}{{\mathcal L}}
\newcommand{\WW}{{\mathcal W}}
\newcommand{\cN}{{\mathcal N}}
\newcommand{\cM}{{\mathcal M}}
\newcommand{\EE}{{\mathcal E}}
\newcommand{\CC}{{\mathcal C}}
\newcommand{\diam}{{\rm diam}}
\newcommand{\dist}{{\rm dist}}
\newcommand{\rf}[1]{{(\ref{#1})}}
\newcommand{\supp}{\operatorname{supp}}
\newcommand{\vphi}{{\varphi}}
\newcommand{\ve}{{\varepsilon}}
\newcommand{\vv}{{\vspace{2mm}}}
\newcommand{\vvv}{{\vspace{3mm}}}
\newcommand{\wt}[1]{{\widetilde{#1}}}
\newcommand{\wh}[1]{{\widehat{#1}}}
\newcommand{\noi}{\noindent}
\newcommand{\pom}{{\partial\Omega}}
\newcommand{\wpom}{{\partial\wt\Omega}}
\def\Xint#1{\mathchoice
	{\XXint\displaystyle\textstyle{#1}}%
	{\XXint\textstyle\scriptstyle{#1}}%
	{\XXint\scriptstyle\scriptscriptstyle{#1}}%
	{\XXint\scriptscriptstyle\scriptscriptstyle{#1}}%
	\!\int}
\def\XXint#1#2#3{{\setbox0=\hbox{$#1{#2#3}{\int}$ }
		\vcenter{\hbox{$#2#3$ }}\kern-.58\wd0}}
\def\avint{\;\Xint-}
\def\Lip{\mathop\mathrm{Lip}} 	                       %Lip
\def\H11{\textup{H}^1_1} 					%BMO
\newtheorem*{prob}{\bf Problem}
\newcommand{\dv}{\mathop{\rm div}}
\def\om{\Omega}
\definecolor{ffffff}{rgb}{1.0,1.0,1.0}
\definecolor{qqqqff}{rgb}{0.0,0.0,1.0}
\definecolor{ffqqqq}{rgb}{1.0,0.0,0.0}
\definecolor{zzzzqq}{rgb}{0.6,0.6,0.0}
\definecolor{marronet}{rgb}{0.6,0.2,0}
\definecolor{negre}{rgb}{0,0,0}
\definecolor{vermell}{rgb}{0.8,0.05,0.05}
\definecolor{blau}{rgb}{0.3,0.2,1.}
\definecolor{blauclar}{rgb}{0.,0.,1.}
\definecolor{grisfosc}{rgb}{0.25098039215686274,0.25098039215686274,0.25098039215686274}
\definecolor{verd}{rgb}{0.1,0.6,0.1}
\definecolor{taronja}{rgb}{0.9,0.6,0.05}
\definecolor{vermellclar}{rgb}{1.,0.,0.}
\definecolor{verdet}{rgb}{0,0.8,0.1}
\definecolor{blauverd}{rgb}{0,0.4,0.2}
\definecolor{grisclar}{rgb}{0.6274509803921569,0.6274509803921569,0.6274509803921569}
\newtheorem{theorem}{Theorem}[section]
\newtheorem{lemma}[theorem]{Lemma}
\newtheorem{coro}[theorem]{Corollary}
\newtheorem{propo}[theorem]{Proposition}
\newtheorem{claim}[theorem]{Claim}
\newtheorem*{claim*}{Claim}
\newtheorem*{theorem*}{Theorem}
\theoremstyle{definition}
\theoremstyle{remark}
\newtheorem{rem}[theorem]{\bf Remark}
\numberwithin{equation}{section}
\newcommand{\brem}{\begin{rem}}
	\newcommand{\erem}{\end{rem}}
\begin{document}

	\title[The Neumann problem in chord-arc domains]{Solvability of the Neumann problem for elliptic equations in chord-arc domains with very big pieces of good superdomains}

	\author[Mihalis Mourgoglou]{Mihalis Mourgoglou}
	\address{Departamento de Matem\'aticas, Universidad del Pa\' is Vasco, UPV/EHU, Barrio Sarriena s/n 48940 Leioa, Spain and\\
		IKERBASQUE, Basque Foundation for Science, Bilbao, Spain.}
	\email{michail.mourgoglou@ehu.eus}
	
	\author[Xavier Tolsa ]{Xavier Tolsa}
	\address{ICREA, Barcelona\\
		Dept.  de Matemàtiques, Universitat Autònoma de Barcelona \\
		and Centre de Recerca Matemàtica, Barcelona, Catalonia.}
	\email{xavier.tolsa@uab.cat}
	\thanks{M.M. was supported  by IKERBASQUE and partially supported by the grants PID2020-118986GB-I00 and PID2024-157724NB-I00 of the Ministerio de Econom\'ia y Competitividad (Spain), and by IT-1615-22 (Basque Government). X.T. was supported by the European Research Council (ERC) under the European Union's Horizon 2020 research and innovation programme (grant agreement 101018680). Also partially supported by MICINN (Spain) under the grant PID2020-114167GB-I00, the María de Maeztu Program for units of excellence (CEX2020-001084-M), and 2021-SGR-00071 (Catalonia).
	}
	
	\begin{abstract}%{english}
		Let $\Omega \subset \mathbb{R}^{n+1}$ be a bounded chord-arc domain,  let $\mathcal L=-{\rm div} A\nabla$ be an elliptic operator in $\Omega$
		associated with a matrix $A$ having Dini mean oscillation coefficients, and let $1<p\leq 2$. 
		In this paper we show that if the regularity problem for $\mathcal L$  is solvable in $L^q$ for some $q>p$ in $\Omega$,   $\pom$ supports a weak $p$-Poincar\'e inequality,  and $\Omega$ has very big pieces of superdomains for which the Neumann problem for $\mathcal L$ is solvable uniformly in $L^q$, then the Neumann problem for $\mathcal L$ is solvable in $L^p$ in $\Omega$. 
	\end{abstract}

	\maketitle
	
	% *************************************************************************************************
	
	\section{Introduction}

	Let  $A=(a_{ij})_{1 \leq i,j \leq n+1}$ be a matrix with real measurable coefficients in $\R^{n+1}$.  We say that $A$ is {\it uniformly elliptic} in $\R^{n+1}$ with constant $\Lambda \geq 1$ if it satisfies the following conditions:
	\begin{align}\label{eqelliptic1}
		&\Lambda^{-1}|\xi|^2\leq \langle A(x) \xi,\xi\rangle,\quad \mbox{ for all $\xi \in\R^{n+1}$ and a.e. $x\in\R^{n+1}$.}\\
		& \| a_{ij} \|_{L^\infty(\R^{n+1})}  \leq\Lambda, \quad \mbox{ for all $i,j \in \{1,  2, \dots, n+1\}$.} \label{eqelliptic2}
	\end{align}
	Notice that the matrix $A$ is {\it possibly non-symmetric}. 
	
	For a ball $B\subset\R^{n+1}$, we denote $m_B(A) = \avint_B A(y)\,dy$ and we consider the mean oscillation function $\omega_A:(0,\infty)\to
	(0,\infty)$ defined by
	$$\omega_A(r) = \sup_{x\in\R^{n+1}} \avint_{B(x,r)} |A(y) - m_{B(x,r)}(A)|\,dy.$$
	We say that $A$ has Dini mean oscillation if 
	$$\int_0^1 \omega_A(r)\,\frac{dr}r<\infty.$$
	If 
	$$\mathcal{L}= -{\rm div} (A(\cdot)\nabla)$$
	is an elliptic operator of divergence form associated with a uniformly elliptic matrix $A$,
	{we write $\mathcal{L} \in \mathcal{E}( \R^{n+1})$. If, moreover $A$ has
		Dini mean oscillation in $\R^{n+1}$, we write $\mathcal{L} \in \mathcal{E}_{\rm DMO}( \R^{n+1})$.  We denote  its formal adjoint operator by $\LL^*=-{\rm div} (A^T(\cdot)\nabla)$, where $A^T$ is the transpose matrix of $A$. If the operator $\LL$ is only assumed to be defined in an open set $\Omega\subset\R^{n+1}$, we write either $\mathcal{L} \in \mathcal{E}( \Omega)$ or $\mathcal{L} \in \mathcal{E}_{\rm DMO}( \Omega)$.}
	
	Let $\Omega \subset \R^{n+1}$ be a bounded chord-arc domain.  For $\xi\in\pom$, let $\nu_A(\xi) = A^T(\xi)\,\nu(\xi)$, where $\nu(\xi)$ is the outer unit normal to $\pom$.
	Given $g\in L^{\frac{2n}{n+1}}(\pom)$ with $\int_\pom g\,d\sigma=0$, we consider the Neumann problem
	\begin{equation}\label{eqneumann1}
		\left\{\begin{array}{ll}
			\LL u=0& \quad\mbox{in $\Omega$,}\\
			\partial_{\nu_A} u = g& \quad\mbox{in $\pom$},\\
			u \in W^{1,2}(\Omega).
		\end{array}
		\right.
	\end{equation}
	The identity $\partial_{\nu_A} u = g$ in $\pom$ should be understood in the following  weak sense:
	\begin{equation}\label{eqweak93}
		\int_\Omega A\nabla u\,\nabla \vphi\,dx = \int_\pom g\,\vphi\,d\sigma \quad \mbox{ for all $\vphi\in C_c^\infty(\R^{n+1})$,}
	\end{equation}
	where $\sigma:=\HH^n|_\pom$ is the ``surface measure'' on $\pom$.

	For a function $v:\Omega\to\R$, we define the {\it non-tangential maximal function} of $v$ by 
	\begin{equation}\label{eqweak93Nom}
		\cN_\Omega v(\xi) = \sup_{x\in\gamma_\Omega(\xi)} |v(x)|,\quad \mbox{ for $\xi\in\pom$,}
	\end{equation}
	where, for $\xi\in\pom$ and a fixed 
	$\alpha>0$, $\gamma_\Omega(\xi)\equiv \gamma_{\Omega,\alpha}(\xi)$ is the non-tangential ``cone'' in $\Omega$ with vertex in $\xi$
	defined by
	\begin{equation}\label{eqconealpha}
		\gamma_\alpha(\xi)=\{ x \in \Omega: |\xi-x|<(1+\alpha)\dist(x, \pom)\}.
	\end{equation}
	We also define the {\it modified non-tangential maximal function} of $v$ by
	\begin{equation}\label{eqweak94Nom}
		\wt \cN_\Omega v(\xi) = \sup_{x\in\gamma_\Omega(\xi)} \bigg(\avint_{B(x,\delta_\Omega(x)/2)}|v|^2\,dm\bigg)^{1/2},\quad \mbox{ for $\xi\in\pom$,}
	\end{equation}
	where $\delta_\Omega(x)=\dist(x,\pom)$.
	\vv
	
	For $1<p<\infty$, we say that the {\it Neumann problem} (for $\LL$) is solvable in $L^p$ if there exists. some contant $C_\LL(N_p)>0$ such that the variational solution $u:\Omega\to\R$ of the
	Neumann problem \rf{eqneumann1} satisfies
	$$\|\wt\cN_\Omega(\nabla u)\|_{L^p(\pom)} \leq C_\LL(N_p) \|g\|_{L^{p}(\pom)}.$$
	For the sake of brevity,  we will  write  that   $(N_p)_{\LL}$ (or $(N_{L^p})_{\LL}$) is solvable in $\Omega$.
	
	We say that {\it the regularity problem is solvable in $L^p$}   (write $(R_{p})_\LL$ is solvable) if  there exists some constant $C_\LL(R_p)>0$  such that, for any    Lipschitz function $f:\pom\to\R$,  the solution $u:\Omega\to\R$ of the continuous
	Dirichlet problem for $\LL$ in $\Omega$ with boundary data $f$ satisfies
	\begin{equation}\label{eq:main-est-reg}
		\|\wt \cN_\Omega(\nabla u)\|_{L^p(\sigma)} \leq C_\LL(R_p)\|\nabla_t f\|_{L^p(\sigma)}.
	\end{equation}
	As in \cite{MT}, one may also define the regularity problem in terms of the Haj\l asz-Sobolev space. However, for the purposes of the present paper we prefer the above definition. See Section \ref{sec24.} for more details about tangential derivatives and Sobolev spaces on $\pom$.

	\vv
	
	In this paper we will prove the following result.
	
	\begin{theorem}\label{teobigpi}
		Let $\Omega\subset\R^{n+1}$ be a bounded $C_1$-chord-arc domain and let $\mathcal{L} \in\mathcal{E}_{\rm DMO}(\R^{n+1})$. Let $p\in (1,2)$, suppose that $(R_q)_\LL$ is solvable in $\Omega$ for some $q>p$, and that $\partial\Omega$ supports a weak $p$-Poincar\'e inequality. 
		Suppose that for every $\xi\in\pom$ and $0<r\leq\diam(\pom)$
		there exists a $C_2$-chord-arc domain $U_{\xi,r}$ such that $ B(\xi,r) \cap \om \subset U_{\xi,r}$, $(N_q)_\LL$ is solvable in $U_{\xi,r}$ uniformly on $\xi$ and $r$, and
		\begin{equation}\label{eqbigpiece}
			\HH^n(B(\xi,r)\cap \pom\setminus \partial U_{\xi,r})\leq \ve\,r^n,
		\end{equation}
		for some $\ve>0$.
		If  $\ve>0$ is small enough (depending only on $n$, $C_1$, $C_2$, the solvability of 
		$(R_q)_\LL$ in $\Omega$, and the uniform solvability of $(N_q)_\LL$  in the domains $U_{\xi,r}$), then $(N_p)_\LL$ is solvable in $\Omega$.
	\end{theorem}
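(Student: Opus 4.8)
The plan is to establish, for the variational solution $u$ of the Neumann problem \rf{eqneumann1}, the \emph{a priori} bound $\|\wt{\cN}_\Omega(\nabla u)\|_{L^p(\pom)}\lesssim \|g\|_{L^p(\pom)}$, first for $g\in L^\infty(\pom)$ of vanishing mean and then for general $g$ by density. The two solvability hypotheses play complementary roles: the uniform solvability of $(N_q)_\LL$ in the superdomains $U_{\xi,r}$ provides, at every location $\xi$ and scale $r$, a genuine Neumann estimate to be imported into $\Omega$, while the solvability of $(R_q)_\LL$ in $\Omega$, combined with the weak $p$-Poincar\'e inequality on $\pom$, controls the trace of $u$ and its tangential gradient; this is needed both to match $u$ with an auxiliary solution in $U_{\xi,r}$ and to give $\wt{\cN}_\Omega(\nabla u)$ an a priori membership in $L^p(\pom)$. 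The very-big-pieces condition \rf{eqbigpiece} with $\ve$ small is what renders the matching errors summable.

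The heart of the argument is a local comparison. Fix a surface ball $B=B(\xi,r)$ with $\xi\in\pom$ and $r\le\diam(\pom)$, and set $U=U_{\xi,2r}$, so that $2B\cap\Omega\subset U$. Split $2B\cap\pom$ into its ``good'' part $G_B=\partial U\cap\pom\cap 2B$ and its ``bad'' part $E_B=2B\cap\pom\setminus\partial U$, with $\HH^n(E_B)\lesssim\ve\,r^n$; note that the portion of $\partial U$ that lies in $2B$ and meets $\overline{\Omega}$ is contained in $\pom$, where $\partial_{\nu_A}u=g$, and that $E_B$ is contained in the interior of $U$. One then chooses a Neumann datum $h_B$ on $\partial U$ that agrees with $g$ on $G_B$, incorporates mean-zero and telescoping corrections built from the (controlled) traces of $u$ at the scale of $B$, and satisfies $\|h_B\|_{L^q(\partial U)}$ bounded by local $L^q$ averages of $g$ and of $\wt{\cN}_\Omega(\nabla u)$ near $B$. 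Letting $v_B$ solve the Neumann problem in $U$ with datum $h_B$, the uniform solvability of $(N_q)_\LL$ in $U$ yields $\|\wt{\cN}_{U}(\nabla v_B)\|_{L^q(\partial U)}\lesssim\|h_B\|_{L^q(\partial U)}$, hence a bound for $\wt{\cN}_\Omega(\nabla v_B)$ on a large subset of $G_B$. The difference $w_B=u-v_B$ is $\LL$-harmonic in $2B\cap\Omega$, has vanishing conormal derivative on $G_B$, and conormal datum of small support on $E_B$; a boundary Caccioppoli inequality up to $G_B$, interior estimates, and $\HH^n(E_B)\lesssim\ve\,r^n$ then control $\wt{\cN}_\Omega(\nabla w_B)$ on a large subset of $G_B$ by $\ve^{\theta}$ times a solid average of $|\nabla u|$ over $2B\cap\Omega$ (for some $\theta>0$), plus acceptable tail terms.

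These local estimates are assembled through a corona/stopping-time decomposition of $\pom$ into trees of surface balls, descending in scale until the relevant averages of $g$ and of $\wt{\cN}_\Omega(\nabla u)$ stabilize or the accumulated bad mass forces a stop; the $C_1$-chord-arc structure of $\Omega$ is used to run the stopping time, and the uniform $C_2$-chord-arc structure of the $U_{\xi,r}$ to apply the comparison with constants independent of $\xi,r$. Converting the $L^q$ estimates on the pieces into $L^p$ estimates on $\pom$ via H\"older's inequality --- this is where the hypothesis $q>p$ enters, producing a genuine positive power of $\ve$ --- and summing the tail terms by means of the Carleson packing of the stopping balls together with the Poincar\'e inequality, one obtains
\[
\|\wt{\cN}_\Omega(\nabla u)\|_{L^p(\pom)}^p \;\le\; C_0\,\|g\|_{L^p(\pom)}^p \;+\; C_0\,\ve^{\theta}\,\|\wt{\cN}_\Omega(\nabla u)\|_{L^p(\pom)}^p,
\]
with $C_0$ depending only on $n$, $C_1$, $C_2$, and the two solvability constants. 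Since $\wt{\cN}_\Omega(\nabla u)\in L^p(\pom)$ a priori, for $\ve$ small enough the last term is absorbed, giving $\|\wt{\cN}_\Omega(\nabla u)\|_{L^p(\pom)}\lesssim\|g\|_{L^p(\pom)}$, and hence, after a routine limiting argument in $g$, the solvability of $(N_p)_\LL$ in $\Omega$.

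The step I expect to be the main obstacle is the local comparison of the second paragraph: to design the auxiliary datum $h_B$ and the telescoping corrections so that $u$ is approximated near $G_B$, in the non-tangential sense, by an \emph{honest} Neumann solution in the superdomain, with an error controlled \emph{only} by the small set $E_B$ and by a bulk term that can be fed back into the absorption. This requires sharp boundary regularity up to the good part of the boundary (Caccioppoli and Moser-type estimates where $\partial_{\nu_A}w_B=0$), the correct handling of mean-zero normalizations across scales, and careful bookkeeping of exponents so that the passage from $L^q$ on the pieces to $L^p$ on $\pom$ yields a true power of $\ve$. A secondary point is making rigorous the qualitative a priori bound $\wt{\cN}_\Omega(\nabla u)\in L^p(\pom)$ that underlies the absorption, for which the solvability of $(R_q)_\LL$ in $\Omega$ and the weak $p$-Poincar\'e inequality on $\pom$ are essential. (An alternative organization would first invoke the known duality between the Neumann problem for $\LL$ and the regularity problem for $\LL^*$ in domains supporting a Poincar\'e inequality, and then run an analogous big-pieces scheme for the regularity problem; the transference mechanism is the same.)
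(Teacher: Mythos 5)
Your proposal has the right global shape --- local comparison with a Neumann solution in the superdomain, conversion of $L^q$ estimates to $L^p$ ones via H\"older so as to extract a positive power of $\ve$, and absorption --- but there are two gaps that the paper's actual proof is built precisely to close, and neither is ``secondary.''

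\textbf{The a priori qualitative bound.} Your absorption step
\[
\|\wt{\cN}_\Omega(\nabla u)\|_{L^p(\pom)}^p \;\le\; C_0\,\|g\|_{L^p(\pom)}^p \;+\; C_0\,\ve^{\theta}\,\|\wt{\cN}_\Omega(\nabla u)\|_{L^p(\pom)}^p
\]
requires \emph{knowing in advance} that $\|\wt{\cN}_\Omega(\nabla u)\|_{L^p(\pom)}<\infty$, and you invoke $(R_q)_\LL$ and the Poincar\'e inequality for this. But solvability of the regularity problem bounds $\wt{\cN}_\Omega(\nabla u)$ in terms of $\|\nabla_t (u|_{\pom})\|_{L^q(\sigma)}$, and for the variational solution of the Neumann problem with $g\in L^p$ there is no a priori control on $\nabla_t(u|_{\pom})$ --- obtaining such control is equivalent in difficulty to what one is trying to prove. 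The paper circumvents this by working entirely in dual form: it proves solvability of the \emph{$\rho$-smoothed rough Neumann problem} for $\LL^*$ from $L^{p'}$ to $L^{p',\infty}$, uniformly in $\rho$. The key point is that Lemma~\ref{lemfinit} shows the $\rho$-smoothed problem is always solvable with a (bad, $\rho$-dependent) finite constant $C_*(\rho)\lesssim \diam(\pom)/\rho$, so the bootstrap has something finite to absorb into; only \emph{after} the absorption does one discover that $C_*(\rho)$ is in fact uniformly bounded, and one can let $\rho\to0$. A primal version of your argument would need an analogous regularization device, and this is a genuine missing idea, not a formality.

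\textbf{The error on the bad set.} You write that the difference $w_B=u-v_B$ has ``conormal datum of small support on $E_B$,'' and that boundary Caccioppoli plus $\HH^n(E_B)\lesssim\ve r^n$ controls it by $\ve^\theta$ times a solid average. But small support alone gives nothing: on $E_B$ the conormal derivative of $w_B$ is $g-\partial_{\nu_A}v_B$, and $\partial_{\nu_A}v_B$ is the interior flux of the superdomain solution along $\pom\cap U$, which is large a priori. The paper's mechanism for this step --- in the estimate of $I_{1,b}$ --- is to first bound $\|\partial_{\nu_A}\wt N_\sigma(\psi_0)\|_{L^{q}(4KQ\setminus\wpom)}$ via Lemma~\ref{lemaux2} (a geometric lemma showing that $\HH^n$-integrals over interior sets of $\wt\Omega$ with bounded $n$-density are controlled by the non-tangential maximal function on $\wpom$) combined with the $(N_q)_\LL$ solvability in $\wt\Omega$, and only then to extract a factor $\ve^{a_0}$ by H\"older using a third exponent $\bar q$ strictly between $p$ and $q$ together with $\HH^n(4KQ\setminus\wpom)\lesssim\ve\,\sigma(KQ)$. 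Your ``interior estimates'' gesture at this, but the geometric lemma is nontrivial and is what makes the scheme actually close; and because the paper runs the argument dually, the object being estimated is the conormal derivative of an auxiliary solution built from a bounded test function $\psi_0$, not the difference of two Neumann solutions with data $g$, which is what makes the $L^q$ bound on the bad set available.

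Finally, a structural remark: the paper does not use a corona/stopping-time decomposition but rather a good-$\lambda$ inequality for the maximal function of the rough Neumann solution, with a Whitney decomposition of the level set $V_\lambda$ and the splitting into a ``far'' and ``local'' part of the boundary datum. Your parenthetical alternative (dualize to the regularity problem for $\LL^*$) is in the right spirit but is also not the paper's route; the correct dual object is the rough Neumann problem for $\LL^*$, i.e., the Neumann problem with data $\partial_{t,j,k}g$, whose solution is estimated by the non-tangential maximal function of $u$ itself (not of $\nabla u$) in weak $L^{p'}$, which is crucial for the far-part estimate and for Kolmogorov-type reductions.
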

	\vv
	
	Under the assumptions above, we call $U_{\xi,r}$ a superdomain for $\Omega$ relative to the ball $B(\xi,r)$.
	For the precise definition of $C$-chord-arc domain and the notion of weak $p$-Poincar\'e inequality, see Sections \ref{subsec-chordarc}
	and \ref{sec24.}, respectively. 
	We remark that Theorem \ref{teobigpi}
	is new even for the Laplace operator. In fact, since $(N_2)$ is solvable for the Laplacian in Lipschitz domains (by \cite{JeK1b}),
	we get the following.
	
	\begin{coro}\label{corobigpi}
		Let $\Omega\subset\R^{n+1}$ be a bounded $C_1$-chord-arc domain. Let $p\in (1,2)$, suppose that $(R_q)_\Delta$ is solvable in $\Omega$ for some $q>p$, and that $\partial\Omega$ supports a weak $p$-Poincar\'e inequality. 
		Suppose that for every $\xi\in\pom$ and $0<r\leq\diam(\pom)$
		there exists a Lipschitz domain $U_{\xi,r}$ (with Lipschitz character uniform on $\xi$ and $r$) such that $ B(\xi,r) \cap \om \subset U_{\xi,r}$ and
		\begin{equation}\label{eqbigpiece''}
			\HH^n(B(\xi,r)\cap \pom\setminus \partial U_{\xi,r})\leq \ve\,r^n,
		\end{equation}
		for some $\ve>0$.
		If  $\ve>0$ is small enough (depending only on $n$, the solvability of 
		$(R_q)_\Delta$ in $\Omega$, and the Lipschitz character of $U_{\xi,r}$), then $(N_p)_\Delta$ is solvable in $\Omega$.
	\end{coro}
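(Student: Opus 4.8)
The plan is to obtain Corollary \ref{corobigpi} as the specialization of Theorem \ref{teobigpi} to the Laplacian, $\LL=\Delta$ (which is self-adjoint, so $\LL^*=\Delta$ and $\nu_A=\nu$). The assumptions of the theorem on $\om$, on the exponents $p$ and $q$, on the weak $p$-Poincar\'e inequality for $\pom$, and on the big-pieces condition \rf{eqbigpiece''} are all present verbatim in the corollary, so only two points remain to be verified: (a) each superdomain $U_{\xi,r}$ is a $C_2$-chord-arc domain with $C_2$ depending only on the uniform Lipschitz character; and (b) $(N_q)_\Delta$ is solvable in the $U_{\xi,r}$ uniformly in $\xi$ and $r$.

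For (a) I would recall that a bounded Lipschitz domain is quantitatively chord-arc: the interior and exterior corkscrew conditions, the Harnack chain condition, and the $n$-Ahlfors regularity of the boundary all follow from the local graph representation, with constants controlled by the Lipschitz slope bound and by the (scale-invariant) number and size of the coordinate charts. Hence a uniform Lipschitz character for the family $\{U_{\xi,r}\}$ yields a uniform chord-arc constant $C_2$. Since $U_{\xi,r}\supset B(\xi,r)\cap\om$ has diameter comparable to $r$ while $r$ ranges over $(0,\diam\pom]$, ``uniform Lipschitz character'' here must be read scale-invariantly, e.g.\ after dilating by $1/r$; this is the sense made precise in Section \ref{subsec-chordarc}.

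For (b), by the theorem of Jerison and Kenig \cite{JeK1b}, $(N_2)_\Delta$ is solvable in every bounded Lipschitz domain with implicit constant depending only on $n$ and the Lipschitz character, hence uniformly in $\{U_{\xi,r}\}$; and by the classical Hardy-space estimates of Dahlberg and Kenig for the Neumann problem (the $H^1$--$L^1$ bound interpolated with the $L^2$ bound) the same holds for $(N_t)_\Delta$ for every $1<t\le 2$. So it suffices to arrange $q\le 2$, and this is automatic: the set of exponents $t>1$ for which $(R_t)_\Delta$ is solvable in $\om$ is an interval with left endpoint $1$ --- this follows from the self-improvement of the regularity problem, e.g.\ from its equivalence with the Dirichlet problem $(D_{t'})_\Delta$ in $\om$, whose solvability set is open-ended above and extends downward by Dahlberg's reverse H\"older inequality --- so solvability of $(R_q)_\Delta$ for some $q>p$ already gives solvability of $(R_t)_\Delta$ for every $t\in(1,q]$, and we may replace $q$ by $\min(q,2)$. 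With (a) and (b) in hand, Theorem \ref{teobigpi} applies with $\LL=\Delta$ and gives that $(N_p)_\Delta$ is solvable in $\om$ provided $\ve$ is small enough, with the dependence of $\ve$ as stated. There is no real obstacle in this deduction: all the substance lies in Theorem \ref{teobigpi}, and the rest is the classical Lipschitz-domain theory for $\Delta$ together with standard self-improvement of the Dirichlet and regularity problems.
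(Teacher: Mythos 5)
Your proposal is correct and takes essentially the same approach as the paper, which introduces the corollary with the one-line remark that $(N_2)_\Delta$ is solvable in Lipschitz domains by Jerison--Kenig and then appeals to Theorem \ref{teobigpi}. You usefully fill in the details the paper glosses over --- in particular the reduction to $q\le 2$ via the downward stability of $(R_t)_\Delta$ solvability (which is needed since the corollary allows $q>2$, for which $(N_q)_\Delta$ may fail in Lipschitz domains), and the scale-invariant reading of the uniform Lipschitz character --- but the underlying argument is the same.
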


	\vv
	In the last decade, there has been significant activity in the {area of boundary value problems for elliptic PDE's  in rough domains and related free boundary problems.} The primary goal of this research program was to find necessary and sufficient geometric conditions for a domain \(\Omega\) {with $n$-Ahlfors regular boundary} that guarantee the solvability of the Dirichlet problem with \(L^p\) boundary data for the Laplace operator or more general operators $\LL$ in the domain, denoted as \((D_p)_{\mathcal{L}}\) being solvable in \(\Omega\). This question was settled for the Laplace operator  by Azzam, Hofmann, Martell, and the authors of the present manuscript in \cite{AHMMT}.
	
	The method to achieve solvability of \((D_p)_{\mathcal{L}}\) in domains more general than Lipschitz is via the so-called Big Pieces functor. Specifically, chord-arc domains have Big Pieces of (interior or exterior) starlike Lipschitz subdomains (see \cite{DJ} and \cite{Semmes}). Using either the work of Dahlberg \cite{Dah} for the Laplace operator or the work of Kenig and Pipher \cite{KP01} for the so-called Dahlberg-Kenig-Pipher {or DKP} operators  (i.e., operators where \(\nabla A\) satisfies certain \(L^2\) type Carleson measure conditions), combined with the maximum principle, it can be shown that harmonic/elliptic measures belong to the \(A_\infty\) class of Muckenhoupt weights. This condition implies that there exists \(p > 1\) such that \((D_p)_{\mathcal{L}}\) is solvable in the chord-arc domain. Similarly, one can use the aforementioned result to obtain \((D_p)_{\mathcal{L}}\) solvability in domains that have Big Pieces of interior chord-arc domains. This is shown in \cite{AHMMT} to be the optimal class of corkscrew domains with Ahlfors regular boundaries in which \((D_p)_{\Delta}\) is solvable. 
	%So, the difficulty was mainly of a geometric nature, which amounted to proving the existence of Big Pieces of a subdomain where $(D_p)_{\mathcal{L}}$ is solvable.
	
	Despite the fact that solvability of \((D_p)_{\Delta}\) in chord-arc domains was settled as early as 1990 by David and Jerison \cite{DJ}, and independently by Semmes \cite{Semmes},  solvability of the regularity problem in \(L^q\) for \(\Delta\) had only been proved in Lipschitz domains up to recently (see \cite{JeK1} for \(q=2\), \cite{V} for \(1 < q \leq 2\), and \cite{DaKe} for the optimal range of exponents).  The existence of \(q\) such that \((R_q)_{\Delta}\) is solvable in chord-arc domains,  a question posed by Kenig in 1991 \cite[Problem 3.2.2]{Ke}, was finally proved in 2021 by the authors (see \cite{MT}). In fact, in this work a more general result was shown, namely, that in corkscrew domains with Ahlfors regular boundaries, \((D_p)_{\Delta} \Rightarrow (R_{p'})_{\Delta}\), where \(p'\) is the Hölder conjugate of \(p\). Moreover, the same paper demonstrated that \((R_{p'})_{\mathcal{L}} \Rightarrow (D_p)_{\mathcal{L}}\) for any \(\mathcal{L} \in \mathcal{E}(\Omega)\).

	For elliptic operators in divergence form with non-constant coefficients less is known.
	For example,
	for the so-called DKP operators, solvability of \((R_q)_{\mathcal{L}}\) for some $q>1$ was not known even in the ball until quite recently. The only known result was by Dindoš, Pipher, and Rule \cite{DPR}, where the authors showed in 2017 that \((R_q)_{\mathcal{L}}\) is solvable in Lipschitz domains with sufficiently small Lipschitz constants and for DKP operators whose coefficients have sufficiently small oscillations. In \cite{MPT}, inspired by ideas in \cite{MT}, the authors, in collaboration with Poggi, proved that if a corkscrew domain has a uniformly \(n\)-rectifiable boundary, then \((D_p)_{\mathcal{L}} \Rightarrow (R_{p'})_{\mathcal{L}^*}\) for any operator \(\mathcal{L}\) satisfying the DKP condition (without any smallness assumption on the oscilation of the coefficients). To do so, they introduced two new Poisson problems with data in certain scale-invariant \(L^p\)-Carleson-type spaces. They called them Poisson Dirichlet and Poisson regularity problems, denoted by \((PD_{p})_{\mathcal{L}}\) and \((PR_{p'})_{\mathcal{L}^*}\), and proved that in corkscrew domains with Ahlfors regular boundaries, for any \(\mathcal{L} \in \mathcal{L}(\Omega)\), it holds that
	$$
	(D_p)_{\mathcal{L}} \Longleftrightarrow (PD_{p})_{\mathcal{L}} \Longleftrightarrow (PR_{p'})_{\mathcal{L}^*}.
	$$
	Simultaneously and independently, Dindo\v s, Hofmann, and Pipher \cite{DHP} (see also \cite{Fe}) showed that \((R_q)_{\mathcal{L}}\) is solvable for DKP operators in Lipschitz graph domains. Their proof is significantly shorter although it uses the existence of a preferred direction and thus it can't be generalized to rougher domains. We would also like to mention the work by Gallegos and us \cite{GMT}, where extrapolation of solvability of the regularity and the Poisson regularity problems is obtained in corkscrew domains with Ahlfors regular boundaries for any \(\mathcal{L} \in \mathcal{E}(\Omega)\), as well as the work by one of us and Zacharopoulos \cite{MZ} where Varopoulos' extensions are constructed and used to obtain similar duality results for elliptic systems with complex coefficients.
	
	While recent advances have led to a pretty good understanding of the Dirichlet and regularity problems in rough domains, this is not the case for the Neumann problem with data in \(L^p\). Indeed, for a Lipschitz domain with connected boundary, in 1987 Dahlberg and Kenig proved in \cite{DaKe} that \((N_p)_{\Delta}\) is solvable for \(p \in (1, 2+\varepsilon)\). This range is optimal since for any \(p > 2\), there exists a Lipschitz domain such that \((N_p)_{\Delta}\) is not solvable. Their proof uses the  solvability of \((N_2)_{\Delta}\), achieved previously by Jerison and Kenig in \cite{JeK1b} via the so-called two-sided Rellich inequality in \(L^2\), i.e., \(\|\partial_\nu u\|_2 \approx \|\nabla_t u\|_2\).
	
	Moreover,  in \cite{DPR} it was shown that in a Lipschitz domain in \(\mathbb{R}^2\) with a sufficiently small Lipschitz constant and for DKP operators  with coefficients with sufficiently small oscillation, \((N_p)_{\mathcal{L}}\) is solvable for any \(p \in (1, \infty)\). More recently, it was proved in \cite{DHP} that if \(\Omega\) is a Lipschitz graph domain in \(\mathbb{R}^2\) with \(\mathcal{L}\) satisfying the DKP condition, then there exists \(q \in (1, \infty)\) such that \((N_q)_{\mathcal{L}}\) is solvable. This result follows from a reduction to the solvability of a relevant regularity problem \((R_q)_{\widetilde{\mathcal{L}}}\), where \(\widetilde{\mathcal{L}} = -\text{div} (\widetilde{A}(\cdot) \nabla)\) and \(\widetilde{A} = A / \det A\) (an idea originated from work of Kenig and Rule \cite{KR}). On the other hand, for any \(p > 2\) there are examples of Lipschitz domains in \(\mathbb{R}^{n+1}\), with \(n \geq 2\), such that \((R_p)_{\Delta} \nRightarrow (N_p)_{\Delta}\) (see \cite[Lemma 3.1]{KP95}). Nevertheless, it is not clear if one should expect that \((R_p)_{\Delta} \Rightarrow (N_p)_{\Delta}\) for \(p \in (1,2]\).
	
	An open problem posed by Kenig in 1991  \cite[Problem 3.2.2]{Ke} and reintroduced by Toro at the ICM in 2010 \cite[Question 2.5]{To} is the following:
	\begin{prob}
		In a bounded chord-arc domain $\Omega \subset \R^{n+1}$, $n \geq 2$, does there exist $p>1$ such that the Neumann problem for the Laplacian with  boundary data in $L^{p}(\pom)$ is solvable?
	\end{prob}
	If $\Omega \subset \R^2$ is a bounded chord-arc domain, Jerison and Kenig \cite{JeK3} showed that $(N_p)_{\Delta} \Leftrightarrow (R_p)_{\Delta} \Leftrightarrow (D_{p'})_{\Delta}$. For a result that applies to rougher but also flatter domains than Lipschitz, we refer to the work of Hofmann, Mitrea, and Taylor \cite[Section 7]{HMT}, where they prove that for every $p \in (1,\infty)$, there exists an $\varepsilon > 0$ such that for every $\varepsilon$-regular SKT domain (see \cite[Definition 4.8]{HMT} for the definition), $(N_p)_{\Delta}$ is solvable. 
	%In fact, these domains have flat boundaries, so there are Lipschitz domains that are not $\varepsilon$-regular SKT domains. 
	By Semmes' decomposition (see \cite[Theorem 4.16]{HMT}), the boundaries of such domains have very big pieces of sufficiently flat Lipschitz graphs with the Lipschitz constant depending on $\varepsilon$.  Note that for general chord-arc domains, 
	the lack of flatness does not allow one to prove invertibility of layer potentials as in \cite{HMT}, thus necessitating different 
	methods.

	In a recent  interesting work,  Feneuil and Li \cite{FL} proved the Neumann counterpart of \cite[Theorem 1.22]{MPT}, exploring the connections between the solvability of Poisson Neumann problems with interior data in  appropriate Carleson-type spaces and the solvability of $(N_p)_{\mathcal{L}}$. They also demonstrated the extrapolation of the solvability of the Poisson Neumann problem, which, in turn, led to the extrapolation of the solvability of $(N_p)_{\mathcal{L}}$ under the assumption of the solvability of $(D_{p'})_{\mathcal{L}^*}$. This result enhances \cite[Theorem 6.3]{Kenig-Pipher}, even in the context of the ball, where the extrapolation of $(N_p)_{\mathcal{L}}$ was proved assuming the solvability of $(R_p)_{\mathcal{L}}$ (which,  by \cite[Theorem A.2]{MT},  in corkscrew domains with Ahlfors regular boundaries,  implies the solvability of $(D_{p'})_{\mathcal{L}^*}$).  It is noteworthy that, simultaneously and independently,  \cite[Theorem 6.3]{Kenig-Pipher} was extended to chord-arc domains by Hofmann and Sparrius \cite{HS} adapting the method in \cite{Kenig-Pipher},  which itself was an interesting  achievement.

	The Neumann problem with \(L^p\) data is a notoriously difficult problem. Unlike the Dirichlet problem, one cannot use the maximum principle to {transfer} solvability from the subdomains to the original domain. Note that the solution of the Dirichlet problem has a representation via the ``Poisson kernel,'' which is a positive function, allowing one to split the data into its positive and negative parts and work with positive solutions. One of the major challenges in the Neumann problem is to find a way to achieve the transference of solvability between domains and subdomains {(or superdomains).}
	
	The appropriate  dual version of the Neumann problem is a ``rough'' Neumann problem with data \(f\) in a ``negative'' Sobolev space.  Loosely speaking, for a function $f:\pom\to\R$ with $\int_\pom f\,d\sigma=0$,
	there exists a vector field \(\vec{g} \in L^p(\partial \Omega; \mathbb{R}^{N_n})\)\footnote{If $\om=\R^{n+1}_+$ and $\partial \Omega = \mathbb{R}^n\), this is exactly the space we are interested in, with \(N_n = n\).} such that \(f = -\text{div}_t \vec{g}\) in a certain sense, where \(\text{div}_t\) denotes the tangential divergence along the boundary. Then
	the solution \(u\) of the variational Neumann with boundary data $f$ problem can be expressed via the Neumann function as (see
	Theorem \ref{teoneumann2}):
	\begin{align*}
		u(x) &= \int_{\partial \Omega} N(x, \xi) f(\xi) \,d\sigma(\xi) = \int_{\partial \Omega} \nabla_t N(x, \xi) \cdot \vec{g}(\xi) \,d\sigma(\xi) \\
		%\end{align*}
		%\begin{align*}
		&= c \sum_{j=1}^{N_n} \int_{\partial \Omega} \tilde{\partial_j} N(x, \xi) g_j(\xi) \,d\sigma(\xi),
	\end{align*}
	where \(\tilde{\partial}_j\) represents an appropriate version of the tangential partial derivatives. Now, it is clear that \(\nabla_t N(x, \xi)\) plays the role that the Poisson kernel plays in the Dirichlet problem, which, in nice domains, coincides with \(\partial_{\nu_A} G(x, \xi)\) (here \(G(\cdot, \cdot)\) stands for the Green function).  Due to the lack of information about the sign of the partial derivatives of the Neumann function on the boundary, we cannot hope to be able to introduce some kind of positive Neumann harmonic measure and apply the maximum principle.
	We remark that in Section 
	\ref{sec-roughneumann} we will introduce another more
	hands-on dual version of the Neumann problem which will be better suited for our purposes.

	\vv

	Our proof of Theorem \ref{teobigpi} uses a good $\lambda$ type argument applied to the rough Neumann problem together with a bootstrapping 
	procedure which requires to work with a weak $(p',p')$ version of the solvability of the rough Neumann problem. By duality,  in turn, this leads us to study the solvability of the Neumann problem from the Lorentz space $L^{p,1}(\sigma)$ to $L^p(\sigma)$.
	Altogether, we get an estimate of the form
	\begin{equation}\label{eqboot51}
		C_\LL(N^R_{L^{p'},L^{p',\infty}})\leq K\, \big(1+ \ve^{a}\,C_\LL(N^R_{L^{p'},L^{p',\infty}})\big),
	\end{equation}
	where $C_\LL(N^R_{L^{p'},L^{p',\infty}})$ is the constant of the weak $(p',p')$ solvability of the rough Neumann problem
	(see Section \ref{sec-roughneumann}), and $a,K$ are positive constants.
	So assuming that $C_\LL(N^R_{L^{p'},L^{p',\infty}})<\infty$, for $\ve$ small enough, we deduce that $C_\LL(N^R_{L^{p'},L^{p',\infty}})\lesssim K$ and then the theorem follows by interpolation between different values of $p$. To ensure that $C_\LL(N^R_{L^{p'},L^{p',\infty}})<\infty$, for $\rho>0$ we introduce $\rho$-smooth versions of the Neumann and rough Neumann problems and instead of proving \rf{eqboot51} directly, we prove this for that $\rho$-smooth version, uniformly on $\rho$.
	
	By the results of David and Jerison \cite{DJ} and \cite{Semmes} mentioned above, it is known that any chord-arc domain $\Omega$ has
	big pieces of Lipschitz superdomains. That is, there exists some $\ve\in (0,1)$ such that for all $\xi\in\pom$ and $0<r\leq\diam(\pom)$
	there exists a Lipschitz domain $U_{\xi,r}$ (with uniform Lipschitz character) such that $\Omega\cap B(\xi,r)\subset U_{\xi,r}$ and
	\begin{equation}\label{eqbigpiece'}
		\HH^n(B(\xi,r)\cap \pom\setminus \partial U_{\xi,r})\leq \ve\,\HH^n(B(\xi,r)\cap \pom).
	\end{equation} 
	Unfortunately, our proof of Theorem \ref{teobigpi} requires to choose the parameter $\ve>0$ in \rf{eqbigpiece'} small enough, so that
	$K\ve^{a}<1$ in \rf{eqboot51} and the term $K\ve^{a}\,C_\LL(N^R_{L^{p'},L^{p',\infty}})$ can be absorbed by the left hand side.
	{Informally speaking, one requires $\Omega$ to have {\em very} big pieces of Lipschitz superdomains (or chord-arc superdomains in case the domains $U_{\xi,r}$ are allowed to be chord-arc, as in Theorem \ref{teobigpi}).}
	One may imagine that perhaps an iterative application of Theorem~\ref{teobigpi} might be used to allow for values of $\ve$ close to $1$ in 
	\rf{eqbigpiece'}. By Corollary \ref{corobigpi}, for the Laplacian this would imply the solvability of $(N_p)_\Delta$  for some $p>1$ in chord-arc domains $\Omega$ whose boundaries support a suitable Poincar\'e inequality (in particular, in two-sided chord-arc domains),   since  $(R_p)_\Delta$
	is solvable in such domains $\Omega$, by \cite{MT}. We do not discard that an approach of this type might work, although
	this might present important difficulties, such as the dependence of the constant $K$ in \rf{eqboot51} on the solvability constant of 
	$(N_q)_\Delta$ in the superdomains $U_{\xi,r}$, which might increase in an iterative application of Theorem \ref{teobigpi}.
	\vv
	
	The reason why we assume $A$ to have Dini mean oscillation in Theorem \ref{teobigpi} is because this ensures that, for
	any function $u$ such that $\LL u=0$ in an open ball $B$ with radius $r(B)\leq C$, it holds that $\nabla u$ is continuous in $B$ and
	\begin{equation}\label{eqgradpunt}
		\sup_{x\in \frac12 B} |\nabla u(x)|\lesssim \avint_{B}|\nabla u(y)|\,dy.
	\end{equation} 
	See \cite{DK}. %So one can replace the Dini mean oscillation assumption in the theorem by the conditions above.
	Notice that \rf{eqgradpunt} implies that, if $\LL u =0$ in $\Omega$, then
	\begin{equation}\label{eqgradpunt'}
		\wt\cN_\Omega(\nabla u)(x)\lesssim \cN_\Omega(\nabla u)(x)\quad \mbox{ for all $x\in\Omega$.}
	\end{equation}

	\vv
	
	\noi {\bf Acknowledgement.} We are grateful to Steve Hofmann for the discussions we had with him about the topic of this paper and for sharing with us  a preliminary version of his work with Sparrius \cite{HS}.
	
	\vv

	\noindent{\bf Data availability statement:} Data sharing is not applicable to this article as no new data were created or analyzed in this study.
	
	\vv
	
	\section{Preliminaries}
	
	In the paper, constants denoted by $C$ or $c$ depend just on the dimension and perhaps other fixed
	parameters, such as the ellipticity of the operator $\LL$. Constants with subindices, such as $C_0$, retain their value at different occurrences.
	We write $a\lesssim b$ if there is $C>0$ such that $a\leq Cb$, and we write $a\approx b$ if $a\lesssim b\lesssim a$. The notation  $a\lesssim_\gamma b$ means that  $a\lesssim b$, with the implicit constant depending on $\gamma$.

	\vv

	\subsection{Measures, rectifiability, and dyadic lattices}\label{subsec:measures}
	All measures in this paper are assumed to be Borel measures.
	We denote by $\HH^n$ the $n$-dimensional Hausdorff measure, and we assume $\HH^n$ to be normalized so that it coincides with $n$-dimensional Lebesgue measure in
	$\R^n$. Sometimes we will denote the $(n+1)$-dimensional Lebesgue measures in $\R^{n+1}$ by $m$, and integration with respect to $dx$ or $dy$ also means integration with respect to Lebesgue measure.
	For a measure $\mu$, a Borel set $A$, and a function $f\in L^1_{loc}(\mu)$, we use the notation
	$$m_{\mu,A}(f) = \avint_A f\,d\mu = \frac1{\mu(A)}\int_A f\,d\mu.$$
	In case $\mu$ is the Lebesgue measure, we just write $m_A(f)$.
	
	A measure $\mu$ in $\R^{n+1}$ is called 
	\textit{$C_0$-$n$-Ahlfors regular} (or just \textit{$n$-Ahlfors regular or Ahlfors regular} or Ahlfors-David regular) if there exists some
	constant $C_{0}>0$ such that
	$$C_0^{-1}r^n\leq \mu(B(x,r))\leq C_0\,r^n\quad \mbox{ for all $x\in
		\supp\mu$ and $0<r\leq \diam(\supp\mu)$.}$$
	The measure $\mu$ is  \textit{uniformly  $n$-rectifiable} if it is 
	$n$-Ahlfors regular and
	there exist constants $\theta, M >0$ such that for all $x \in \supp\mu$ and all $0<r\leq \diam(\supp\mu)$ 
	there is a Lipschitz mapping $g$ from the ball $B_n(0,r)$ in $\R^{n}$ to $\R^{n+1}$ with $\text{Lip}(g) \leq M$ such that
	$
	\mu (B(x,r)\cap g(B_{n}(0,r)))\geq \theta r^{n}.$
	
	A set $E\subset \R^{n+1}$ is called $n$-{\textit {rectifiable}} if there are Lipschitz maps
	$f_i:\R^n\to\R^{n+1}$, $i=1,2,\ldots$, such that 
	$
	\HH^n\Bigl(E\setminus\bigcup_i f_i(\R^n)\Bigr) = 0.
	$
	The set $E$ is $C_0$-$n$-Ahlfors regular if $\HH^n|_E$ is $C_0$-$n$-Ahlfors regular.
	Also, 
	$E$ is uniformly $n$-rectifiable if $\HH^n|_E$ is uniformly $n$-rectifiable.
	The notion of uniform rectifiability is a quantitative version of rectifiability which was introduced by David and Semmes in the pioneering works \cite{DS1} and \cite{DS2}.

	Given an $n$-Ahlfors measure $\mu$ in $\R^{n+1}$, we consider 
	the dyadic lattice of ``cubes'' built by David and Semmes in \cite[Chapter 3 of Part I]{DS2}. The properties satisfied by $\DD_\mu$ are the following. 
	Assume first, for simplicity, that $\diam(\supp\mu)=\infty$). Then for each $j\in\Z$ there exists a family $\DD_{\mu,j}$ of Borel subsets of $\supp\mu$ (the dyadic cubes of the $j$-th generation) such that:
	\begin{itemize}
		\item[$(a)$] each $\DD_{\mu,j}$ is a partition of $\supp\mu$, i.e.\ $\supp\mu=\bigcup_{Q\in \DD_{\mu,j}} Q$ and $Q\cap Q'=\varnothing$ whenever $Q,Q'\in\DD_{\mu,j}$ and
		$Q\neq Q'$;
		\item[$(b)$] if $Q\in\DD_{\mu,j}$ and $Q'\in\DD_{\mu,k}$ with $k\leq j$, then either $Q\subset Q'$ or $Q\cap Q'=\varnothing$;
		\item[$(c)$] for all $j\in\Z$ and $Q\in\DD_{\mu,j}$, we have $2^{-j}\lesssim\diam(Q)\leq2^{-j}$ and $\mu(Q)\approx 2^{-jn}$;
		\item[$(d)$] there exists $C>0$ such that, for all $j\in\Z$, $Q\in\DD_{\mu,j}$, and $0<\tau<1$,
		\begin{equation}\label{small boundary condition}
			\begin{split}
				\mu\big(\{x\in Q:\, &\dist(x,\supp\mu\setminus Q)\leq\tau2^{-j}\}\big)\\&+\mu\big(\{x\in \supp\mu\setminus Q:\, \dist(x,Q)\leq\tau2^{-j}\}\big)\leq C\tau^{1/C}2^{-jn}.
			\end{split}
		\end{equation}
		This property is usually called the {\em small boundaries condition}.
		From (\ref{small boundary condition}), it follows that there is a point $x_Q\in Q$ (the center of $Q$) such that $\dist(x_Q,\supp\mu\setminus Q)\gtrsim 2^{-j}$ (see \cite[Lemma 3.5 of Part I]{DS2}).
	\end{itemize}
	We set $\DD_\mu:=\bigcup_{j\in\Z}\DD_{\mu,j}$. 
	
	In case that $\diam(\supp\mu)<\infty$, the families $\DD_{\mu,j}$ are only defined for $j\geq j_0$, with
	$2^{-j_0}\approx \diam(\supp\mu)$, and the same properties above hold for $\DD_\mu:=\bigcup_{j\geq j_0}\DD_{\mu,j}$.
	
	Given a cube $Q\in\DD_{\mu,j}$, we say that its side length is $2^{-j}$, and we denote it by $\ell(Q)$. Notice that $\diam(Q)\leq\ell(Q)$. 
	We also denote 
	\begin{equation}\label{defbq}
		B(Q):=B(x_Q,c_1\ell(Q)),\qquad B_Q := B(x_Q,\ell(Q)),
	\end{equation}
	where $c_1>0$ is some fixed constant such that $B(Q)\cap\supp\mu\subset Q$ for all $Q\in\DD_\mu$. Clearly, we have $Q\subset B_Q$.
	
	For $\lambda>1$, we write
	$$\lambda Q = \bigl\{x\in \supp\mu:\, \dist(x,Q)\leq (\lambda-1)\,\ell(Q)\bigr\}.$$
	
	%We denote $\delta_\Omega(x)=\dist(x,\partial\Omega)$.
	
	The side length of a ``true cube'' $P\subset\R^{n+1}$ is also denoted by $\ell(P)$. On the other hand, given a ball $B\subset\R^{n+1}$, its radius is denoted by $r(B)$. For $\lambda>0$, the ball $\lambda B$ is the ball concentric with $B$ with radius $\lambda\,r(B)$.
	
	\vv
	
	\subsection{The Whitney decomposition of $\Omega$}\label{secwhitney}
	
	For any open set $\Omega \subsetneq \R^{n+1}$,  there is a family $\WW(\Omega)$ (the Whitney cubes of $\Omega$) of dyadic cubes in $\R^n$ with disjoint interiors contained in $\Omega$ such that
	$\bigcup_{P\in\WW(\Omega)} P = \Omega,$
	and moreover there are
	some constants $\Lambda>20$ and $D_0\geq1$ such that the following holds for every $P \in\WW(\Omega)$:
	\begin{itemize}
		\item[(i)] $10P \subset \Omega$;
		\item[(ii)] $\Lambda P \cap \partial\Omega \neq \varnothing$;
		\item[(iii)] there are at most $D_0$ cubes $P'\in\WW(\Omega)$
		such that $10P \cap 10P' \neq \varnothing$. Further, for such cubes $P'$, we have $\frac12\ell(P')\leq \ell(P)\leq 2\ell(P')$.
	\end{itemize}
	%Above, we denote by $\ell(Q)$ the side length of $Q$. 
	From the properties (i) and (ii) it is clear that $\dist(P,\partial\Omega)\approx\ell(P)$. We assume that
	the Whitney cubes are small enough so that
	\begin{equation}\label{eqeq29}
		\diam(P)< \frac1{20}\,\dist(P,\partial\Omega).
	\end{equation}
	The arguments to construct a Whitney decomposition satisfying the properties above are
	standard.
	\vv
	
	Suppose that $\pom$ is $n$-Ahlfors regular and consider the dyadic lattice $\DD_\sigma$ defined above, for $\sigma=\HH^n|_\pom$.
	Then, for each Whitney cube $P\in \WW(\Omega)$ there is some cube $Q\in\DD_\sigma$ such that $\ell(Q)=\ell(P)$
	and $\dist(P,Q)\approx \ell(Q)$, with the implicit constant depending on the parameters of $\DD_\sigma$ and on the Whitney decomposition. We denote this by $Q=b(P)$ (``b'' stands for ``boundary''). Conversely,
	given $Q\in\DD_\sigma$, we let 
	\begin{equation}\label{eqwq00}
		w(Q) = \bigcup_{P\in\WW(\Omega):Q=b(P)} P.
	\end{equation}
	It is immediate to check that $w(Q)$ is made up at most of a {uniformly} bounded number of cubes $P$, but it may happen
	that $w(Q)=\varnothing$.

	\vv
	\subsection{Chord-arc domains}\label{subsec-chordarc}
	
	A domain is a connected open set.
	In the whole paper, $\Omega$ will be an open set in $\R^{n+1}$, with $n\geq 1$.
	Often we will denote the $n$-Hausdorff measure on $\pom$ by $\sigma$.
	
	The open set $\Omega$ satisfies the $C_1$-corkscrew condition if
	there exists some $C_1>0$ such that for all
	$x\in\pom$ and all $r\in(0, 2\diam(\Omega))$ there exists a ball $B\subset B(x,r)\cap\Omega$ such that
	$r(B)\geq C_1^{-1}\,r$. We also say that $\Omega$ is a $C_1$-corkscrew domain, or just a corkscrew domain if we do not want to mention the constant $C_1$.
	
	Given two points $x,x' \in \Omega$, and a pair of numbers $M,N\geq1$, 
	an $(M,N)$-{\it Harnack chain connecting $x$ to $x'$},  is a chain of
	open balls
	$B_1,\dots,B_N \subset \Omega$, 
	with $x\in B_1,\, x'\in B_N,$ $B_k\cap B_{k+1}\neq \varnothing$
	and $M^{-1}\diam (B_k) \leq \dist (B_k,\partial\Omega)\leq M\diam (B_k).$
	For $C_2\geq1$, we say that $\Omega$ satisfies the {\it $C_2$-Harnack chain condition} if
	for any two points $x,x'\in\Omega$,
	there is an $(M,N)$-Harnack chain connecting them, with $M\leq C_2$ and $N$ such that
	$$N\leq C_2\,\bigg(1+\log^+\frac{|x-x'|}{\min(\delta_\Omega(x),\delta_\Omega(x'))}\bigg).$$
	Recall that $\delta_{\Omega}(x)=\dist(x, \pom)$.

	We  say that a domain $\Omega$ is $C_3$-uniform, if it satisfies the
	$C_3$-Harnack chain condition and the $C_3$-corkscrew condition.  
	Following \cite{JeK2}, we say that a
	domain $\Omega\subset \mathbb{R}^{n+1}$ is $C_3$-NTA ({\it non-tangentially accessible})  if it is uniform\footnote{In \cite{JeK2} the Harnack chain condition is formulated in a different but equivalent form.}  and $\Omega_{\textup{ext}}:= \R^{n+1}\setminus \overline{\Omega}$ satisfies the $C_3$-corkscrew condition. We also say that a connected open set $\Omega \subset \R^{n+1}$
	is a $C_3$-CAD ({\it chord-arc domain}), if it is $C_3$-NTA and  $\pom$ is $C_3$-$n$-Ahlfors regular. In this case, we say that $C_3$ is the CAD constant of $\Omega$. Additionally, if a domain $\Omega$ and its exterior $\R^{n+1} \setminus \overline\Omega$  are $C$-CAD, then we say that $\Omega$ is a two-sided $C$-CAD. To simplify notation, we may write NTA or CAD in place of $C$-NTA or $C$-CAD.

	It was shown independently by David and Jerison \cite{DJ} and \cite{Semmes} that if $\Omega$ is a CAD, then its boundary $\pom$ is uniformly
	$n$-rectifiable.

	\vv
	
	\subsection{Finite perimeter sets and  reduced boundary,} \label{subsec:finite perimeter}

	An open set $\Omega\subset\R^{n+1}$ has {\it finite perimeter} if {the distributional gradient $\nabla \chi_\Omega$ of $\chi_\Omega$}  is a locally finite $\R^{n+1}$-valued measure.  From results of De Giorgi and Moser it follows that $\nabla \chi_\Omega=
	-\nu_{\Omega}\,\HH^n_{\pom^*}$, where $\partial^*\Omega\subset\pom$ is the reduced boundary of $\Omega$ and
	$|\nu_{\Omega}(x)|=1$ $\HH^n$-a.e.\ in $\partial^*\Omega$.  By  \cite[Theorem 5.15]{EG},  $\partial^*\Omega$ can be written, up to  a set of $\HH^n$-measure zero,  as a countable union of  compact sets $\{K_j\}_{j=1}^\infty$ where $K_j \subset S_j$ for a  $C^1$ hypersurface $S_j$  and $\nu_{\Omega}|_{S_j}$ is  normal to $S_j$.  Moreover, the following Green's formula is satisfied: for every $\vphi \in C^\infty_c(\R^{n+1};\R^{n+1})$, 
	\begin{equation}\label{eq:Green-finiteperim}
		\int_\Omega \dv \varphi(x)\,dx = \int_{\partial^*\Omega} \nu_{\Omega}(\xi) \cdot \vphi(\xi)\,\HH^n(\xi).
	\end{equation}
	More generally, given a unit vector $\nu_\Omega$ and $x \in \pom$, we define the (closed) half-spaces 
	$$
	H^\pm_{\nu_\Omega}(x)= \{ y \in \R^{n+1}: \pm\nu_\Omega \cdot (y-x) \geq 0\}.
	$$
	Then, for $x \in \partial^*\Omega$, it holds
	\begin{equation}\label{eq:normal-halfspace}
		\lim_{r \to 0} r^{-(n+1)} m \left(B(x,r)\cap \Omega^\pm \cap H^\pm_{\nu_\Omega}\right) >0,
	\end{equation}
	where $\Omega^+=\om$ and $\Omega^-=\R^{n+1} \setminus \Omega$ (see for instance \cite[p. 230]{EG}).  A unit vector $\nu_\Omega$  satisfying \eqref{eq:normal-halfspace} 
	is called the {\it measure theoretic outer unit normal} to $\Omega$ at $x$ and we denote by $\partial_0 \Omega$ all the points of $\pom$ for which \eqref{eq:normal-halfspace} holds.  It is clear that $\partial^* \Omega \subset \partial_0 \Omega$.

	The {\it measure theoretic boundary} $\partial_*\Omega$ consists of the points $x\in\pom$ such that
	$$\limsup_{r\to0}\frac{m(B(x,r)\cap\Omega)}{r^{n+1}}>0\quad \mbox{ and }\quad
	\limsup_{r\to0}\frac{m(B(x,r)\setminus \overline\Omega)}{r^{n+1}}>0.$$
	When $\Omega$ has finite perimeter, it holds that $\partial_*\Omega\subset \partial_0 \om \subset \partial^*\Omega\subset \pom$ and $\HH^n(\partial^*\Omega\setminus\partial_*\Omega)=0$. A good reference for those results is either the book of Evans and Gariepy \cite{EG} or the book of Maggi \cite{Maggi}.

	\begin{rem}\label{rem:reduced-boundary-domains}
		If $\om \subset \R^{n+1}$ is a bounded open set with  Ahlfors regular boundary, then it has finite perimeter.  If it also  satisfies the two-sided corkscrew condition,   then it holds that  $\pom = \partial_* \Omega$, (see \cite[(3.1.25),  p. 52]{HMT}).  Therefore, for such domains,   $\HH^n(\partial \Omega \setminus \partial^* \Omega)=0$. 
	\end{rem}

	%It was proved by Federer (see \cite[p. 314]{Fed} and the references therein) that if $\om \subset \R^{n+1}$ is a bounded set such that $\HH^n(\pom)<\infty$ and $u \in C(\overline \om)\cap  \dot W^{1,1}(\om)$, then for any integer $j \in [1, n+1]$, it holds that\begin{equation}\label{eq:Green-Federer}\int_{\om} \partial_j u(x)\,dx=\int_{\partial_0 \om} e_j \cdot\nu_\om(\xi) \,u(\xi) \,d\sigma(\xi),\end{equation}where $e_j$ is the $j$-th standard basis vector of $\R^{n+1}$ and $\nu_\om$ is the measure theoretic outer unit normal to $\Omega$.
	
	\vv
	
	\subsection{The Haj\l asz Sobolev space and the HMT Sobolev space}\label{sec24.}

	Let $\Sigma$  be a metric space equipped with a doubling
	measure $\sigma$ on $\Sigma$, which means that there is a uniform constant $C_\sigma\geq1$ such that $\sigma(B(x,2r))\leq C_\sigma\, \sigma(B(x,r))$, for all $x\in \Sigma$ and $ r>0$. We will now  define  the {\it Haj\l{}asz's Sobolev spaces}  $\dot{M}^{1,p}(\Sigma)$ and  ${M}^{1,p}(\Sigma)$, which were introduced in \cite{Hajlasz}. For  more information on those spaces and, in general, Sobolev spaces in metric measure  spaces, the reader may consult \cite{Heinonen}.
	
	For a Borel function $f:\Sigma\to\R$, we say that a non-negative Borel function $g:\Sigma \to \R$ is a {\it Haj\l asz upper gradient of  $f$} if  
	\begin{equation}\label{eq:H-Sobolev}
		|f(x)-f(y)| \leq |x-y| \,(g(x)+g(y))\quad \mbox{ for $\sigma$-a.e. $x, y \in \Sigma$.} 
	\end{equation}
	We denote the collection of all the Haj\l asz upper gradients of $f$ by $D(f)$.
	
	For $p\geq1$, we denote by $\dot{M}^{1,p}(\Sigma)$ the space of Borel functions $f$ which have 
	a Haj\l asz upper gradient in $L^p(\sigma)$, and we let $M^{1,p}(\Sigma)$ be the space of functions $f\in L^p(\sigma)$ which have a Haj\l asz upper gradient in $L^p(\sigma)$, i.e.,  $M^{1,p}(\Sigma)=  \dot M^{1,p}(\Sigma) \cap L^p(\sigma)$.
	We  define the semi-norm (as it annihilates constants)
	\begin{equation}\label{eqseminorm}
		\| f \|_{ \dot M^{1.p}(\Sigma)} = \inf_{g \in D(f)} \| g\|_{L^p(\Sigma)}
	\end{equation}
	and the scale-invariant norm
	\begin{equation}\label{eqnorm}
		\| f\|_{M^{1,p}(\Sigma)} = \diam(\Sigma)^{-1} \|f\|_{L^p(\Sigma)} +   \inf_{g \in D(f)} \| g\|_{L^p(\Sigma)}.
	\end{equation}
	Remark that, 
	for any a metric space $\Sigma$, in the case $p\in (1,\infty)$, from the uniform convexity of $L^p(\sigma)$, one easily deduces
	that the infimum in the definition of the norm $\|\cdot\|_{M^{1,p}(\Sigma)}$ and $\|\cdot\|_{\dot M^{1,p}(\Sigma)}$ in \eqref{eqseminorm} and \eqref{eqnorm} is attained and is unique. We denote by $\nabla_{H,p} f$ the function $g$ which attains the infimum.

	\vv
	
	In \cite{HMT}, Hofmann, Mitrea, and Taylor have introduced some tangential derivatives and another tangential gradient which
	are  well suited for arguments involving integration by parts in chord-arc and other more general domains.

	Let $\om \subset \R^{n+1}$ be  a set of finite perimeter.  The HMT-derivatives we will introduce below  are defined on $\partial^* \Omega$.  To this end, consider a $C_c^1$ function $\vphi:\R^{n+1}\to \R$ and $1\leq j,k\leq n+1$. Then, one  defines the tangential derivatives of $\vphi$ on ${\partial^* \Omega}$  by
	\begin{equation}\label{eqparts099}
		\partial_{t,j,k} \vphi := \nu_j\,(\partial_k \vphi)|_{\partial^* \Omega} - \nu_k\,(\partial_j \vphi)|_{\partial \Omega},
	\end{equation}
	where $\nu_i$, $i=1,\ldots,n+1$ are the components of the outer unit normal $\nu$. Remark that, by integration by parts, if and $\vphi,\psi$ are $C^1$ in a neighborhood of $\pom$, the arguments in \cite[p.  2676]{HMT} show that
	\begin{equation}\label{eqparts09}
		\int_{\partial^* \Omega} \partial_{t,j,k}\psi\,\vphi\,d\HH^n = \int_{\partial^* \Omega} \psi\,\partial_{t,k,j} \vphi\,d\HH^n.
	\end{equation}
	
	Let $\sigma_*:=\HH^n|_{\partial^* \om}$.  For $1<p<\infty$, one defines the Sobolev type space $W^{1,p}(\sigma_*)\equiv W^{1,p}(\partial^* \om)$ (see \cite[display (3.6.3)]{HMT}) as the subspace of functions in $L^p(\sigma_*)$
	for which there exists some constant $C(f)$ such that
	\begin{equation}\label{eqlp1HMT}
		\sum_{1\leq j,k\leq n+1} \left|\int_{\partial \Omega} f\, \partial_{t,k,j} \vphi\,d\sigma_* \right|\leq C(f)\,\|
		\vphi\|_{L^{p'}(\sigma_*)}
	\end{equation}
	for all $\vphi\in C_c^\infty(\R^{n+1})$. By the Riesz representation theorem, for
	each $f\in W^{1,p}(\partial^* \om)$ and each $j,k=1,\ldots,n+1$, there exists some function $h_{j,k}\in L^p(\sigma_*)$ such that 
	$$
	\int_{\partial^* \Omega}  h_{j,k}\,\vphi\,d\sigma_*= \int_{\partial^* \Omega}  f\, \partial_{t,k,j} \vphi\,d\sigma_*$$
	and we set 
	$\partial_{t,j,k} f:=h_{j,k}$, so that this is coherent with \eqref{eqparts09}. It is easy to check that Lipschitz functions with compact support are contained in $W^{1,p}(\partial^* \Omega)$.

	If $f:\pom\to\R$ is Lipschitz on $\pom$, it is shown in \cite[Lemma 6.4]{MT} that
	\begin{equation*}\label{eqnab720}
		\partial_{t,j,k} f(x) =-\nu_j \,(\nabla_{t} f)_k(x) + \nu_k\,(\nabla_{t} f)_j(x)\quad\mbox{ for $\HH^n|_{\partial^* \Omega}$-a.e.\ $x\in\partial^* \Omega$,}
	\end{equation*}
	where $(\nabla_{t} f)_k$ denotes the $k$ component of the tangential gradient $\nabla_{t} f$ at $x$.
	
	\vv
	
	The proof of  the following lemma is given in Subsection \ref{sec:ApproxIdentity}.
	
	\begin{lemma}\label{lem:HMT vs Hajlasz}
		Let $\Omega \subset \R^{n+1}$ be a bounded open set with an Ahlfors regular boundary.  Then,   it holds that $ M^{1,p}(\pom) \subset  W^{1,p}(\partial^* \om)$ and, for any $f \in  M^{1,p}(\pom)$ and  $ j ,k \in \{1, \dots, n+1\}$,  
		\begin{equation}\label{eq:M emb W}
			\| \partial_{t,j,k} f\|_{L^p(\partial^*\om)} \lesssim \| \nabla_H f\|_{L^p(\pom)},
		\end{equation}
		where the implicit constant depends on $n$ and the Ahlfors regularity constants.
	\end{lemma}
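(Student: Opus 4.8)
The plan is to show that any $f\in M^{1,p}(\pom)$ satisfies the defining inequality \eqref{eqlp1HMT} for membership in $W^{1,p}(\partial^*\om)$, with the constant $C(f)$ controlled by $\|\nabla_H f\|_{L^p(\pom)}$, and then identify $\partial_{t,j,k}f$ via the Riesz representation theorem and track the resulting bound \eqref{eq:M emb W}. First I would fix $\vphi\in C_c^\infty(\R^{n+1})$ and $j,k\in\{1,\dots,n+1\}$. Testing $\partial_{t,k,j}\vphi=\nu_k(\partial_j\vphi)|_{\partial^*\om}-\nu_j(\partial_k\vphi)|_{\partial^*\om}$ against $f$, I want to integrate by parts ``from the boundary into the interior'' using the finite-perimeter Green's formula \eqref{eq:Green-finiteperim}. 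The technical point is that $f$ is only defined $\sigma$-a.e.\ on $\pom$, not in $\om$, so one cannot directly apply \eqref{eq:Green-finiteperim}; the standard remedy is a mollification/approximation argument. This is where the reference to Subsection \ref{sec:ApproxIdentity} is doing its work: one builds an approximation $f_\varepsilon$ of $f$ (via an approximate identity adapted to $\pom$, or via a McShane--Whitney-type Lipschitz approximation of the Haj\l asz gradient), notes that $f_\varepsilon$ extends to a Lipschitz function near $\pom$ for which \eqref{eqparts09} applies literally, passes to the limit, and controls the error terms by $\|\nabla_H f\|_{L^p}$ using the Haj\l asz inequality \eqref{eq:H-Sobolev}.

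More concretely, the key identity to exploit is the integration-by-parts formula \eqref{eqparts09}, which holds for functions that are $C^1$ (or Lipschitz) near $\pom$. For such a function $\psi$ one has
\[
\int_{\partial^*\om}\partial_{t,j,k}\psi\,\vphi\,d\HH^n=\int_{\partial^*\om}\psi\,\partial_{t,k,j}\vphi\,d\HH^n,
\]
and by the pointwise formula $\partial_{t,j,k}\psi=-\nu_j(\nabla_t\psi)_k+\nu_k(\nabla_t\psi)_j$ recalled just before the lemma, the left-hand side is bounded by $\int_{\pom}|\nabla_t\psi|\,|\vphi|\,d\sigma\leq \|\nabla_t\psi\|_{L^p(\sigma)}\|\vphi\|_{L^{p'}(\sigma)}$. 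The tangential gradient $|\nabla_t\psi|$ of a Lipschitz function is controlled pointwise by its local Lipschitz constant, which in turn, for the approximant produced from a Haj\l asz gradient $g\in D(f)$, is controlled by (a maximal function of) $g$; integrating and using the $L^p$-boundedness of the Hardy--Littlewood maximal operator on the doubling space $(\pom,\sigma)$ gives $\|\nabla_t f_\varepsilon\|_{L^p(\sigma)}\lesssim\|g\|_{L^p(\sigma)}$ uniformly in $\varepsilon$. Taking the infimum over $g\in D(f)$ yields \eqref{eqlp1HMT} with $C(f)\lesssim\|\nabla_H f\|_{L^p(\pom)}$, hence $f\in W^{1,p}(\partial^*\om)$, and then $\partial_{t,j,k}f=h_{j,k}$ is the Riesz representative, with $\|h_{j,k}\|_{L^p(\partial^*\om)}=C(f)\lesssim\|\nabla_H f\|_{L^p(\pom)}$, which is exactly \eqref{eq:M emb W}.

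I expect the main obstacle to be the approximation step: making precise the passage from $f\in M^{1,p}(\pom)$ to Lipschitz-near-$\pom$ approximants $f_\varepsilon$ for which \eqref{eqparts09} genuinely applies, and showing that $\partial_{t,j,k}f_\varepsilon\to\partial_{t,j,k}f$ (weakly in $L^p$, against $C_c^\infty$ test functions) while keeping the gradient bound uniform. One has to be careful that the HMT derivatives live on $\partial^*\om$ and that $\HH^n(\pom\setminus\partial^*\om)$ need not vanish in general — though under the running hypotheses it effectively does (cf.\ Remark \ref{rem:reduced-boundary-domains}), which is presumably why the statement only asks for an Ahlfors regular boundary while the deeper results of the paper assume CAD. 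A cleaner alternative, if the approximate identity of Subsection \ref{sec:ApproxIdentity} is set up on $\R^{n+1}$, is to mollify $f$ (extended by zero or by a Whitney extension) to get $F_\varepsilon\in C^\infty(\R^{n+1})$, apply \eqref{eqparts099} and \eqref{eq:Green-finiteperim} to $F_\varepsilon$ directly, and estimate $\partial_k F_\varepsilon$ restricted to a neighborhood of $\pom$ by the Haj\l asz gradient; the error incurred by replacing $f$ with $F_\varepsilon$ on $\pom$ tends to $0$ in $L^p(\sigma)$ by continuity of the trace. Either route reduces the lemma to the pointwise Lipschitz/Haj\l asz comparison plus maximal function bounds, which are routine.
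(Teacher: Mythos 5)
Your proposal matches the paper's proof in all essentials: reduce to the Lipschitz case (for which \eqref{eqparts09} and the bound \eqref{eq:M emb W} are already available, via \cite{MT}), approximate a general $f\in M^{1,p}(\pom)$ by the Lipschitz smoothings $S_\rho f$ from Subsection \ref{sec:ApproxIdentity}, obtain the uniform-in-$\rho$ bound $\|\nabla_H S_\rho f\|_{L^p}\lesssim\|\nabla_H f\|_{L^p}$ via a Hardy--Littlewood maximal function argument (this is exactly Lemma \ref{lemlip}(a)), and pass to the limit $\rho\to0$ using $S_\rho f\to f$ in $L^p$ (Lemma \ref{lemsr}(d)) to read off \eqref{eqlp1HMT} with the right constant. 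The one spot where your write-up is more tentative than the actual argument is that you speculatively invoke the finite-perimeter Green's formula \eqref{eq:Green-finiteperim} and a mollification in $\R^{n+1}$; the paper avoids this entirely by working purely intrinsically on $\pom$ with $S_\rho$, already knowing that Lipschitz functions on $\pom$ lie in $W^{1,p}(\partial^*\om)$ so that \eqref{eqparts09} applies directly to $S_\rho f$ without any extension to a neighborhood.
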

	
	\vv
	
	For $\Omega$ such that $\HH^n(\pom)=\HH^n(\partial^* \om)$,  we say that $\pom$ supports a weak $p$-Poincar\'e inequality if there are constants $C$, $C'$ such that  for any Lipschitz function $f:\pom \to\R$ and any ball $B$ centered in $\pom$, the following holds:
	$$\avint_{B \cap \pom} |f- m_{\sigma,B}f|\,d\sigma \leq C\,r(B)\avint_{C'B\cap\pom }|\nabla_t f|^p\,d\sigma.$$
	If this weak $p$-Poincar\'e inequality holds, then the space $W^{1,p}(\partial\Omega)$ of Hofmann, Mitrea, Taylor
	coincides with the Haj\l asz-Sobolev space $M^{1,p}(\pom)$ (in particular, this happens when $\Omega$ is a two-sided CAD). The Haj\l asz-Sobolev space is specially useful when studying the regularity problem in rather general domains, as shown in \cite{MT}.  
	
	\begin{comment}
		\vv
		
		If $\Omega \subset \R^{n+1}$ is  a bounded CAD, we say that {\it the regularity problem is solvable in $L^p$} for the differential operator 
		$\LL={\rm div} A\nabla$  (write $(R_{p})_\LL$ is solvable) if  there exists some constant $C_\LL(R_p)>0$  such that, for any    Lipschitz function $f:\pom\to\R$,  the solution $u:\Omega\to\R$ of the continuous
		Dirichlet problem for $\LL$ in $\Omega$ with boundary data $f$ satisfies
		\begin{equation}\label{eq:main-est-reg}
			\|\wt \cN_\Omega(\nabla u)\|_{L^p(\sigma)} \leq C_\LL(R_p)\|\nabla_t f\|_{L^p(\sigma)},
		\end{equation}
		where $\wt \cN_\Omega$ is the non-tangential maximal function defined in \rf{eqweak94Nom}.

		\vv
		
		As in \cite{MT}, one may also define the regularity problem in terms of the Haj\l asz-Sobolev space. However, for the purposes of the present
		paper, we prefer the above definition.
	\end{comment}
	\vv
	
	\subsection{Lorentz spaces on Ahlfors regular sets}
	Here we recall some basic facts about Lorentz spaces, which will play an important role in this paper. One can find proofs
	of the results described in this subsection in \cite[Chapter 4]{BS}.

	Let $\mu$ be an $n$-Ahlfors regular finite measure on $\R^{n+1}$ (for the purposes of this paper,  we may think that $\mu=\HH^n|_\pom$, where $\Omega\subset\R^{n+1}$ is a bounded  CAD).  
	For $X=\supp\mu$ and a $\mu$-measurable function $f:X\to \R$, we denote by $d_f$ its distribution function, given by
	$$d_f(\lambda) = \mu\big(\big\{x\in X:|f(x)|>\lambda\big\}\big),\quad \mbox{ for $\lambda\geq0$.}$$
	The decreasing rearrangement of $f$ is the function $f^*:[0,\infty)\to\R$ defined by
	$$f^*(t) = \inf\{\lambda\geq 0:d_f(\lambda)\leq t\},\quad \mbox{ for $t\geq0$.}$$
	For $0< p,q \leq\infty$, the Lorentz space $L^{p,q}(\mu)$ consists of the $\mu$-measurable functions $f:X\to\R$ such that the quantity
	$$\|f\|_{L^{p,q}(\mu)} = \left\{
	\begin{array}{ll}
		\left(\int_0^\infty (t^{1/p}f^*(t))^q\,\frac{dt}t\right)^q & \quad\mbox{for $1\leq q<\infty$,}\\
		&\\
		\sup_{t>0}(t^{1/p}f^*(t))& \quad\mbox{for $q=\infty$,}
	\end{array}
	\right.
	$$
	is finite. 
	
	Remark the space $L^{p,p}(\mu)$ coincides with the Lebesgue space $L^p(\mu)$, while $L^{p,\infty}(\mu)$ coincides with the usual space weak-$L^p(\mu)$.
	From the definition above, it easily follows  that for $0<p\leq\infty$ and $0<q\leq r\leq\infty$,
	\begin{equation}\label{eqordre4}
		\|f\|_{L^{p,r}(\mu)}\lesssim_{p,q,r} \|f\|_{L^{p,q}(\mu)}.
	\end{equation}

	The spaces $L^{p,q}(\mu)$ are quasi-Banach spaces (assuming the functions in the spaces to be defined modulo sets of zero measure, as usual), and 
	$\|\cdot\|_{L^{p,q}(\mu)}$ is a quasinorm. Further, for $1<p<\infty$ and $1\leq q\leq\infty$, the space $L^{p,q}(\mu)$ is normable. That is, $\|\cdot\|_{L^{p,q}(\mu)}$ is comparable to a norm.
	Additionally, for $1<p<\infty$ and $1\leq q<\infty$, the dual of $L^{p,q}(\mu)$ can identified with $L^{p',q'}(\mu)$, with equivalence of norms.
	More generally, for $1<p<\infty$, $1\leq q\leq\infty$, we have
	\begin{equation}\label{eqclau29}
		\|f\|_{L^{p.q}(\mu)} \approx_{p,q} \sup_{\|g\|_{L^{p',q'}(\mu)}\leq 1} \int |f\,g|\,d\mu.
	\end{equation} 
	
	In this paper we will make use of the spaces $L^{p,1}(\mu)$, $L^{p,p}(\mu)=L^p(\mu)$, and $L^{p,\infty}(\mu)$, with $1<p<\infty$.
	Clearly, from \rf{eqordre4} it follows that
	$$L^{p,1}(\mu) \subset L^p(\mu)\subset  L^{p,\infty}(\mu).$$
	
	Recall that, by Kolmogorov's inequality, if $1\leq p_1<p_2<\infty$,
	$$\|f\|_{L^{p_1}(\mu)}\lesssim_{p_1,p_2}\|f\|_{L^{p_2,\infty}(\mu)}\,\mu(X)^{\frac1{p_1}- \frac1{p_2}}.$$
	Then, from \rf{eqclau29}, it follows easily that, for $1\leq p_1<p_2$, 
	\begin{equation}\label{eqclau31}
		\|f\|_{L^{p_1,1}(\mu)}\lesssim_{p_1,p_2}\|f\|_{L^{p_2}(\mu)}\,\mu(X)^{\frac1{p_1}- \frac1{p_2}}.
	\end{equation}

	We also recall that (a special case of) the Marcinkiewicz interpolation theorem asserts that, for
	$1\leq p_0<p_1<\infty$,
	if $T$ is a quasilinear operator bounded from $L^{p_i,1}(\mu)$ to $L^{p_i,\infty}(\mu)$ for $i=0,1$,
	then $T$ is also bounded in $L^{p,q}(\mu)$ for all $p_0<p<p_1$ and $1\leq q\leq \infty$.
	
	\vv

	We denote by $\Lip(X)$ the space of Lipschitz functions on $X$. We also define
	$$
	{\Lip}_0(X):=\{ f \in \Lip(X): \int_X f\,d\mu=0\} \qquad L^{p,q}_0(X):=\{ f \in L^{p,q}(X): \int_X f\,d\mu=0\} 
	$$
	
	\vv

	Note that the space $(X,\mu)$ is a strongly resonant space; see \cite[p.45, Definition 2.3]{BS} and \cite[p.49, Theorem 2.6]{BS}.  Therefore,  by  \cite[p. 23, Corollary 4.3]{BS} and \cite[p. 221, Corollary 4.8]{BS}, we have that $L^{p,q}(\mu)$ has absolutely continuous norm. 
	{In fact, for every $f\in L^{p,q}(\mu)$, with $p \in (1,\infty)$, $q \in[1,\infty)$, and every 
		$\ve>0$, there exists $\delta=\delta(f,\ve)>0$ such that 
		\begin{equation}\label{eq:absolutecontinuityLorentz}
			\textup{if}\,\, E \subset X\,\,\textup{with}\,\, \mu(E)<\delta,\,\,\textup{then}\,\, \| f\, \chi_E \|_{L^{p,q}(\mu)} < \ve.
		\end{equation}
	}
	
	\vv
	\begin{lemma}\label{lem:denseLorentz}
		Let $\mu$ be an Ahlfors $n$-Ahlfors regular measure in $\R^{n+1}$ and $X=\supp\mu$.
		For   $1<p <\infty$, and $1\leq q <\infty$,  $\Lip(X)$  is dense in $L^{p,q}(\mu)$. Moreover,  $\Lip_0(X)$ is dense in $L_0^{p,q}(\mu)$ for $p \in (1,\infty)$ and $q \in[1,p]$. 
	\end{lemma}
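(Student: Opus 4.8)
The plan is to reduce the statement to the already-known density of Lipschitz functions in $L^p(\mu)$ via a truncation-and-interpolation scheme, together with the absolute continuity of the Lorentz norm recorded in \eqref{eq:absolutecontinuityLorentz}. First I would observe that simple functions are dense in $L^{p,q}(\mu)$ for $1<p<\infty$, $1\le q<\infty$: indeed, given $f\in L^{p,q}(\mu)$, one can write $f=f^+-f^-$ and approximate each nonnegative piece monotonically by simple functions $s_k\uparrow f^\pm$; since $|f-s_k|\le |f|$ and $|f-s_k|\to 0$ pointwise, the absolute continuity of the Lorentz norm (via dominated convergence in $L^{p,q}$, which holds because $L^{p,q}$ has absolutely continuous norm) gives $\|f-s_k\|_{L^{p,q}(\mu)}\to 0$. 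Hence it suffices to approximate a single characteristic function $\chi_E$, $E\subset X$ Borel, in $L^{p,q}(\mu)$ by Lipschitz functions.

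Next I would approximate $\chi_E$ by Lipschitz functions. Using the regularity of the measure $\mu$ (it is a finite Borel measure on a metric space, hence outer regular by open sets and inner regular by closed sets), choose a closed set $F$ and an open set $G$ with $F\subset E\subset G$ and $\mu(G\setminus F)<\delta$, where $\delta$ is chosen according to \eqref{eq:absolutecontinuityLorentz} for the given $\varepsilon$. Then the function
\[
\varphi(x)=\frac{\dist(x,X\setminus G)}{\dist(x,X\setminus G)+\dist(x,F)}
\]
is Lipschitz on $X$ (a quotient of Lipschitz functions bounded below near the relevant sets; one handles the degenerate cases $F=\varnothing$ or $G=X$ separately), takes values in $[0,1]$, equals $1$ on $F$ and $0$ off $G$. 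Thus $|\chi_E-\varphi|\le \chi_{G\setminus F}$, so $\|\chi_E-\varphi\|_{L^{p,q}(\mu)}\le \|\chi_{G\setminus F}\|_{L^{p,q}(\mu)}<\varepsilon$ by the choice of $\delta$. Combining with the first step and the triangle inequality for the (normable, since $p>1$) quasinorm, $\Lip(X)$ is dense in $L^{p,q}(\mu)$.

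For the second assertion, let $f\in L^{p,q}_0(X)$ with $q\in[1,p]$, and fix $\varepsilon>0$. By the first part pick $g\in\Lip(X)$ with $\|f-g\|_{L^{p,q}(\mu)}<\varepsilon$. The corrected function $\tilde g:=g-\frac{1}{\mu(X)}\int_X g\,d\mu$ lies in $\Lip_0(X)$. Since $\int_X f\,d\mu=0$, we have $\bigl|\int_X g\,d\mu\bigr|=\bigl|\int_X (g-f)\,d\mu\bigr|\le \|f-g\|_{L^1(\mu)}\lesssim \mu(X)^{1-1/p}\,\|f-g\|_{L^{p}(\mu)}$, and by \eqref{eqordre4} with $q\le p$ this is $\lesssim \mu(X)^{1-1/p}\|f-g\|_{L^{p,q}(\mu)}<\varepsilon\,\mu(X)^{1-1/p}$; the constant function $\frac{1}{\mu(X)}\int_X g\,d\mu$ therefore has $L^{p,q}(\mu)$-norm $\lesssim_{p,q}\varepsilon$. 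Hence $\|f-\tilde g\|_{L^{p,q}(\mu)}\lesssim_{p,q}\varepsilon$, proving density of $\Lip_0(X)$ in $L^{p,q}_0(X)$. The only mildly delicate point — and the one I expect to require the most care — is justifying the dominated-convergence step in $L^{p,q}$ in the first paragraph (equivalently, that $\|s_k-f\|_{L^{p,q}}\to0$), which is exactly where the absolute continuity of the Lorentz norm from \eqref{eq:absolutecontinuityLorentz} and the fact that $L^{p,q}$ for $q<\infty$ has absolutely continuous norm enter; everything else is a routine assembly of standard measure-theoretic facts.
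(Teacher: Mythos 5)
Your proof is correct and follows essentially the same route as the paper: reduce to simple functions via absolute continuity of the Lorentz quasi-norm, approximate $\chi_E$ using inner/outer regularity of $\mu$, and then fix up the mean exactly as the paper does for the $\Lip_0$ statement (using \eqref{eqordre4} with $q\le p$ to control the $L^1$ average). The only cosmetic difference is that you build the Lipschitz approximant to $\chi_E$ by an explicit distance-quotient formula, whereas the paper invokes Urysohn's lemma to get a continuous approximant and then uniformly approximates by Lipschitz functions on the compact set $X$.
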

	
	We remark that this lemma holds for more general measures $\mu$. However, we only need this for the $n$-Ahlfors regular ones.

	\begin{proof}
		We first record that, by \cite[Theorem 2.20]{CC},  the set of simple functions is dense in $L^{p,q}(\mu)$.  We will then show that for every Borel set $E \subset X$ with $\mu(E)<\infty$,  the function $\chi_E$ can be approximated by continuous functions in the $L^{p,q}(\mu)$ norm, which would imply the same result for any simple function.  To this end,  since $\mu$ is a Radon measure,  for fixed $\delta>0$ to be chosen momentarily,  there exist a compact set $K$ and an open set $U$ such that $K \subset E \subset U$ and $\mu(U \setminus K) <\delta$.  By Urysohn's lemma,  there exists $f \in C(X)$ such that $\chi_K \leq f \leq \chi_U$.  Therefore,  for fixed $\ve>0$,  if  $\delta=\delta(\ve)>0$ is the one for which \eqref{eq:absolutecontinuityLorentz} holds,  we have that 
		$$
		\| f - \chi_E\|_{L^{p,q}(\mu)} \leq \| \chi_U - \chi_E\|_{L^{p,q}(\mu)} = \| \chi_{U \setminus E}\|_{L^{p,q}(\mu)} \leq \| \chi_{U \setminus K}\|_{L^{p,q}(\mu)}<\ve.
		$$
		We have proved that any $L^{p,q}(\mu)$ function can be approximated by continuous functions.  Since any continuous function on a compact set $X$ can be approximated by Lipschitz functions on $X$ in the uniform norm,  we readily infer that $\Lip(X)$  is dense in $L^{p,q}(\mu)$.  
		
		Let now $f \in L_0^{p,q}(\mu)$.   By the first part of the lemma, there exists $f_k \in \Lip(X)$ is such that $f_k \to f$ in the $L^{p,q}(\mu)$  norm.  Since    $\| f_k -f\|_{L^{p}(\mu)}  \lesssim \| f_k-f\|_{L^{p,q}(\mu)} $, for any $p \in (1,\infty)$ and $q \in[1,p]$, we get that $\lim_{k \to \infty} f_k \to f$ in $L^1(\mu)$ and so $\fint_X f_k \,d\mu\to \fint_X f \,d\mu=0$ as $k \to \infty$. Thus,  if we set $g_k:=f_k -\fint_X f_k\,d\mu$, it is easy to see that $g_k \in \Lip_0(X)$ and $g_k \to f$ in the $L^{p,q}(\mu)$-norm.
	\end{proof}

	\vv
	\subsection{Approximations of the identity on Ahlfors regular sets}\label{sec:ApproxIdentity}

	Let $\phi:\R^{n+1}\to\R$ be a smooth radial function such that $\chi_{B(0,1/2)}\leq \phi\leq\chi_{B(0,1)}$ and, for $\rho>0$, denote
	$\phi_\rho(x) = \phi(\rho^{-1}x)$.
	
	Let $\sigma$ be an $n$-dimensional Ahlfors regular measure on $\R^{n+1}$ and let $\Sigma:=\supp(\sigma)$.  For a function $g\in L^1_{loc}(\sigma)$, $\rho\in(0,\diam \Sigma)$, and $x\in\ \Sigma$, we denote
	$$\wt S_\rho  g(x) = \frac{\phi_\rho *(g\sigma)(x)}{\phi_\rho *\sigma(x)}.$$
	We denote by $\wt s_\rho (x,y)$ the kernel of $\wt S_\rho $ with respect to $\sigma$. That is, 
	$$\wt s_\rho (x,y) = \frac1{\phi_\rho *\sigma(x)}\,\phi_\rho (x-y)\quad \mbox{ for $x,y\in \Sigma$},$$
	so that  $\wt S_\rho  g(x)  =\int \wt s_\rho (x,y)\,g(y)\,d\sigma(y)$.  It is easy to prove that,  $ \wt s_\rho (\cdot,y) \in \Lip(\Sigma)$ with $\Lip( \wt s_\rho (\cdot,y)) \lesssim \rho^{-n-1} \|\sigma\|$, uniformly in $y \in \Sigma$.

	Let $\wt S_\rho ^*$ the dual operator of $\wt S_\rho $. That is, 
	$$
	\wt S_\rho^*  g(x)  =\int \wt s_\rho (y,x)\,g(y)\,d\sigma(y).
	$$
	Notice that $\wt S_\rho ^*1\approx 1$, but  $\wt S_\rho ^*1\not\equiv 1$, in general.
	To solve this drawback, let $W_\rho $ be the operator of multiplication by $1/\wt S_\rho ^*1$. Then we consider the operator
	$$S_\rho  = \wt S_\rho \,W_\rho \,\wt S_\rho ^*,$$
	Notice that $S_\rho $ is self-adjoint. Moreover
	$S_\rho 1 = S_\rho ^*1\equiv1$. 
	On the other hand, the kernel of $S_\rho $ is the following:
	$$s_\rho (x,y) = \int \wt s_\rho (x,z)\,\frac1{\wt S_\rho ^*1(z)}\,\wt s_\rho (y,z)\,d\sigma(z).$$
	We remark that the above construction of the operators $S_\rho$ appeared first in the proof of David, Journ\'e, and Semmes  
	of the $Tb$ theorem in homogeneous spaces \cite{DJS}. In this work the authors attribute the construction of these operators to Coifman.

	The proof of the following lemma is standard and we will omit it.

	\begin{lemma}\label{lemsr}
		Let $\sigma$ be an $n$-dimensional Ahlfors regular measure on $\R^{n+1}$ and $\Sigma:=\supp(\sigma)$.  For $\rho\in(0,\diam(\Sigma))$, let $S_\rho$ be defined as above.
		The following holds:
		\begin{itemize}
			\item[(a)] For every $x\in\Sigma$, the kernel $s_\rho(x,\cdot)$ is supported in $\bar B(x,2\rho)$ and it holds
			$s_\rho(x,\cdot)\gtrsim \chi_{B(x,\frac14\rho)}$.
			\item[(b)] For every $x\in\Sigma$,  the kernels $s_\rho(x,\cdot)$ and $s_\rho(\cdot, x)$ are  Lipschitz on $\Sigma$, with Lipschitz constants ${\rm Lip}(s_\rho(x,\cdot))+{\rm Lip}(s_\rho(\cdot,x))\lesssim \rho^{-n-1}.$
			\item[(c)] For $1\leq p \leq\infty$, $S_\rho$ is bounded in $L^p(\sigma)$ with norm at most $1$.
			\item[(d)] For $1\leq p<\infty$ and  $f\in L^p(\sigma)$,  then $S_\rho(f)$ converges to $f$ in $L^p(\sigma)$ as $\rho\to0$.
			\item[(e)] If  $f\in L^1(\sigma)$,  then $S_\rho(f) \in \Lip(\Sigma)$ and  ${\rm Lip}(S_\rho g) \lesssim \rho^{-n-1} \|g\|_{L^1(\sigma)}$.
		\end{itemize}
	\end{lemma}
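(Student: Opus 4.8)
The plan is to verify the five properties directly from the explicit formula $s_\rho(x,y) = \int \wt s_\rho(x,z)\,\frac1{\wt S_\rho^*1(z)}\,\wt s_\rho(y,z)\,d\sigma(z)$, together with the elementary bounds on $\wt s_\rho$ coming from Ahlfors regularity. The basic observation, used repeatedly, is that for $x\in\Sigma$ one has $\phi_\rho *\sigma(x) = \int_{B(x,\rho)}\phi(\rho^{-1}(x-y))\,d\sigma(y) \approx \rho^n$ by the Ahlfors regularity of $\sigma$ (upper bound is immediate; lower bound because $\phi\geq\chi_{B(0,1/2)}$ and $\sigma(B(x,\rho/2))\gtrsim\rho^n$), so that $\wt s_\rho(x,y)\approx \rho^{-n}$ whenever $|x-y|\leq\rho/2$, $\wt s_\rho(x,y)\leq C\rho^{-n}\chi_{B(x,\rho)}(y)$ always, and consequently $\wt S_\rho^*1(z) = \int \wt s_\rho(y,z)\,d\sigma(y) \approx 1$ uniformly in $z$.

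For part (a): since $\wt s_\rho(x,z)$ is supported in $\bar B(x,\rho)$ and $\wt s_\rho(y,z)$ in $\bar B(y,\rho)$, the product under the integral is nonzero only if $z\in\bar B(x,\rho)\cap\bar B(y,\rho)$, forcing $|x-y|\leq 2\rho$; hence $s_\rho(x,\cdot)$ is supported in $\bar B(x,2\rho)$. For the lower bound, if $|x-y|\leq\rho/4$, restrict the $z$-integral to $B(x,\rho/4)\subset B(x,\rho/2)\cap B(y,\rho/2)$, on which $\wt s_\rho(x,z)\wt s_\rho(y,z)\gtrsim \rho^{-2n}$ and $1/\wt S_\rho^*1(z)\gtrsim 1$, so $s_\rho(x,y)\gtrsim \rho^{-2n}\sigma(B(x,\rho/4))\gtrsim \rho^{-n}$. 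For part (b): write $s_\rho(x,\cdot)-s_\rho(x',\cdot)$ or estimate $s_\rho(x,y)-s_\rho(x,y')$ by moving the Lipschitz factor $\wt s_\rho(y,z)$ (resp. $\wt s_\rho(\cdot,z)$), using the stated estimate $\Lip(\wt s_\rho(\cdot,z))\lesssim \rho^{-n-1}$ and the fact that the $z$-integration runs over a set of $\sigma$-measure $\lesssim\rho^n$ while $\wt s_\rho(x,z)/\wt S_\rho^*1(z)\lesssim \rho^{-n}$; this gives $\rho^{-n}\cdot\rho^{-n-1}\cdot\rho^n = \rho^{-n-1}$, and symmetry of the formula handles $s_\rho(\cdot,x)$.

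For part (c): since $S_\rho 1 = 1$ and $S_\rho$ is self-adjoint with nonnegative kernel, it is a positive operator fixing constants, hence a (sub)Markov operator; by the Riesz–Thorin / Jensen argument it is a contraction on $L^\infty(\sigma)$ and on $L^1(\sigma)$, and by interpolation on every $L^p(\sigma)$, $1\leq p\leq\infty$, with norm $\leq 1$. For part (d): by density of $\Lip(\Sigma)$ (or of bounded functions) in $L^p(\sigma)$ for $p<\infty$ together with the uniform bound from (c), it suffices to check convergence for bounded (or Lipschitz) $f$; for such $f$ the estimate $|S_\rho f(x)-f(x)| = |\int s_\rho(x,y)(f(y)-f(x))\,d\sigma(y)|$ and the support property $s_\rho(x,\cdot)\subset\bar B(x,2\rho)$ give $|S_\rho f(x)-f(x)|\leq \sup_{|y-x|\leq 2\rho}|f(y)-f(x)|\to 0$ (uniformly for Lipschitz $f$, pointwise a.e. and dominatedly in general), whence convergence in $L^p$. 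For part (e): for $f\in L^1(\sigma)$, $S_\rho f(x) = \int s_\rho(x,y)f(y)\,d\sigma(y)$, so $|S_\rho f(x)-S_\rho f(x')|\leq \Lip(s_\rho(\cdot,y))\,|x-x'|\,\|f\|_{L^1(\sigma)}$ by part (b), giving $\Lip(S_\rho f)\lesssim \rho^{-n-1}\|f\|_{L^1(\sigma)}$.

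I do not expect any genuine obstacle here; the lemma is a standard Coifman-type construction of a smooth approximate identity on a space of homogeneous type (as the text notes, it goes back to \cite{DJS}), and every step reduces to Ahlfors regularity plus the triangle inequality. The only mildly delicate point is keeping track of constants in part (b) so that the Lipschitz bound comes out as $\rho^{-n-1}$ rather than something worse — this is just bookkeeping — and ensuring in (d) that one genuinely has convergence in $L^p$ and not merely boundedness, which is handled by the standard density-plus-uniform-bound argument.
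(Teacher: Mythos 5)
The paper does not actually supply a proof of this lemma; it states that ``The proof of the following lemma is standard and we will omit it.'' So there is no argument in the text to compare against, and your proposal is supplying the omitted details. With that understood: your proof is correct, and it is essentially the canonical argument for Coifman-type approximations of the identity on Ahlfors regular spaces of homogeneous type, as suggested by the paper's reference to \cite{DJS}.

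A few small remarks. In (a), you actually establish the stronger and more natural estimate $s_\rho(x,\cdot)\gtrsim \rho^{-n}\chi_{B(x,\rho/4)}$. The paper's statement reads $s_\rho(x,\cdot)\gtrsim \chi_{B(x,\rho/4)}$, which appears to be a minor typo (or at least a weaker statement that only follows from yours when $\rho\lesssim1$). Your computation is the right one and consistent with the fact that $s_\rho$ has total mass $1$ on a ball of radius $2\rho$. In (c), your conclusion is right, but the contraction on $L^1$ comes not from Jensen but from the identity $S_\rho^* 1 = 1$ (which holds because $S_\rho$ is self-adjoint with $S_\rho 1 = 1$), by Fubini: $\int |S_\rho f|\,d\sigma \le \int \int s_\rho(x,y)|f(y)|\,d\sigma(y)\,d\sigma(x) = \int |f|\,d\sigma$; the $L^\infty$ contraction is immediate from $S_\rho 1 = 1$ and positivity, and Riesz--Thorin closes the gap. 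In (d), the density argument is fine; since in the intended application $\sigma = \HH^n|_{\pom}$ with $\Omega$ bounded, $\sigma$ is a finite measure and Lipschitz functions are dense in $L^p(\sigma)$ for $p<\infty$ (this is also proved in Lemma~\ref{lem:denseLorentz} of the paper for Lorentz spaces). If one wanted the lemma for $\sigma$ with unbounded support, one would use compactly supported continuous functions in the density step instead, but this does not affect the paper's application. Finally, parts (b) and (e) are correctly chained: (e) is an immediate consequence of the Lipschitz bound on $s_\rho(\cdot,y)$ from (b), exactly as you write.
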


	\vv
	
	\begin{lemma}\label{lemlip}
		Let $\sigma$ be an $n$-dimensional Ahlfors regular measure on $\R^{n+1}$  and $\Sigma:=\supp(\sigma)$.  For $\rho\in(0,\diam(\Sigma))$, let $S_\rho$ be defined as above. The following hold:
		\begin{itemize}
			\item[(a)] If $g \in \dot M^{1,p}(\sigma)$, then $S_\rho g \in \dot M^{1,p}(\sigma)$ satisfying
			$$
			\|\nabla_H S_\rho g\|_{L^p(\sigma)} \lesssim \|\nabla_H  g\|_{L^p(\sigma)}.
			$$
			\item[(b)] If $g\in \Lip(\Sigma)$,  then $S_\rho g \in \Lip(\Sigma)$ satisfying
			$${\rm Lip}(S_\rho g) \lesssim {\rm Lip}(g).$$
		\end{itemize}
		The implicit constants depend only on $n$ and the Ahlfors regularity constants of $\sigma$.
	\end{lemma}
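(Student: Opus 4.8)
The plan is to prove both statements by exploiting the explicit kernel representation $s_\rho(x,y) = \int \wt s_\rho(x,z)\,\frac{1}{\wt S_\rho^*1(z)}\,\wt s_\rho(y,z)\,d\sigma(z)$ together with the fact that $S_\rho 1 = 1$, which guarantees that $S_\rho$ reproduces constants. First I would record the elementary kernel bounds that follow from the construction: since $\wt S_\rho^*1 \approx 1$ and $\phi_\rho$ is supported in $B(0,\rho)$ with $\phi_\rho \leq \chi_{B(0,\rho)}$, we have $0 \le s_\rho(x,y) \lesssim \rho^{-n}\chi_{|x-y|\le 2\rho}$ and, by Ahlfors regularity, $\int s_\rho(x,y)\,d\sigma(y) = 1$ and $\int s_\rho(x,y)\,d\sigma(x)=1$. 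These are the two ingredients that make $S_\rho$ behave like a well-localized averaging operator.

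For part (b), let $g \in \Lip(\Sigma)$. Using $S_\rho 1 = 1$, for $x,x' \in \Sigma$ write
\begin{equation*}
S_\rho g(x) - S_\rho g(x') = \int \big(s_\rho(x,y) - s_\rho(x',y)\big)\,\big(g(y) - g(x)\big)\,d\sigma(y) + \big(g(x)-g(x')\big).
\end{equation*}
When $|x - x'| \geq \rho$ this is harmless since $|S_\rho g(x) - S_\rho g(x')| \le 2\|g\|_{\infty} \lesssim {\rm Lip}(g)\,\diam(\Sigma)$ actually one argues instead directly: for such far-apart points one just uses that $S_\rho$ has operator norm $\le 1$ in $L^\infty$ and that $g$ restricted to a $2\rho$-ball oscillates by at most ${\rm Lip}(g)\rho \le {\rm Lip}(g)|x-x'|$, so $|S_\rho g(x) - g(x)| \lesssim {\rm Lip}(g)\rho$ and the triangle inequality closes the estimate. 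When $|x-x'| < \rho$, split the integral: on $|x-y| \le 2\rho$ the factor $|g(y)-g(x)| \le {\rm Lip}(g)\cdot 2\rho$, while the Lipschitz bound on the kernels from Lemma \ref{lemsr}(b) gives $|s_\rho(x,y) - s_\rho(x',y)| \lesssim \rho^{-n-1}|x-x'|$ on the region of integration of $\sigma$-measure $\lesssim \rho^n$; multiplying, $\int \lesssim \rho^{-n-1}|x-x'| \cdot {\rm Lip}(g)\rho \cdot \rho^n = {\rm Lip}(g)|x-x'|$. This yields ${\rm Lip}(S_\rho g) \lesssim {\rm Lip}(g)$.

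For part (a), let $g \in \dot M^{1,p}(\sigma)$ with Haj{\l}asz upper gradient $h := \nabla_{H,p}g \in L^p(\sigma)$. The goal is to produce an explicit Haj{\l}asz upper gradient of $S_\rho g$ controlled by $h$. The natural candidate is $\wt h := c\,(M_{C\rho}h + $ a localized maximal average of $h)$, or more simply $\wt h(x) := c\,\avint_{B(x,C\rho)} h\,d\sigma + c\,S_\rho(\text{something})$; concretely I would take, for a large fixed constant $\Lambda$, $\wt h := c\,M_{\sigma}^{\le \Lambda\rho}\!\big(M_\sigma^{\le \Lambda\rho} h\big)$ or even just $\wt h := c\,M_\sigma^{\le \Lambda\rho}h$ where $M_\sigma^{\le t}$ denotes the restricted maximal operator over balls of radius $\le t$; note $\|M_\sigma^{\le\Lambda\rho} h\|_{L^p(\sigma)} \lesssim \|h\|_{L^p(\sigma)}$ by the Hardy--Littlewood maximal theorem on the doubling space $(\Sigma,\sigma)$. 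To verify the Haj{\l}asz inequality, given $x,x' \in \Sigma$ use $S_\rho 1 = 1$ to write
\begin{equation*}
S_\rho g(x) - S_\rho g(x') = \int s_\rho(x,y)\big(g(y)-g(x)\big)\,d\sigma(y) - \int s_\rho(x',y)\big(g(y)-g(x')\big)\,d\sigma(y) + g(x)-g(x').
\end{equation*}
Bound $|g(y)-g(x)| \le |x-y|(h(x)+h(y)) \le 2\rho(h(x)+h(y))$ on $\supp s_\rho(x,\cdot)$; integrating against $s_\rho(x,\cdot)$ and using $\int s_\rho(x,y)h(y)\,d\sigma(y) \lesssim \avint_{B(x,2\rho)}h\,d\sigma \le M_\sigma^{\le\Lambda\rho}h(x)$ gives $\big|\int s_\rho(x,y)(g(y)-g(x))\,d\sigma\big| \lesssim \rho\,(h(x) + M_\sigma^{\le\Lambda\rho}h(x)) \lesssim \rho\,M_\sigma^{\le\Lambda\rho}h(x)$ for a.e.\ $x$ (using $h \le M_\sigma^{\le\Lambda\rho}h$ a.e.). The same for $x'$, and $|g(x)-g(x')| \le |x-x'|(h(x)+h(x'))$ directly. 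When $|x-x'| \ge \rho$ these combine to $|S_\rho g(x)-S_\rho g(x')| \lesssim |x-x'|(\wt h(x) + \wt h(x'))$ with $\wt h := c\,M_\sigma^{\le\Lambda\rho}h$. When $|x-x'| < \rho$ one instead keeps the two integrals together and uses the kernel Lipschitz bound $|s_\rho(x,y)-s_\rho(x',y)| \lesssim \rho^{-n-1}|x-x'|$ from Lemma \ref{lemsr}(b), plus $S_\rho 1 = 1$ again to subtract $g(x)$, estimating $\int |s_\rho(x,y)-s_\rho(x',y)|\,|g(y)-g(x)|\,d\sigma(y) \lesssim \rho^{-n-1}|x-x'|\cdot \rho \cdot \int_{B(x,C\rho)} (h(x)+h(y))\,d\sigma(y) \lesssim |x-x'|\,M_\sigma^{\le\Lambda\rho}h(x)$; the leftover term $\int s_\rho(x',y)\big((g(y)-g(x))-(g(y)-g(x'))\big)d\sigma = g(x')-g(x)$ is handled by the pointwise gradient bound as before. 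Hence $\wt h \in D(S_\rho g)$ and $\|\nabla_H S_\rho g\|_{L^p(\sigma)} \le \|\wt h\|_{L^p(\sigma)} \lesssim \|h\|_{L^p(\sigma)} = \|\nabla_H g\|_{L^p(\sigma)}$, after taking the infimum over $h \in D(g)$.

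The main obstacle I anticipate is the bookkeeping in the near-diagonal case $|x-x'| < \rho$: one must combine the cancellation from $S_\rho 1 = 1$ with the kernel Lipschitz estimate carefully so that the two powers of $\rho$ (one from $|g(y)-g(x)| \lesssim \rho\,(\cdot)$ and one gained from $\rho^{-n-1}|x-x'| \cdot \sigma(B(x,C\rho)) \lesssim \rho^{-1}|x-x'|$) balance to leave exactly one factor of $|x-x'|$ and one maximal-function factor, with no residual power of $\rho$ — this is precisely where $\diam(\Sigma) > \rho$ and the uniform bounds in Lemma \ref{lemsr} are used. Everything else is routine, and part (b) is the $h \equiv {\rm Lip}(g)$ special case of the mechanism in part (a).
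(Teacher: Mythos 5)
Your proposal is correct and follows essentially the same route as the paper's proof: split into the cases $|x-x'|\geq\rho$ and $|x-x'|<\rho$, use $S_\rho 1 = 1$ to insert $g(x)$ into the integrand, and invoke the support and Lipschitz bounds on $s_\rho(x,\cdot)$ from Lemma~\ref{lemsr} to produce a maximal-function-type Haj\l asz upper gradient for $S_\rho g$, then apply the $L^p$-boundedness of the maximal operator. The only cosmetic differences are that you use the restricted maximal operator $M_\sigma^{\le\Lambda\rho}h$ where the paper uses $\cM_\sigma(\nabla_H g)+\nabla_H g$, and the ``leftover term'' you mention in part~(a) in fact cancels identically once the decomposition is carried out, so it need not be handled separately.
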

	
	\begin{proof}
		Fix $\rho\in(0,\diam(\Sigma))$.
		Let $x,x'\in\Sigma$ and suppose first that $|x-x'|\geq \rho$.
		Then we write
		$$|S_\rho g(x) - S_\rho g(x')| \leq |S_\rho g(x) -  g(x)| + |g(x) -  g(x')| + |g(x') - S_\rho g(x')|.$$
		Since $S_\rho1=1$ and $\supp(s_\rho(x,\cdot))\subset \bar B(x,2\rho)$,
		we have
		\begin{align*}
			|S_\rho g(x) -  g(x)| & = \bigg|\int_\pom s_\rho(x,y) (g(y)-g(x)))\,d\sigma(y)\bigg| \\
			& \lesssim \rho^{-n}  \int_{B(x,2\rho)} |x-y| \,\left( \nabla_H g(y) +  \nabla_H g(x) \right) \,d\sigma(y)\\
			& \lesssim \rho\, \left( \cM_\sigma(\nabla_H  g)(x) +  \nabla_H g(x)\right),
		\end{align*}
		where $\cM_\sigma$ is the Hardy-Littlewood maximal function with respect to $\sigma$. The same estimate holds with $x'$ in place of $x$. Thus,
		\begin{align*}
			|S_\rho g(x) &- S_\rho g(x')| \\
			&\lesssim  \left( \cM_\sigma(\nabla_H  g)(x) +  \nabla_H g(x)+   \cM_\sigma(\nabla_H  g)(x') +  \nabla_H g(x')\right)\,\rho  + |g(x) -  g(x')|\\
			& \lesssim \left( \cM_\sigma(\nabla_H  g)(x) +  \nabla_H g(x)+   \cM_\sigma(\nabla_H  g)(x') +  \nabla_H g(x')\right) \,|x-x'|.
		\end{align*}
		
		In case that $|x-x'|< \rho$, we write
		\begin{align*}
			S_\rho g(x) - S_\rho g(x') &= \int_\pom (s_\rho(x,y) - s_\rho(x',y))\,g(y)\,d\sigma(y)\\
			& =
			\int_\pom (s_\rho(x,y) - s_\rho(x',y))\,(g(y)-g(x))\,d\sigma(y).
		\end{align*}
		Notice that the support of the integrand is contained in
		$$\supp(s_\rho(x,\cdot))\cup \supp(s_\rho(x',\cdot))\subset 
		\bar B(x,2\rho)\cup \bar B(x',2\rho)\subset \bar B(x,3\rho).$$
		Using also that ${\mathrm Lip}(s_\rho(\cdot,y))\lesssim \rho^{-n-1}$, we deduce
		\begin{align*}
			|S_\rho g(x) - S_\rho g(x')| & \leq 
			\rho\,\int_{\pom\cap \bar B(x,3\rho)} |s_\rho(x,y) - s_\rho(x',y)|\,\,\left( \nabla_H g(y) +  \nabla_H g(x) \right) \,d\sigma(y)\\
			&\lesssim \rho \, \frac{|x-x'|}{\rho^{n+1}}\int_{\pom\cap \bar B(x,3\rho)}\left( \nabla_H g(y) +  \nabla_H g(x) \right)  \,\,d\sigma(y)\\
			&\lesssim |x-x'| \,\left( \cM_\sigma(\nabla_H  g)(x) +  \nabla_H g(x)\right).
		\end{align*}
		Hence, $C(\cM_\sigma(\nabla_H  g) +  \nabla_H g)$ is an upper gradient of $g$ with $L^p(\sigma)$ norm bounded above by $C'\|\nabla_Hg\|_{L^p(\sigma)}$.
		This completes the proof of (a). 
		
		To prove (b), one readily checks that in the case that $g \in \Lip(\Sigma)$, the above estimates
		hold replacing $\nabla_H g$ and $\cM_\sigma(\nabla_H  g)$ by $\Lip(g)$.
	\end{proof}
	
	\vv
	
	We are now ready to prove Lemma \ref{lem:HMT vs Hajlasz}.
	\begin{proof}[Proof of Lemma \ref{lem:HMT vs Hajlasz}]
		If $f \in \Lip(\pom)$, then Lemma \ref{lem:HMT vs Hajlasz}  follows from  \cite[eq. (4.15)]{MT} and  \cite[Lemma 6.3]{MT},  where the implicit constants are independent of the Lipschitz constant of $f$.  Let us now assume that $f \in M^{1,p}(\pom)$ and let $\vphi \in C^\infty_c(\R^{n+1})$. By Lemma \ref{lemsr} (d) and (e),  H\"older's inequality,  \eqref{eq:M emb W} for Lipschitz functions,  and  Lemma \ref{lemlip} (a), we obtain 
		\begin{align*}
			\int_{\partial^* \om} f \,\partial_{t, k, j} \vphi \,d\sigma_*&= \lim_{\rho \to 0}\int_{\partial^* \om} S_\rho f \,\partial_{t, k, j} \vphi \,d\sigma_*=\lim_{\rho \to 0}\int_{\partial^* \om} \partial_{t, j, k} S_\rho f\, \vphi \,d\sigma_*\\
			& \leq \lim_{\rho \to 0} \|\partial_{t, j, k} S_\rho f\|_{L^p(\partial^* \om)} \, \| \vphi \|_{L^{p'}(\partial^* \om)} \\
			& \leq \lim_{\rho \to 0} \|\nabla_H S_\rho f\|_{L^p(\partial \om)} \, \| \vphi \|_{L^{p'}(\partial ^*\om)} \lesssim  \|\nabla_H  f\|_{L^p(\partial \om)} \, \| \vphi \|_{L^{p'}(\partial^* \om)},
		\end{align*}
		which, by the definition of $W^{1,p}(\partial^* \om)$,  concludes the proof of the lemma.
	\end{proof}
	
	\vv
	
	\subsection{The variational Neumann problem in bounded uniform domains with Ahlfors regular boundaries} 
	
	Let us first introduce the function spaces in the interior of the domain and the boundary which are necessary in order to solve the variational Neumann problem. 
	We consider the Sobolev space 
	$$
	W^{1,p}(\om):= \{ u \in L^2(\om): \nabla u \in L^p(\om)\},\quad p \in [1,\infty],
	$$ 
	with norm 
	$\|u \|_{W^{1,p}(\om)}:= \|u \|_{L^{p}(\om)}+\|\nabla u \|_{L^{p}(\om)}$.

	We say that a domain $\om\subset \R^{n+1}$ is a {\it $p$-Sobolev extension domain} if there exists a linear and bounded ({\it extension}) operator $\textup{Ext}_{\om \to \R^{n+1}}: W^{1,p}(\om) \to W^{1,p}(\R^{n+1})$ and a constant $\wt  C_E>0$ such that
	\begin{equation}\label{eq:extension*}
		\|\textup{Ext}_{\om \to \R^{n+1}}(u)\|_{W^{1,p}(\R^{n+1}) } \leq \wt  C_E\,\|u\|_{W^{1,p}(\om)}.
	\end{equation}
	By the work of Jones \cite{Jo}, we know that any uniform domain is a $p$-Sobolev extension domain for every $p\in[1,\infty]$.

	If $\mu$ is an Ahlfors $d$-regular measure in $\R^{n+1}$ with $F:=\supp \mu$,   then, for $s \in (0,1)$, we define the fractional Sobolev space 
	$$
	H^{s}(F):=\left\{ f \in L^2(F): \|f\|_{\dot H^{s}(F)}:= \int_F \,\,\int_F \frac{|f(x)-f(y)|^2}{|x-y|^{d-2s}} \,d\mu(x)\,d\mu(y)< \infty. \right\}
	$$
	equipped with norm $\|f\|_{H^{s}(F)}:= \|f\|_{L^2(F)}+ \|f\|_{\dot H^{s}(F)}$,  which is a Banach space.  Thus, we define via duality
	$$
	H^{-s}(F)=(H^{s}(F))^*.
	$$
	We also define by $H_0^{s}(F)$ the subspace of $H^{s}(F)$ of those functions satisfying $\int_{F} f \,d\mu=0$.

	Let $F \subset \R^{n+1}$ be a $d$-Ahlfors regular closed  set, with $s \in(0,n+1]$.  Then, by \cite[Theorem 1,  p.182]{JW},  it holds that there exists a linear and bounded ({\it trace}) operator 
	$$
	\textup{Tr}_{\R^{n+1} \to F} : W^{1,2}(\R^{n+1}) \to H^{\beta}(F)
	$$ such that, for every $u \in W^{1,2}(\R^{n+1})$,  
	$$
	\| \textup{Tr}_{\R^{n+1} \to F}  (u) \|_{H^{\beta}(F)} \lesssim \| u\|_{W^{1,2}(\R^{n+1})}.
	$$
	where $\beta=1-\frac{n+1-d}{2} >0$.
	
	Notice that if $\pom$ which is $n$-Ahlfors regular, then we may take $F=\pom$ and $\mu:=\HH^n|_\pom$, and get that $\beta=1/2$ and $\textup{Tr}_{\R^{n+1} \to \pom} : W^{1,2}(\R^{n+1}) \to H^{1/2}(\pom)$.  If $\om \subset \R^{n+1}$ is a corkscrew domain, then we may take $F=\overline{\om}$ and equip it with the Lebesgue measure $m|_\om$, which is $(n+1)$-Ahlfors regular (by the corkscrew condition). { In that case, we have  $d=n+1$ and so $\beta= 1$. Therefore, there exists a linear and bounded  trace operator $\textup{Tr}_{\R^{n+1} \to \overline{\om}} : W^{1,2}(\R^{n+1}) \to H^{1}(\overline{\om})$.  Note that since  $m|_\om(\partial \om)=0$, we have that $H^{1}(\overline \om)=H^{1}(\om)\equiv W^{1,2}(\om)$.  Consequently,  if  $\om$ is a uniform domain with $n$-Ahlfors regular boundary, % then by the discussion and the results in \cite[Chapter VIII, pp.205-212]{JW}, 
		we can  define $\textup{Tr}_{\R^{n+1} \to \om} : W^{1,2}(\R^{n+1}) \to W^{1,2}( \om)$.  }
	
	Assuming still $\pom$ to be $n$-Ahlfors regular, by \cite[Theorem 3,  p.155]{JW},  there exists a linear and bounded ({\it extension}) operator $\textup{Ext}_{\pom \to \R^{n+1}}: H^{s}(\pom) \to W^{1,p}(\R^{n+1})$ such that, for every $f \in H^{1/2}(\pom)$,
	$$
	\| \textup{Ext}_{\pom \to \R^{n+1}} (f)\|_{W^{1,2}(\R^{n+1})}  \lesssim \| f \|_{H^{1/2}(\pom)}.
	$$

	\vv

	Combining all the results above one can show the following:
	\begin{theorem}\label{thm:trace-extension}
		If  $\om\subset \R^{n+1}$ is a uniform domain with $n$-Ahlfors regular boundary, there exists a bounded linear trace operator $\textup{Tr}_{\om \to \pom}: W^{1,2}(\om) \to H^{1/2}(\pom)$ and a bounded linear extension operator $\textup{Ext}_{\pom \to \om}: H^{1/2}(\pom) \to W^{1,2}(\om) $ such that 
		\begin{align}\label{eq:trace}
			\| \textup{Tr}_{\om \to \pom}  (u) \|_{H^{1/2}(\om)} &\leq C_T\, \| u\|_{W^{1,2}(\om)}\\
			\| \textup{Ext}_{\pom \to \om} (f)\|_{W^{1,2}(\om )}  &\leq  C_E\,  \| f \|_{H^{1/2}(\pom)}.\label{eq:extension}
		\end{align} 
		Moreover, $\textup{Ext}_{\pom \to \om} \circ \textup{Tr}_{\om \to \pom} = \textup{Id}$, the identity on $H^{1/2}(\pom)$.
	\end{theorem}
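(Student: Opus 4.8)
The plan is to assemble the trace and extension operators by composing the boundary-to-Euclidean-space maps from Jonsson--Wallin with the Jones extension for uniform domains, and then to fix up the composition to get the identity relation. First I would define $\textup{Ext}_{\pom\to\om}$ simply as the restriction to $\om$ of the Jonsson--Wallin extension $\textup{Ext}_{\pom\to\R^{n+1}}\colon H^{1/2}(\pom)\to W^{1,2}(\R^{n+1})$ recalled just above (via \cite[Theorem 3, p.155]{JW}); the bound \eqref{eq:extension} with $C_E = \wt C_E$ is then immediate since $\|v\|_{W^{1,2}(\om)}\le\|v\|_{W^{1,2}(\R^{n+1})}$. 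For the trace operator, I would first apply the Jones extension $\textup{Ext}_{\om\to\R^{n+1}}\colon W^{1,2}(\om)\to W^{1,2}(\R^{n+1})$ (available because $\om$ is uniform, by \cite{Jo}), and then apply $\textup{Tr}_{\R^{n+1}\to\pom}\colon W^{1,2}(\R^{n+1})\to H^{1/2}(\pom)$, which exists with $\beta=1/2$ precisely because $\pom$ is $n$-Ahlfors regular in $\R^{n+1}$ (so $s=n$, $\beta = 1-\frac{n+1-n}{2}=\frac12$). Thus I set $\textup{Tr}_{\om\to\pom} := \textup{Tr}_{\R^{n+1}\to\pom}\circ\textup{Ext}_{\om\to\R^{n+1}}$, and \eqref{eq:trace} follows with $C_T = \wt C_E\cdot(\text{trace norm})$.

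The one point requiring care is the identity $\textup{Ext}_{\pom\to\om}\circ\textup{Tr}_{\om\to\pom}=\textup{Id}$ on $H^{1/2}(\pom)$; as defined above this need not hold on the nose, since $\textup{Tr}_{\R^{n+1}\to\pom}$ applied to a Jones extension of a Whitney-type extension of $f$ need not return $f$. The clean fix is to \emph{redefine} the extension operator so that the relation holds by construction: for $f\in H^{1/2}(\pom)$, let $E f := \textup{Ext}_{\pom\to\R^{n+1}}(f)$ be the Jonsson--Wallin extension to $\R^{n+1}$, and observe that $\textup{Tr}_{\R^{n+1}\to\pom}(Ef)=f$ $\sigma$-a.e.\ --- this is part of the Jonsson--Wallin trace/extension package (the extension $Ef$ has $f$ as its ``strict'' boundary values, and the trace of a $W^{1,2}(\R^{n+1})$ function onto an Ahlfors regular set is realized by Lebesgue-point limits, which agree with $f$). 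Now define $\textup{Ext}_{\pom\to\om}(f) := (Ef)|_\om$. Then for $u\in W^{1,2}(\om)$ with $g:=\textup{Tr}_{\om\to\pom}(u)$, I need $\textup{Ext}_{\pom\to\om}(g)$ to have trace $g$, which holds by the previous sentence; but to get the operator identity $\textup{Ext}_{\pom\to\om}\circ\textup{Tr}_{\om\to\pom}=\textup{Id}$ on $H^{1/2}(\pom)$ it suffices that $\textup{Tr}_{\om\to\pom}\circ\textup{Ext}_{\pom\to\om}=\textup{Id}$ on $H^{1/2}(\pom)$ and then read the statement in that order. Concretely: for $f\in H^{1/2}(\pom)$, $\textup{Tr}_{\om\to\pom}(\textup{Ext}_{\pom\to\om}(f)) = \textup{Tr}_{\R^{n+1}\to\pom}\big(\textup{Ext}_{\om\to\R^{n+1}}((Ef)|_\om)\big)$; since $(Ef)|_\om\in W^{1,2}(\om)$ extends to the global $W^{1,2}$ function $Ef$ and the trace onto $\pom$ depends only on the behavior of the function near $\pom$ (Lebesgue points from inside $\om$), this equals $\textup{Tr}_{\R^{n+1}\to\pom}(Ef)=f$. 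So I would in fact state the identity in the direction $\textup{Tr}_{\om\to\pom}\circ\textup{Ext}_{\pom\to\om}=\textup{Id}$ on $H^{1/2}(\pom)$, which is the natural and sufficient one (and rephrase the theorem statement accordingly, or note the equivalence for the uses made of it).

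The main obstacle is thus not any estimate --- those are all quoted --- but checking that the trace operator $\textup{Tr}_{\om\to\pom}$ is \emph{intrinsic}, i.e.\ independent of the chosen Jones extension, and compatible with the Jonsson--Wallin trace, so that the composition identity is genuinely true rather than merely formal. I would handle this by characterizing $\textup{Tr}_{\om\to\pom}(u)(\xi)$ as the $\sigma$-a.e.\ limit of averages of $u$ over interior balls $B(x,\delta_\om(x))$ with $x\to\xi$ nontangentially (using that $\om$ is a corkscrew domain, so such balls exist and are comparable to $\pom$-balls under the Ahlfors regularity), and noting this limit is unchanged under extension to $\R^{n+1}$ and agrees with the Jonsson--Wallin boundary values of $Ef$ when $u=(Ef)|_\om$. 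A brief remark recording this intrinsic description, together with the quoted boundedness statements, completes the argument.
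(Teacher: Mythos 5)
Your proposal is essentially correct and quite close to the paper's, with one notable variation and one good catch. The paper obtains the trace operator $\textup{Tr}_{\om\to\pom}$ directly from the intrinsic Jonsson--Wallin trace theorem on the domain $\om$ (\cite[Theorem 1, p.208]{JW}, in the framework of Chapter VIII where $\overline\om$ carries an $(n+1)$-Ahlfors measure and $\pom$ is an $n$-set), so that the composition identity with the extension $\textup{Ext}_{\pom\to\om}:=\textup{Tr}_{\R^{n+1}\to\om}\circ\textup{Ext}_{\pom\to\R^{n+1}}$ is part of the JW package and requires no further checking. You instead build $\textup{Tr}_{\om\to\pom}$ as $\textup{Tr}_{\R^{n+1}\to\pom}\circ\textup{Ext}_{\om\to\R^{n+1}}$, so the trace is defined through an arbitrary choice of Jones extension. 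Both routes give the same operators, but yours requires an extra well-posedness argument: that $\textup{Tr}_{\R^{n+1}\to\pom}(v)$ depends only on $v|_\om$. Your sketch (``Lebesgue points from inside $\om$'') is on the right track but slightly misstates how the JW trace is defined (it averages over full balls $B(\xi,r)$, not interior subcones). The clean way to close it is: if $v\in W^{1,2}(\R^{n+1})$ vanishes a.e.\ on $\om$, then for $\sigma$-a.e.\ Lebesgue point $\xi\in\pom$ one has $\avint_{B(\xi,r)}|v-\textup{Tr}(v)(\xi)|\,dm\to 0$; since the corkscrew condition gives $|B(\xi,r)\cap\om|\gtrsim r^{n+1}$, restricting to $B(\xi,r)\cap\om$ (where $v=0$) forces $\textup{Tr}(v)(\xi)=0$. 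Once that is recorded, your composition argument goes through.

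You also correctly notice that the identity as printed, $\textup{Ext}_{\pom\to\om}\circ\textup{Tr}_{\om\to\pom}=\textup{Id}$ ``on $H^{1/2}(\pom)$'', is type-incorrect (that composition maps $W^{1,2}(\om)$ to itself and is not the identity there); the intended and true statement is $\textup{Tr}_{\om\to\pom}\circ\textup{Ext}_{\pom\to\om}=\textup{Id}$ on $H^{1/2}(\pom)$, which is what both proofs establish.
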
   
	
	\begin{proof}
		This follows from \cite[Theorem 1,  p.208]{JW} and by defining $\textup{Ext}_{\pom \to \om} :=\textup{Tr}_{\R^{n+1} \to \om} \circ  \textup{Ext}_{\pom \to \R^{n+1}}$. 
	\end{proof}
	
	\vv
	Now,  by \cite[Lemma 7.2.1]{MMM},  if $\pom$ is $n$-Ahlfors regular,  we have that the following inclusions are  well-defined continuous with dense ranges if $n \geq 2$:
	\begin{align*}
		L^{\frac{2n}{n+1}}(\pom) \hookrightarrow H^{-1/2}(\pom) \qquad \textup{and} \qquad H^{1/2}(\pom)  \hookrightarrow L^{\frac{2n}{n-1}}(\pom). 
	\end{align*}
	Moreover,  when $n=1$,  if $\pom$ is compact,  the following inclusions are also  well-defined continuous with dense ranges for each $p \in (1,\infty)$:
	$$
	H^{1/2}(\pom) \hookrightarrow   L^{p}(\pom)    \hookrightarrow  H^{-1/2}(\pom).
	$$
	Let us also mention that the set of bounded Lipschitz functions $\Lip_b(\partial \Omega)$ equipped with the norm $\| \vphi \|_{\Lip_b(\partial \Omega)}:= \|\vphi \|_{\Lip_b(\partial \Omega)}  + \| \vphi\|_{L^\infty(\partial \Omega)}$ is a Banach space.  Moreover,  if $\pom$ is compact,  then the space $\Lip_0(\pom)$ endowed with the homogeneous norm $\|\vphi \|_{\Lip(\partial \Omega)} $  is a Banach space.   It is easy to see that for $p \geq 1$,  $\Lip_b(\pom) \hookrightarrow L^p(\pom)$ (resp. $\Lip_0(\pom)\hookrightarrow L_0^p(\pom)$ when $\pom$ is compact) is a well-defined continuous inclusion with dense range.  In fact, when $\pom$ is compact,  $\Lip(\pom)$  endowed with  $\| \cdot \|_{\Lip_b(\partial \Omega)}$ is continuously embedded in $L^p(\pom)$. 
	
	Therefore,  if $\pom$ is compact and $n$-Ahlfors regular,  for $n \geq 2$,  we have that 
	\begin{align}
		{\Lip}_0(\pom) \hookrightarrow L_0^{\frac{2n}{n+1}}(\pom) &\hookrightarrow H_0^{-1/2}(\pom) \label{eq:embed-fractional-}\\
		H^{1/2}(\pom)  &\hookrightarrow L^{\frac{2n}{n-1}}(\pom). \label{eq:embed-fractional+}
	\end{align}
	are well-defined continuous inclusions with dense ranges.

	\vv

	Let $\om \subset \R^{n+1}$ be a bounded uniform domain with $n$-Ahlfors regular boundary. We consider the Sobolev space with zero trace
	$$
	\widehat W^{1,2}(\om):=\left\{u \in W^{1,2}(\om) : \int_\pom  \textup{Tr}_{\om \to \pom}  (u) \,d\sigma=0 \right\}.
	$$
	By abusing notation, we will  write $u|_\pom$ instead of $ \textup{Tr}_{\om \to \pom}  (u)$.  Note that $\widehat W^{1,2}(\om)$ becomes a Hilbert space with the inner product 
	$$
	\langle u, v\rangle=\int_\om  u \, v + \int_\om \nabla u \cdot \nabla v.
	$$
	
	If we define the bilinear form associated with the operator \(L\) as
	\[
	B(u,v) := \int_\Omega A \nabla u\cdot\nabla  v,
	\]
	then by \eqref{eqelliptic1} and \eqref{eqelliptic2}, the bilinear form \(B\) becomes coercive and bounded on \(\widehat W^{1,2}(\om)\).

	Let $g \in L^{\frac{2n}{n+1}}(\partial \Omega)$ satisfy the ``compatibility” condition 
	$\int_{\partial \Omega} g = 0.$  Then, by \eqref{eq:trace}, \eqref{eq:embed-fractional-}, and \eqref{eq:embed-fractional+},   we find that
	\[
	\ell(u) :=  \int_{\partial \Omega} g \,  u \,d\sigma
	\]
	is a bounded linear functional on  $\widehat W^{1,2}(\om)$. Therefore, the Lax-Milgram theorem implies that there exists a unique \(u \in \widehat W^{1,2}(\om)\) such that \(B(u, v) = \ell(v)\) for all \(v \in  \widehat{W}^{1,2}(\Omega)\). 
	
	Observe now that any function \(v \in W^{1,2}(\Omega)\) can be written as the sum of a function in \(\widehat W^{1,2}(\om)\) and a constant \(c\). Indeed,
	\[
	v = \tilde{v} + c, \quad \text{where} \quad \tilde{v} = v - \frac{1}{\sigma(\partial \Omega)} \int_{\partial \Omega} v \quad \text{and} \quad c = \frac{1}{\sigma(\partial \Omega)} \int_{\partial \Omega} v.
	\]
	Notice that the condition $\int_\pom g \,d\sigma=0$ implies that \(\ell(c) = 0\). Then the identity \(B(u, \tilde{v}) = \ell(\tilde{v})\) yields
	\[
	\int_\Omega A\nabla u \cdot \nabla v = \int_{\partial \Omega} g \, v, \qquad \textup{for all}\,\, v \in W^{1,2}(\Omega),
	\]
	and so,  there exists a unique solution \(u\) in \( \widehat{W}^{1,2}(\Omega)\) of the variational Neumann problem \eqref{eqneumann1}.

	\vv

	\subsection{The Neumann function} 
	
	The following result is already known. 
	
	\begin{theorem}[Moser estimates]\label{teomoser}
		Let $\Omega\subset\R^{n+1}$ be a  bounded uniform  domain with Ahlfors regular boundary and assume that  $\LL \in \EE(\Omega)$.  Let also $\xi\in \pom$ and $0<r\leq \diam(\Omega)$.   Let $B=B(\xi,r)$ and $u\in W^{1,2}(2B\cap \Omega)$ be
		such that $\LL u=0$ in $2B$ with vanishing Neumann data on $2B\cap \pom$. Then 
		\begin{equation}\label{eq:moser1}
			{\rm osc}_B(u) \leq C\,\avint_{2B\cap\Omega} |u-m_B (u)|\,dm,
		\end{equation}
		where $m_B(u) = \avint_{B\cap\Omega} u\,dm$.
		Further, there exists some $\alpha>0$ such that for $0<\ve\leq 1$,
		\begin{equation}\label{eq:moser2}
			{\rm osc}_{\ve B}(u) \leq C\,\ve^\alpha \,{\rm osc}_B(u).
		\end{equation}
		The constants $C$ and $\alpha$ in \rf{eq:moser1} and \rf{eq:moser2} depend only on $n$, the ellipticity of $A$, the Ahlfors regularity of $\pom$, and  the uniformity  constant of $\Omega$.
	\end{theorem}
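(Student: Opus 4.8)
The plan is to run the De Giorgi--Nash--Moser machinery, but on the irregular sets $\Omega\cap B'$ rather than on full balls, using the structure of $\Omega$ to make every step uniform in the centre and scale of $B'$. After translating and rescaling we may take $\xi=0$ and $r=1$, so $B=B(0,1)$ and $2B=B(0,2)$, and we are free to subtract a constant from $u$. The hypothesis ``$\LL u=0$ in $2B$ with vanishing Neumann data on $2B\cap\pom$'' means that $\int_\Omega A\nabla u\,\nabla\vphi\,dm=0$ for every $\vphi\in C_c^\infty(2B)$ — crucially $\vphi$ is \emph{not} required to vanish on $\pom$ — and this is all we use.

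First I would prove a boundary Caccioppoli inequality: for $x_0\in\overline\Omega$ and $\rho>0$ with $B(x_0,2\rho)\subset 2B$, a Lipschitz cutoff $\eta$ with $\eta\equiv1$ on $B(x_0,\rho)$, $\supp\eta\subset B(x_0,2\rho)$, $|\nabla\eta|\lesssim\rho^{-1}$, and any constant $k$, testing with $\vphi=\eta^2(u-k)$ (legitimate after a density argument) and using ellipticity gives $\int_\Omega\eta^2|\nabla u|^2\,dm\lesssim\rho^{-2}\int_{B(x_0,2\rho)\cap\Omega}|u-k|^2\,dm$. Next I would record the two geometric facts we need: the corkscrew condition forces the measure density $m(B(x_0,\rho)\cap\Omega)\approx\rho^{n+1}$ (so $m|_\Omega$ is doubling on $\overline\Omega\cap2B$, a space of homogeneous type), and, $\Omega$ being uniform, Jones's extension theorem \cite{Jo} furnishes a bounded extension $W^{1,2}(\Omega)\to W^{1,2}(\R^{n+1})$, whence a relative Sobolev--Poincar\'e inequality $\big(\avint_{B(x_0,\rho)\cap\Omega}|f-c|^{2\kappa}\,dm\big)^{1/(2\kappa)}\lesssim\rho\big(\avint_{B(x_0,2\rho)\cap\Omega}|\nabla f|^2\,dm\big)^{1/2}+\avint_{B(x_0,2\rho)\cap\Omega}|f-c|\,dm$ with $\kappa=(n+1)/(n-1)>1$ when $n\ge2$ (any $\kappa>1$ if $n=1$), and likewise a relative isoperimetric inequality on $\Omega\cap B(x_0,\rho)$ — all with constants uniform in $x_0$ and $\rho$.

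With these in hand the remainder is classical. Feeding Sobolev--Poincar\'e into Caccioppoli and iterating over a geometric sequence of concentric balls yields local boundedness for solutions with vanishing Neumann data, $\sup_{B(x_0,\rho/2)\cap\Omega}|v|\lesssim\big(\avint_{B(x_0,\rho)\cap\Omega}|v|^2\,dm\big)^{1/2}$, and Moser's self-improvement (using that $|v|^q$, $0<q\le1$, is a subsolution up to harmless terms) replaces the $L^2$ average on the right by an $L^1$ average. Applying this to $v=u-m_B(u)$ and covering $\overline{B\cap\Omega}$ by boundedly many such balls (using the measure density to pass from local $L^1$ averages to $\avint_{2B\cap\Omega}$) gives $\sup_{B\cap\Omega}|u-m_B(u)|\lesssim\avint_{2B\cap\Omega}|u-m_B(u)|\,dm$, and since ${\rm osc}_B(u)\le2\sup_{B\cap\Omega}|u-m_B(u)|$ this is \rf{eq:moser1}. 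For \rf{eq:moser2} I would use De Giorgi's oscillation lemma at the boundary: writing $M=\sup_{B(x_0,2\rho)\cap\Omega}u$ and $\mu=\inf_{B(x_0,2\rho)\cap\Omega}u$, if the level set splits as $m(\{u\le(M+\mu)/2\}\cap B(x_0,\rho)\cap\Omega)\ge\tfrac12 m(B(x_0,\rho)\cap\Omega)$, then Caccioppoli together with the relative isoperimetric inequality and a De Giorgi iteration in the level force $\sup_{B(x_0,\rho/2)\cap\Omega}u\le M-\theta(M-\mu)$ for some structural $\theta\in(0,1)$; since one of the two level-set splittings (for $u$ or for $-u$) always holds, this gives ${\rm osc}_{B(x_0,\rho/2)}(u)\le\theta'\,{\rm osc}_{B(x_0,2\rho)}(u)$ for some $\theta'\in(0,1)$, and iterating this decay across the dyadic scales between $\ve$ and $1$ at $x_0=\xi$ yields ${\rm osc}_{\ve B}(u)\le C\ve^\alpha\,{\rm osc}_B(u)$ with $\alpha=\alpha(\theta')>0$. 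One could equivalently route this through a full boundary Harnack inequality for nonnegative solutions, the $\inf$-bound coming from the $\log u\in\BMO$ estimate and John--Nirenberg on $\overline{\Omega\cap2B}$.

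The main obstacle is not any single estimate but securing the \emph{uniformity} of the iteration near $\pom$: Caccioppoli, Sobolev--Poincar\'e and the De Giorgi level-set argument must all be run on the non-smooth sets $\Omega\cap B(x_0,\rho)$ with constants independent of $x_0\in\pom$ and of $\rho$. This is precisely what the hypotheses buy us — the corkscrew condition gives the lower bound $m(\Omega\cap B(x_0,\rho))\gtrsim\rho^{n+1}$ and doubling, the Harnack-chain condition connects corkscrew points so that positivity and oscillation estimates propagate across all of $\Omega\cap B(x_0,\rho)$, and uniformity (through Jones's theorem) upgrades these to the scale- and location-invariant Sobolev and isoperimetric inequalities above. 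Once this uniformity is in place, every step is the classical argument.
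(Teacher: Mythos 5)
The paper does not prove Theorem~\ref{teomoser}: it simply cites \cite{Kim} for \rf{eq:moser1} and \cite{HS} for both \rf{eq:moser1} and \rf{eq:moser2}. So there is no in-paper argument to match yours against line by line. That said, your De Giorgi--Nash--Moser outline is the natural and essentially correct route, and it is presumably the one taken in those references: boundary Caccioppoli with test functions that do not vanish on $\pom$, a relative Sobolev--Poincar\'e inequality with uniform constants, Moser iteration for the $L^1\to L^\infty$ bound, and De Giorgi's level-set oscillation lemma plus a relative isoperimetric inequality for the H\"older-type decay.

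One step deserves more care than you give it. You say ``$\Omega$ being uniform, Jones's extension theorem furnishes a bounded extension $W^{1,2}(\Omega)\to W^{1,2}(\R^{n+1})$, whence a relative Sobolev--Poincar\'e inequality'' on $\Omega\cap B(x_0,\rho)$ with constants independent of $x_0$ and $\rho$. Jones's theorem is a single global extension for $\Omega$; it does not, by itself, give the scale- and location-invariant local inequality you need, because $\Omega\cap B(x_0,\rho)$ need not be a uniform domain at all (balls can slice $\Omega$ into badly disconnected or cusp-like pieces). The standard fix --- and the one the paper uses in its proof of the Poincar\'e inequality in Theorem~\ref{teotrace**} --- is to pass to the Carleson box $T_\Delta$ associated with the surface ball $\Delta=B(x_0,\rho)\cap\pom$: by \cite[Eq.\ (3.60), Lemma 3.61]{HM} one has $\tfrac54 B(x_0,\rho)\cap\Omega\subset T_\Delta\subset C B(x_0,\rho)\cap\Omega$, and $T_\Delta$ is a uniform domain with constants depending only on those of $\Omega$, so Jones applies to $T_\Delta$ uniformly in $x_0$ and $\rho$. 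With that replacement your argument is sound. The same remark applies to the relative isoperimetric inequality you invoke for the De Giorgi oscillation step: derive it on $T_\Delta$, not on $\Omega\cap B(x_0,\rho)$ directly. The remaining pieces (corkscrew density $m(\Omega\cap B(x_0,\rho))\approx\rho^{n+1}$, doubling, Caccioppoli, Moser self-improvement, and the dyadic oscillation decay at $\xi$) are fine as written.
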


	The proof of \rf{eq:moser1} can be found in \cite{Kim} or \cite{HS},  the proof of \rf{eq:moser2} is in \cite{HS}. The  following result is shown in \cite{HS} and \cite[Theorem 2.29]{FL}.

	\begin{theorem}\label{teoneumann1}
		Let $\Omega\subset\R^{n+1}$ be a bounded uniform  domain with Ahlfors regular boundary and let $\LL \in \EE(\Omega)$.
		There exists a unique function $N:\overline\Omega\times \overline\Omega\to \R\cup\{+\infty\}$ (called the Neumann function) such that  the following holds:
		\begin{itemize}
			\item[(i)] $N(x,\cdot)\in W^{1,2}(\Omega\setminus\{x\}) \cap C(\overline\Omega\setminus\{x\})$,  for any fixed $x\in\Omega$, 
			\item[(ii)] $\int_\pom N(x,\xi)\,d\sigma(\xi)=0$, 
			\item[(iii)] For any $y\in\overline \Omega$ and $v\in W^{1,2}(\Omega)$,
			$$\int_\Omega A(x)\,\nabla_1N(x,y)\cdot \nabla v(x)\,dx = v(y) - \avint_\pom v\,d\sigma,$$
			where we identified $v$ on $\pom$ with the trace of $v$.
			\item[(iv)] For any $x,y\in\Omega$, $N^T(x,y) = N(y,x)$, where $N^T$ is the Neumann function for the adjoint operator  $\LL^*$ on $\Omega$.
			\item[(v)] For all $x,y\in\overline\Omega$,
			$$|N(x,y)|\leq \frac{C}{|x-y|^{n-1}}.$$
			\item[(vi)] There exists some $\alpha\in (0,1)$ such that for all $x,x'\in\overline\Omega$, $y\in\Omega$,
			$$|N(x,y) - N(x',y)| \leq \frac{C\,|x-x'|^\alpha}{|x-y|^{n-1+\alpha} + |x'-y|^{n-1+\alpha} }.$$
		\end{itemize}
		The constants $C$ in (v) and (vi) only depend on the uniformity constants of $\Omega$.
	\end{theorem}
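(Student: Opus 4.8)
The plan is to construct $N$ by the classical approximation scheme of Grüter--Widman (as adapted to the Neumann setting in \cite{Kim,HS,FL}), passing to a limit from the variational Neumann problem with a mollified point source. Fix $y\in\Omega$. For $\rho\in(0,\tfrac12\delta_\Omega(y))$, let $N_\rho(\cdot,y)\in\widehat W^{1,2}(\Omega)$ be the Lax--Milgram solution, guaranteed by the construction in the preliminary section, of
$$\int_\Omega A(x)\,\nabla_1 N_\rho(x,y)\cdot\nabla v(x)\,dx = \avint_{B(y,\rho)}v\,dm - \avint_\pom v\,d\sigma\qquad\text{for all }v\in W^{1,2}(\Omega);$$
note that the right-hand side annihilates constants, so this is well posed. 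Taking $v\in C_c^\infty(\Omega\setminus \overline{B(y,\rho)})$ shows $N_\rho(\cdot,y)$ is $\LL$-harmonic away from $B(y,\rho)$, and likewise it has vanishing Neumann data on $\pom$ in the weak sense, so by interior De~Giorgi--Nash--Moser estimates and by the boundary estimates of Theorem~\ref{teomoser} it is locally Hölder continuous on $\overline\Omega\setminus\overline{B(y,\rho)}$.

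The first substantial step is the pointwise bound $|N_\rho(x,y)|\le C|x-y|^{1-n}$ with $C$ independent of $\rho$. I would first obtain the global energy-type estimates: testing the equation with truncations $\min(N_\rho(\cdot,y),t)$ and $\max(N_\rho(\cdot,y),-t)$ in the standard way yields, uniformly in $\rho$, bounds $\|\nabla N_\rho(\cdot,y)\|_{L^q(\Omega)}\lesssim 1$ for $q<\frac{n+1}{n}$ and $\|N_\rho(\cdot,y)\|_{L^{q^*}(\Omega)}\lesssim 1$ for the Sobolev conjugate (here one uses the extension/trace theory and the Poincaré-type inequality on $\widehat W^{1,2}(\Omega)$ available since $\Omega$ is uniform). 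Then, for $x$ with $d:=|x-y|>0$, one runs a dyadic annular decomposition around $y$: on each annulus one combines Caccioppoli's inequality with the Sobolev embedding $W^{1,2}\hookrightarrow L^{2(n+1)/(n-1)}$, and one converts the resulting $L^2$-averages of $N_\rho(\cdot,y)$ into pointwise values using interior Moser estimates when the relevant ball lies in $\Omega$ and the boundary oscillation estimate \rf{eq:moser1} of Theorem~\ref{teomoser} when it meets $\pom$. Summing the geometric series produces the bound \emph{(v)}, uniformly in $\rho$. The Hölder bound \emph{(vi)} is obtained similarly: for $x,x'\in\overline\Omega$ and $y\in\Omega$ one applies interior De~Giorgi--Nash--Moser or the boundary oscillation decay \rf{eq:moser2} on a ball of radius $\sim\max(|x-x'|,\ \tfrac12\min(|x-y|,|x'-y|))$ centered suitably, again uniformly in $\rho$.

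With these uniform bounds in hand, Arzelà--Ascoli on compact subsets of $\overline\Omega\setminus\{y\}$, together with the weak compactness of $\{\nabla N_\rho(\cdot,y)\}$ in $L^q_{loc}$, yields a subsequence converging to a function $N(\cdot,y)$ satisfying \emph{(i)}, \emph{(v)}, \emph{(vi)}; property \emph{(ii)} passes to the limit from $N_\rho(\cdot,y)\in\widehat W^{1,2}(\Omega)$, and \emph{(iii)} passes to the limit because $\avint_{B(y,\rho)}v\,dm\to v(y)$ for $v$ continuous (and for general $v\in W^{1,2}(\Omega)$ one uses a density/regularization argument together with the trace theorem). For the symmetry \emph{(iv)}, let $N^T_{\rho'}(\cdot,x)$ denote the analogous approximant for $\LL^*$; inserting $v=N^T_{\rho'}(\cdot,x)$ in the equation for $N_\rho(\cdot,y)$ and $v=N_\rho(\cdot,y)$ in the equation for $N^T_{\rho'}(\cdot,x)$, subtracting, and sending $\rho,\rho'\to0$ gives $N(y,x)=N^T(x,y)$ after using \emph{(ii)} for both functions. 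Uniqueness follows because the difference $w$ of two functions satisfying \emph{(i)}--\emph{(iii)} is $\LL$-harmonic in $\Omega\setminus\{y\}$ with vanishing Neumann data and growth $O(|x-y|^{1-n})$; a removable-singularity argument (the point $y$ has zero $W^{1,2}$-capacity, so $w$ solves $\LL w=0$ across $y$ with vanishing Neumann data) forces $w$ to be constant, and \emph{(ii)} forces that constant to be $0$.

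I expect the main obstacle to be the uniform-in-$\rho$ global and pointwise estimates up to the boundary: this is exactly the place where the Neumann problem differs from the Dirichlet one, since there is no zero boundary value to exploit, and one must instead feed the boundary Moser estimates \rf{eq:moser1}--\rf{eq:moser2} of Theorem~\ref{teomoser} into the dyadic scheme, carefully separating the annuli that lie well inside $\Omega$ from those that touch $\pom$, and keeping track of the normalization $\avint_\pom(\cdot)\,d\sigma=0$ throughout. A secondary technical point is verifying \emph{(iii)} for \emph{all} $v\in W^{1,2}(\Omega)$ (not merely continuous or smooth $v$), which requires the trace theorem and the density of nice functions together with the a priori bound $\|\nabla N_\rho(\cdot,y)\|_{L^q}\lesssim1$ to justify the limit.
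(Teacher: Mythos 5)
The paper does not prove Theorem~\ref{teoneumann1}; it simply states it and cites \cite{HS} and \cite[Theorem 2.29]{FL} for the proof, and points to \cite{Kim} and \cite{HS} for the boundary Moser estimates of Theorem~\ref{teomoser} used therein. Your proposal reproduces the standard Gr\"uter--Widman approximation scheme underlying those references (Lax--Milgram with a mollified point source, uniform $L^q$-gradient and $|x-y|^{1-n}$ pointwise bounds via truncation, Caccioppoli, annular decomposition and boundary Moser estimates, then a compactness passage to the limit and a duality argument for the symmetry), and your identification of the uniform-in-$\rho$ up-to-the-boundary estimates as the crux is accurate.

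One step warrants tightening: in the uniqueness argument you invoke the zero $W^{1,2}$-capacity of $\{y\}$ to remove the singularity of the difference $w$ of two admissible Neumann functions, but a priori you only have $|w|=O(|x-y|^{1-n})$ and $\nabla w\in L^q$ for $q<\tfrac{n+1}{n}$, which does \emph{not} place $w$ in $W^{1,2}$ near $y$ when $n\geq 2$; the capacity removability lemma therefore does not apply directly. The correct route is to observe that, by (iii), the pairing $\int_\Omega A\nabla w\cdot\nabla v\,dm$ (which is well defined for $v\in W^{1,q'}(\Omega)$, $q'>n+1$) vanishes for \emph{all} smooth $v$, including those that do not vanish at $y$; this kills the coefficient of the homogeneous singular part of $w$ at $y$ and upgrades the bound to $w$ being bounded (in fact H\"older continuous) across $y$. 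Only after that does $w\in W^{1,2}(\Omega)$, whence testing with $v=w$ and the normalization (ii) give $w\equiv 0$.
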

	
	Recall that $\wh W^{1,2}(\Omega)$ denotes the subspace of the functions $u\in W^{1,2}(\Omega)$ such that the trace of $u$   on  $\pom$ satisfies (abusing notation)
	$\int_\pom u\,d\sigma=0.$
	
	\vv
	
	Integrating by parts, one derives the following result (see \cite{FL} and \cite{HS}):
	\begin{theorem}\label{teoneumann2}
		Under the assumptions of Theorem \ref{teoneumann1}, for any $g\in L^\infty_0(\pom)$, there exists a unique function  $u\in \wh W^{1,2}(\Omega)$ which solves the Neumann problem \rf{eqneumann1} with the following representation:
		$$u(x) = \int_\pom N(x,\xi)\,g(\xi)\,d\sigma(\xi),\quad\mbox{ for all $x\in\Omega$.}
		$$
	\end{theorem}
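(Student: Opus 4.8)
The plan is to reduce the statement to the reproducing identity in part~(iii) of Theorem~\ref{teoneumann1}, the one genuine difficulty being that the Neumann function $N(x_0,\cdot)$ has a non-$W^{1,2}$ singularity at its pole and so cannot be fed directly into the weak formulation of the variational solution.

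First, existence and uniqueness of a solution $u\in\wh W^{1,2}(\Omega)$ of \rf{eqneumann1} is not new: as $\Omega$ is bounded, $L^\infty_0(\pom)\subset L^{2n/(n+1)}_0(\pom)$, so the Lax--Milgram construction in the subsection on the variational Neumann problem gives a unique $u\in\wh W^{1,2}(\Omega)$ with $\int_\Omega A\nabla u\cdot\nabla\varphi\,dx=\int_\pom g\,\varphi\,d\sigma$ for every $\varphi\in W^{1,2}(\Omega)$. Since $\LL u=0$ in $\Omega$, classical interior regularity theory gives a representative of $u$ in $C^{0,\alpha}_{loc}(\Omega)$; it then suffices to show $u(x_0)=\int_\pom N(x_0,\xi)\,g(\xi)\,d\sigma(\xi)=:v(x_0)$ for this representative and every $x_0\in\Omega$. (The integral defining $v$ converges since $|N(x_0,\xi)|\lesssim|x_0-\xi|^{1-n}$ by Theorem~\ref{teoneumann1}(v), $g\in L^\infty(\pom)$, and $\int_\pom|x_0-\xi|^{1-n}\,d\sigma(\xi)\lesssim\diam(\pom)$ by the $n$-Ahlfors regularity of $\pom$.)

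Fix $x_0\in\Omega$, write $\delta:=\delta_\Omega(x_0)$, and record two identities. On one hand, applying part~(iii) of Theorem~\ref{teoneumann1} to the adjoint $\LL^*\in\EE(\Omega)$ (whose Neumann function is $N^T$), with $y=x_0$ and test function $u$, and then using $N^T(x,x_0)=N(x_0,x)$ from part~(iv) together with $A^T\zeta\cdot\eta=\zeta\cdot A\eta$, one gets
\[
\int_\Omega A(x)\,\nabla u(x)\cdot\nabla_x\big[N(x_0,x)\big]\,dx=u(x_0)-\avint_\pom u\,d\sigma=u(x_0),
\]
because $\int_\pom u\,d\sigma=0$. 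On the other hand, for small $\varepsilon\in(0,\delta/4)$ choose $\psi_\varepsilon\in C_c^\infty(\R^{n+1})$ with $\chi_{B(x_0,\varepsilon)}\le\psi_\varepsilon\le\chi_{B(x_0,2\varepsilon)}$ and $|\nabla\psi_\varepsilon|\lesssim\varepsilon^{-1}$; then $(1-\psi_\varepsilon)\,N(x_0,\cdot)\in W^{1,2}(\Omega)$ (it vanishes near $x_0$), and, since $\psi_\varepsilon\equiv0$ on $\pom$, testing the weak formulation of $u$ against it yields $\int_\Omega A\nabla u\cdot\nabla[(1-\psi_\varepsilon)N(x_0,\cdot)]\,dx=\int_\pom g\,N(x_0,\cdot)\,d\sigma=v(x_0)$. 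Subtracting, and using $\nabla_xN(x_0,x)-\nabla[(1-\psi_\varepsilon)N(x_0,\cdot)]=\psi_\varepsilon\nabla_xN(x_0,x)+N(x_0,x)\nabla\psi_\varepsilon$, I am left with
\[
u(x_0)-v(x_0)=\int_{B(x_0,2\varepsilon)}\psi_\varepsilon\,A\nabla u\cdot\nabla_xN(x_0,x)\,dx+\int_{B(x_0,2\varepsilon)}N(x_0,x)\,A\nabla u\cdot\nabla\psi_\varepsilon\,dx,
\]
and the task reduces to showing the right-hand side vanishes as $\varepsilon\to0$.

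This last step is the crux, and it is the interior regularity of $u$ near its ``good'' point $x_0$ that carries it. For the second integral, $|N(x_0,x)|\lesssim\varepsilon^{1-n}$ on $B(x_0,2\varepsilon)$, so it is $\lesssim\varepsilon^{-n}\int_{B(x_0,2\varepsilon)}|\nabla u|\,dx$; since $\LL u=0$ on $B(x_0,4\varepsilon)$, Caccioppoli's inequality together with the interior H\"older/oscillation decay of $u$ gives $\int_{B(x_0,2\varepsilon)}|\nabla u|^2\lesssim_{x_0,u}\varepsilon^{\,n-1+2\alpha}$, hence $\int_{B(x_0,2\varepsilon)}|\nabla u|\lesssim_{x_0,u}\varepsilon^{\,n+\alpha}$ by Cauchy--Schwarz, so the second integral is $\lesssim_{x_0,u}\varepsilon^{\alpha}$. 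For the first integral I would additionally invoke the standard pointwise gradient bound $|\nabla_xN(x_0,x)|\lesssim|x_0-x|^{-n}$ for the Neumann function (proved alongside Theorem~\ref{teoneumann1} in \cite{HS} and \cite{FL}), split $B(x_0,2\varepsilon)$ into dyadic annuli about $x_0$, and apply the same Caccioppoli/oscillation bound on each; summing the geometric series gives $\int_{B(x_0,2\varepsilon)}|\nabla u|\,|\nabla_xN(x_0,x)|\,dx\lesssim_{x_0,u}\varepsilon^{\alpha}\to0$ (this same bound also makes all the integrals above absolutely convergent, so the subtraction is legitimate). Thus $u(x_0)=v(x_0)$, and since $x_0$ was arbitrary, the representation formula follows. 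I expect this absorption of the pole singularity --- the cutoff combined with interior Caccioppoli and oscillation estimates for $u$ near $x_0$ --- to be the only real obstacle; the rest is bookkeeping with parts (ii)--(iv) and the size estimate (v) of Theorem~\ref{teoneumann1}.
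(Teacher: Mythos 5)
Your overall strategy is sound and, as far as I can tell, matches the ``integrating by parts'' approach that the paper defers to \cite{FL} and \cite{HS} without spelling out: reproduce $u(x_0)$ through part~(iii) of Theorem~\ref{teoneumann1} applied to $\LL^*$ and $N^T$, test the Lax--Milgram identity for $u$ against the cut-off Neumann function $(1-\psi_\varepsilon)N(x_0,\cdot)\in W^{1,2}(\Omega)$, and show the residual integral over $B(x_0,2\varepsilon)$ vanishes as $\varepsilon\to 0$ using interior regularity of $u$. The bookkeeping with (i)--(iv) and (v), and the Caccioppoli + oscillation estimate $\int_{B(x_0,2\varepsilon)}|\nabla u|\lesssim_{x_0,u}\varepsilon^{n+\alpha}$ you use for the $N(x_0,\cdot)\nabla\psi_\varepsilon$ term, are all correct (noting that the size bound $|N(x_0,\cdot)|\lesssim\varepsilon^{1-n}$ is needed only on $\supp\nabla\psi_\varepsilon$, where $|x-x_0|\ge\varepsilon$, which is what makes it valid).

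There is, however, one genuine gap. For the $\psi_\varepsilon\,A\nabla u\cdot\nabla_x N(x_0,x)$ term you invoke a pointwise bound $|\nabla_x N(x_0,x)|\lesssim|x_0-x|^{-n}$ and assert it is ``proved alongside Theorem~\ref{teoneumann1} in \cite{HS} and \cite{FL}.'' That estimate is not part of Theorem~\ref{teoneumann1}, and more importantly it is generally \emph{false} for $\LL\in\EE(\R^{n+1})$ with merely bounded measurable coefficients: already for the Green function the gradient fails to be pointwise bounded by $|x-y|^{-n}$ (Meyers-type examples show the gradient need not even belong to $L^p$ for $p$ much larger than $2$); pointwise gradient control requires some modulus of continuity on $A$. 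The fix is cheap and stays entirely within what Theorem~\ref{teoneumann1} gives you: on each dyadic annulus $A_j=\{r_j/2<|x-x_0|<r_j\}$, with $r_j=2^{-j+1}\cdot 2\varepsilon$, use that $N^{x_0}:=N(x_0,\cdot)$ is $\LL^*$-harmonic on $\Omega\setminus\{x_0\}$, and apply Caccioppoli on the annulus together with (v) to get
\begin{equation*}
\int_{A_j}|\nabla N^{x_0}|^2\,dm\ \lesssim\ r_j^{-2}\int_{\{r_j/4<|x-x_0|<2r_j\}}|N^{x_0}|^2\,dm\ \lesssim\ r_j^{\,1-n}.
\end{equation*}
Pairing with your bound $\int_{A_j}|\nabla u|^2\lesssim_{x_0,u} r_j^{\,n-1+2\alpha}$ and Cauchy--Schwarz gives $\int_{A_j}|\nabla u|\,|\nabla N^{x_0}|\lesssim_{x_0,u} r_j^{\alpha}$, and summing the geometric series yields $\varepsilon^{\alpha}\to 0$ exactly as you intended. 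This replaces the unjustified pointwise gradient bound, and it also cleanly shows the integral in part~(iii), applied with $v=u$, is absolutely convergent, so that the subtraction producing $u(x_0)-v(x_0)$ is legitimate. With that replacement, your proof is complete.
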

	
	%{\rd Mihalis: We should define the operator $N_\sigma u$ and $N_m u$ here, right?}
	
	\vv
	
	The following result will be useful when proving a localization result.  Although this is a standard calculation that follows from (iii) in Theorem \ref{teoneumann1}, we show the details for the reader's convenience.
	\begin{lemma}\label{propoloc0}
		Under the assumptions of Theorem \ref{teoneumann1}, let $\vphi\in C_c^\infty(\R^{n+1})$, let $g\in L^\infty_0(\pom)$, and  let  $u\in \widehat W^{1,2}(\Omega)$ solve the Neumann problem \rf{eqneumann1}.  For all $x\in \Omega$, we have
		\begin{align*}
			\vphi(x)\,u(x) - \avint_\pom \vphi\,u\,d\sigma &= \int_\pom N(x,\xi)\,\vphi(\xi)\,g(\xi)\,d\sigma(\xi) + \int_\Omega u(y)\,A(y)\,\nabla \vphi(y)\cdot \nabla_2 N(x,y)\,dy \\
			& \quad- \int_\Omega N(x,y)\,A(y)\nabla u(y)\cdot\nabla\vphi(y)\,dy.
		\end{align*}
	\end{lemma}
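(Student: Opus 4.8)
The plan is to derive the identity from property (iii) of Theorem \ref{teoneumann1} applied to a cleverly chosen test function. Fix $x\in\Omega$. The natural candidate is to apply (iii) with $v=\vphi\,u$, but one must be careful: (iii) requires $v\in W^{1,2}(\Omega)$, and $\vphi\,u$ indeed lies in $W^{1,2}(\Omega)$ since $\vphi\in C_c^\infty(\R^{n+1})$ and $u\in W^{1,2}(\Omega)$. Also note $v(y)=\vphi(y)u(y)$ pointwise in $\Omega$ and the trace of $v$ on $\pom$ is $\vphi\,u|_\pom$. So (iii) with this $v$ reads
\[
\int_\Omega A(y)\,\nabla_1 N(x,y)\cdot\nabla(\vphi u)(y)\,dy = \vphi(x)u(x) - \avint_\pom \vphi\,u\,d\sigma.
\]
This already produces the left-hand side of the claimed identity, so the work is entirely in rewriting the volume integral on the left.

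Next I would expand $\nabla(\vphi u)=\vphi\,\nabla u + u\,\nabla\vphi$, splitting the integral into two pieces:
\[
\int_\Omega A(y)\nabla_1 N(x,y)\cdot\nabla u(y)\,\vphi(y)\,dy + \int_\Omega A(y)\nabla_1 N(x,y)\cdot\nabla\vphi(y)\,u(y)\,dy.
\]
For the first piece, the idea is to use that $u$ solves the Neumann problem: by the weak formulation \rf{eqweak93} (or rather its extension to test functions in $W^{1,2}$, which holds by density since $g\in L^\infty_0(\pom)\subset L^{2n/(n+1)}(\pom)$ and the trace is continuous), we have $\int_\Omega A\nabla u\cdot\nabla\psi\,dy = \int_\pom g\,\psi\,d\sigma$ for $\psi\in W^{1,2}(\Omega)$. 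However, here the vector field hitting $\nabla u$ is $A^T\!\big(A\,\nabla_1 N(x,\cdot)\big)$-type, so instead I would use property (iv): $\nabla_1 N(x,y) = \nabla_2 N^T(y,x)$ where $N^T$ is the Neumann function of $\LL^*$. Then apply (iii) for $\LL^*$ with the roles reversed — that is, use the characterization of $N^T(\cdot,x)$ as the solution with $\int_\Omega A^T(y)\nabla_1 N^T(y,x)\cdot\nabla w(y)\,dy = w(x)-\avint_\pom w\,d\sigma$ — taking $w=\vphi u$... but this circles back. The cleaner route: treat $\psi(y):=\vphi(y)\,N(x,y)$ as a test function in the weak formulation for $u$. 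Indeed $N(x,\cdot)\in W^{1,2}(\Omega\setminus\{x\})$ with the pointwise bounds (v), so $\vphi N(x,\cdot)\in W^{1,2}(\Omega)$ (the singularity at $x$ is integrable in the relevant sense for $n\geq 1$; one checks $|\nabla_2 N|\lesssim |x-y|^{-n}$ gives an $L^2$ issue only in low dimensions, so more care is needed — this is the delicate point, see below). Accepting this, $\int_\Omega A\nabla u\cdot\nabla(\vphi N(x,\cdot))\,dy = \int_\pom g\,\vphi\,N(x,\cdot)\,d\sigma$, and expanding $\nabla(\vphi N) = \vphi\nabla_2 N + N\nabla\vphi$ gives
\[
\int_\Omega \vphi\,A\nabla u\cdot\nabla_2 N(x,y)\,dy = \int_\pom N(x,\xi)\vphi(\xi)g(\xi)\,d\sigma(\xi) - \int_\Omega N(x,y)\,A\nabla u\cdot\nabla\vphi\,dy.
\]

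Finally I would combine: using $A\nabla u\cdot\nabla_2 N = (A^T\nabla_2 N)\cdot\nabla u$ and matching with $A\nabla_1 N\cdot\nabla u$ via (iv) (so that the first piece of the expanded integral equals $\int_\Omega \vphi\,A\nabla u\cdot\nabla_2 N\,dy$ after the bookkeeping with transposes — here one uses that $\nabla_1 N(x,y)$ paired through $A(y)$ against $\nabla u(y)$ is the same object appearing in the $\LL^*$-Neumann identity), substitute the displayed expression, and for the second piece write $A(y)\nabla_1 N(x,y)\cdot\nabla\vphi(y) = A(y)\nabla\vphi(y)\cdot\nabla_2 N(x,y)$ again through (iv). This yields exactly
\[
\vphi(x)u(x)-\avint_\pom\vphi\,u\,d\sigma = \int_\pom N(x,\xi)\vphi(\xi)g(\xi)\,d\sigma(\xi) + \int_\Omega u(y)A(y)\nabla\vphi(y)\cdot\nabla_2 N(x,y)\,dy - \int_\Omega N(x,y)A(y)\nabla u(y)\cdot\nabla\vphi(y)\,dy.
\]
The main obstacle is the justification that $\vphi\,N(x,\cdot)$ is an admissible test function in the weak formulation for $u$ and that $\vphi\,u$ is admissible in (iii) — i.e., the rigorous handling of the integrable singularity of $N(x,\cdot)$ at the point $x$. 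The standard fix is to remove a small ball $B(x,\eta)$, carry out the above integrations by parts on $\Omega\setminus B(x,\eta)$ picking up boundary terms on $\partial B(x,\eta)$, and show these vanish as $\eta\to 0$ using the bounds $|N(x,y)|\lesssim |x-y|^{1-n}$ from (v), the Caccioppoli-type bound $\int_{B(x,2\eta)}|\nabla_2 N(x,y)|^2\,dy\lesssim \eta^{1-n}$, and $u\in W^{1,2}$ near $x$ (so $u$ is bounded in $L^2$ there and, by the Moser/De Giorgi bounds for $\LL$-solutions away from the boundary, locally Hölder). Since the argument is presented as "a standard calculation", I would keep this localization-at-$x$ step brief but explicit.
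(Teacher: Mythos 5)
Your core strategy is the paper's: apply property (iii) of Theorem \ref{teoneumann1} with $v=\vphi u$, expand $\nabla v = u\nabla\vphi + \vphi\nabla u$, and handle the $\vphi\nabla u$ piece by using $\psi=\vphi N(x,\cdot)$ as a test function in the weak formulation for $u$. The attention you give to the regularization near the pole of $N(x,\cdot)$ (removing $B(x,\eta)$, using (v), Caccioppoli, etc.) is more careful than the paper's terse ``standard approximation argument'' and is welcome.

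However, there are genuine errors upstream that the paper avoids and that your ``bookkeeping with transposes'' does not actually fix. Your first displayed equation is not a correct application of (iii): for fixed $x$, (iii) evaluated at $x$ with integration variable $y$ gives $\int_\Omega A(y)\,\nabla_1 N(y,x)\cdot\nabla(\vphi u)(y)\,dy = \vphi(x)u(x)-\avint\vphi u\,d\sigma$, with the moving variable in the \emph{first} slot of $N$, whereas you wrote $\nabla_1 N(x,y)$. More importantly, this form of (iii) puts $A$ against $\nabla_1 N(\cdot,x)$, not against $\nabla v$, so it does not match the form $\int_\Omega A\nabla v\cdot\nabla_2 N(x,\cdot)\,dy$ that the lemma statement requires. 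The paper's move is to apply (iii) for $\LL^*$ and $N^T$ at $x$ and then use (iv): since $\nabla_1 N^T(y,x)=\nabla_2 N(x,y)$ and $(A^T a)\cdot b = a\cdot(Ab)$, one gets directly $\vphi(x)u(x)-\avint\vphi u = \int_\Omega A(y)\nabla(\vphi u)(y)\cdot\nabla_2 N(x,y)\,dy$, with $A$ hitting $\nabla(\vphi u)$. After that, the $u\nabla\vphi$ piece is already exactly $\int_\Omega u\,A\nabla\vphi\cdot\nabla_2 N(x,\cdot)\,dy$, no conversion needed, and the $\vphi\nabla u$ piece is handled by your cleaner route. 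In contrast, your ``final combination'' invokes the pointwise identity $A(y)\nabla_1 N(x,y)\cdot\nabla\vphi(y)=A(y)\nabla\vphi(y)\cdot\nabla_2 N(x,y)$ ``through (iv),'' but this is false for non-symmetric $A$ (and non-self-adjoint $\LL$): $A a\cdot b \neq A b\cdot a$ in general, and $\nabla_1 N(x,y)\neq\nabla_2 N(x,y)$ unless $N=N^T$. The two expressions for $v(x)-\avint v$ that you implicitly have (one from (iii) for $\LL/N$, one from (iii) for $\LL^*/N^T$) coincide as integrals, not pointwise, so a piece-by-piece reconciliation cannot work. Start from the $\LL^*/N^T$ form of (iii), and the argument closes cleanly.
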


	\begin{proof}
		Set $v=\vphi\,u$. By   Theorem \ref{teoneumann1} (iii) applied to  $\LL^*$ and $N^T$, and (iv), we have  that
		$$v(x) - \avint_\pom v\,d\sigma= \int_\Omega A^T(y)\,\nabla_2N(x,y)\cdot \nabla v(y)\,dy =\int_\Omega A(y)\nabla v(y)\cdot\nabla_2N(x,y)\,dy.$$
		Writing $N^x = N(x,\cdot)$, we have
		\begin{align}\label{eqalk471}
			\int_\Omega A(y)&\nabla v(y)\cdot\nabla_2N(x,y)\,dy  =   \int_\Omega u\,A\nabla \vphi\cdot\nabla N^x\,dm + \int_\Omega \vphi\,A\nabla u\cdot\nabla N^x\,dm\\
			& = 
			\int_\Omega u\,A\nabla \vphi\cdot\nabla N^x\,dm + \int_\Omega A\nabla u\cdot\nabla (\vphi\,N^x)\,dm - \int_\Omega N^xA\nabla u\cdot\nabla\vphi\,dm.\nonumber
		\end{align}
		By \rf{eqweak93} with $\vphi$ replaced by $\vphi\,N^x$, Theorem \ref{teoneumann1} (i),  and a standard approximation argument, we infer that 
		$$\int_\Omega A\nabla u\cdot\nabla (\vphi\,N^x)\,dm = 
		\int_\pom N^x\,\vphi\,g\,d\sigma.$$
		The lemma readily  follows once we plug this identity into \rf{eqalk471}.
	\end{proof}

	\vvv
	
	% *************************************************************************************************************
	
	\section{The Neumann problem and the rough Neumann problem}\label{sec-roughneumann}
	
	In this section we introduce several variants of the solvability of the Neumann problem which are required for the proof of our main theorem.
	First, recall that for $1<p<\infty$, we say that the {\it Neumann problem for $\LL$  
		is solvable in $L^p$} if the variational solution $u:\Omega\to\R$ of 
	\rf{eqneumann1}  satisfies
	\begin{equation}\label{eq:neu Lp}
		\|\wt \cN_\Omega(\nabla u)\|_{L^p(\pom)} \lesssim \|g\|_{L^{p}(\pom)} \quad \mbox{ for all $g\in  L^p(\pom) \cap L_0^{\frac{2n}{n+1}}(\pom)$.}
	\end{equation}
	Additionally, we say that the {\it  Neumann problem is solvable
		from the Lorentz space $L^{p,1}$ to $L^p$} if the variational solution \rf{eqneumann1}  satisfies
	\begin{equation}\label{eq:neu lp1}
		\|\wt \cN_\Omega(\nabla u)\|_{L^p(\pom)} \lesssim \|g\|_{L^{p,1}(\pom)}\quad \mbox{ for all $g\in  L^{p,1}(\pom) \cap L_0^{\frac{2n}{n+1}}(\pom)$.}
	\end{equation}
	To be brief,  we will write that $(N_{L^p})_\LL$ (or $(N_p)_\LL$) and $(N_{L^{p,1},L^p})_\LL$ are solvable, respectively. 
	
	For $\rho>0$, we say that the {\it $\rho$-smooth Neumann problem is solvable in $L^p$} if  the variational solution of 
	\rf{eqneumann1} satisfies
	\begin{equation}\label{eq:smooth neu lp}
		\|S_\rho(\partial_{t,j,k} u)\|_{L^p(\pom)}\lesssim \|g\|_{L^{p}(\pom)}\quad \mbox{ for all $g\in  L^p(\pom) \cap L_0^{\frac{2n}{n+1}}(\pom)$ and $1\leq j,k\leq
			n+1$.}
	\end{equation}
	On the other hand, we say that the {\it $\rho$-smooth Neumann problem is solvable from $L^{p,1}$ to $L^p$} if
	\begin{equation}\label{eq:smooth neu lp1}
		\|S_\rho(\partial_{t,j,k} u)\|_{L^p(\pom)}\lesssim \|g\|_{L^{p,1}(\pom)}\quad \mbox{ for all $g\in  L^p(\pom) \cap L_0^{\frac{2n}{n+1}}(\pom)$  and $1\leq j,k\leq
			n+1$.}
	\end{equation}
	We  will write that $(N_p(\rho))_\LL$ and $(N_{L^{p,1},L^p}(\rho))_\LL$ are solvable, respectively.

	\vv

	We say that the {\it rough Neumann problem is solvable for $\LL$ in $L^{p'}$} (and we write $(N^R_{L^{p'}})_\LL$ or $(N^R_{p'})_\LL$ is solvable) if, for every
	$1\leq j,k\leq n+1$, and every $g\in L^{p'}(\pom) \cap \Lip(\pom)$, the variational solution $u\in\wh W^{1,2}(\Omega)$ of 
	\begin{equation}\label{eqdualneumann}
		\left\{
		\begin{array}{ll}
			\LL u=0 & \quad \text{ in $\Omega$,}\\
			\partial_{\nu_A} u = \partial_{t,j,k} g & \quad \text{ in $\pom$}
		\end{array}
		\right.
	\end{equation}
	satisfies
	\begin{equation}\label{eq:rouneu Lp}
		\|\cN_\Omega(u)\|_{L^{p'}(\pom)} \lesssim \|g\|_{L^{p'}(\pom)}.
	\end{equation}
	On the other hand, we say that the {\it rough Neumann problem is solvable from $L^{p'}$ to $L^{p',\infty}$} (and we write 
	$(N^R_{L^{p'},L^{p',\infty}})_\LL$ is solvable) if 
	\begin{equation}\label{eq:rouneu Lpinfty}
		\|\cN_\Omega(u)\|_{L^{p',\infty}(\pom)} \lesssim \|g\|_{L^{p'}(\pom)},
	\end{equation}
	for all $j,k,g$ as above.

	\vv
	
	For $\rho>0$, we say that the {\it $\rho$-smooth rough Neumann problem is solvable in $L^{p'}$} (and we write $(N^R_{L^{p'}}(\rho))_\LL$ or $(N^R_{L^{p'}}(\rho))_\LL$ is solvable) if 
	for every function $g\in L^{p'}(\pom)$
	the variational 
	solution $u\in\wh W^{1,2}(\Omega)$ of
	\begin{equation}\label{eqdualneumannsmooth}
		\left\{
		\begin{array}{ll}
			\LL u=0 & \quad \text{ in $\Omega$,}\\
			\partial_{\nu_A} u = \partial_{t,j,k} S_\rho g & \quad \text{ in $\pom$}
		\end{array}
		\right.
	\end{equation}
	satisfies
	\begin{equation}\label{eq:smooth rouneu Lp}
		\|\cN_\Omega(u)\|_{L^{p'}(\pom)} \lesssim \|g\|_{L^{p'}(\pom)}.
	\end{equation}
	On the other hand, we say that the {\it $\rho$-smooth rough Neumann problem is solvable from $L^{p'}$ to $L^{p',\infty}$} (and we write 
	$(N^R_{L^{p'},L^{p',\infty}}(\rho))_\LL$ is solvable) if 
	\begin{equation}\label{eq:smooth rouneu Lpinfty}
		\|\cN_\Omega(u)\|_{L^{p',\infty}(\pom)} \lesssim \|g\|_{L^{p'}(\pom)},
	\end{equation}
	for all $j,k,g$ as above.

	\vv
	We denote by $C_\LL(N_p)$, $C_\LL(N^R_{p'})$, $C_\LL(N_{L^{p,1},L^p})$, and $C_\LL(N^R_{L^{p'},L^{p',\infty}})$ stand for the solvability constants
	of $(N_p)_\LL$, $(N^R_{p'})_\LL$, $(N_{L^{p,1},L^p})_\LL$, and $(N^R_{L^{p'},L^{p',\infty}})_\LL$, respectively. Their $\rho$-smooth versions are denoted by  $C_\LL(N_p(\rho))$, $C_\LL(N^R_{p'}(\rho))$, $C_\LL(N_{L^{p,1},L^p}(\rho))$, and $C_\LL(N^R_{L^{p'},L^{p',\infty}}(\rho))$.
	
	\vv
	The proposition below and its proof is partially inspired by the results in \cite[section 3]{AM}.
	\begin{propo}\label{propodual1}
		Let $\Omega$ be a bounded chord-arc domain and $\LL \in \EE(\om)$.  Suppose that $1 < p < \infty$ and that $(R_q)_\LL$ is solvable for some $q > p$. The following hold:
		\begin{itemize}
			\item[(a)] If $(N_p)_\LL$ (resp. $(N_{L^{p,1},L^p})_\LL$) is solvable, then $(N^R_{p'})_{\LL^*}$ (resp. $(N^R_{L^{p'},L^{p',\infty}})_{\LL^*}$) is solvable.
			\item[(b)] If $(N^R_{p'})_{\LL^*}$ (resp. $(N^R_{L^{p'},L^{p',\infty}})_{\LL^*}$) is solvable and we assume,  in addition,  that $\partial \Omega$ supports a weak $p$-Poincaré inequality, then  $(N_p)_\LL$ (resp. $(N_{L^{p,1},L^p})_\LL$) is also solvable.
		\end{itemize}
		Further, 
		$$C_\LL(N_p)\approx C_{\LL^*}(N^R_{p'})\qquad \text{and}\qquad C_\LL(N_{L^{p,1},L^p})\approx C_{\LL^*}(N^R_{L^{p'},L^{p',\infty}}).$$
	\end{propo}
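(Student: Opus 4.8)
The proof rests on one bilinear duality identity, read in the two directions. Fix $1\le j,k\le n+1$ and take $g,h\in\Lip_0(\pom)$. Let $u\in\wh W^{1,2}(\Omega)$ be the variational solution of \eqref{eqneumann1} for $\LL$ with $\partial_{\nu_A}u=g$, and let $v\in\wh W^{1,2}(\Omega)$ be the variational solution of \eqref{eqdualneumann} for $\LL^*$ with $\partial_{\nu_{A^T}}v=\partial_{t,j,k}h$. Using the weak formulation \eqref{eqweak93} of $\partial_{\nu_A}u=g$ with test function $v$ (licit by density since $\Omega$ is a Sobolev extension domain and by the trace theory of Theorem \ref{thm:trace-extension}), the symmetry $\int_\Omega A\nabla u\cdot\nabla v=\int_\Omega A^T\nabla v\cdot\nabla u$, the weak formulation of \eqref{eqdualneumann} with test function $u$, and the boundary integration by parts \eqref{eqparts09}, one gets
\[
\int_\pom g\,v\,d\sigma\;=\;\int_\Omega A\nabla u\cdot\nabla v\,dx\;=\;\int_\pom h\,\partial_{t,k,j}u\,d\sigma.
\]
Alongside this I would record the elementary bounds $\|\partial_{t,k,j}u\|_{L^p(\sigma)}\lesssim\|\wt\cN_\Omega(\nabla u)\|_{L^p(\sigma)}$ (the tangential derivative of the trace is a bounded combination of $\nu$ and the nontangential limit of $\nabla u$) and their $L^{p,1}$, $L^{p',\infty}$ analogues; the solvability of $(R_q)_\LL$ for some $q>p$ is what guarantees, for the dense class of Lipschitz data, that $\partial_{t,k,j}u$ is an honest $L^p(\sigma)$ function and $\wt\cN_\Omega(\nabla u)\in L^p(\sigma)$, so that everything below is a priori finite.

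For part (a), assume $(N_p)_\LL$ (resp.\ $(N_{L^{p,1},L^p})_\LL$). Dualizing $\|v|_{\pom}\|_{L^{p'}(\pom)}$ against $g\in\Lip_0(\pom)$ with $\|g\|_{L^p}\le1$ (dense by Lemma \ref{lem:denseLorentz}) and inserting the identity and the elementary bound immediately yields the boundary estimate $\|v|_{\pom}\|_{L^{p'}(\pom)}\lesssim C_\LL(N_p)\,\|h\|_{L^{p'}(\pom)}$. The substantive step is to upgrade this to control of the full nontangential maximal function $\cN_\Omega v$: I would linearize $\cN_\Omega v$ by a measurable selection $x_\xi\in\gamma_\Omega(\xi)$, bound $|v(x_\xi)|$ by local $L^1$-averages of $v$ plus cut-off tails using Moser's estimate (Theorem \ref{teomoser}) and the localized Neumann-function representation of Lemma \ref{propoloc0} for $\LL^*$, and then, after dualizing, reorganize these averages into an interior pairing of $v$ against a weight built from the dual test function, which the identity --- applied once more against an auxiliary $\LL$-Neumann solution with an interior source --- again dominates by $C_\LL(N_p)$. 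The Lorentz version is the same computation with the norm bookkeeping $(L^{p,1})^{*}=L^{p',\infty}$, \eqref{eqclau29}, and the normability of Lorentz spaces recalled above.

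For part (b), assume $(N^R_{p'})_{\LL^*}$ (resp.\ the weak-type version) and that $\pom$ supports a weak $p$-Poincar\'e inequality, so that $W^{1,p}(\partial^*\Omega)=M^{1,p}(\pom)$ with comparable norms (recalled in Subsection \ref{sec24.}). Given $g\in\Lip_0(\pom)$ with Neumann solution $u$ for $\LL$, dualizing $\sum_{j,k}\|\partial_{t,j,k}u\|_{L^p(\pom)}$ against Lipschitz $h$ and using the identity together with $(N^R_{p'})_{\LL^*}$ gives $\|\nabla_H(u|_\pom)\|_{L^p(\pom)}\approx\sum_{j,k}\|\partial_{t,j,k}u\|_{L^p(\pom)}\lesssim C_{\LL^*}(N^R_{p'})\,\|g\|_{L^p(\pom)}$; then, mirroring the upgrading step of part (a) with $u$ in place of $v$ and $\wt\cN_\Omega(\nabla\,\cdot\,)$ in place of $\cN_\Omega$ (now using Caccioppoli's inequality and the interior Neumann-function representation), one passes to $\|\wt\cN_\Omega(\nabla u)\|_{L^p(\pom)}\lesssim C_{\LL^*}(N^R_{p'})\,\|g\|_{L^p(\pom)}$. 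General admissible data are reached by density (Lemma \ref{lem:denseLorentz}), and $(R_q)_\LL$ again supplies the a priori finiteness needed for this last step. Tracking the constants through both implications gives $C_\LL(N_p)\approx C_{\LL^*}(N^R_{p'})$ and $C_\LL(N_{L^{p,1},L^p})\approx C_{\LL^*}(N^R_{L^{p'},L^{p',\infty}})$.

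The routine part is the bilinear identity and the boundary-to-boundary duality it yields. The main obstacle is the passage from control of boundary traces to control of the genuine, nonlinear nontangential maximal functions $\cN_\Omega v$ and $\wt\cN_\Omega(\nabla u)$: this forces the linearization together with the localized Neumann-function representation of Lemma \ref{propoloc0}, and one must check that the cut-off tail terms there and the sign of the solution are handled without circularity (in the spirit of the good-$\lambda$ arguments used elsewhere in the paper), so that they are absorbed by $C_\LL(N_p)$ (resp.\ $C_{\LL^*}(N^R_{p'})$) rather than reappearing on the left-hand side. A secondary, bookkeeping-type difficulty is keeping all Lorentz norms on the correct side in the $L^{p,1}$--$L^{p',\infty}$ statements, and a preliminary one is the qualitative reduction to Lipschitz data, where the solvability of $(R_q)_\LL$ and the Poincar\'e inequality are what make the tangential derivatives of the Neumann trace well defined.
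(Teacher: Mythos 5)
Your bilinear duality identity is the same one the paper runs with, and you've correctly identified it as the routine core; the way you pass from this identity to a bound on the boundary trace of the dual solution is also essentially the paper's. But your account of the ``substantive step'' --- upgrading boundary-trace control to control of the genuine non-tangential maximal function --- misses the much simpler mechanism the paper actually uses, and the route you sketch instead is where your argument has a real gap.

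In the paper, the upgrade is a one-liner precisely because the $(R_q)_\LL$ hypothesis (for $q>p$) is not merely a qualitative crutch: it implies solvability of $(D_{q'})_{\LL^*}$ by \cite[Theorem 1.6]{MT}, and hence, by interpolation with the trivial $L^\infty$ endpoint, solvability of the Dirichlet problem from $L^{p',\infty}$ to $L^{p',\infty}$. Since the rough-Neumann solution $u$ lies in $\wh W^{1,2}(\Omega)$ and is $\LL^*$-harmonic, it \emph{is} the Dirichlet extension of its own trace, so $\|\cN_\Omega(u)\|_{L^{p',\infty}(\pom)}\lesssim\|u|_\pom\|_{L^{p',\infty}(\pom)}$ follows immediately with no localization, no Moser iteration, and no auxiliary Neumann solutions with interior sources. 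In part (b) the same mechanism applies with the regularity problem in place of the Dirichlet problem: the weak $p$-Poincar\'e inequality together with the cited results gives $(R_p)_\LL$ solvability, so once the duality identity yields $\|\partial_{t,j,k}u\|_{L^p(\pom)}\lesssim C_*\|f\|_{L^{p,1}(\pom)}$ you instantly get $\|\wt\cN_\Omega(\nabla u)\|_{L^p(\pom)}\lesssim C_*\|f\|_{L^{p,1}(\pom)}$. Your proposal, by contrast, tries to prove this upgrade ``by hand'' via a linearization of $\cN_\Omega v$, local averages, cut-off tails from the Neumann-function localization, and a second application of the duality identity; you explicitly flag that you are unsure the tail terms can be absorbed without circularity, and indeed that scheme is not self-closing as written --- the interior pairing you would need to control reintroduces $\cN_\Omega v$ on the right-hand side rather than being dominated by $C_\LL(N_p)$. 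You should instead read off the upgrade directly from the Dirichlet (resp.\ regularity) solvability that the hypotheses already furnish. Once that is fixed, the Lorentz bookkeeping and the density/a-priori-finiteness considerations in your sketch are all in order and match the paper's.
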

	
	{Remark that in (b) one asks $\partial \Omega$ to support a weak $p$-Poincaré inequality, not a weak $p'$-Poincaré inequality.}
	
	\begin{proof}
		We will only prove the  statements involving the Lorentz spaces,  as the others are similar and slightly easier.
		Suppose first that $(N_{L^{p,1},L^p})_\LL$ is solvable with constant $C_4$ and let us check that then $(N^R_{L^{p'},L^{p',\infty}})_{\LL^*}$ is solvable with constant $\lesssim C_4$.
		Let $j,k,u,g$ be as in \rf{eqdualneumann} with $\LL^* $ and $A^T$  in place of $\LL$ and $A$ and consider an arbitrary function $\vphi\in \Lip(\pom) \cap L^{p,1}(\pom)$.
		Let $v:\Omega\to\R$ be the solution of the Neumann problem with boundary data $\vphi-m_{\sigma,\pom}\vphi$. Using that $m_{\sigma,\pom}(u)=0$, we get
		\begin{align*}
			\int_\pom u\,\vphi\,d\sigma &= \int_\pom u\,(\vphi-m_{\sigma,\pom}\vphi)\,d\sigma =
			\int_\pom u\,\partial_{\nu_A} v\,d\sigma = \int_\Omega A\nabla u\cdot \nabla v\,dm\\
			& = \int_\pom \partial_{\nu_{A^T}} u\, v\,d\sigma
			= \int_\pom \partial_{t,j,k} g\, v\,d\sigma = \int_\pom g\, \partial_{t,k,j}  v\,d\sigma.
		\end{align*} 
		As usual, abusing notation we denoted by $u$ the  trace of $u$ on $\pom$.
		Therefore,  by Lemma \ref{lem:HMT vs Hajlasz} and a standard argument originating from \cite{Kenig-Pipher} (see for instance \cite[Eq. (4.3.11)]{HMT}), we have that
		\begin{align}\label{eq:tang.der.bound n.t. maximal}
			\left|\int_\pom u\,\vphi\,d\sigma\right| & \lesssim \|g\|_{L^{p'}(\pom)}\, \|\partial_{t,k,j}  v\|_{L^p(\pom)} \lesssim\|g\|_{L^{p'}(\pom)}\, \|\nabla_H v\|_{L^p(\pom)}\\
			&\lesssim \|g\|_{L^{p'}(\pom)}\, \|\cN_\Omega(\nabla  v)\|_{L^p(\pom)} \notag \\
			& \leq C_4\|g\|_{L^{p'}(\pom)}\, \|\partial_{\nu_A} v\|_{L^{p,1}(\pom)} = C_4\|g\|_{L^{p'}(\pom)}\, \|\vphi\|_{L^{p,1}(\pom)} \notag
		\end{align}
		By Lemma \ref{lem:denseLorentz} and  \rf{eqclau29}, we infer that
		$$\|u\|_{L^{p',\infty}(\pom)} \lesssim C_4 \|g\|_{L^{p'}(\pom)}.$$
		Next,  since $(R_q)_{\LL}$  is solvable for $\LL$ some $q>p$,  by Theorem \cite[Theorem 1.6]{MT}, we deduce that  the Dirichlet problem is solvable for $\LL^*$ in $L^{q'}(\pom)$. Hence, by interpolation,  the Dirichlet problem is also solvable from $L^{p',\infty}(\pom)$ to $L^{p',\infty}(\pom)$ and so
		$$\|\cN_\Omega(u)\|_{L^{p',\infty}(\pom)} \lesssim \|u\|_{L^{p',\infty}(\pom)} \lesssim C_4 \|g\|_{L^{p'}(\pom)},$$
		concluding that $(N^R_{L^{p'},L^{p',\infty}})_{\LL^*}$ is solvable.

		Suppose now that $(N^R_{L^{p'},L^{p',\infty}})_{\LL^*}$ is solvable with constant $C_*$ and let us prove that  $(N_{L^{p,1},L^p})_\LL$ is solvable with constant $\lesssim C_*$.
		Let $u$ be solution of the Neumann problem in $\Omega$ with boundary data $\partial_{\nu_A} u=f$ for some $f\in L^{p,1}(\pom) \cap L_0^{2n/(n+1)}(\pom)$.  Since $\pom$ satisfies a weak $p$-Poincar\'e inequality,  then,   by \cite[Theorem 1.6]{GMT} and \cite[Lemma 1.3]{MT},  it also holds that $(R_p)_\LL$\footnote{We mean the regularity problem in terms of the tangential derivatives. Notice however that, since $\pom$ supports a Poincar\'e inequality, this is equivalent to the regularity problem in the Haj\l asz space $\dot M^{1,p}(\pom)$. See \cite[Lemma 1.3]{MT} for more details.} is solvable. Thus, it suffices to show that for all $1\leq j,k\leq n+1$,
		\begin{equation}\label{eqdual1}
			\|\partial_{t,j,k} u\|_{L^p(\pom)} \leq C_*\,\|f\|_{L^{p,1}(\pom)},
		\end{equation}
		where again $u$ on $\pom$ should be understood as a trace.
		To this end, consider an arbitrary function  $\psi\in L^{p'}(\pom) \cap \Lip(\pom)$ and let $w:\Omega\to\R$ be the solution of the
		rough Neumann problem for $\LL^*$ with data $\partial_{\nu_{A^T}} w= \partial_{t,k,j}\psi$.
		Then we have
		\begin{align*}
			\int_\pom \partial_{t,j,k} u\,\psi\,d\sigma &= \int_\pom  u\,\partial_{t,k,j}\psi\,d\sigma   
			=\int_\pom u\, \partial_{\nu_{A^T}} w\,d\sigma = \int_\pom \partial_{\nu_A} u\,  w\,d\sigma 
			\\
			&\leq \| \partial_{\nu_A} u\|_{L^{p,1}(\Omega)}\,\|w\|_{L^{p',\infty}(\pom)}\leq C_*\,\| f\|_{L^{p,1}(\Omega)}\,\|\psi\|_{L^{p'}(\pom)},
		\end{align*}
		which gives \rf{eqdual1} by density and duality. 
	\end{proof}
	
	\vv
	\begin{propo}
		Let $1<p<\infty$, $\rho\in (0,\diam(\pom))$, and let $\Omega$ be a bounded chord-arc domain.
		Suppose that the Dirichlet problem for $\LL^*$ is solvable in $L^{p'}$ and in $L^{p',\infty}$.
		Then $(N_p(\rho))_\LL$ is solvable if and only if $(N^R_{p'}(\rho))_{\LL^*}$ is solvable. Also, 
		$(N_{L^{p,1},L^p}(\rho))_{\LL}$ is solvable if and only if $(N^R_{L^{p'},L^{p',\infty}}(\rho))_{\LL^*}$ is solvable.
		Further, 
		$$C_\LL(N_p(\rho))\approx C_{\LL^*}(N^R_{p'}(\rho) )\qquad \text{and}\qquad C_\LL(N_{L^{p,1},L^p}(\rho))\approx C_{\LL^*}(N^R_{L^{p'},L^{p',\infty}}(\rho)).$$
	\end{propo}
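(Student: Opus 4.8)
The plan is to reprove the $\rho$-smooth statements by mimicking the proof of Proposition \ref{propodual1} almost verbatim, inserting the smoothing operators $S_\rho$ at the right places and exploiting that $S_\rho$ is self-adjoint, fixes constants, and maps $L^1(\sigma)$ into $\Lip(\pom)$ (Lemma \ref{lemsr}). The crucial simplification relative to Proposition \ref{propodual1} is that the $\rho$-smooth problems only ever involve the quantities $S_\rho(\partial_{t,j,k}(\cdot))$, so we never have to compare $\partial_{t,j,k}$ with $\nabla_t$ or with the Haj\l{}asz gradient, and we never have to bound $\wt\cN_\Omega(\nabla u)$; this is exactly why no weak Poincar\'e inequality and no solvability of a regularity problem are needed here (only the $L^{p',\infty}$-solvability of the Dirichlet problem for $\LL^*$). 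I would treat the Lorentz equivalence $(N_{L^{p,1},L^p}(\rho))_{\LL}\Leftrightarrow(N^R_{L^{p'},L^{p',\infty}}(\rho))_{\LL^*}$ in detail; the Lebesgue equivalence $(N_p(\rho))_{\LL}\Leftrightarrow(N^R_{p'}(\rho))_{\LL^*}$ is identical after replacing $L^{p,1}$ and $L^{p',\infty}$ by $L^p$ throughout, using in addition that $L^{p',\infty}$-solvability of the Dirichlet problem for $\LL^*$ self-improves to $L^{p'}$-solvability.

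\textbf{From Neumann to rough Neumann.} Assume $(N_{L^{p,1},L^p}(\rho))_{\LL}$ holds with constant $C$. Fix $1\le j,k\le n+1$ and $g\in L^{p'}(\pom)$, let $u\in\wh W^{1,2}(\Omega)$ solve \eqref{eqdualneumannsmooth} with $\LL^*,A^T$ in place of $\LL,A$, and for $\vphi\in\Lip(\pom)$ let $v$ be the variational Neumann solution for $\LL$ with datum $\vphi-m_{\sigma,\pom}\vphi$. Testing the weak formulation of $v$ against $u$, then that of $u$ against $v$, then using the integration-by-parts identity \eqref{eqparts09}, and finally the self-adjointness of $S_\rho$, one would obtain (using also $\int_\pom u\,d\sigma=0$)
\[
\int_\pom u\,\vphi\,d\sigma=\int_\pom\partial_{t,j,k}(S_\rho g)\,v\,d\sigma=\int_\pom S_\rho g\,\partial_{t,k,j}v\,d\sigma=\int_\pom g\,S_\rho(\partial_{t,k,j}v)\,d\sigma,
\]
whence, by $(N_{L^{p,1},L^p}(\rho))_{\LL}$ applied to $v$ and the elementary bound $\|\vphi-m_{\sigma,\pom}\vphi\|_{L^{p,1}(\pom)}\lesssim\|\vphi\|_{L^{p,1}(\pom)}$,
\[
\Bigl|\int_\pom u\,\vphi\,d\sigma\Bigr|\le\|g\|_{L^{p'}(\pom)}\,\|S_\rho(\partial_{t,k,j}v)\|_{L^p(\pom)}\lesssim C\,\|g\|_{L^{p'}(\pom)}\,\|\vphi\|_{L^{p,1}(\pom)}.
\]
By density of $\Lip(\pom)$ in $L^{p,1}(\pom)$ (Lemma \ref{lem:denseLorentz}) and the duality \eqref{eqclau29}, $\|u\|_{L^{p',\infty}(\pom)}\lesssim C\,\|g\|_{L^{p'}(\pom)}$; since $u$ is $\LL^*$-harmonic with trace in $L^{p',\infty}(\pom)$ it is the Dirichlet solution for $\LL^*$ with that datum, so the hypothesis gives $\|\cN_\Omega(u)\|_{L^{p',\infty}(\pom)}\lesssim\|u\|_{L^{p',\infty}(\pom)}\lesssim C\,\|g\|_{L^{p'}(\pom)}$, i.e.\ $(N^R_{L^{p'},L^{p',\infty}}(\rho))_{\LL^*}$ with constant $\lesssim C$.

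\textbf{From rough Neumann to Neumann.} Assume $(N^R_{L^{p'},L^{p',\infty}}(\rho))_{\LL^*}$ holds with constant $C_*$, and let $u\in\wh W^{1,2}(\Omega)$ solve the Neumann problem for $\LL$ with $\partial_{\nu_A}u=f\in L^{p,1}(\pom)\cap L_0^{2n/n+1}(\pom)$. Here $S_\rho(\partial_{t,j,k}u)$ is understood through the pairing $\psi\mapsto\int_\pom u\,\partial_{t,k,j}(S_\rho\psi)\,d\sigma$, which is a bounded functional on $L^1(\sigma)$ because $S_\rho\psi\in\Lip(\pom)$ and $u|_\pom\in L^2(\pom)$, consistent with self-adjointness of $S_\rho$ and \eqref{eqparts09}. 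For $\psi\in\Lip(\pom)$, let $w\in\wh W^{1,2}(\Omega)$ solve \eqref{eqdualneumannsmooth} for $\LL^*$ with datum $\partial_{t,k,j}S_\rho\psi$; then, testing weak formulations as before,
\[
\int_\pom S_\rho(\partial_{t,j,k}u)\,\psi\,d\sigma=\int_\pom u\,\partial_{t,k,j}(S_\rho\psi)\,d\sigma=\int_\pom u\,\partial_{\nu_{A^T}}w\,d\sigma=\int_\pom\partial_{\nu_A}u\,w\,d\sigma=\int_\pom f\,w\,d\sigma,
\]
so that $\bigl|\int_\pom S_\rho(\partial_{t,j,k}u)\,\psi\,d\sigma\bigr|\le\|f\|_{L^{p,1}(\pom)}\,\|\cN_\Omega(w)\|_{L^{p',\infty}(\pom)}\lesssim C_*\,\|f\|_{L^{p,1}(\pom)}\,\|\psi\|_{L^{p'}(\pom)}$. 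By density of $\Lip(\pom)$ in $L^{p'}(\pom)$ and duality, $\|S_\rho(\partial_{t,j,k}u)\|_{L^p(\pom)}\lesssim C_*\,\|f\|_{L^{p,1}(\pom)}$, i.e.\ $(N_{L^{p,1},L^p}(\rho))_{\LL}$ with constant $\lesssim C_*$. Combining the two directions yields the claimed equivalences and the norm comparisons.

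\textbf{Main obstacle.} There is no conceptual difficulty; the delicate points are bookkeeping. One must (i) give rigorous meaning to $S_\rho(\partial_{t,j,k}u)$ for a variational solution $u$ whose trace lies only in $H^{1/2}(\pom)$ — done via the displayed dual pairing, equivalently by moving $\partial_{t,k,j}$ onto the Lipschitz kernel $s_\rho(x,\cdot)$; (ii) justify the integration-by-parts steps, namely the extension of \eqref{eqweak93} from $C_c^\infty(\R^{n+1})$ to $W^{1,2}(\Omega)$, Green's formula, and \eqref{eqparts09}, when one factor is the trace of a $W^{1,2}$ solution and the other is the Lipschitz function $S_\rho g$ or $S_\rho\psi$ (note that $\partial_{t,j,k}(S_\rho g)$ has zero $\sigma$-mean, so it is an admissible Neumann datum); and (iii) the routine Lorentz-space facts ($\|\vphi-m_{\sigma,\pom}\vphi\|_{L^{p,1}}\lesssim\|\vphi\|_{L^{p,1}}$, density of $\Lip(\pom)$, and \eqref{eqclau29}) together with the identification $\sigma=\sigma_*$ valid on a chord-arc domain (Remark \ref{rem:reduced-boundary-domains}), so that $S_\rho$ and $\partial_{t,j,k}$ act on the same measure space.
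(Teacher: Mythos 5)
Your proposal is correct and follows essentially the same duality argument the paper uses: in both directions you test against a Lipschitz $\vphi$ (resp.\ $\psi$), pair the solution $u$ with the companion $\rho$-smooth Neumann (resp.\ rough Neumann) solution $v$ (resp.\ $w$) for the adjoint problem via the weak formulations, use self-adjointness of $S_\rho$ together with \eqref{eqparts09} to shift $S_\rho$ and $\partial_{t,j,k}$ across the pairing, and conclude by density, Lorentz duality, and the $L^{p',\infty}$ Dirichlet solvability hypothesis. Your "main obstacle" remarks about making $S_\rho(\partial_{t,j,k}u)$ rigorous via the dual pairing and about $\sigma=\sigma_*$ for chord-arc domains are reasonable bookkeeping notes that the paper leaves implicit.
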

	
	\begin{proof}
		The arguments are very similar to the ones for Proposition \ref{propodual1}.  However, we show the details for completeness.
		We will only prove the second statement since the first one is similar.

		Suppose  that $(N_{L^{p,1},L^p}(\rho))_\LL$ is solvable with constant $C_4$.  We will check that $(N^R_{L^{p'},L^{p',\infty}}(\rho))_{\LL^*}$ is solvable with constant $\lesssim C_4$ as well.
		Let $j,k,u,g,\rho$ be as in \rf{eqdualneumannsmooth} with $\LL^*$ and $A^T$ in place of $\LL$ and $A$ and consider an arbitrary function $\vphi\in L^{p,1}(\pom)\cap \Lip(\pom)$.
		Let $v:\Omega\to\R$ be the solution of the Neumann problem for $\LL$ with boundary data $\vphi-m_{\sigma,\pom}\vphi$. Then, we have that
		\begin{align*}
			\int_\pom u\,\vphi\,d\sigma &= \int_\pom u\,(\vphi-m_{\sigma,\pom}\vphi)\,d\sigma =
			\int_\pom u\,\partial_{\nu_A} v\,d\sigma \\
			& = \int_\pom \partial_{\nu_{A^T}} u\, v\,d\sigma
			= \int_\pom \partial_{t,j,k} S_\rho(g)\, v\,d\sigma = \int_\pom g\, S_\rho(\partial_{t,k,j}  v)\,d\sigma.
		\end{align*} 
		Therefore,
		\begin{align*}
			\left|\int_\pom u\,\vphi\,d\sigma\right| & \lesssim \|g\|_{L^{p'}(\pom)}\, \|S_\rho(\partial_{t,k,j}  v)\|_{L^p(\pom)}
			\\
			& \leq C_4\|g\|_{L^{p'}(\pom)}\, \|\partial_{\nu_A} v\|_{L^{p,1}(\pom)} = C_4\|g\|_{L^{p'}(\pom)}\, \|\vphi\|_{L^{p,1}(\pom)},
		\end{align*}
		where $C_4$ is the constant of $(N_{L^{p,1},L^p})$.   By density and duality,  we deduce that
		$$\|u\|_{L^{p',\infty}(\pom)} \lesssim C_4 \|g\|_{L^{p'}(\pom)}.$$
		Using  that the  Dirichlet problem is solvable from $L^{p',\infty}(\pom)$ to $L^{p',\infty}(\pom)$,  we infer that
		$$\|\cN_\Omega(u)\|_{L^{p',\infty}(\pom)} \lesssim \|u\|_{L^{p',\infty}(\pom)} \lesssim C_4 \|g\|_{L^{p'}(\pom)}.$$
		That is, $(N^R_{L^{p'},L^{p',\infty}}(\rho))_{\LL^*}$ is solvable with constant $\lesssim C_4$.
		
		Suppose now that $(N^R_{L^{p'},L^{p',\infty}}(\rho))_{\LL^*}$ is solvable with constant $C_*$ and let us prove that then $(N_{L^{p,1},L^p}(\rho))_\LL$ is solvable with constant $\lesssim C_*$.
		Let $u$ be solution of $(N_{L^{p,1},L^p}(\rho))_\LL$ with boundary data $\partial_{\nu_A} u=f$, for some $f\in L^{p,1}(\pom) \cap \Lip_0(\pom)$.  We aim to show that for all $1\leq j,k\leq n+1$,
		\begin{equation}\label{eqdual1'}
			\|S_\rho(\partial_{t,j,k} u)\|_{L^p(\pom)} \lesssim C_*\,\|f\|_{L^{p,1}(\pom)}.
		\end{equation}
		To this end, consider an arbitrary Lipschitz function $\psi\in L^{p'}(\pom)$ and let $w:\Omega\to\R$ be the solution of the
		rough Neumann problem for $\LL^*$ with $\partial_{\nu_{A^T}} w= \partial_{t,k,j}S_\rho(\psi)$.
		Then we have
		\begin{align*}
			\int_\pom S_\rho(\partial_{t,j,k} u)\,\psi\,d\sigma &= \int_\pom  u\,\partial_{t,k,j}S_\rho(\psi)\,d\sigma   
			=\int_\pom u\, \partial_{\nu_{A^T}} w\,d\sigma = \int_\pom \partial_{\nu_A} u\,  w\,d\sigma 
			\\
			&\leq \| \partial_{\nu_A} u\|_{L^{p,1}(\Omega)}\,\|w\|_{L^{p',\infty}(\pom)}\leq C_*\,\| f\|_{L^{p,1}(\Omega)}\,\|\psi\|_{L^{p'}(\pom)},
		\end{align*}
		which gives \rf{eqdual1'} by  duality.
	\end{proof}

	\vv

	\begin{lemma}\label{lemfinit}
		Let $\Omega\subset\R^{n+1}$ be a bounded chord-arc domain and let $\rho\in (0,\diam(\pom)]$.  Then $(N_{p}^R(\rho))_\LL$ is solvable for any  $1<p<\infty$  with constant
		$$ C_\LL(N^R_{p}(\rho))\lesssim \frac{\diam(\pom)}\rho.$$
	\end{lemma}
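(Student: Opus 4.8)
The plan is to prove the a priori estimate $\|\cN_\Omega(u)\|_{L^p(\pom)}\lesssim \tfrac{\diam(\pom)}{\rho}\,\|g\|_{L^{p'}(\pom)}$ — wait, the roles of $p$ and $p'$ need care: the statement is that $(N^R_p(\rho))_\LL$ is solvable with constant $\lesssim \diam(\pom)/\rho$, so we must bound $\|\cN_\Omega(u)\|_{L^p(\pom)}\lesssim \frac{\diam(\pom)}{\rho}\|g\|_{L^p(\pom)}$ where $u$ solves $\LL u=0$ in $\Omega$ with $\partial_{\nu_A}u=\partial_{t,j,k}S_\rho g$ on $\pom$. The key point is that after applying the smoothing operator $S_\rho$, the data $\partial_{t,j,k}S_\rho g$ is a \emph{bounded, Lipschitz} function on $\pom$, with size controlled quantitatively by $\rho$ and $\|g\|_{L^p(\pom)}$. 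Concretely, by Lemma \ref{lemsr}(e), $S_\rho g\in\Lip(\Sigma)$ with $\mathrm{Lip}(S_\rho g)\lesssim \rho^{-n-1}\|g\|_{L^1(\sigma)}$, and similarly $\|S_\rho g\|_{L^\infty(\pom)}\lesssim \rho^{-n}\|g\|_{L^1(\sigma)}$; since $\sigma$ is a finite $n$-Ahlfors regular measure with $\|\sigma\|\approx\diam(\pom)^n$, Hölder gives $\|g\|_{L^1(\sigma)}\lesssim \|g\|_{L^p(\sigma)}\,\diam(\pom)^{n/p'}$. Then from the definition \eqref{eqparts099} of the tangential derivative, $\partial_{t,j,k}S_\rho g = \nu_j(\partial_k \widetilde{S_\rho g})|_{\pom} - \nu_k(\partial_j\widetilde{S_\rho g})|_{\pom}$ for a suitable smooth extension, so one controls $\|\partial_{t,j,k}S_\rho g\|_{L^\infty(\pom)}$ by the Lipschitz constant of $S_\rho g$, yielding $\|\partial_{t,j,k}S_\rho g\|_{L^\infty(\pom)}\lesssim \rho^{-n-1}\|g\|_{L^1(\sigma)}$.

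First I would set $f:=\partial_{t,j,k}S_\rho g\in L^\infty_0(\pom)$ (it has mean zero by \eqref{eqparts09}, testing against $\varphi\equiv 1$, or rather by the structure of tangential derivatives) and invoke Theorem \ref{teoneumann2}, which gives the representation $u(x)=\int_\pom N(x,\xi)\,f(\xi)\,d\sigma(\xi)$. Then I would estimate $\cN_\Omega(u)$ pointwise: for $\xi\in\pom$ and $x\in\gamma_\Omega(\xi)$, using the Neumann function bound (v) from Theorem \ref{teoneumann1}, $|u(x)|\le \int_\pom \frac{C}{|x-\eta|^{n-1}}\,|f(\eta)|\,d\sigma(\eta)\le C\|f\|_{L^\infty(\pom)}\int_\pom |x-\eta|^{-(n-1)}\,d\sigma(\eta)$. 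Since $\pom$ is $n$-Ahlfors regular and bounded, the integral $\int_\pom|x-\eta|^{1-n}\,d\sigma(\eta)$ is bounded by $C\,\diam(\pom)$ uniformly in $x$ (a standard dyadic annular decomposition of $\pom$ around the closest boundary point to $x$). Hence $\cN_\Omega(u)(\xi)\le C\,\diam(\pom)\,\|f\|_{L^\infty(\pom)}$ for every $\xi$, i.e. $\|\cN_\Omega(u)\|_{L^\infty(\pom)}\lesssim \diam(\pom)\,\|f\|_{L^\infty(\pom)}$.

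Combining the two estimates: $\|\cN_\Omega(u)\|_{L^\infty(\pom)}\lesssim \diam(\pom)\cdot\rho^{-n-1}\|g\|_{L^1(\sigma)}\lesssim \diam(\pom)\cdot\rho^{-n-1}\diam(\pom)^{n/p'}\|g\|_{L^p(\sigma)}$, and then passing from $L^\infty$ to $L^p$ costs a factor $\sigma(\pom)^{1/p}\approx\diam(\pom)^{n/p}$, so $\|\cN_\Omega(u)\|_{L^p(\pom)}\lesssim \rho^{-n-1}\diam(\pom)^{1+n}\|g\|_{L^p(\sigma)}$. This is \emph{not} quite the claimed bound $\diam(\pom)/\rho$; the discrepancy suggests one should track the dependence more carefully, or — more likely — that the intended argument exploits better decay. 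In fact the cleaner route is to keep the power of $\rho$ optimal by noting $\|f\|_{L^p(\pom)}\lesssim \mathrm{Lip}(S_\rho g)\cdot(\text{diam})^{n/p}$ only where $S_\rho g$ genuinely varies, and more to the point, to use the $L^p\to L^p$ (rather than $L^\infty$) mapping properties: the map $g\mapsto S_\rho g$ is bounded on $L^p(\sigma)$ with norm $1$ (Lemma \ref{lemsr}(c)), and $g\mapsto \partial_{t,j,k}g$ composed with the solution operator of the rough Neumann problem is, by scaling considerations, bounded on $L^p$ with norm $\lesssim \diam(\pom)/\rho$ once one commutes $S_\rho$ past the solution operator. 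I would therefore instead argue: the solution operator $g\mapsto u$ for data $\partial_{t,j,k}g$ maps $W^{1,p}$-type data to $\cN_\Omega(u)\in L^p$, and precomposing with $S_\rho$ gains a factor $\mathrm{Lip}$-type estimate of order $\rho^{-1}$ relative to $\diam(\pom)$, because $S_\rho$ smooths at scale $\rho$ so $\|\partial_{t,j,k}S_\rho g\|$ in the relevant negative-Sobolev norm is $\lesssim \rho^{-1}\|g\|_{L^p}$ while the raw $\diam(\pom)$ factor comes from the global Neumann-function bound.

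The main obstacle is getting the \emph{sharp} power $\diam(\pom)/\rho$ rather than a cruder polynomial in $\rho^{-1}$: this requires being careful to interpolate/combine the $L^1\to L^\infty$ smoothing of $S_\rho$ with the $L^p$-boundedness of $S_\rho$, so that one loses only a single power of $\rho^{-1}$, paired with a single power of $\diam(\pom)$ from integrating the Neumann kernel $|x-\eta|^{1-n}$ over $\pom$. One way to make this precise: write $\|\cN_\Omega u\|_{L^p(\pom)}\lesssim \|u\|_{L^{p,\infty}(\pom)}$-type bounds are unavailable here (no Dirichlet solvability assumed), so instead bound $\cN_\Omega u$ directly by the potential $\int \frac{|f(\eta)|}{|\cdot-\eta|^{n-1}}d\sigma(\eta)$, observe this is $(\diam\pom)$ times an averaging operator bounded on $L^p(\sigma)$ with norm $\lesssim 1$ — by Schur's test, since $\sup_x \int|x-\eta|^{1-n}d\sigma(\eta)\lesssim\diam(\pom)$ and symmetrically — so $\|\cN_\Omega u\|_{L^p(\pom)}\lesssim \diam(\pom)\,\|f\|_{L^p(\pom)}=\diam(\pom)\,\|\partial_{t,j,k}S_\rho g\|_{L^p(\pom)}$, and finally bound $\|\partial_{t,j,k}S_\rho g\|_{L^p(\pom)}\lesssim \rho^{-1}\|g\|_{L^p(\pom)}$ using the Lipschitz estimate for the kernel $s_\rho$ from Lemma \ref{lemsr}(b) (the tangential derivative of $S_\rho g$ has a kernel of size $\lesssim \rho^{-1}\cdot\rho^{-n}$ supported on balls of radius $\rho$, hence is an $L^p$-bounded operator times $\rho^{-1}$ by Schur's test again). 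That chain gives exactly $C_\LL(N^R_p(\rho))\lesssim \diam(\pom)/\rho$, and the only delicate point is the routine but essential Schur-test verification of the two kernel bounds.
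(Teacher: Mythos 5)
Your final paragraph lands on exactly the paper's argument: bound $\|\partial_{t,j,k}S_\rho\|_{L^p\to L^p}\lesssim\rho^{-1}$ by Schur's test using the Lipschitz bound $\mathrm{Lip}(s_\rho(\cdot,y))\lesssim\rho^{-n-1}$ from Lemma~\ref{lemsr}(b) together with the $\rho$-localized support, and bound the Neumann potential $N_\sigma$ on $L^p(\sigma)$ with norm $\lesssim\diam(\pom)$ by Schur's test using Theorem~\ref{teoneumann1}(v). The earlier detour through $L^1\to L^\infty$ smoothing of $S_\rho$ is noise (and you correctly notice it gives a non-sharp power of $\rho^{-1}$ and of $\diam(\pom)$), but you self-correct to the right argument; your observation that the Neumann-kernel bound $|N(x,y)|\lesssim|x-y|^{1-n}$ controls $\cN_\Omega u(\xi)$ directly (since $|x-\eta|\gtrsim|\xi-\eta|$ for $x\in\gamma_\Omega(\xi)$, $\eta\in\pom$) is in fact a point the paper leaves implicit.
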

	
	\begin{proof}
		Let $u:\Omega\to\R$ be the variational solution of  \rf{eqdualneumannsmooth}. Then
		$$u(x) = \int_\pom N(x,y)\,\partial_{t,j,k} (S_\rho g)(y)\,d\sigma(y).$$
		For brevity, we write
		$$u= N_\sigma(\partial_{t,j,k} (S_\rho g)).$$
		From the properties of the kernel of $S_\rho$ in Lemma \ref{lemsr} and Schur's lemma, it follows that $\partial_{t,j,k} S_\rho$ is an operator bounded in $L^{p}(\sigma)$ with norm $\lesssim \rho^{-1}$, for $1\leq p\leq\infty$. On the other hand, since
		$$\int_{\pom}N(x,y)\,d\sigma(y)\lesssim \int_{\pom}\frac1{|x-y|^{n-1}}\,d\sigma(y)\lesssim \diam(\pom)$$
		and the same estimate holds for $N(y,x)$, by Schur's criterion again, we deduce that the operator $N_\sigma$ is bounded in $L^{p}(\sigma)$ with norm
		$\lesssim \diam(\pom)$.
		Therefore,
		$$\|u\|_{L^{p}(\sigma)} = \|N_\sigma(\partial_{t,j,k} (S_\rho g))\|_{L^{p}(\sigma) }\lesssim \diam(\pom)\,
		\|\partial_{t,j,k} (S_\rho g))\|_{L^{p}(\sigma)} \lesssim \frac{\diam(\pom)}\rho\,\|g\|_{L^{p}(\pom)}.$$ 
	\end{proof}
	\vv

	\begin{lemma}\label{lemuniformrho}
		Let $\Omega\subset\R^{n+1}$ be a bounded chord-arc domain and suppose that $(D_{p})_{\LL}$ is solvable for some $p\in(1,\infty)$.   Then $(N^R_{p})_\LL$ is solvable if and only if $(N^R_{p}(\rho))_{\LL}$ is solvable uniformly on $0<\rho\leq\diam(\pom)$.
	\end{lemma}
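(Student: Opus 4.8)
The plan is to bridge the two notions of solvability via the smoothing operators $S_\rho$: by construction, the $\rho$-smooth rough Neumann problem with datum $g$ is exactly the ordinary rough Neumann problem with the regularized datum $\partial_{t,j,k}(S_\rho g)$, and $S_\rho g\to g$ as $\rho\to 0$. So the whole argument reduces to comparing the solution with datum $\partial_{t,j,k}g$ to the solutions with data $\partial_{t,j,k}(S_\rho g)$, and passing to the limit $\rho\to 0$.

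For the implication ``$(N^R_p)_\LL$ solvable $\Rightarrow$ $(N^R_p(\rho))_\LL$ solvable uniformly'' I would argue directly. Given $g\in L^p(\pom)$, Lemma~\ref{lemsr}(c),(e) gives $S_\rho g\in L^p(\pom)\cap\Lip(\pom)$ with $\|S_\rho g\|_{L^p(\pom)}\le\|g\|_{L^p(\pom)}$. The variational solution $u_\rho$ of \rf{eqdualneumannsmooth} is precisely the variational solution of the rough Neumann problem with the admissible Lipschitz datum $\partial_{t,j,k}(S_\rho g)$, so $(N^R_p)_\LL$ yields $\|\cN_\Omega u_\rho\|_{L^p(\pom)}\le C_\LL(N^R_p)\,\|S_\rho g\|_{L^p(\pom)}\le C_\LL(N^R_p)\,\|g\|_{L^p(\pom)}$, uniformly in $\rho$. (This direction does not even use $(D_p)_\LL$.)

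For the converse, fix $g\in L^p(\pom)\cap\Lip(\pom)$ and $1\le j,k\le n+1$, and let $u$ (resp.\ $u_\rho$) be the variational solution of the rough Neumann problem with datum $\partial_{t,j,k}g$ (resp.\ $\partial_{t,j,k}(S_\rho g)$). First I would show $u_\rho\rightharpoonup u$ in $W^{1,2}(\Omega)$: by Lemma~\ref{lemlip}(a), Lemma~\ref{lem:HMT vs Hajlasz} and $\nabla_H g\in L^\infty(\pom)$, the norms $\|\partial_{t,j,k}(S_\rho g)\|_{L^{2n/(n+1)}(\pom)}$ are bounded uniformly in $\rho$, so (via the embedding \eqref{eq:embed-fractional-}, Theorem~\ref{thm:trace-extension} and Lax--Milgram) $\{u_\rho\}$ is bounded in $W^{1,2}(\Omega)$; testing against $\vphi\in C_c^\infty(\R^{n+1})$ one has $\int_\pom \partial_{t,j,k}(S_\rho g)\,\vphi\,d\sigma=\int_\pom S_\rho g\,\partial_{t,k,j}\vphi\,d\sigma\to\int_\pom g\,\partial_{t,k,j}\vphi\,d\sigma=\int_\pom \partial_{t,j,k} g\,\vphi\,d\sigma$ by Lemma~\ref{lemsr}(d) and $\partial_{t,k,j}\vphi\in L^\infty(\pom)$, so the data functionals are uniformly bounded and converge on the dense subspace $C_c^\infty(\R^{n+1})|_\Omega$, which together with uniqueness of the variational solution gives $u_\rho\rightharpoonup u$. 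Since $\LL u_\rho=0$ in $\Omega$, interior De~Giorgi--Nash--Moser estimates and Arzelà--Ascoli upgrade this to $u_\rho\to u$ locally uniformly in $\Omega$; hence for every $\xi\in\pom$, $\cN_\Omega u(\xi)=\sup_{x\in\gamma_\Omega(\xi)}\lim_{\rho\to 0}|u_\rho(x)|\le\liminf_{\rho\to 0}\cN_\Omega u_\rho(\xi)$, and Fatou together with the uniform hypothesis gives $\|\cN_\Omega u\|_{L^p(\pom)}\le\liminf_{\rho\to 0}\|\cN_\Omega u_\rho\|_{L^p(\pom)}\lesssim C_\LL(N^R_p(\rho))\,\|g\|_{L^p(\pom)}$, i.e.\ $(N^R_p)_\LL$ is solvable. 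Alternatively — and this is where $(D_p)_\LL$ is genuinely convenient — one may route this last passage through boundary traces: $(D_p)_\LL$ implies $\|\cN_\Omega w\|_{L^p(\pom)}\approx\|w|_\pom\|_{L^p(\pom)}$ for every $\LL$-solution $w\in\wh W^{1,2}(\Omega)$, with a constant independent of $w$ (hence of $\rho$); then one only needs weak-$L^2$ continuity of the trace operator to get $u_\rho|_\pom\rightharpoonup u|_\pom$, identify the weak $L^p$-limit by testing against $L^\infty(\pom)$, and use lower semicontinuity of the $L^p$-norm.

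The point requiring care is precisely this limit passage. In the direct version the subtlety is the interior compactness turning the weak $W^{1,2}$-convergence $u_\rho\rightharpoonup u$ into local uniform convergence, so that $\cN_\Omega u(\xi)\le\liminf_{\rho\to0}\cN_\Omega u_\rho(\xi)$ and Fatou applies; in the trace version the subtlety is the ``$\gtrsim$'' half of the equivalence $\|\cN_\Omega w\|_{L^p}\approx\|w|_\pom\|_{L^p}$, namely extending the Dirichlet estimate from continuous boundary data to $H^{1/2}(\pom)\cap L^p(\pom)$ data by a density-and-Fatou argument (and recognizing that the variational solution is the $L^p$-Dirichlet solution of its own trace). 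Everything else — the smoothing and gradient bounds for $S_\rho$ in Lemmas~\ref{lemsr}, \ref{lemlip} and \ref{lem:HMT vs Hajlasz}, the Lax--Milgram solvability of the variational problem, and the trace/embedding theory of Section~\ref{sec24.} — is already in place.
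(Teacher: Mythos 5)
Your forward direction is identical to the paper's. For the converse, the paper argues via boundary traces and the Neumann-function representation: it first shows that $u$ and $u_\rho$ are continuous on $\overline\Omega$ (using the H\"older continuity of $N$ off the diagonal from Theorem~\ref{teoneumann1}), then that $u_\rho\to u$ pointwise on $\pom$ by testing $\partial_{t,j,k}S_\rho g\rightharpoonup \partial_{t,j,k} g$ in $L^q(\sigma)$ against $N^x\in L^q(\sigma)$, then strong $L^p(\sigma)$ convergence by dominated convergence, and finally invokes $(D_p)_\LL$ to pass from $\|u\|_{L^p(\sigma)}$ to $\|\cN_\Omega u\|_{L^p}$. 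Your \emph{trace route} is morally the same, replacing the Neumann-function pointwise argument by weak $L^p$ trace convergence plus lower semicontinuity; as you acknowledge, the delicate point (that the nontangential trace of the variational solution coincides $\sigma$-a.e.\ with its Sobolev trace, so that $\|u_\rho|_\pom\|_{L^p}\le\|\cN_\Omega u_\rho\|_{L^p}$) still needs to be filled in, and the paper handles exactly this by proving continuity on $\overline\Omega$. Your \emph{direct route}, however, is genuinely different and, to my reading, cleaner: $u_\rho\rightharpoonup u$ in $W^{1,2}(\Omega)$ via Lax--Milgram together with weak-$*$ convergence of the data functionals, Rellich plus interior De~Giorgi--Nash--Moser estimates and Arzel\`a--Ascoli upgrade this to $u_\rho\to u$ locally uniformly in $\Omega$, and then the pointwise inequality $\cN_\Omega u(\xi)\le\liminf_\rho\cN_\Omega u_\rho(\xi)$ together with Fatou gives the conclusion directly. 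This bypasses both the Dirichlet-solvability hypothesis and the boundary-trace machinery entirely, which suggests that $(D_p)_\LL$ in the lemma's hypothesis is not actually needed for the equivalence (it is, of course, available in the paper's application, since it follows from $(R_q)_\LL$). One small remark: since $g$ is Lipschitz, Lemma~\ref{lemlip}(b) immediately gives that $S_\rho g$ is Lipschitz uniformly in $\rho$, so you can get the uniform bound on $\|\partial_{t,j,k}(S_\rho g)\|_{L^\infty}$ without routing through Lemma~\ref{lemlip}(a) and Lemma~\ref{lem:HMT vs Hajlasz}; this is also the observation the paper uses.
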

	
	\begin{proof}
		Suppose that  $(N^R_{p})$ is solvable and let $0<\rho\leq\diam(\pom)$.
		For $g\in L^{p}(\pom)$, let $u_\rho:\Omega\to\R$ be the variational solution of \rf{eqdualneumannsmooth}.
		Then, by the solvability of $(N^R_{p})$ and the $L^{p}(\sigma)$ boundedness of $S_\rho$ with norm $1$,
		$$\|\cN_\Omega(u_\rho)\|_{L^{p}(\pom)} \lesssim \|S_\rho g\|_{L^{p}(\pom)}\leq\| g\|_{L^{p}(\pom)}
		.$$
		So $(N^R_{p}(\rho))_{\LL}$ is solvable uniformly on $0<\rho\leq\diam(\pom)$.
		
		Conversely, suppose that $(N^R_{p}(\rho))_{\LL}$ is solvable uniformly on $0<\rho\leq\diam(\pom)$.
		For $g\in L^{p}(\pom)\cap \Lip(\pom)$,  let $u$ and $u_\rho:\Omega\to\R$ be the respective variational solutions of 
		\rf{eqdualneumann} and \rf{eqdualneumannsmooth}. 
		We claim that $u$ and $u_\rho$ are continuous in $\overline \Omega$ and
		$u_\rho$ converges to $u$ in $L^{p}(\sigma)$ as $\rho\to0$. Together with the $L^{p}$ solvability of the Dirichlet problem, this implies that 
		$$\|\cN_\Omega(u)\|_{L^{p}(\pom)}\lesssim\|u\|_{L^{p}(\sigma)} = \lim_{\rho\to 0}\|u_\rho\|_{L^{p}(\sigma)}\leq C\,\|g\|_{L^p(\sigma)},$$
		which completes the proof of the lemma, modulo our claim.
		
		To prove the claim, observe first that, from the fact that $g$ is Lipschitz, by Lemma \ref{lemlip}, it follows that
		$S_\rho g$ is also Lipschitz uniformly on $\rho$. Hence, $\partial_{t,j,k} g$ and  $\partial_{t,j,k} S_\rho g$ are 
		in $L^\infty(\sigma)$ uniformly in $\rho$. Arguing as in the proof of Lemma \ref{lemfinit},  it follows that
		$N_\sigma$ is bounded in 
		$L^{\infty}(\sigma)$ (with norm depending on $\diam(\pom)$), and so $u = N_\sigma(\partial_{t,j,k} g)$ and $u_\rho 
		= N_\sigma(\partial_{t,j,k} S_\rho g)$ are uniformly in $L^\infty(\sigma)$. By the H\"older continuity of the Neumann function away from the diagonal given by Theorem  \ref{teoneumann1} (vi) and the local integrability implied by (v) in the same theorem, using standard arguments, it follows that 
		in fact, $u$ and $u_\rho$ are continuous in $\overline\Omega$.

		By the dominated convergence theorem, since $u_\rho$ and $u$ are uniformly in $L^\infty(\sigma)$, to prove the convergence of $u_\rho$ to $u$ in $L^{p}(\sigma)$, it suffices to check that $u_\rho$ converges to $u$ pointwise in $\pom$.
		First we check the weak convergence of $\partial_{t,j,k} S_\rho g$ to $\partial_{t,j,k} g$ in $L^{q}(\sigma)$, for all $q\in (1,\infty)$. Indeed, if $\vphi$ is a  $C^1$ function in a neighborhood of $\pom$, then
		\begin{equation}\label{eqconv61}
			\int_\pom \partial_{t,j,k}(S_\rho g)\,\vphi\,d\sigma = \int_\pom S_\rho(g)\,\partial_{t,k,j}\vphi\,d\sigma\to 
			\int_\pom g\,\partial_{t,k,j}\vphi\,d\sigma=\int_\pom \partial_{t,j,k}g\,\vphi\,d\sigma \quad \mbox{ as $\rho\to0$,}
		\end{equation}
		because $S_\rho(g)$ converges to $g$ in $L^q(\sigma)$. Since the $L^q(\sigma)$ norms of $\partial_{t,j,k}(S_\rho g)$ and 
		$\partial_{t,j,k}g$ are bounded uniformly in $L^q(\sigma)$, \rf{eqconv61} also holds for any $\vphi\in L^{q'}(\sigma)$, by density.
		That is, $\partial_{t,j,k} S_\rho g$ converges to $\partial_{t,j,k} g$ weakly in $L^{q}(\sigma)$ as $\rho\to0$.
		
		Finally, from the fact that $N^x\equiv N(x,\cdot)\in L^q(\sigma)$ for $1<q<n/(n-1)$, we deduce that, for any $x\in\overline\Omega$,
		$$
		N_\sigma(\partial_{t,j,k} (S_\rho g))(x) = \int_\pom \partial_{t,j,k}S_\rho(g)\,N^x\,d\sigma \to  \int_\pom \partial_{t,j,k} g\,N^x\,d\sigma=
		N_\sigma(\partial_{t,j,k} g)(x),\,\,\textup{as}\,\,\rho \to 0.
		$$
		This finishes the proof of the claim.
	\end{proof}

	\vvv

	% *************************************************************************************************
	
	\section{The localization lemmas}
	
	\begin{theorem}[Poincar\'e inequality]\label{teotrace**}
		Let \(\Omega \subset \mathbb{R}^{n+1}\) be a  uniform domain and let  \(B := B(x_0, R)\) be a ball of radius \(R > 0\) centered at \(x_0 \in \partial \Omega\).  There exists a constant \(C > 2\), depending only on the uniformity constants of \(\Omega\),  such that if  \(u \in W^{1,p}(C B \cap \Omega)\), for \(1 < p <n+1\),  then
		\begin{equation}\label{eq:Poincare-interior}
			\bigg(\avint_{B\cap\Omega} |u-u_B|^{p+\ve_p}\,dm\bigg)^{\frac1{p+\ve_p}}\\
			\lesssim   R\left(\avint_{CB\cap\Omega}|\nabla u|^p \,dm\right)^{\frac1p},
		\end{equation}
		where  \(u_B := \avint_{B \cap \Omega} u \, dm\) and $\ve_{p}$ is some positive constant depending only on $p$ and $n$.
	\end{theorem}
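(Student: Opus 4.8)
The plan is to obtain \eqref{eq:Poincare-interior} as a self-improvement of a scale-invariant $(1,1)$-Poincar\'e inequality on $\Omega$ combined with the first-order Sobolev embedding in $\mathbb{R}^{n+1}$, so that the gain $\ve_p$ is precisely the gap between $p$ and its Sobolev conjugate $p^{*}=\tfrac{(n+1)p}{\,n+1-p\,}$. The two structural facts I would use are that a uniform domain satisfies the interior corkscrew and Harnack chain conditions, and that, consequently, $(\Omega,|\cdot|,m|_{\Omega})$ is an Ahlfors $(n+1)$-regular metric measure space carrying a Poincar\'e inequality at every scale and location.

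\smallskip
\noi\emph{Step 1: a $(1,1)$-Poincar\'e inequality at all scales.} First I would show there is $C_{0}>2$, depending only on the uniformity constants, such that for every $B'=B(y,r)$ with $y\in\overline{\Omega}$, $0<r\le\diam(\Omega)$, and every $v\in W^{1,1}(C_{0}B'\cap\Omega)$,
\begin{equation*}
\avint_{B'\cap\Omega}|v-v_{B'}|\,dm\;\le\;C_{0}\,r\avint_{C_{0}B'\cap\Omega}|\nabla v|\,dm,\qquad v_{B'}:=\avint_{B'\cap\Omega}v\,dm.
\end{equation*}
If $B(y,C_{0}r)\subset\Omega$ this is the classical Euclidean Poincar\'e inequality. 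Otherwise one picks $\xi\in\pom$ with $|\xi-y|=\dist(y,\pom)\lesssim r$ and argues by chaining: cover $B'\cap\Omega$ by Whitney-type balls, join each one to a fixed ``central'' corkscrew ball inside $\Omega\cap B(\xi,cr)$ by a chain of overlapping Whitney balls of comparable radii and length controlled by the uniformity constants (such chains exist exactly because of the corkscrew/Harnack chain structure), telescope the elementary Poincar\'e inequality along each chain, and sum; this is by now routine for uniform (hence John) domains. One could alternatively invoke Jones's extension operator for uniform domains, but handling its non-locality is an extra nuisance, so the chaining route seems cleaner. Running the standard telescoping argument that passes from averaged to pointwise Poincar\'e inequalities (cf.\ Haj\l asz--Koskela) over the dyadic scales $\lesssim R$ inside $C_{1}B\cap\Omega$, one then obtains
\begin{equation*}
|u(x)-u_{B}|\;\lesssim\;R\avint_{C_{1}B\cap\Omega}|\nabla u|\,dm\;+\;\int_{C_{1}B\cap\Omega}\frac{|\nabla u(z)|}{|x-z|^{\,n}}\,dm(z)\qquad\text{for a.e.\ }x\in B\cap\Omega,
\end{equation*}
with $C_{1}$ depending only on the uniformity constants.

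\smallskip
\noi\emph{Step 2: self-improvement.} Using the interior corkscrew condition one checks $m(B(y,r)\cap\Omega)\approx r^{n+1}$ for $y\in\overline{\Omega}$, $0<r\le\diam(\Omega)$, so $m|_{\Omega}$ is Ahlfors $(n+1)$-regular on the relevant balls. Since $1<p<n+1$, the Riesz potential $If(x):=\int_{C_{1}B\cap\Omega}|x-z|^{-n}f(z)\,dm(z)$ is bounded from $L^{p}(C_{1}B\cap\Omega)$ to $L^{p^{*}}(B\cap\Omega)$ with the scale-invariant bound $\big(\avint_{B\cap\Omega}|If|^{p^{*}}dm\big)^{1/p^{*}}\lesssim R\,\big(\avint_{C_{1}B\cap\Omega}|f|^{p}dm\big)^{1/p}$ (Hardy--Littlewood--Sobolev on the $(n+1)$-regular space). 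Applying this with $f=|\nabla u|$ in the pointwise bound of Step 1, and bounding the lower-order term by H\"older's inequality, gives
\begin{equation*}
\Big(\avint_{B\cap\Omega}|u-u_{B}|^{p^{*}}\,dm\Big)^{1/p^{*}}\;\lesssim\;R\,\Big(\avint_{C_{1}B\cap\Omega}|\nabla u|^{p}\,dm\Big)^{1/p}.
\end{equation*}
As $p^{*}>p$, this is exactly \eqref{eq:Poincare-interior} with $\ve_{p}:=p^{*}-p=\dfrac{p^{2}}{\,n+1-p\,}$, which depends only on $n$ and $p$, and $C:=C_{1}$, which depends only on the uniformity constants (any smaller positive $\ve_p$ and any larger $C>2$ also work, by H\"older).

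\smallskip
The main obstacle is Step 1: establishing the scale-invariant $(1,1)$-Poincar\'e inequality near $\pom$ with a dilation constant depending only on the uniformity constants, i.e.\ carrying out the chaining argument cleanly (or, along the alternative route, controlling the non-locality of Jones's extension). Once that is secured, Step 2 is the by-now-standard self-improvement machinery for PI spaces and is routine.
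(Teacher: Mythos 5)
Your proof is correct, but the route is genuinely different from the paper's. The paper avoids chaining altogether: it rescales to $R=1$, replaces $B\cap\Omega$ by the Carleson box $T_\Delta$ associated with $\Delta=B\cap\pom$ (which, by \cite{HM}, is itself a bounded \emph{uniform} domain squeezed between $\tfrac54 B\cap\Omega$ and $CB\cap\Omega$ with $\diam(T_\Delta)\approx1$), invokes Jones's extension theorem to extend $u|_{T_\Delta}$ to $\wt u\in\dot W^{1,p}(\R^{n+1})$ with $\|\nabla\wt u\|_{L^p(\R^{n+1})}\lesssim\|\nabla u\|_{L^p(T_\Delta)}$, and then applies the \emph{classical Euclidean} Sobolev--Poincar\'e inequality on the ball $CB$ to $\wt u$. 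This is exactly the ``Jones extension route'' that you flagged and discarded because of the operator's non-locality; the Carleson box device is precisely what neutralizes that objection, since $T_\Delta$ is localized inside $CB\cap\Omega$ and one never needs to control the extension outside $CB$. Your chaining-plus-Riesz-potential argument is a valid intrinsic alternative (essentially the Haj\l asz--Koskela self-improvement for the PI space $(\Omega,|\cdot|,m|_\Omega)$, which is Ahlfors $(n+1)$-regular by the corkscrew condition), and it yields the same optimal gain $\ve_p=p^*-p$ with $p^*=(n+1)p/(n+1-p)$; the price is that Step~1 requires a careful chaining construction near $\pom$ (in particular, handling the possible disconnectedness of $B'\cap\Omega$ by allowing chains to wander inside $C_0B'\cap\Omega$), whereas the paper delegates all of this to the two black boxes of \cite{HM} and \cite{Jo} and is correspondingly much shorter.
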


	\begin{proof} 
		Since \eqref{eq:Poincare-interior} is scale invariant, we may assume that  \(R = 1\). If \(T_\Delta \subset \Omega\) is the Carleson box associated with the surface ball \(\Delta := B \cap \partial \Omega\) (see \cite[Eq. 3.59]{HM} for its definition), then by \cite[Eq. (3.60)]{HM}, we have \(\frac{5}{4} B \cap \Omega \subset T_\Delta \subset CB \cap \Omega\) for some large constant \(C > 2\) depending only on the uniformity constants of \(\Omega\).   Furthermore, according to \cite[Lemma 3.61]{HM}, \(T_\Delta\) is a uniform domain, where the uniformity constants depend only on those of \(\Omega\) and are uniform with respect to \(B\). It is evident that \(\text{diam}(T_\Delta) \approx 1\).

		By \cite[Theorem 2]{Jo},  any uniform domain is a Sobolev extension domain for the homogeneous Sobolev space $\dot W^{1,p}$ for any  $p \in (1, \infty)$.  Therefore,  since $u \in \dot W^{1,p}(T_\Delta)$,  we may extend it to a function $ \wt u\in \dot W^{1,p}(\R^n)$, satisfying   $\| \nabla \wt u \|_{L^{p}(\R^{n+1})} \lesssim \|\nabla  u\|_{L^{p}(T_\Delta)}$.   If  $q:= \frac{(n+1)\,p}{n+1-p}$, then by the Sobolev-Poincar\'e inequality,  we have that 
		\begin{align*}
			\bigg(\int_{B\cap\Omega} |u-u_B|^{q}\,dm\bigg)^{1/q} &\leq 2\, \bigg( \int_{CB} |\wt u-\wt u_{CB}|^{q}\,dm\bigg)^{1/q}
			\lesssim   \bigg(\int_{CB}|\nabla \wt u|^p \,dm\bigg)^{1/p}\\
			& \lesssim   \bigg(\int_{T_\Delta}|\nabla u|^p \,dm\bigg)^{1/p} \leq \bigg(\int_{CB\cap \om}|\nabla u|^p \,dm\bigg)^{1/p}.
		\end{align*}
		It is trivial to see that \eqref{eq:Poincare-interior} follows by rescaling the  inequalities above.
	\end{proof}

	\vv
	
	We define the {\it truncated  non-tangential maximal operator}  $\cN_{\Omega,r}$ by
	$$\cN_{\Omega,r}v(\xi) =  \sup_{x\in\gamma_\Omega(\xi)\cap B(\xi,r)} |v(x)|,\quad \mbox{ for $\xi\in\pom$.}$$

	\begin{lemma}\label{lemfutur}
		Let \(\Omega \subset \mathbb{R}^{n+1}\) be an open set  with a $n$-Ahlfors regular boundary satisfying the interior corkscrew condition. Then, for all \(p\) and \(q\) such that \(1 \leq q \leq p < q\left(1 + \frac{1}{n}\right)\), any function \(v: \Omega \to \mathbb{R}\), and any ball \(B\) centered at \(\partial \Omega\), we have
		\begin{equation}\label{eqRHI}
			\left(\avint_{B \cap \Omega} |v|^p \, dm \right)^{1/p} \lesssim_{p,q} \left(\avint_{2B \cap \partial \Omega} |\mathcal{N}_{\Omega,4 r(B)} v|^q \, d\sigma \right)^{1/q},
		\end{equation}
		assuming the aperture of the cones associated with \(\mathcal{N}_{\Omega}\) is large enough, depending only on \(n\).
	\end{lemma}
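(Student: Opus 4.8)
The plan is to localize the solid integral on the left-hand side to a Whitney decomposition of $\Omega$ and then close a geometric series over the scales of the Whitney cubes. The restriction $p<q(1+\frac1n)$ is exactly what forces that series to converge, which is why $p$ may be taken larger than $q$: \eqref{eqRHI} is a reverse-H\"older type inequality, reflecting that $\cN_\Omega$ is a ``large'' operator. By the scale and translation invariance of \eqref{eqRHI} we may assume $B=B(x_0,1)$ with $x_0\in\pom$; put $F:=\cN_{\Omega,4}v$ on $\pom$ and $\Theta:=\int_{2B\cap\pom}F^q\,d\sigma$, and we may assume $\Theta<\infty$.

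First I would fix a Whitney decomposition $\{Q\}$ of $\Omega$ and let $\mathcal{W}_B$ denote the family of cubes $Q$ with $Q\cap B\neq\varnothing$. These cover $B\cap\Omega$, and each $Q\in\mathcal{W}_B$ satisfies $\ell(Q)\approx\dist(Q,\pom)<r(B)=1$, so $\delta_\Omega(x)\approx\ell(Q)$ for $x\in Q$. To each such $Q$ I attach a surface ball $\Delta_Q:=B(\wh{x}_Q,c_n\ell(Q))\cap\pom$, where $\wh{x}_Q\in\pom$ is a boundary point at distance $\lesssim\ell(Q)$ from $Q\cap B$ (for the boundedly many cubes with $\ell(Q)\approx1$ one instead takes $\wh{x}_Q=x_0$ and a small absolute constant as the radius). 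A short computation with the Whitney geometry shows that, if the aperture of the cones defining $\cN_\Omega$ is large enough depending only on $n$, then for all $x\in Q\cap B$ and all $\xi\in\Delta_Q$ one has $x\in\gamma_\Omega(\xi)$ and $|x-\xi|<4$, hence $|v(x)|\le F(\xi)$; moreover $\Delta_Q\subset 2B\cap\pom$ and, by Ahlfors regularity, $\sigma(\Delta_Q)\approx\ell(Q)^n$. Consequently $\big(\sup_{Q\cap B}|v|\big)^q\le\avint_{\Delta_Q}F^q\,d\sigma$ for every $Q\in\mathcal{W}_B$.

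Next, using $m(Q)\approx\ell(Q)^{n+1}$, $\sigma(\Delta_Q)\approx\ell(Q)^n$ and $p\ge q$, I would estimate
\begin{equation*}
\int_{B\cap\Omega}|v|^p\,dm\le\sum_{Q\in\mathcal{W}_B}m(Q)\Big(\sup_{Q\cap B}|v|\Big)^p\lesssim\sum_{Q\in\mathcal{W}_B}\ell(Q)^{\,n+1-np/q}\Big(\int_{\Delta_Q}F^q\,d\sigma\Big)^{p/q}.
\end{equation*}
Since $\int_{\Delta_Q}F^q\,d\sigma\le\Theta$ and $p/q\ge1$, the last sum is at most $\Theta^{(p-q)/q}\sum_{Q\in\mathcal{W}_B}\ell(Q)^{\,n+1-np/q}\int_{\Delta_Q}F^q\,d\sigma$. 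For fixed $\xi\in\pom$, only boundedly many cubes $Q\in\mathcal{W}_B$ of a given side length $2^{-k}$ have $\xi\in\Delta_Q$ (the balls $\Delta_Q$ within one generation have bounded overlap), and all have $\ell(Q)<1$, so
\begin{equation*}
\sum_{Q\in\mathcal{W}_B:\,\xi\in\Delta_Q}\ell(Q)^{\,n+1-np/q}\lesssim\sum_{k\ge 1}2^{-k(n+1-np/q)}\lesssim_{p,q,n}1,
\end{equation*}
the geometric series converging \emph{precisely because} $n+1-np/q>0$, i.e.\ $p<q(1+\tfrac1n)$. Integrating this bound against $F^q$ over $2B\cap\pom$ gives $\sum_{Q\in\mathcal{W}_B}\ell(Q)^{\,n+1-np/q}\int_{\Delta_Q}F^q\,d\sigma\lesssim\Theta$, and therefore $\int_{B\cap\Omega}|v|^p\,dm\lesssim_{p,q}\Theta^{p/q}$. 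Finally, the interior corkscrew condition gives $m(B\cap\Omega)\gtrsim 1$ and Ahlfors regularity gives $\sigma(2B\cap\pom)\approx 1$; hence $\big(\avint_{B\cap\Omega}|v|^p\,dm\big)^{1/p}\lesssim\big(\int_{B\cap\Omega}|v|^p\,dm\big)^{1/p}\lesssim_{p,q}\Theta^{1/q}\approx\big(\avint_{2B\cap\pom}F^q\,d\sigma\big)^{1/q}$, which is \eqref{eqRHI}.

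The main obstacle is the exponent bookkeeping in the summation step — recognizing that the scale-indexed geometric series converges exactly under the hypothesis $p<q(1+\frac1n)$, which is the only place this hypothesis enters. The accompanying Whitney-geometry verification in the first step (producing $\Delta_Q\subset 2B\cap\pom$ with $|x-\xi|<4$ for a purely dimensional choice of aperture) and the bounded-overlap count are routine; everything else is standard.
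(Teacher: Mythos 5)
Your proposal is correct, and it is essentially the same argument as the paper's: both pass to a Whitney decomposition of $\Omega$ near $B$, attach to each Whitney cube a boundary piece of comparable size (a surface ball $\Delta_Q$ for you, a boundary dyadic cube $\wh P$ in the paper), control $\sup_Q|v|$ by the truncated non-tangential maximal function on that boundary piece, and close with the convergent geometric sum over scales, which is exactly where the hypothesis $p<q(1+\tfrac1n)$ enters. The only real difference is organizational: the paper first establishes the case $q=1$ (where the sum $(\sum a_P^p)^{1/p}\le\sum a_P$ for $p\ge1$ does the exponent bookkeeping, and the condition reads $p<1+\tfrac1n$), and then obtains the general case by applying that estimate to $|v|^q$ and using $\cN_{\Omega,4r(B)}(|v|^q)=(\cN_{\Omega,4r(B)}v)^q$; you instead treat general $q$ in one pass, peeling off the factor $\Theta^{(p-q)/q}$ from $\bigl(\int_{\Delta_Q}F^q\bigr)^{p/q}$ before summing. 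Both routes are equally valid, and the paper's $q=1$ reduction is merely a slightly cleaner way to package the same computation.
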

	
	\begin{proof}
		Consider first the case $q=1$.
		Denote by $W_{B}$ the family of (Euclidean) Whitney cubes of $\Omega$ that intersect $B \cap \Omega$.
		Adjusting suitably the parameters of the construction of the Whitney cubes, we can ensure that 
		the cubes $P\in W_{B}$ satisfy
		$$\diam(P)\leq r(B)/2\quad \text{ and }\quad P\subset 2B\cap \Omega.$$
		Denote $\Delta=B\cap\pom$. Then we write
		\begin{align*}
			\left(\avint_{B \cap \Omega} |v|^p\,dm\right)^{1/p}\!\!\sigma(\Delta) & \lesssim \bigg( \sum_{P\in W_B} m_P(|v|^p)\,\ell(P)^{n+1}\bigg)^{1/p}\frac{\sigma(\Delta)}{r(B)^{(n+1)/p}}\\
			& \lesssim 
			\bigg( \sum_{P\in W_B} \inf_{\xi\in \wh P} \cN_{\Omega,4 r(B)}(v)(\xi)^p\,\ell(P)^{n+1}\bigg)^{1/p}\frac{\sigma(\Delta)}{r(B)^{(n+1)/p}},
		\end{align*}
		where $m_P$ denotes the mean with respect to Lebesgue measure and $\wh P\in\DD_\pom$ is a boundary cube associated with $P$ such that $\dist(\wh P,P)\approx \ell(P)$ and $\ell(\wh P) = \ell(P)$.
		Observe that we can assume that $\wh P\subset 2B$.
		Thus, taking into account that $\frac{n+1}{p} - n>0$ (because $p<1+\frac1n$ as $q=1$), 
		\begin{align}\label{eqfutur}
			\left(\frac1{r(B)^{n+1}}\int_{\Omega\cap B}
			|v|^p\,dm\right)^{1/p}\!\!\sigma(\Delta) & \lesssim 
			\sum_{P\in W_B}
			\inf_{\xi\in \wh P} \cN_{\Omega,4 r(B)}(v)(\xi)\,\ell(P)^{(n+1)/p}\frac{\sigma(\Delta)}{r(B)^{(n+1)/p}}\\
			& \lesssim
			\sum_{\substack{\wh P\in \DD_\pom:\wh P\subset 2B\\ \diam(P)\leq r(B)}} \int_{\wh P} \cN_{\Omega,4r(B)}(v)\,d\sigma \,\frac{\ell(P)^{\frac{n+1}{p} - n}}{r(B)^{\frac{n+1}{p} - n}}\nonumber\\
			%& \approx \sum_{\substack{\wh P\in \DD_\pom:\wh P\subset C_2B\\ \ell(P)\lesssim r(B)}}  \int_{\wh P}  \cN_{\Omega,\beta r(B)}(v)\,d\sigma \,\left(\frac{\ell(P)}{r(B)}\right)^{{\frac{n+1}{p} - n}}\nonumber\\
			&\lesssim \int_{2\Delta}  \cN_{\Omega,4 r(B)}(v)\,d\sigma.\nonumber
		\end{align}
		
		In the case $q>1$, we apply \rf{eqfutur} to the function $u=|v|^q$. Then, for $1\leq s< 1+\frac1n$, we get
		$$\bigg(\avint_{B \cap \Omega} |v|^{sq}\,dm\bigg)^{1/s}\lesssim \avint_{2B \cap \pom} |\cN_{\Omega,4 r(B)}(|v|^q)\,d\sigma =  \avint_{2B \cap \pom} |\cN_{\Omega,4 r(B)}(v)|^q\,d\sigma.$$
		Writing $p=sq$, we are done.
	\end{proof}
	\vv

	We will need the following localization lemma.
	
	\begin{lemma}[Smooth Localization Lemma in $L^{p,1}$]\label{lemlocLp**}
		Let $\Omega\subset\R^{n+1}$ be a chord-arc domain and $1<p\leq2$, $0<\rho\leq \diam(\pom)$. Suppose that $(N_{L^{p,1},L^p}(\rho))_\LL$ 
		and $(D_{L^{p',\infty}})_{\LL^*}$ are solvable in $\Omega$. Let $R\geq 4\rho$,  
		let $B=B(x_0,R)$ be a ball centered in $\pom$, and denote $\Delta = \pom\cap B$ and $\Omega_R =\Omega\cap A(x_0,2R,C_5R)$, for some constant $C_5>4$ depending only on the chord-arc character of $\Omega$.
		Let $u\in W^{1,2}(\Omega)$ be $\LL$-harmonic in $\Omega\cap B(x_0,C_5R)$ and such that $\partial_{\nu_A} u\in L^{p,1}(3\Delta)$ (with
		$\partial_{\nu_A} u$ understood in the weak sense).
		Then, for $1\leq j,k\leq n+1$, we have
		$$\avint_\Delta |S_\rho(\partial_{t,j,k} u)|^p \,d\sigma \lesssim \frac1{\sigma(\Delta)}\,\|\partial_{\nu_A} u\|_{L^{p,1}( 3\Delta)}^p + \bigg(\avint_{\Omega_R}
		|\nabla u |^{2}\,dm\bigg)^{p/2},$$
		with the implicit constant bounded by $1+C_\LL(N_{L^{p,1},L^p}(\rho))$ times some constant depending only on $n$, $p$, the chord-arc character of $\Omega$, and the $(D_{L^{p',\infty}})_{\LL^*}$ constant.
	\end{lemma}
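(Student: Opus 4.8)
The plan is to compare $u$, near $\Delta$, with a global Neumann solution in $\Omega$ whose data agrees with $\partial_{\nu_A}u$ on $3\Delta$, and to show that the difference only produces the far-field term $\big(\avint_{\Omega_R}|\nabla u|^2\,dm\big)^{p/2}$. So I would first fix $1\le j,k\le n+1$, set $m_{3\Delta}:=\avint_{3\Delta}\partial_{\nu_A}u\,d\sigma$ and $g:=\chi_{3\Delta}(\partial_{\nu_A}u-m_{3\Delta})$; then $\int_\pom g\,d\sigma=0$, and Hölder's inequality in Lorentz spaces (using $\|\chi_{3\Delta}\|_{L^{p',\infty}(\sigma)}\approx\sigma(\Delta)^{1/p'}$ to absorb the constant) gives $\|g\|_{L^{p,1}(\pom)}\lesssim\|\partial_{\nu_A}u\|_{L^{p,1}(3\Delta)}$, while also $|m_{3\Delta}|^p\sigma(\Delta)\lesssim\|\partial_{\nu_A}u\|_{L^{p,1}(3\Delta)}^p$. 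Letting $v\in\wh W^{1,2}(\Omega)$ solve the variational Neumann problem with $\partial_{\nu_A}v=g$, the assumed solvability of $(N_{L^{p,1},L^p}(\rho))_\LL$ gives
$$\avint_\Delta|S_\rho(\partial_{t,j,k}v)|^p\,d\sigma\le\frac{1}{\sigma(\Delta)}\,\|S_\rho(\partial_{t,j,k}v)\|_{L^p(\pom)}^p\lesssim\frac{C_\LL(N_{L^{p,1},L^p}(\rho))^p}{\sigma(\Delta)}\,\|\partial_{\nu_A}u\|_{L^{p,1}(3\Delta)}^p,$$
which is dominated by the first term on the right-hand side of the lemma; the contribution of the constant $m_{3\Delta}$ is absorbed the same way, either as a harmless constant conormal for the remainder or via an auxiliary Neumann solution with data $\approx m_{3\Delta}\chi_{3\Delta}$.

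It then remains to estimate $\avint_\Delta|S_\rho(\partial_{t,j,k}z)|^p\,d\sigma$ for $z:=u-v$, which is $\LL$-harmonic in $\Omega\cap B(x_0,C_5R)$ with conormal derivative equal to the constant $m_{3\Delta}$ on the interior of $3\Delta$. I would localize with a cutoff $\eta\in C_c^\infty(B(x_0,\tfrac52R))$ equal to $1$ on $B(x_0,2R)$ with $|\nabla\eta|\lesssim R^{-1}$: since $R\ge4\rho$, for $x\in\Delta$ the kernel $s_\rho(x,\cdot)$ (supported in $\bar B(x,2\rho)$) is supported in $\tfrac32\Delta\subset\{\eta=1\}$, so $S_\rho(\partial_{t,j,k}z)=S_\rho(\partial_{t,j,k}(\eta z))$ on $\Delta$. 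Applying the Neumann-function representation in the spirit of Lemma~\ref{propoloc0} with test function $\eta$ (this only uses the weak formulation, hence applies to the locally $\LL$-harmonic $u$) and fixing the mean via $\int_\pom N(x,\cdot)\,d\sigma=0$, I get, for $x$ in a neighbourhood of $\Delta$,
$$u(x)=\mathrm{const}+\underbrace{\int_\pom N(x,\xi)\,\eta(\xi)\,\partial_{\nu_A}u(\xi)\,d\sigma(\xi)}_{(\mathrm I)(x)}+\underbrace{\int_{\Omega_R}u\,A\nabla\eta\cdot\nabla_2N(x,\cdot)\,dm}_{(\mathrm{II})(x)}-\underbrace{\int_{\Omega_R}N(x,\cdot)\,A\nabla u\cdot\nabla\eta\,dm}_{(\mathrm{III})(x)},$$
because $\Omega\cap\supp\nabla\eta\subset\Omega_R$. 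Applying $S_\rho(\partial_{t,j,k}\cdot)$ and restricting to $\Delta$: the constant drops; $(\mathrm I)$ is (after subtracting a constant from $\eta\,\partial_{\nu_A}u\,\chi_{3\Delta}$) a genuine Neumann solution with $L^{p,1}$ data of norm $\lesssim\|\partial_{\nu_A}u\|_{L^{p,1}(3\Delta)}$, so it is controlled exactly as $v$ above; and $(\mathrm{II}),(\mathrm{III})$ are the error terms, for which one uses $|x-y|\gtrsim R$ for $x\in\Delta$, $y\in\Omega_R$, the size/Hölder bounds of Theorem~\ref{teoneumann1}(v)--(vi), a Caccioppoli estimate on $\Omega_R$ (to move $\nabla\eta$ off $u$ in $(\mathrm{II})$), and Cauchy–Schwarz in $y$ to reduce them on $\Delta$ to $\big(\avint_{\Omega_R}|\nabla u|^2\,dm\big)^{1/2}$ times harmless powers of $R$; the solvability of $(D_{L^{p',\infty}})_{\LL^*}$ enters here to control the non-tangential/trace quantities appearing along the way.

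The hard part — and where I expect the real work to be — is the $\rho$-uniformity of the estimates for $z$ and for the error terms. Rewriting $S_\rho(\partial_{t,j,k}w)(x)=\int_\pom\partial_{t,k,j}(s_\rho(x,\cdot))(y)\,(w(y)-c_x)\,d\sigma(y)$ (using $\int_\pom\partial_{t,k,j}s_\rho(x,\cdot)\,d\sigma=0$), with $\|\partial_{t,k,j}s_\rho(x,\cdot)\|_\infty\lesssim\rho^{-n-1}$ and support in $\bar B(x,2\rho)$, a crude bound gives only $|S_\rho(\partial_{t,j,k}w)(x)|\lesssim\rho^{-1}\operatorname{osc}_{B(x,2\rho)}(w)$, which for a merely Hölder continuous $w$ degrades like $\rho^{\alpha-1}$ as $\rho\to0$. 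The device to avoid this is to invoke Moser's oscillation estimate \rf{eq:moser1} (not the scaling estimate \rf{eq:moser2}) — available since $w\in\{z,(\mathrm{II}),(\mathrm{III})\}$ is $\LL$- or $\LL^*$-harmonic with constant conormal on $3\Delta$ — followed by the interior Poincaré inequality of Theorem~\ref{teotrace**}, so that $\rho^{-1}\operatorname{osc}_{B(x,2\rho)}(w)\lesssim\rho^{-1}\avint_{B(x,C\rho)\cap\Omega}|w-c_x|\,dm\lesssim\big(\avint_{B(x,C'\rho)\cap\Omega}|\nabla w|^2\,dm\big)^{1/2}$, the powers of $\rho$ cancelling. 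One then raises to the $p$-th power, uses $p\le2$ together with Jensen's inequality, integrates over $\Delta$, and applies Fubini, the corkscrew and Ahlfors-regularity conditions, and the reverse Hölder inequality of Lemma~\ref{lemfutur} to turn the resulting interior $L^2$-averages into $\big(\avint_{\Omega_R}|\nabla u|^2\,dm\big)^{p/2}$, which closes the argument; throughout, one has to keep track that every constant depends on the data only through $n$, $p$, the chord-arc character of $\Omega$, the $(D_{L^{p',\infty}})_{\LL^*}$ constant, and the factor $1+C_\LL(N_{L^{p,1},L^p}(\rho))$ attached to the first term.
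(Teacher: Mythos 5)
Your approach is genuinely different from the paper's, but I don't think it closes as written. The paper's proof is a \emph{duality} argument: after reducing to $S_\rho(\partial_{t,j,k}(\varphi v))$, it pairs against a test function $\psi$ supported on $\Delta$, uses the representation of Lemma~\ref{propoloc0}, and then shifts all operators onto $\psi$ via Fubini, so that every term involves the \emph{rough Neumann solution} $N_\sigma^*(\partial_{t,k,j}S_\rho\psi)$ for $\LL^*$. The solvability hypotheses then enter cleanly: $(N_{L^{p,1},L^p}(\rho))_\LL$ via the dual bound $\|N_\sigma^*(\partial_{t,k,j}S_\rho\psi)\|_{L^{p',\infty}(\sigma)}\lesssim C_p\|\psi\|_{L^{p'}(\sigma)}$, and $(D_{L^{p',\infty}})_{\LL^*}$ together with Moser, Caccioppoli and Lemma~\ref{lemfutur} to convert interior annulus averages of $N_\sigma^*(\partial_{t,k,j}S_\rho\psi)$ and its gradient into boundary $L^{p',\infty}$ norms. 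Your plan instead represents $u$ itself (or $z$) and tries to estimate $S_\rho\partial_{t,j,k}$ of the pieces $(\mathrm{I}),(\mathrm{II}),(\mathrm{III})$ directly.

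I see two concrete gaps. First, you invoke Theorem~\ref{teomoser} for $w\in\{(\mathrm{II}),(\mathrm{III})\}$, but that theorem requires \emph{vanishing} conormal data, and $(\mathrm{III})$ has conormal $-\sigma(\pom)^{-1}\int_{\Omega_R}A\nabla u\cdot\nabla\eta\,dm$, a nonzero constant coming from the constant Neumann data $\partial_{\nu_A(x)}N(x,y)\equiv -\sigma(\pom)^{-1}$ of $N(\cdot,y)$ for interior $y$. You acknowledge ``constant conormal'' but do not say how to reduce to the vanishing case; this is in principle patchable (move the constant to the $(\mathrm{I})$-term, whose conormal is $\eta\,\partial_{\nu_A}u - c_{\mathrm I}$ with $c_{\mathrm I}+c_{\mathrm{III}}=0$), but as written it is a gap. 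Second, and more seriously, your Moser--Poincar\'e device reduces $\rho^{-1}\operatorname{osc}_{B(x,2\rho)}(w)$ to $\big(\avint_{B(x,C'\rho)\cap\Omega}|\nabla w|^2\,dm\big)^{1/2}$, and you then need to bound this, uniformly in $\rho$, by $\big(\avint_{\Omega_R}|\nabla u|^2\,dm\big)^{1/2}$. For $w=(\mathrm{II})$ this requires control of $\nabla_x\nabla_y N(x,y)$ with $x$ near $\Delta$ and $y$ in $\Omega_R$ (including $y$ arbitrarily close to $\pom$), and for $w=(\mathrm{III})$ control of $\nabla_x N(x,y)$ up to $\pom$ in $x$; Theorem~\ref{teoneumann1} provides only size and H\"older estimates for $N$ itself, not for its gradients, and in chord-arc domains with merely bounded measurable $A$ such pointwise gradient bounds are not available. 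Using only the H\"older estimate (vi) instead gives $\rho^{-1}\operatorname{osc}_{B(x,2\rho)}\lesssim\rho^{\alpha-1}(\cdots)$, which blows up as $\rho\to0$, exactly the degeneracy you were trying to avoid. Your appeal to Lemma~\ref{lemfutur} and to $(D_{L^{p',\infty}})_{\LL^*}$ does not close this, because those tools convert interior averages of a \emph{solution} into boundary norms of its non-tangential maximal function, and the objects whose gradients you need to bound are weighted integrals of the Neumann kernel, not solutions with controllable boundary data. The paper's duality step is precisely the device that sidesteps this: after Fubini, the gradients fall on the genuine rough Neumann solution $N_\sigma^*(\partial_{t,k,j}S_\rho\psi)$, whose boundary behaviour is controlled by the hypotheses, and all Neumann-kernel estimates reduce to the statements of Theorem~\ref{teoneumann1} plus Caccioppoli/Moser on that solution.
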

	
	\begin{proof}
		Let $\vphi$ be a smooth function which equals $1$ on $2.5B$ and vanishes on  $\R^{n+1} \setminus 3B$. Denote $u_B = \avint_{\Omega_R} u$ and $v=u-u_B$,
		so that $S_\rho(\partial_{t,j,k} u)=S_\rho(\partial_{t,j,k} (\vphi\,v))$ on $\Delta=B\cap \pom$.
		Hence it suffices to estimate $\|S_\rho(\partial_{t,j,k} (\vphi\,v))\|_{L^p(\sigma|_\Delta)}$ (recall that $\supp(s_\rho(x,\cdot)\subset \bar B(x,2\rho)$ and $\rho\leq R/4$)).
		To this end, we consider an arbitrary smooth function $\psi$ supported on $\Delta$, and for every $j,k$ we write
		\begin{equation}\label{eqwyu7**a}
			\langle S_\rho(\partial_{t,j,k} (\vphi\,v)),\,\psi\rangle_\sigma = \langle \vphi\,v,\,\partial_{t,k,j} S_\rho\psi\rangle_\sigma = 
			\langle \vphi\,v- c_{\vphi v},\,\partial_{t,k,j} S_\rho\psi\rangle_\sigma,
		\end{equation}
		where 
		$$c_{\vphi v} =\avint_\pom \vphi\,v\,d\sigma$$
		and where we took into account that $\int_\pom\partial_{t,k,j} S_\rho\psi\,d\sigma=0$. Then,  by Lemma \ref{propoloc0}, for all $x\in \Omega$, we have
		\begin{align*}
			\vphi(x)\,v(x) - c_{\vphi v} &= \int_\pom N^x\,\vphi\,\partial_{\nu_A}v\,d\sigma + \int_\Omega v\,A\nabla \vphi\cdot \nabla N^x\,dm - \int_\Omega N^x\,A\nabla v\cdot\nabla\vphi\,dm.
		\end{align*}
		In other words, if we denote 
		\begin{equation}\label{defNmu}
			N_\mu (f)(x) = \int N(x,y)\,f(y)\,d\mu(y),
			\quad (\nabla_2 N)_\mu (F)(x) = \int_\Omega  \nabla N^x(y)\cdot F(y)\,d\mu(y),
		\end{equation}
		we have
		$$\vphi(x)\,v(x) - c_{\vphi v} = N_\sigma(\vphi\,\partial_{\nu_A} v)(x) - N_m\big(A\nabla v\cdot \nabla\vphi\big)(x)
		+ (\nabla_2 N)_m (v\,A\nabla \vphi)(x).$$   
		Plugging this identity into \rf{eqwyu7**a}, we obtain
		\begin{align}\label{eqgh34**a}
			\langle S_\rho(\partial_{t,j,k} (\vphi\,v)),\,\psi\rangle_\sigma & = \langle N_\sigma(\vphi\,\partial_{\nu_A} v),\,\partial_{t,k,j} S_\rho\psi\rangle_\sigma -
			\langle N_m\big(A\nabla v\cdot \nabla\vphi\big),\,\partial_{t,k,j} S_\rho\psi\rangle_\sigma \\
			&\quad +
			\langle (\nabla_2 N)_m (v\,A\nabla \vphi),\,\partial_{t,k,j} S_\rho\psi\rangle_\sigma
			\nonumber\\
			& =
			\langle \vphi\,\partial_{\nu_A} v,\,N_\sigma^*(\partial_{t,k,j} S_\rho\psi)\rangle_\sigma + \langle A \nabla v\cdot \nabla\vphi,\,N_\sigma^*(\partial_{t,k,j} S_\rho\psi)\rangle_m\nonumber\\
			&\quad + \langle  v\,A\nabla \vphi,\,(\nabla_2 N)_\sigma^*(\partial_{t,k,j} S_\rho\psi)\rangle_m,\nonumber
		\end{align}
		where
		\begin{equation}\label{defNmu*}
			N_\mu^* (f)(y) = \int N(x,y)\,f(x)\,d\mu(x),
			\quad (\nabla_2 N)^*_\mu (f)(y) = \int_\Omega  \nabla_2N(x,y) \,f(x)\,d\mu(x).
		\end{equation}
		Notice that $(\nabla_2 N)_\sigma^*(\partial_{t,k,j} S_\rho\psi)$ is a vector field.

		Next we claim that
		\begin{equation}\label{claim*1**}
			\|N_\sigma^*(\partial_{t,k,j} S_\rho\psi)\|_{L^{p',\infty}(\sigma)}\lesssim C_p\,\| \psi\|_{L^{p'}(\sigma)},
		\end{equation}
		where we wrote $C_p:=C_\LL(N_{L^{p,1},L^p}(\rho))$ to shorten notation.
		Indeed, consider a Lipschitz function $\theta\in L^{p,1}(\sigma)$. By the solvability of $(N_{L^{p,1},L^p}(\rho))_\LL$, we have
		\begin{align*}
			\big|\langle \theta,\,N_\sigma^*(\partial_{t,k,j} S_\rho\psi)\rangle_\sigma\big| & = \big|\langle S_\rho(\partial_{t,j,k}(N_\sigma(\theta))),\, \psi\rangle_\sigma\big|\\
			&\leq \|S_\rho(\partial_{t,j,k}(N_\sigma(\theta)))\|_{L^p(\sigma)} \,\|\psi\|_{L^{p'}(\sigma)} \leq C_p
			\|\theta\|_{L^{p,1}(\sigma)}\, \|\psi\|_{L^{p'}(\sigma)},
		\end{align*}
		which proves \eqref{claim*1**}.
		Consequently, we can estimate the first term in \rf{eqgh34**a} as follows
		\begin{align*}
			\big|
			\langle \vphi\,\partial_{\nu_A} v,\,N_\sigma^*(\partial_{t,k,j} S_\rho\psi)\rangle_\sigma\big| 
			& \lesssim  
			\|\vphi\,\partial_{\nu_A} v\|_{L^{p,1}(\pom)}\,\|N_\sigma^*(\partial_{t,k,j} S_\rho\psi)\|_{L^{p',\infty}(\pom)}\\
			&\lesssim
			C_p\,\|\partial_{\nu_A} v\|_{L^{p,1}(\sigma|_{3\Delta})}\,\|\psi\|_{L^{p'}(\sigma)}.
		\end{align*}
		
		Now we turn our attention to the second term on the right hand side of \rf{eqgh34**a}. First we apply H\"older's inequality to obtain
		$$\big|\langle A \nabla v\cdot \nabla\vphi,\,N_\sigma^*(\partial_{t,k,j} S_\rho\psi)\rangle_m\big| 
		\leq \|A \nabla v\cdot \nabla\vphi\|_{L^1} \,\|N_\sigma^*(\partial_{t,k,j} S_\rho\psi)\|_{L^{\infty}(\Omega\cap A(x_0,2.5R,3R))}.$$
		Notice now that 
		$$\supp(\partial_{t,k,j} S_\rho\psi) \subset \bar B(x_0,R+2\rho)\subset \bar B(x_0,\tfrac32R).$$
		So
		$N_\sigma^*(\partial_{t,k,j} S_\rho\psi)$ is an $\LL^*$-harmonic function which is continuous in $\overline\Omega$ and 
		such that its conormal derivative vanishes on $\pom\setminus  \bar B(x_0,\tfrac32R)$. Then, by Moser type estimates and Lemma \ref{lemfutur}, it follows that
		\begin{align*}
			\|N_\sigma^*(\partial_{t,k,j} S_\rho\psi)\|_{L^{\infty}(\Omega\cap A(x_0,2.5R,3R))} & \lesssim \avint_{\Omega\cap A(x_0,2R,4R)} |N_\sigma^*(\partial_{t,k,j} S_\rho\psi)|\,dm\\
			& \lesssim \avint_{6\Delta} \cN_\Omega(N_\sigma^*(\partial_{t,k,j} S_\rho\psi))\,d\sigma.
		\end{align*}

		By Kolmogorov's inequality (recall that $p'\geq2$), the $L^{p',\infty}$ solvability of the Dirichlet problem for $\LL^*$,  and \rf{claim*1**}:
		\begin{align*}
			\|N_\sigma^*(\partial_{t,k,j} S_\rho\psi)&\|_{L^{\infty}(\Omega\cap A(x_0,2.5R,3R))}
			\lesssim 
			\sigma(\Delta)^{-1/p'}\, \|\cN_\Omega(N_\sigma^*(\partial_{t,k,j} S_\rho\psi))\|_{L^{p',\infty}(\sigma)} \\
			&\qquad\lesssim  \sigma(\Delta)^{-1/p'}\, \|N_\sigma^*(\partial_{t,k,j} S_\rho\psi)\|_{L^{p',\infty}(\sigma)}\lesssim C_p\,\sigma(\Delta)^{-1/p'}\,\| \psi\|_{L^{p'}(\sigma)}.
		\end{align*}
		Therefore,
		\begin{align*}
			\big|\langle A \nabla v\cdot \nabla\vphi,\,N_\sigma^*(\partial_{t,k,j}  S_\rho\psi)\rangle_m\big| &
			\lesssim C_p\, \sigma(\Delta)^{-1/p'}\|A \nabla v\cdot \nabla\vphi\|_{L^1} \,\| \psi\|_{L^{p'}(\sigma)}\\
			& \lesssim C_p\,\sigma(\Delta)^{-1/p'} R^{-1}\|\nabla v\|_{L^1(A(x_0,2.5R,3R)\cap\Omega)} \,\| \psi\|_{L^{p'}(\sigma)}.
			%& \lesssim C_p\,\sigma(\Delta)^{-1/p'} R^{-1}\|\nabla u\|_{L^1(\Omega_R)}\, \| \psi\|_{L^{p'}(\sigma)}.
		\end{align*}
		
		Finally we deal with the third term on the right hand side of \rf{eqgh34**a}. First we write
		\begin{align}\label{eqal5bx}
			\big|\langle  v\,A\nabla \vphi,(\nabla_2 N)_\sigma^*(\partial_{t,k,j} S_\rho\psi)\rangle_m\big|  & \leq \bigg(\int_\Omega |v\,A\nabla \vphi|^2\,dy\bigg)^{1/2}\|(\nabla_2 N)_\sigma^*(\partial_{t,k,j} S_\rho\psi)\|_{L^2(A(x_0,2.5R,3R)}.
		\end{align}
		Notice that 
		\begin{equation}\label{eqint581}
			\int_\Omega |v\,A\nabla \vphi|^2\,dy\lesssim \frac1{R^2}\,\int_{A(x_0,2.5R,3R)} |v|^2\,dy.
		\end{equation}
		Regarding the last term on the right hand side of \rf{eqal5bx}, observe that $N_\sigma^*(\partial_{t,k,j} S_\rho\psi)$ is $\LL^*$-harmonic in $\Omega$ and it has vanishing conormal derivative in $\pom\setminus \bar B(x_0,\tfrac32R)$. Thus, 
		by the Caccioppoli inequality and Lemma \ref{lemfutur}, for some $q\in (1,2)$ (depending on $n$),
		\begin{align*}
			\|\nabla N_\sigma^*(\partial_{t,k,j} S_\rho\psi)\|_{L^2(A(x_0,2.5R,3R)} &\lesssim \frac1R\,\|N_\sigma^*(\partial_{t,k,j} S_\rho\psi)\|_{L^2(A(x_0,2R,4R))}\\ & 
			\lesssim R^{\frac{n-1}2-\frac nq}\,\|\cN_\Omega(N_\sigma^*(\partial_{t,k,j} S_\rho\psi))\|_{L^q(8\Delta)}.
		\end{align*}
		Then, by Kolmogorov's inequality, the $L^{p',\infty}$ solvability of the Dirichlet problem, and \rf{claim*1**},
		\begin{align*}
			\|\nabla N_\sigma^*(\partial_{t,k,j} S_\rho\psi)\|_{L^2(A(x_0,2.5R,3R)} &\lesssim R^{\frac{n-1}2-\frac n{p'}}\,\|\cN_\Omega(N_\sigma^*(\partial_{t,k,j} S_\rho\psi))\|_{L^{p',\infty}(8\Delta)}\\
			& \lesssim  
			R^{\frac{n-1}2-\frac n{p'}}\,\|N_\sigma^*(\partial_{t,k,j} S_\rho\psi)\|_{L^{p',\infty}(8\Delta)}\\
			& \leq C_p\,R^{\frac{n-1}2-\frac n{p'}}\,\|\psi\|_{L^{p'}(\sigma)}.
		\end{align*}
		Plugging this estimate and \rf{eqint581} into \rf{eqal5bx}, we obtain
		$$\big|\langle  v\,A\nabla \vphi,\,(\nabla_2 N)_\sigma^*(\partial_{t,k,j} S_\rho\psi)\rangle_m\big|
		\lesssim 
		C_p\, R^{\frac{n-3}2-\frac n{p'}}\,\bigg(\int_{A(x_0,2.5R,3R)} |v|^2\,dy\bigg)^{1/2}\,\|\psi\|_{L^{p'}(\sigma)}.$$
		
		Gathering the estimates obtained above, we derive
		\begin{align*}
			\big|\langle S_\rho(\partial_{t,j,k} (\vphi\,v)),\,\psi\rangle_\sigma\big| 
			& \lesssim C_p\,\Big(\|\partial_{\nu_A} v\|_{L^{p,1}(\sigma|_{3\Delta})}
			+ \sigma(\Delta)^{-1/p'} R^{-1}\|\nabla v\|_{L^1(A(x_0,2.5R,3R)\cap\Omega)} \\
			&\quad + R^{\frac{n-3}2-\frac n{p'}}\,\| v\|_{L^2(A(x_0,2.5R,3R)\cap\Omega)}\Big)\,\|\psi\|_{L^{p'}(\sigma)}.
		\end{align*}
		Therefore,
		\begin{align*}
			\| S_\rho(\partial_{t,j,k} (\vphi\,v))\|_{L^p(\sigma)}
			& \lesssim C_p\,\Big(\|\partial_{\nu_A} v\|_{L^{p,1}(\sigma|_{3\Delta})}
			+ \sigma(\Delta)^{-1/p'} R^{-1}\|\nabla v\|_{L^1(A(x_0,2.5R,3R)\cap\Omega)} \\
			&\quad + R^{\frac{n-3}2-\frac n{p'}}\,\| v\|_{L^2(A(x_0,2.5R,3R)\cap\Omega)}\Big).
		\end{align*}
		Then, denoting $\sigma_\Delta = \frac1{\sigma(\Delta)}\sigma$, we get
		\begin{align*}\| S_\rho(\partial_{t,j,k} (\vphi\,u))\|_{L^p(\sigma_\Delta)} &\lesssim C_p\,
			\bigg(\|\chi_{3\Delta}\,\partial_{\nu_A} u\|_{L^{p,1}(\sigma_\Delta)} + \avint_{A(x_0,2.5R,3R)\cap\Omega}|\nabla u|\,dm \\
			&\quad + \frac1R
			\bigg(\avint_{A(x_0,2.5R,3R)\cap\Omega}|u-u_B|^2\,dm\bigg)^{1/2}.
		\end{align*}
		By the Poincar\'e inequality in Theorem \ref{teotrace**}, we have\footnote{In fact, we can obtain
			$$
			\frac1R \bigg(  \avint_{A(x_0,2.5R,3R)\cap\Omega}|u-u_B|^2\,dm    \bigg)^{1/2}\lesssim \bigg(\avint_{\Omega_R}|\nabla u|^{q_n}\,dm\bigg)^{1/{q_n}}, 
			$$
			with $q_n= \frac{2n+2}{n+3}$, so that at the end we get 
			$$\| S_\rho(\partial_{t,j,k} (\vphi\,u))\|_{L^p(\sigma_\Delta)} \lesssim C_p\,
			\bigg(\|\chi_{3\Delta}\,\partial_{\nu_A} u\|_{L^{p,1}(\sigma_\Delta)} + \bigg(\avint_{\Omega_R}|\nabla u|^{q_n}\,dm\bigg)^{1/q_n}\bigg).$$
		}
		$$\frac1R\bigg(\avint_{A(x_0,2.5R,3R)\cap\Omega}|u-u_B|^2\,dm\bigg)^{1/2}\lesssim \bigg(\avint_{\Omega_R}|\nabla u|^{2}\,dm\bigg)^{1/2}, 
		$$
		so that at the end we get 
		$$\| S_\rho(\partial_{t,j,k} (\vphi\,u))\|_{L^p(\sigma_\Delta)} \lesssim C_p\,
		\bigg(\|\chi_{3\Delta}\,\partial_{\nu_A} u\|_{L^{p,1}(\sigma_\Delta)} + \bigg(\avint_{\Omega_R}|\nabla u|^{2}\,dm\bigg)^{1/2}\bigg).$$
	\end{proof}

	\vv
	
	We remark that by quite similar, but somewhat simpler arguments, we could get the following more classical localization result. Since this will not be used in this paper, we skip the detailed proof.

	\begin{lemma}[Localization Lemma]\label{lemlocLp}
		Let $\Omega\subset\R^{n+1}$ be a chord-arc domain and $1<p\leq2$. Suppose that the Neumann problem for $\Omega$ is solvable in $L^p$ and the
		Dirichlet problem for $\Omega$ is solvable in $L^{p'}$.
		Let $g\in L^p(\sigma)$ and let $u$ be the solution of the Neumann problem with boundary data $g$.
		Let $B=B(x_0,R)$ be a ball centered in $\pom$, $\Delta = \pom\cap B$, and $\Omega_R =\Omega\cap A(x_0,2R,C_5R)$, where $C_5>4$ is some constant depending just on the chord-arc character of $\Omega$. Then, for $1\leq j,k\leq n+1$, we have
		$$\avint_\Delta |\partial_{t,j,k} u|^p \,d\sigma \lesssim \avint_{3\Delta} |\partial_{\nu_A} u|^p\,d\sigma + \bigg(\avint_{\Omega_R}
		|\nabla u |^2\,dm\bigg)^{p/2}.$$
	\end{lemma}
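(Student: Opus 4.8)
The plan is to transcribe the proof of the Smooth Localization Lemma (Lemma~\ref{lemlocLp**}), deleting the approximation operator $S_\rho$ everywhere and replacing the Lorentz pair $(L^{p,1},L^{p',\infty})$ by the ordinary pair $(L^p,L^{p'})$. First I would fix a smooth cutoff $\vphi$ with $\vphi\equiv1$ on $2.5B$ and $\supp\vphi\subset3B$, set $u_B=\avint_{\Omega_R}u\,dm$ and $v=u-u_B$, so that $\partial_{t,j,k}u=\partial_{t,j,k}(\vphi v)$ on $\Delta$ and it suffices to bound $\|\partial_{t,j,k}(\vphi v)\|_{L^p(\sigma|_\Delta)}$. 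Testing against an arbitrary smooth $\psi$ supported on $\Delta$, using $\int_\pom\partial_{t,k,j}\psi\,d\sigma=0$ and the localization identity of Lemma~\ref{propoloc0} applied to $\vphi v$, one arrives (exactly as in \eqref{eqgh34**a}) at
$$\langle\partial_{t,j,k}(\vphi v),\psi\rangle_\sigma=\langle\vphi\,\partial_{\nu_A}v,\,N_\sigma^*(\partial_{t,k,j}\psi)\rangle_\sigma+\langle A\nabla v\cdot\nabla\vphi,\,N_\sigma^*(\partial_{t,k,j}\psi)\rangle_m+\langle v\,A\nabla\vphi,\,(\nabla_2N)_\sigma^*(\partial_{t,k,j}\psi)\rangle_m .$$

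The heart of the matter is the analogue of \eqref{claim*1**}, namely $\|N_\sigma^*(\partial_{t,k,j}\psi)\|_{L^{p'}(\sigma)}\lesssim C_\LL(N_p)\,\|\psi\|_{L^{p'}(\sigma)}$. This I would prove by duality: for $\theta\in\Lip(\pom)$, property (ii) of Theorem~\ref{teoneumann1} shows that $N_\sigma\theta$ is the variational solution of the Neumann problem with data $\theta-m_{\sigma,\pom}\theta$, and $\langle\theta,N_\sigma^*(\partial_{t,k,j}\psi)\rangle_\sigma=\langle\partial_{t,j,k}(N_\sigma\theta),\psi\rangle_\sigma$; then $\|\partial_{t,j,k}(N_\sigma\theta)\|_{L^p(\sigma)}\lesssim\|\nabla_H(N_\sigma\theta)\|_{L^p(\sigma)}\lesssim\|\wt\cN_\Omega(\nabla N_\sigma\theta)\|_{L^p(\sigma)}\lesssim C_\LL(N_p)\|\theta\|_{L^p(\sigma)}$ by Lemma~\ref{lem:HMT vs Hajlasz}, the standard Kenig--Pipher-type argument from the proof of Proposition~\ref{propodual1}, and the solvability of $(N_p)_\LL$; density of $\Lip(\pom)$ in $L^p(\sigma)$ then closes the claim. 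With it in hand, the first pairing is $\lesssim C_\LL(N_p)\|\partial_{\nu_A}u\|_{L^p(3\Delta)}\|\psi\|_{L^{p'}(\sigma)}$ by H\"older. For the other two I would use that $\partial_{t,k,j}\psi$ is supported on $\Delta$, so that $w:=N_\sigma^*(\partial_{t,k,j}\psi)$ is $\LL^*$-harmonic in $\Omega$ with vanishing conormal derivative on $\pom\setminus\bar\Delta$; Moser estimates (Theorem~\ref{teomoser}), Lemma~\ref{lemfutur}, Kolmogorov's inequality (valid since $p'\ge2$), the assumed $L^{p'}$ solvability of the Dirichlet problem for $\LL^*$, and the claim give $\|w\|_{L^\infty(\Omega\cap A(x_0,2.5R,3R))}\lesssim C_\LL(N_p)\,\sigma(\Delta)^{-1/p'}\|\psi\|_{L^{p'}(\sigma)}$, while Caccioppoli together with the same ingredients gives $\|\nabla w\|_{L^2(A(x_0,2.5R,3R))}\lesssim C_\LL(N_p)\,R^{\frac{n-1}2-\frac n{p'}}\|\psi\|_{L^{p'}(\sigma)}$; feeding these into the second and third pairings, together with $|A\nabla v\cdot\nabla\vphi|\lesssim R^{-1}|\nabla v|\,\chi_{A(x_0,2.5R,3R)}$ and $\int_\Omega|v\,A\nabla\vphi|^2\lesssim R^{-2}\int_{A(x_0,2.5R,3R)}|v|^2$, bounds them by $C_\LL(N_p)\,\sigma(\Delta)^{-1/p'}R^{-1}\|\nabla v\|_{L^1(A(x_0,2.5R,3R)\cap\Omega)}\|\psi\|_{L^{p'}(\sigma)}$ and $C_\LL(N_p)\,R^{\frac{n-3}2-\frac n{p'}}\|v\|_{L^2(A(x_0,2.5R,3R)\cap\Omega)}\|\psi\|_{L^{p'}(\sigma)}$ respectively.

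Taking the supremum over $\psi$ with $\|\psi\|_{L^{p'}(\sigma)}\le1$ supported on $\Delta$ and normalizing by $\sigma(\Delta)$ (writing $\sigma_\Delta=\sigma(\Delta)^{-1}\sigma$), the exponents of $R$ are tracked so that, exactly as in Lemma~\ref{lemlocLp**}, one obtains
$$\|\partial_{t,j,k}(\vphi u)\|_{L^p(\sigma_\Delta)}\lesssim C_\LL(N_p)\Big(\|\chi_{3\Delta}\,\partial_{\nu_A}u\|_{L^p(\sigma_\Delta)}+\avint_{A(x_0,2.5R,3R)\cap\Omega}|\nabla u|\,dm+\tfrac1R\big(\avint_{A(x_0,2.5R,3R)\cap\Omega}|u-u_B|^2\,dm\big)^{1/2}\Big),$$
and the interior Poincar\'e inequality of Theorem~\ref{teotrace**} converts the last term into $\big(\avint_{\Omega_R}|\nabla u|^2\,dm\big)^{1/2}$, which also dominates the $L^1$ average; raising to the $p$-th power and using $\partial_{t,j,k}(\vphi u)=\partial_{t,j,k}u$ on $\Delta$ yields the stated inequality. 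I expect the only genuinely non-routine point to be the $L^{p'}$ bound $\|N_\sigma^*(\partial_{t,k,j}\psi)\|_{L^{p'}(\sigma)}\lesssim C_\LL(N_p)\|\psi\|_{L^{p'}(\sigma)}$: this is where the Neumann solvability hypothesis is actually used, and it requires both that $N_\sigma\theta$ be a legitimate zero-mean-data variational solution and that the chain $\|\partial_{t,j,k}\cdot\|_{L^p}\lesssim\|\nabla_H\cdot\|_{L^p}\lesssim\|\wt\cN_\Omega(\nabla\cdot)\|_{L^p}$ apply to it. Everything else is the verbatim, slightly simpler analogue of the corresponding steps in the proof of Lemma~\ref{lemlocLp**}.
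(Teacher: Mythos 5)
Your proposal is correct and implements exactly the adaptation the paper has in mind: the paper explicitly omits the proof of Lemma~\ref{lemlocLp}, remarking only that it follows by ``quite similar, but somewhat simpler arguments'' than Lemma~\ref{lemlocLp**}, and your transcription (delete $S_\rho$, replace $(L^{p,1},L^{p',\infty})$ by $(L^p,L^{p'})$) is precisely that simplification, with the key duality estimate $\|N_\sigma^*(\partial_{t,k,j}\psi)\|_{L^{p'}(\sigma)}\lesssim C_\LL(N_p)\|\psi\|_{L^{p'}(\sigma)}$ correctly obtained from $(N_p)_\LL$-solvability via $\|\partial_{t,j,k}(N_\sigma\theta)\|_{L^p}\lesssim\|\nabla_H(N_\sigma\theta)\|_{L^p}\lesssim\|\wt\cN_\Omega(\nabla N_\sigma\theta)\|_{L^p}$ and the observation that $N_\sigma 1\equiv 0$ by Theorem~\ref{teoneumann1}(ii) and (iv), so $\partial_{\nu_A}(N_\sigma\theta)=\theta-m_{\sigma,\pom}\theta$.
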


	\vvv

	% *************************************************************************************************
	
	\section{Proof of the main theorem}
	
	%For a domain $\Omega\subset \R^{n+1}$ and a Radon measure $\mu$ in $\Omega$, we consider the maximal operator
	%$$\wh{\cM}_{\Omega,\mu}(f)(x) =\sup_{Q\ni x}\frac1{\mu(3Q)}\int_Q |f|\,d\mu,$$
	%where the supremum is taken over all the cubes $Q$ in $\R^{n+1}$ with sides parallel to the coordinate axes that contain $x$ and such that $3Q$ is contained in $\Omega$.
	
	This section is devoted to the proof of Theorem \ref{teobigpi} by means of a suitable good $\lambda$ inequality.
	To this end, we need some auxiliary lemmas. The first one is the following.
	
	\begin{lemma}\label{lemaux2}
		Let $\Omega$ be a domain with $n$-Ahlfors regular boundary, $B_0$ a ball centered at  $\pom$, and $\mu$ a Borel measure in $ B_0\cap\Omega$ such that
		\begin{equation}\label{eqtyp4500}
			\mu(B(x,r))\leq \bar C_0\,r^n\quad \mbox{ for all $x\in \R^{n+1}$ and $r>0$}
		\end{equation}
		and
		\begin{equation}\label{eqtyp4501}
			\mu(B(x,r))\geq \bar C_0^{-1}\,r^n\quad \mbox{ for all $x\in \supp\mu$ and $0<r\leq \delta_\Omega(x)$}
		\end{equation}
		Then, for any Borel function $u:\Omega\to\R$ such that $u\in L^1_{loc}(\mu)$ and any Borel function $\vphi:\supp\mu\to \R$,
		\begin{equation}\label{eqtyp450}
			\int |u\,\vphi|\,d\mu \lesssim \int_{2B_0}\cN_{\Omega,4r(B_0)}(u)\,\cM_\mu(\vphi)\,d\sigma.
		\end{equation}
		Also,
		\begin{equation}\label{eqtyp45}
			\int |u|\,d\mu \lesssim \int_{2B_0}\cN_{\Omega,4r(B_0)}(u)\,d\sigma,
		\end{equation}
		assuming in both estimates the aperture  of the cones associated with $\cN_{\Omega,4r(B_0)}$ to be large enough (depending only on $n$).
		The implicit constant above depends only on $n$, $\bar C_0$, and the Ahlfors regularity of $\pom$.
	\end{lemma}
	
	In the lemma $\cM_\mu$ stands for the non-centered maximal Hardy-Littlewood operator, with the supremum taken with respect to balls centered in $\supp\mu$.
	
	\begin{proof}
		Notice that \rf{eqtyp45} follows from \rf{eqtyp450} setting $\vphi=1$.
		
		To prove \rf{eqtyp450}, let $E=\supp\mu$, consider a decomposition of $\Omega$ into Whitney cubes as in Section \ref{secwhitney}, and  denote by $\WW_0$ the family of the Whitney cubes that intersect $B_0\cap E$. Reducing the size of the Whitney cubes if necessary, we can assume that
		$P\subset 1.5 B_0$ for each $P\in \WW_0$.
		By monotone convergence and the inner regularity of $\mu$,  we can also assume that 
		$\dist(E,\pom)>0$, which implies that the family $\WW_0$ is finite. 
		
		By the lower Ahlfors regularity of $\pom$ and the properties of Whitney cubes, we can choose positive constants $C_6$ (depending on $n$) and
		$\tau$ (depending on $n$ and the Ahlfors regularity of $\pom$) such that
		for each $P\in\WW_0$, the ball $\wt B(P) :=B(x_P,C_6\ell(P))$ (where $x_P$ is the center of $P$) satisfies 
		$3P\subset \wt B(P)$ and 
		\begin{equation}\label{eqsig840}
			\sigma(\wt B(P)\cap B_0\cap\pom) \geq \tau\,\ell(P)^n
		\end{equation}
		(in particular, this implies that $\wt B(P)\cap B_0\cap\pom\neq\varnothing$).
		For later reference, notice that the growth conditions on $\mu$ ensures that
		\begin{equation}\label{eqsig841}
			\mu(\wt B(P))\lesssim \mu(3P) \leq C_0'\,\ell(P)^n,
		\end{equation}
		with $C_0'$ depending only on $\bar C_0$ and $n$.
		
		We claim that for each \( P \in \WW_0 \), we can choose a Borel subset \( F_P \subset \wt B(P) \cap 2B_0 \cap \partial \Omega \) such that there exist constants \( c_2 > 0 \) and \( A > 1 \), depending on \( n \), the Ahlfors regularity of \( \partial \Omega \), and \( \bar{C}_0 \), for which the following holds:
		\begin{itemize}
			\item[(a)] $c_2\mu(3P)\leq \sigma(F_P) \leq \mu(3P)$, and
			\item[(b)] $\sum_{P\in\WW_0}\chi_{F_P}\leq (A+1)\,\chi_{2B_0}.$
		\end{itemize}
		
		Assume the claim for the moment, and let us see how the lemma follows.  From the fact that $F_P\subset \wt B(P)\cap 2B_0\cap\pom$, we have
		$$P\subset \gamma_\Omega(\xi)\cap 2B_0 \quad\mbox{ for $\xi\in F_P$,}$$
		where $\gamma_\Omega(\xi)$ is a non-tangential cone associated with $\cN_\Omega$ with vertex at $\xi$, with aperture  large enough. Thus
		$$u(x)\leq \inf_{\xi\in F_P} \cN_{\Omega,4r(B_0)}(u)(\xi)\quad\mbox{ for all $x\in P\cap E$.}$$
		From this fact, the properties (a), (b) claimed above, and \rf{eqsig841}, we deduce 
		\begin{align*}
			\int |u\,\vphi|\,d\mu & = \sum_{P\in\WW_0}\int_{P} |u\,\vphi|\,d\mu 
			\leq \sum_{P\in\WW_0}\inf_{\xi\in F_P} \cN_{\Omega,4r(B_0)}(u)(\xi)\,\int_{\wt B(P)}|\vphi|\,d\mu\\
			&  \lesssim \sum_{P\in\WW_0}\inf_{\xi\in F_P} \big( \cN_{\Omega,4r(B_0)}(u)(\xi)\,\cM_\mu(\vphi)(\xi)\big)\,\mu(3P)\\
			& \lesssim
			\sum_{P\in\WW_0}\int_{F_P} \cN_{\Omega,4r(B_0)}(u)\,{M}_\mu(\vphi)\,d\sigma \lesssim A\int_{2B_0} \cN_{\Omega,4r(B_0)}(u)\,{\cM}_\mu(\vphi)\,d\sigma,
		\end{align*}
		which proves \rf{eqtyp450}.
		\vv
		
		Next we prove the claim. To this end, since $\WW_0$ is a finite family, we can order it so that
		$\WW_0=\{P_1,P_2,\ldots P_{N}\}$, with $\ell(P_i)\leq \ell(P_{i+1})$ for all $i=1,\ldots,N-1$.
		We will construct the sets $F_i\equiv F_{P_i}$ inductively, 
		checking that for each $i=1,\ldots,N$ it holds
		\begin{equation}\label{eqind497}
			F_i\subset \wt B(P_i)\cap 2B_0\cap\pom =:S_i,
		\end{equation}
		and moreover,
		\begin{equation}\label{eqind498}
			c_2\,\mu(3P_i)\leq \sigma(F_i) \leq \mu(3P_i)
		\end{equation}
		and
		\begin{equation}\label{eqind499}
			\sum_{k=1}^i\chi_{F_k}\leq (A+1)\,\chi_{2B_0},
		\end{equation}
		with $c_2=\tau/(2C_0')$ and some sufficiently large $A>1$.
		
		To start, we choose an arbitrary Borel set $F_1\subset S_1= \wt B(P_1)\cap 2B_0\cap\pom$ satisfying \rf{eqind498}. The existence of
		$F_1$ is ensured by the fact that, by \rf{eqsig840} and 
		\rf{eqsig841}, for each $P_i$ it holds
		\begin{equation}\label{eqsigh23}
			\sigma(S_i) \geq \tau\,\ell(P_i)^n \geq \tau\,\mu(3P_i)/C_0'.
		\end{equation}
		Recall that $S_i=\wt B(P_i)\cap B_0\cap\pom$.
		Obviously, \rf{eqind499} also holds.
		
		Suppose now that we have already constructed sets $F_1,\ldots,F_i$ satisfying 
		\rf{eqind497}, \rf{eqind498}, and \rf{eqind499}, and let us construct $F_{i+1}$ for $i\leq N$.
		For any $\lambda>0$, by Chebyshev's inequality,  we have
		\begin{align*}
			T:=\sigma\Big(\Big\{x\in S_{i+1}:\sum_{1\leq k\leq i} \chi_{F_k}(x)>\lambda\Big\}\Big)
			\leq \frac1\lambda\sum_{1\leq k\leq i} \sigma(F_k \cap S_{i+1}).
		\end{align*}
		Notice now that if $F_k \cap S_{i+1}\neq\varnothing$, then $\wt B(P_k)\cap \wt B(P_{i+1})\neq\varnothing$, and then
		$3P_k\subset \wt B(P_k)\subset 3 \wt B(P_{i+1})$, since $r(\wt B(P_k))\leq r(\wt B(P_{i+1}))$ for $k<i+1$. So, if we denote by $I_{i+1}$ the subset of indices
		$k$ with $1\leq k\leq i$ such that $F_k \cap S_{i+1}\neq\varnothing$, using the upper estimate in \rf{eqind498}, we get
		$$T
		\leq \frac1\lambda\sum_{k\in I_{i+1}} \sigma(F_k \cap S_{i+1}) \leq
		\frac1\lambda\sum_{k\in I_{i+1}}  \mu(3P_k)\leq \frac C\lambda  \mu(3 \wt B(P_{i+1}))
		.$$
		By the upper growth condition on $\mu$, we deduce
		$$T\leq \frac{c\,\bar C_0}\lambda\,\ell(P_{i+1})^n,
		$$
		with $\bar C_0$ as in \rf{eqtyp4500}.
		
		So choosing $A=\lambda=\frac{2c\bar C_0}\tau$, by \rf{eqsig840}, we derive
		$$\sigma\Big(\Big\{x\in S_{i+1}:\sum_{1\leq k\leq i} \chi_{F_k}(x)>A\Big\}\Big) \leq
		\frac\tau2\,\ell(P_{i+1})^n\leq \frac12\,\sigma(S_{i+1}).$$
		Then we can take a subset 
		$$F_{i+1}\subset \Big\{x\in S_{i+1}:\sum_{1\leq k\leq i} \chi_{F_k}(x)\leq A\Big\}$$
		satisfying
		$$\sigma(F_{i+1}) =\min\big(\mu(3P_{i+1}),\,\tfrac12\sigma(S_{i+1})\big).$$
		So either $\sigma(F_{i+1}) =\mu(3P_{i+1})$ or 
		$$\sigma(F_{i+1}) =\frac12\sigma(S_{i+1}) \geq \frac{C_0'^{-1}\tau}2\,\mu(3P_{i+1}),
		$$
		using \rf{eqsigh23} in the last inequality. So in any case $F_{i+1}$ satisfies the required properties 
		\rf{eqind497}, \rf{eqind498}, \rf{eqind499}, and then the claim follows.
	\end{proof}
	\vv
	
	The second lemma that we need is an immediate consequence of the assumptions in Theorem~\ref{teobigpi}:
	
	\begin{lemma}\label{lemg2q}
		Under the assumptions of Theorem \ref{teobigpi}, let $K\geq10$ and $Q\in\DD_\pom$ and denote $\wt \Omega=U_{x_Q,10C_5K\ell(Q)}$, where $C_5>4$ depends on the chord-arc character of $\Omega$, as in Lemma \ref{lemlocLp**}.
		Then 
		$$\sigma(10C_5KQ\setminus \partial\wt\Omega)\lesssim \ve\,\sigma(4KQ)$$
		and
		there exists a compact subset $G_{2Q}\subset 2Q\cap\wpom$ such that
		$$\sigma(2Q\setminus G_{2Q})\lesssim K^n\ve\,\sigma(2Q).$$
	\end{lemma}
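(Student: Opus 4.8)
The plan is to read both estimates off the ``very big pieces'' assumption \eqref{eqbigpiece} of Theorem~\ref{teobigpi}, applied with center $\xi=x_Q$, using only the Ahlfors regularity of $\pom$. Write $\wt\Omega=U_{x_Q,r}$ for the superdomain in the statement, with $r$ its defining radius (we may assume $r\le\diam(\pom)$, replacing $r$ by $\diam(\pom)$ otherwise, so that $U_{x_Q,r}$ is the domain furnished by the hypothesis). Then \eqref{eqbigpiece} gives $B(x_Q,r)\cap\Omega\subset\wt\Omega$ and $\HH^n\big(B(x_Q,r)\cap\pom\setminus\partial\wt\Omega\big)\le\ve\,r^n$. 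Since $\sigma=\HH^n|_\pom$ and $\pom$ is Ahlfors regular, one has $\sigma\big(B(\xi,t)\cap\pom\big)\approx t^n$ for $\xi\in\pom$ and $0<t\le\diam(\pom)$, and in particular $\sigma(Q)\approx\ell(Q)^n$; these are the only ingredients.

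First I would dispatch the bound for $2Q$. Since $x_Q\in Q$, $\diam(Q)\le\ell(Q)$ and $K\ge10$, every $x\in 2Q$ satisfies $|x-x_Q|\le\dist(x,Q)+\diam(Q)\le 2\ell(Q)\le 2K\ell(Q)$, so $2Q\subset B(x_Q,2K\ell(Q))\cap\pom$; applying \eqref{eqbigpiece} with radius $2K\ell(Q)$ then yields $\sigma\big(2Q\cap(\pom\setminus\partial\wt\Omega)\big)\le\ve\,(2K\ell(Q))^n\lesssim K^n\ve\,\sigma(2Q)$, using $\sigma(2Q)\approx\ell(Q)^n$. For the compact set I would simply take $G_{2Q}:=2Q\cap\partial\wt\Omega=2Q\cap\wpom$: as $x\mapsto\dist(x,Q)$ is continuous, $2Q=\{x\in\pom:\dist(x,Q)\le\ell(Q)\}$ is closed, hence compact because $\pom$ is compact ($\Omega$ being bounded), and its intersection with the closed set $\partial\wt\Omega$ stays compact; since $2Q\setminus G_{2Q}=2Q\cap(\pom\setminus\partial\wt\Omega)$, the bound just obtained is exactly the one claimed. (If one insists on a strictly interior compact set, shrink $G_{2Q}$ using inner regularity of $\sigma$; this is not needed.)

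The bound for $4KQ$ is proved the same way. From $x_Q\in Q$ and $\diam(Q)\le\ell(Q)$ one checks $B(x_Q,K\ell(Q))\cap\pom\subset 4KQ\subset B(x_Q,4K\ell(Q))$, so Ahlfors regularity gives $\sigma(4KQ)\approx(K\ell(Q))^n$; applying \eqref{eqbigpiece} at $\xi=x_Q$ with the defining radius $r$ of $\wt\Omega$ (which is comparable to $K\ell(Q)$ and must be taken large enough that $B(x_Q,r)\supset 4KQ$) gives $\sigma\big(4KQ\setminus\partial\wt\Omega\big)\le\HH^n\big(B(x_Q,r)\cap\pom\setminus\partial\wt\Omega\big)\le\ve\,r^n\lesssim\ve\,(K\ell(Q))^n\approx\ve\,\sigma(4KQ)$. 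The one thing to be careful about — and essentially the only obstacle — is the bookkeeping of the dilation constants: one must make sure the ball feeding \eqref{eqbigpiece} is centered at $x_Q$, has radius comparable to $K\ell(Q)$, and contains $4KQ$, and that its associated superdomain is the $\wt\Omega$ in the statement. Once these factors are matched, there is no analytic content beyond the defining inequality \eqref{eqbigpiece} and the Ahlfors regularity of $\pom$.
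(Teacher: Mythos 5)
Your argument for the second estimate is correct and is precisely the ``immediate consequence'' the paper has in mind: since $2Q\subset B(x_Q,2K\ell(Q))\cap\pom$ for $K\ge 1$, applying \eqref{eqbigpiece} with $\xi=x_Q$, $r=2K\ell(Q)$ gives $\sigma(2Q\setminus\wpom)\le\ve\,(2K\ell(Q))^n\lesssim K^n\ve\,\sigma(2Q)$, and $G_{2Q}:=2Q\cap\wpom$ is compact because $2Q=\{x\in\pom:\dist(x,Q)\le\ell(Q)\}$ is a closed subset of the compact set $\pom$ and $\wpom$ is closed.

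For the first estimate there is a genuine constant mismatch which you noticed but did not resolve. With the paper's dilation convention ($\lambda Q=\{x\in\pom:\dist(x,Q)\le(\lambda-1)\ell(Q)\}$ and $\diam Q\le\ell(Q)$), one only gets $4KQ\subset\bar B(x_Q,4K\ell(Q))$, and $4KQ$ is in general \emph{not} contained in $B(x_Q,2K\ell(Q))$: a point of $4KQ$ can lie at distance up to $4K\ell(Q)$ from $x_Q$. Yet the lemma fixes $\wt\Omega=U_{x_Q,2K\ell(Q)}$, and \eqref{eqbigpiece} applied to this particular superdomain only bounds $\HH^n\big(B(x_Q,2K\ell(Q))\cap\pom\setminus\wpom\big)$; it gives no information about the part of $4KQ$ lying outside $B(x_Q,2K\ell(Q))$. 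Your parenthetical remark that $r$ ``must be taken large enough that $B(x_Q,r)\supset 4KQ$'' is exactly the crux: taking $r\ge 4K\ell(Q)$ makes the computation go through, but then $U_{x_Q,r}$ is a different domain from the $\wt\Omega=U_{x_Q,2K\ell(Q)}$ named in the statement, so you have not proved the lemma as written. Most likely the intended statement has $\wt\Omega$ associated to a larger ball (for instance $U_{x_Q,4K\ell(Q)}$ or, as later in the proof of the theorem, $U_{x_Q,10C_5K\ell(Q)}$), or else the first display should read $\sigma(2KQ\setminus\wpom)$; a careful write-up should say so explicitly rather than leave the bookkeeping as a caveat, since as stated the factors cannot be matched.
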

	
	Remark that, by definition, $2Q\subset 4KQ\subset\pom$.
	Below, for the application of this lemma, we will choose $\ve$ small enough so that $K^n\ve\ll1$.
	
	\vvv

	\vvv
	\begin{proof}[\bf Proof of Theorem \ref{teobigpi}]
		It is enough to show that, for $1<p<q$, $(N_{L^{p'},L^{p',\infty}}^R(\rho))_{\LL^*}$ is solvable uniformly on $\rho\in (0,\diam(\pom))$. Indeed, then by interpolation, for any $1<p<q$, $(N_{L^{p'}}^R(\rho))_{\LL^*}$ is solvable uniformly on $\rho\in (0,\diam(\pom))$, which implies the solvability of $(N_{L^{p'}}^R)_{\LL^*}$ and of $(N_{L^p})_\LL$, by Lemma \ref{lemuniformrho} (notice that the solvability of
		$(R_q)_{\LL}$ for some $q>p$ implies the solvability of $(D_{q'})_{\LL^*}$ and thus of $(D_{p'})_{\LL^*}$. See \cite{MT}).

		{We will prove that $(N_{L^{p'},L^{p',\infty}}^R(\rho))_{\LL^*}$ is solvable with some finite constant $C_*:=C_*(\rho)$ (i.e.,  $C_*$ depends on $\rho$; see Lemma \ref{lemfinit}), with $ C_*(\rho)$ bounded uniformly on $\rho$. To this end, we will prove
			an estimate of the form $C_*(\rho)\leq C + \delta\,C_*(\rho)$, for some fixed small $\delta>0$.
			by means of a good $\lambda$ inequality. }
		By the solvability of the Dirichlet problem for $\LL^*$ in $L^{p',\infty}$ (which follows from the one of $(D_{L^{q'}})_{\LL^*}$ by interpolation), it suffices to prove that
		\begin{equation}\label{eqproof1}
			\|u\|_{L^{p',\infty}(\pom)} \lesssim  C_*(\rho)\|g\|_{L^{p'}(\pom)},
		\end{equation}
		for $u:\Omega\to\R$ and a Lipschitz function $g\in L^{p'}(\pom)$ satisfying \rf{eqdualneumannsmooth}, for any fixed $1\leq j,k\leq n+1$ and $0<\rho\leq \diam(\pom)$, and $C_*(\rho)$ as above.
		To shorten notation, we write $\partial_t=\partial_{t,j,k}$.
		Notice that the function $u$ can be written as follows:
		\begin{equation}\label{eqproof2}
			u(x) = \int_\pom N(y,x)\,\partial_t S_\rho g(y) \,d\sigma(y) = - \langle\partial_tN_x^T,\, S_\rho g\rangle_\sigma,
		\end{equation}
		where $N_x^T(y) = N(y,x)$, with $N$ equal to the Neumann function for $\LL$ in $\Omega$.

		To prove \rf{eqproof1}, for any $\lambda>0$, let 
		\begin{equation}\label{eqdefvlam}
			V_\lambda = \{x\in \pom:\cM_\sigma u(x)>\lambda\},
		\end{equation}
		where $\cM_\sigma$ is the non-centered Hardy-Littlewood operator with respect to balls centered at $\pom$.
		For some fixed constant $A>2$ to be chosen below, we will estimate $\sigma(V_{A\lambda})$.
		To this end, we consider a partition of $V_\lambda$ into Whitney cubes from $\DD_\pom$, and we denote by $\WW_\lambda$ this family of Whitney cubes. 
		We choose the parameters in the Whitney decomposition so that $10Q\subset V_\lambda$ for all $Q\in\WW_\lambda$. 
		We denote
		$$E_Q = Q\cap V_{A\lambda}\quad\mbox{ for $Q\in\WW_\lambda$.}$$
		Clearly, 
		$$V_{A\lambda} = \bigcup_{Q\in\WW_\lambda} E_Q \subset V_\lambda.$$
		
		Fix a cube $Q\in\DD_\pom$ and consider the ball $B(x_Q,2K\ell(Q))$ and the associated domain
		$$\wt\Omega\equiv U_{x_Q,10C_5K\ell(Q)}.$$
		Also, let $G_{2Q}$ be as in Lemma \ref{lemg2q}.

		From the properties of the Whitney decomposition, we know that $CQ\cap V_\lambda^c\neq\varnothing$ for some fixed constant $C>1$. So by the definition of $V_\lambda$  we deduce 
		that
		$$
		\avint_{G_{2Q}}|u|\,d\sigma\leq 2
		\avint_{2Q}|u|\,d\sigma \leq C_7 \lambda\quad\mbox{ for all $Q\in\WW_\lambda$,}$$
		for some fixed constant $C_7$.
		Analogously, for any ball $B$ centered in $\pom$ intersecting $Q\in\WW_\lambda$ such that $r(B)\geq \ell(Q)/4$,
		$$ \avint_{B}|u|\,d\sigma \leq C_8 \lambda.$$
		Consequently,  assuming $A>C_8$,  if $x\in E_Q$, it holds that  $\cM_\sigma u(x) >A\lambda$,  and so there exists some ball $B_x$ centered at $\pom$ such that $r(B_x)\leq  \ell(Q)/4$ and $x\in B_x$, satisfying
		$$ \avint_{B_x}|u|\,d\sigma > A \lambda.$$
		Thus,
		$$ \avint_{B_x}|u - m_{\sigma,G_{2Q}}(u)|\,d\sigma > (A -C_7) \lambda \geq \lambda,$$
		where we wrote $m_{\sigma,G_{2Q}}(u) = \avint_{G_{2Q}}u\,d\sigma$ to shorten notation and we assumed $A\geq C_7+1$.
		Therefore, 
		$$\cM_\sigma \big(\chi_{2Q}(u-m_{\sigma,G_{2Q}}(u))\big)(x)>\lambda\quad\mbox{ for all $x\in E_Q$ with $Q\in\WW_\lambda$.}$$
		By the weak $(1,1)$ boundedness of $\cM_\sigma$, it follows that
		$$\sigma(E_Q) \leq \sigma(\{x\in Q:\cM_\sigma(\chi_{2Q}(u-m_{\sigma,G_{2Q}}(u)))(x)>\lambda\}) \lesssim \frac1\lambda\int_{2Q}|u-m_{\sigma,G_{2Q}}(u)|\,d\sigma.$$
		
		%Next, for a fixed $Q\in\WW_\lambda$ and for some big constant $K>10$ to be chosen below, we consider a Lipschitz function $\eta_{KQ}$ on $\pom$ which equals $1$ on $KQ$ and is supported in $1.1KQ$. 
		For a fixed $Q\in\WW_\lambda$ and for some big constant $K>10$ to be chosen below, 
		let $j_0\geq 0$ be the least integer such that $\ell(2^{j_0}KQ)\geq \rho$. We  split
		\begin{multline*}
			u(x) = \int N(y,x)\, \partial_tS_\rho (\chi_{2^{j_0}KQ} \,g)(y) \,d\sigma(y) 
			+ \int N(y,x)\, \partial_tS_\rho (\chi_{\pom\setminus 2^{j_0}KQ} \,g)(y) \,d\sigma(y)\\
			=: u_l(x) + u_f(x).
		\end{multline*}
		The subindices  ``$l$" and ``$f$" above  stand for ``local" and ``far".
		Then,
		\begin{equation}\label{eqEQ2}
			\sigma(E_Q)  \lesssim \frac1\lambda\int_{2Q}|u_l-m_{\sigma,G_{2Q}}(u_l)|\,d\sigma + \frac1\lambda\int_{2Q}|u_f-m_{\sigma,G_{2Q}}(u_f)|\,d\sigma =:T_l + T_f.
		\end{equation}
		\vv
		
		\subsection*{\bf Estimate of $T_f$}
		We write\footnote{To estimate $T_f$ as we do,  it is important that we are working with 
			the rough Neumann problem.  Similar arguments for the usual Neumann problem do not work.}
		\begin{align}\label{eqalft15} 
			\lambda\,T_f &= \int_{2Q}|u_f(x)-m_{\sigma,G_{2Q}}(u_f)|\,d\sigma(x)\leq \int_{2Q}m_{\sigma,G_{2Q}}(|u_f(x)-u_f|)|\,d\sigma(x)\\
			& \lesssim \sigma(2Q)\,
			\sup_{x,x'\in 2Q}|u_f(x)-u_f(x')|.\notag
		\end{align}

		To bound $|u_f(x)-u_f(x')|$ for $x,x'\in 2Q$, we write
		$$u_f(x) =     \sum_{j\geq j_0} \int_{\pom} N(y,x)\,\partial_t S_\rho (\chi_{2^{j+1}KQ\setminus 2^jKQ}\,g)(y) \,d\sigma(y) =: \sum_{j\geq j_0}u_j(x).$$ Then, 
		$$|u_f(x)-u_f(x')|\leq \sum_{j\geq j_0} |u_j(x)-u_j(x')|.$$
		Denote by $B(2^jKQ)$ a ball with radius $\ell(2^jKQ)$ centered at $x_Q$, the center of $Q$.
		By standard estimates, Kolmogorov's inequality (recall that $p'>2$), and the solvability of $(N^R_{L^{p'},L^{p',\infty}}(\rho))_{\LL^*}$
		with constant $C_*=C_*(\rho)$,
		we have that
		\begin{align*}
			\left(\avint_{B(2^jKQ)\cap\Omega}|u_j|^2\,dm\right)^{1/2} & \lesssim \left(\avint_{2^{j+3}KQ}\cN_\Omega(u_j)^2\,d\sigma\right)^{1/2}\\
			&\lesssim \frac1{\sigma(2^{j+3}KQ)^{1/p'}}\,\|\cN_\Omega(u_j)\|_{L^{p',\infty}(\pom)} \\
			& \lesssim \frac{C_*}{\sigma(2^{j+3}KQ)^{1/p'}}\,\|\chi_{2^{j+1}KQ\setminus 2^jKQ} \,g\|_{L^{p'}(\pom)}\\
			& \lesssim C_*\,\inf_{y\in Q} \cM_{\sigma,p'}g(y),
		\end{align*}
		where $\cM_{\sigma,p'}$ stands for the maximal $p'$-Hardy-Littlewood operator, defined by
		$$\cM_{\sigma,p'} f(x) = \big(\cM_{\sigma} (|f|^{p'})(x)\big)^{1/p'}.$$
		Notice that $u_j$ is  $\LL$-harmonic in $\Omega$ and its conormal derivative vanishes $\sigma$-a.e.\ in $\pom\cap 2^{j-1}KQ$, since
		$$\supp (\partial_t S_\rho (\chi_{2^{j+1}KQ\setminus 2^jKQ}\,g))\subset 2^{j+2}KQ\setminus 2^{j-1}KQ,$$
		by the choice of $j_0$. Then, by Moser type
		estimates (see Theorem \ref{teomoser}) we infer that, for some fixed $\alpha>0$,
		$$|u_j(x) - u_j(x')| \lesssim \left(\frac{\ell(2Q)}{2^{j}K\ell(Q)}\right)^\alpha\,\left(\avint_{B(2^jKQ)\cap\Omega}|u_j|^2\,dm\right)^{1/2}\lesssim K^{-\alpha}\,2^{-j\alpha}\,
		C_*\,\inf_{y\in Q} \cM_{\sigma,p'}g(y).$$
		Thus, summing on $j$,
		$$|u_f(x)-u_f(x')|\leq \sum_{j\geq 1} K^{-\alpha}\,2^{-j\alpha}\,
		C_*\,\inf_{y\in Q} \cM_{\sigma,p'}g(y) \lesssim C_*\,K^{-\alpha}\,
		\inf_{y\in Q} \cM_{\sigma,p'}g(y).$$
		Consequently, by \rf{eqalft15},
		\begin{equation}\label{eqtf94}
			T_f \lesssim \frac1\lambda\,\sigma(Q)\,C_*\,K^{-\alpha}\,
			\inf_{y\in Q} \cM_{\sigma,p'}g(y) \lesssim \frac{C_*\,K^{-\alpha}}\lambda\int_{Q} \cM_{\sigma,p'}g\,d\sigma.
		\end{equation}
		\vv
		
		\subsection*{\bf Estimate of $T_l$ in the case $j_0>0$}
		Remark that in this case we have $\ell(KQ)\leq \rho\approx \ell(2^{j_0}KQ)$.
		Then, from the properties of the kernel of $S_\rho$ in Lemma \ref{lemsr}, it follows that
		$$|\partial_tS_\rho (\chi_{2^{j_0}KQ} \,g)(y)|\lesssim \frac{m_{\sigma,2^{j_0+1}KQ}(|g|)}\rho\,\chi_{2^{j_0+1}KQ}(y).$$ 
		Thus, for all $x\in 2Q$,
		\begin{align*}
			|u_l(x)| & = |N_\sigma^*(\partial_tS_\rho (\chi_{2^{j_0}KQ}  \,g))(x)|\lesssim \frac{m_{\sigma,2^{j_0+1}KQ}(|g|)}\rho\,\int_{y\in2^{j_0+1}KQ}\frac1{|x-y|^{n-1}}\,d\sigma(y)\\
			&\lesssim m_{\sigma,2^{j_0+1}KQ}(|g|)\lesssim
			\inf_{z\in Q} \cM_{\sigma,p'}g(z),
		\end{align*}
		where $N_\sigma^*$ is defined in \rf{defNmu*}.
		Therefore, 
		\begin{equation}\label{eqcas00}
			T_l=\frac1\lambda\int_{2Q}|u_l-m_{\sigma,G_{2Q}}(u_l)|\,d\sigma \lesssim \frac1\lambda\,\sigma(Q)\,\inf_{z\in Q} \cM_{\sigma,p'}g(z)\leq \frac{1}\lambda\int_{Q} \cM_{\sigma,p'}g\,d\sigma.
		\end{equation}

		\vv
		
		\subsection*{\bf Estimate of $T_l$ in the case $j_0=0$}
		In this case we have $\ell(KQ)> \rho$ and
		we write
		\begin{align}\label{eqtl*}
			\lambda\,T_l = \int_{2Q}|u_l-m_{\sigma,G_{2Q}}(u_l)|\,d\sigma &\lesssim  \int_{G_{2Q} }|u_l-m_{\sigma,G_{2Q}}(u_l)|\,d\sigma + 
			\int_{2Q\setminus G_{2Q} }|u_l|\,d\sigma=: I_1+ I_2.
		\end{align}
		To estimate $I_2$,  we use  Kolmogorov's inequality,  Lemma \ref{lemg2q}, and that $(N^R_{L^{p'},L^{p',\infty}}(\rho))_{\LL^*}$ is solvable with constant $C_*$,  and we obtain
		\begin{align}\label{eqI2}
			I_2 & \lesssim \|u_l\|_{L^{p',\infty}(\pom)}\,\sigma(2Q\setminus G_Q)^{1/p} \lesssim 
			K^{n/p}\,\ve^{1/p}\,\|u_l\|_{L^{p',\infty}(\pom)}\,\sigma(Q)^{1/p}\\
			& \lesssim C_*K^{n/p}\,\ve^{1/p}\,\|g\|_{L^{p'}(KQ)}\,\sigma(Q)^{1/p} \lesssim C_*\,C(K)\,\ve^{1/p}
			\,\inf_{y\in Q} \cM_{\sigma,p'}g(y)\,\sigma(Q)\notag \\
			&\lesssim C_*\,C(K)\,\ve^{1/p}\int_{Q} \cM_{\sigma,p'}g\,d\sigma,\notag
		\end{align}
		where $C(K)$ depends on $K$.
		
		To deal with the integral $I_1$, recall that $
		\wt\Omega = U_{x_Q,10 C_5K\ell(Q)}.$
		We denote by $\wt N$ the Neumann function of $\wt \Omega$ and by $\wt\sigma$ the surface measure on $\partial\wt\Omega$.
		We also consider the function $\wt u_l:\wt\Omega\to\R$ defined by
		$$\wt u_l(x) = \int \wt N(y,x)\, \partial_tS_\rho (\chi_{KQ} \,g)(y) \,d\sigma(y),$$
		where the tangential derivative is defined with respect to the tangent at $\pom$.  
		In a sense, $\wt u_l$ should be considered as an approximation of $u_l$. So we split 
		$$I_1 \leq \int_{G_{2Q} }\!|\wt u_l - m_{\sigma,G_{2Q}}(\wt u_l)|\,d\sigma + \int_{G_{2Q} }\!|(u_l-m_{\sigma,G_{2Q}}(u_l))
		-(\wt u_l - m_{\sigma,G_{2Q}}(\wt u_l))|\,d\sigma = I_{1,a} + I_{1,b}.$$
		
		\vv
		
		\subsection*{\bf Estimate of $I_{1,a}$}
		%First we will estimate $I_{1,a}$ using the solvability of $(N_q)_\LL$ in $\wt\Omega$. For
		Denote
		$$\vphi=\chi_{G_{2Q}} \,\frac{\wt u_l - m_{\sigma,G_{2Q}}(\wt u_l)}{|\wt u_l - m_{\sigma,G_{2Q}}(\wt u_l)|},$$
		so that we have 
		\begin{align*}
			I_{1,a}= \int_{G_{2Q} }(\wt u_l - m_{\sigma,G_{2Q}}(\wt u_l))\,\vphi\,d\sigma &= \int_{G_{2Q} }(\wt u_l - m_{\sigma,G_{2Q}}(\wt u_l))\,(\vphi - m_{\sigma,G_{2Q}}(\vphi))\,d\sigma \\
			&= \int_{G_{2Q} }\wt u_l\,(\vphi - m_{\sigma,G_{2Q}}(\vphi))\,d\sigma.
		\end{align*}
		In view of the identity above,  Fubini, and the fact that $\sigma=\wt\sigma$ on $G_{2Q}\subset\pom\cap \partial\wt\Omega$, we obtain
		\begin{align*} 
			I_{1,a} &= \int_{x\in G_{2Q}}(\vphi(x) - m_{\sigma,G_{2Q}}(\vphi))\int  \wt N(y,x)\, \partial_t S_\rho(\chi_{KQ}\,g)(y) \,d\sigma(y)\,d\sigma(x)\\
			& =
			-\int S_\rho(\chi_{KQ}\,g)(y)\,\partial_{t_y}\bigg(\int_{x\in G_{2Q}}\wt N(y,x)\,(\vphi(x) - m_{\sigma,G_{2Q}}(\vphi))\,d\wt\sigma(x)\bigg)d\sigma(y).
		\end{align*}
		Denoting 
		$$F(\vphi)(y) = \int_{x\in G_{2Q}}\wt N(y,x)\,(\vphi(x) - m_{\sigma,G_{2Q}}(\vphi))\,d\wt\sigma(x),$$
		and using H\"older's inequality, we get
		\begin{align*}
			I_{1,a} & \leq \|S_\rho (g\,\chi_{KQ})\|_{L^{p'}(\pom)}\,\|\partial_{t}F(\vphi)\|_{L^p(2KQ)}\\ &\leq
			\|g\|_{L^{p'}(KQ)}\,\|\partial_{t}F(\vphi)\|_{L^p(2KQ)}\lesssim \|g\|_{L^{p'}(KQ)}\,\|\partial_{t}F(\vphi)\|_{L^q(2KQ)}\,\sigma(KQ)^{\frac1p-\frac1q}.
		\end{align*}
		{By Lemma \ref{lemaux2} applied to $|\nabla F(\vphi)|^q$ in $\wt\Omega$ and to $\mu=\HH^n|_{\pom\setminus\wpom}$ and the solvability of $(N_q)_\LL$  with constant $\wt C_q$ in $\wt\Omega$, we infer that
			\begin{align*}
				\|\partial_{t}F(\vphi)\|_{L^q(2KQ)} & \lesssim \|\partial_{t}F(\vphi)\|_{L^q(2KQ\setminus\wpom)} + \|\partial_{t}F(\vphi)\|_{L^q(2KQ\cap\wpom)}\\
				& \lesssim \|\cN_{\wt\Omega}(\nabla F(\vphi))\|_{L^q(\partial\wt\Omega)}+\|\partial_{t}F(\vphi)\|_{L^q(2KQ\cap\wpom)}
				\lesssim \wt C_q\,\|\partial_{\wt \nu_A} F(\vphi)\|_{L^q(\partial\wt\Omega)},
		\end{align*}}
		where we denoted by $\partial_{\wt \nu_A}$ the conormal derivative on $\partial\wt\Omega$. 
		Recall now that $\vphi$ is supported on $G_{2Q}\subset\pom\cap \partial\wt\Omega$ and notice that 
		$$
		\avint_{G_{2Q}} (\vphi - m_{\sigma,G_{2Q}}(\vphi))\,d\sigma=0.
		$$
		Since  $\partial_{\wt \nu_A} F(\vphi)=\chi_{G_{2Q}}\,(\vphi - m_{\sigma,G_{2Q}}(\vphi))$ and $\|\vphi\|_\infty\leq 1$,
		we have $\|\partial_{t}F(\vphi)\|_{L^q(2KQ)}\lesssim \wt C_q\,\sigma(Q)^{1/q}$. Thus,
		we obtain
		\begin{align*}
			I_{1,a}&\lesssim \wt C_q\,\|g\|_{L^{p'}(KQ)}\,\sigma(Q)^{\frac1q}\sigma(KQ)^{\frac1p-\frac1q} \lesssim
			\wt C_q\,K^{\frac np-\frac nq}\,\|g\|_{L^{p'}(KQ)}\,\sigma(Q)^{\frac1p}
			\\
			&\lesssim \wt C_q\,C(K)\,\inf_{y\in Q}\cM_{\sigma,p'}(g)(y)\,\sigma(Q) \lesssim \wt C_q\,C(K)\int_Q \cM_{\sigma,p'}(g)\,d\sigma.
		\end{align*}
		
		\vv
		
		\subsection*{\bf Estimate of $I_{1,b}$}
		Let 
		$$\psi= \chi_{G_{2Q}}\,\frac{(u_l-m_{\sigma,G_{2Q}}(u_l))
			-(\wt u_l - m_{\sigma,G_{2Q}}(\wt u_l))}{\big|(u_l-m_{\sigma,G_{2Q}}(u_l))
			-(\wt u_l - m_{\sigma,G_{2Q}}(\wt u_l))\big|}.$$
		Notice that $\psi$ is supported on $G_{2Q}$ and $|\psi(x)|\leq1$ for all $x\in G_{2Q}$.
		Then we have, as in the estimate fo $I_a$,
		\begin{align*}
			I_{1,b} & = \int_{G_{2Q} }\big((u_l-m_{\sigma,G_{2Q}}(u_l))
			-(\wt u_l - m_{\sigma,G_{2Q}}(\wt u_l))\big)\,\psi\,d\sigma\\
			&=\int_{G_{2Q} }\big((u_l-m_{\sigma,G_{2Q}}(u_l))
			-(\wt u_l - m_{\sigma,G_{2Q}}(\wt u_l))\big)\,\big(\psi-m_{\sigma,G_{2Q}}(\psi)\big)\,d\sigma\\
			& =\int_{G_{2Q} }(u_l-\wt u_l)\,\big(\psi-m_{\sigma,G_{2Q}}(\psi)\big)\,d\sigma.
		\end{align*}
		Using Fubini, as above, we get
		\begin{align*}
			I_{1,b} & = \int_{x\in G_{2Q}} \big(\psi(x)-m_{\sigma,G_{2Q}}(\psi)\big)\int ( N(y,x) - \wt N(y,x)) \,\partial_t S_\rho(\chi_{KQ}\,g)(y)\,d\sigma(y)\,d\sigma(x)\\
			& = \int S_\rho(\chi_{KQ}\,g)(y) \,\partial_{t_y} \bigg(\int_{x\in G_{2Q}} (\wt N(y,x) -  N(y,x)) \,\big(\psi(x)-m_{\sigma,G_{2Q}}(\psi)\big)\,d\sigma(x)\bigg)\,d\sigma(y).
		\end{align*}
		
		We denote
		$$v(y) = \int_{x\in G_{2Q}} (\wt N(y,x) -  N(y,x)) \,\big(\psi(x)-m_{\sigma,G_{2Q}}(\psi)\big)\,d\sigma(x)=: \wt h - h,$$
		{ 
			where to shorten notation, we set
			$$h(y) = \int_{x\in G_{2Q}}  N(y,x) \,\big(\psi(x)-m_{\sigma,G_{2Q}}(\psi)\big)\,d\sigma(x)$$
			and
			$$\wt h(y) = \int_{x\in G_{2Q}}  \wt N(y,x) \,\big(\psi(x)-m_{\sigma,G_{2Q}}(\psi)\big)\,d\sigma(x).$$
			Remark that both $\wt h$ and $h$ are solutions of $Lu=0$ in the respective domains $\wt \om$ and $\om$, and recall that $B(x_Q,10 C_5K\ell(Q)) \cap \om \subset \wt \om$.  
			
			To bound $I_{1,b}$, we will apply the localization Lemma \ref{lemlocLp**} and to do so,  we first need to prove that $\partial_{\nu_A} v \in L^{p,1}(4KQ)$.  Estimating $\|\partial_{\nu_A} v\|_{L^{p,1}(4KQ)}$ is one of the key points of this proof.
			To shorten notation, we write 
			$$\psi_0(x) = \chi_{G_{2Q}}\big(\psi(x)-m_{\sigma,G_{2Q}}(\psi)\big), $$
			and
			$$\wt N_\sigma(\psi_0)(y) = \int_{x\in G_{2Q}} \wt N(y,x)\,\psi_0(x)\,d\sigma(x).$$
			Notice that $|\psi_0| \leq 2\, \chi_{G_{2Q}}$. Since $\psi_0$ is supported in $G_{2Q}\subset \wpom$, it vanishes on the relatively open set $\pom\cap\wt\Omega$, and Lemma \ref{lemapp} gives $\partial_{\nu_A}v=\psi_0=0$ there.
			Thus,
			\begin{equation}\label{eqnatural1} 
				\partial_{\nu_A} v(y) = \psi_0(y) - \psi_0(y)=0\quad \mbox{ for $\sigma$-a.e.\ $y\in\pom\cap \partial\wt\Omega$},
			\end{equation}
			since $G_{2Q}\subset \pom\cap  \partial\wt\Omega$.
			Using also the solvability of $(N_{q})_\LL$ in 
			$\wt\Omega$ with constant $\wt C_q$ and Lemma~\ref{lemaux2},
			\begin{equation}\label{eqnatural2}
				\|\partial_{\nu_A} v\|_{L^q(\sigma|_{4KQ})}\lesssim\wt C_q
				\|\psi_0\|_{L^q(\wt\sigma)}.
				%\partial_{\nu_A} v(y) = A(y)^T \nu(y)\cdot \nabla\wt N_\sigma(\psi_0)(y)\quad \mbox{ for $\sigma$-a.e.\ $y\in\pom\setminus \partial\wt\Omega$},
			\end{equation}
			Although both \rf{eqnatural1} and \rf{eqnatural2} look very natural, they need a careful justification,
			since $\partial_{\nu_A} v$ is only defined in a weak sense. We defer the justification to Appendix \ref{appendix1}.
			
			We choose $\bar q=\frac{p+q}2$, so that $p<\bar q<q$.
			Then, by H\"older's inequality, 
			\begin{align}\label{eqPguai0.5}
				\|\partial_{\nu_A} v\|_{L^{p,1}(4KQ)}&\lesssim \|\partial_{\nu_A} v\|_{L^{\bar q}(4KQ)}\,\sigma(KQ)^{\frac1p-\frac1{\bar q}}\\
				&= \|\partial_{\nu_A} \wt N_\sigma(\psi_0)\|_{L^{\bar q}(4KQ\setminus \partial\wt\Omega)}\,\sigma(KQ)^{\frac1p-\frac1{\bar q}}.\notag
			\end{align}
			Since $4KQ\subset\pom$ and recalling that the conormal derivative $\partial_{\nu_A}$ is defined with respect to the conormal at $\pom$ and that
			\begin{equation*}\label{eqPguai**}
				\sigma(4KQ\setminus \wpom) \leq  C\ve\,\sigma(KQ),
			\end{equation*}
			we may apply  H\"older's inequality to deduce that
			\begin{align*}
				\|\partial_{\nu_A} \wt N_\sigma(\psi_0)\|_{L^{\bar q}(4KQ\setminus \partial\wt\Omega)} &\leq
				\|\partial_{\nu_A} \wt N_\sigma(\psi_0)\|_{L^{q}(4KQ\setminus \partial\wt\Omega)}\,\sigma(4KQ\setminus \partial\wt\Omega)^{\frac1{\bar q}-\frac1q}\\
				&\lesssim \ve^{a_0}\|\partial_{\nu_A} \wt N_\sigma(\psi_0)\|_{L^{q}(4KQ\setminus \partial\wt\Omega)}\,\sigma(KQ)^{\frac1{\bar q}-\frac1q},
			\end{align*}
			with $a_0: = \frac1{\bar q}-\frac1{q}>0.$ Plugging this estimate into \rf{eqPguai0.5} and using \rf{eqnatural2}, we obtain
			\begin{align}\label{eq:Ib-conormal}
				\|\partial_{\nu_A} v\|_{L^{p,1}(4KQ)} & \lesssim \ve^{a_0}\|\partial_{\nu_A} \wt N_\sigma(\psi_0)\|_{L^{q}(4KQ)}\,\sigma(KQ)^{\frac1{p}-\frac1q}\lesssim
				\wt C_q\,\ve^{a_0}\|\psi_0\|_{L^{q}(\partial\wt\Omega)} \,\sigma(KQ)^{\frac1{p}-\frac1q}\\
				&\lesssim \wt C_q\,\ve^{a_0}\,\sigma(KQ)^{\frac1q} \,\sigma(KQ)^{\frac1{p}-\frac1q} = \wt C_q\,\ve^{a_0}\,\sigma(KQ)^{\frac1p},\nonumber
			\end{align}
			%Next we use
			%take into account \rf{eqgradpunt'} and we
			%apply Lemma \ref{lemaux2} with $\Omega$, $u$, and $E$ in the lemma replaced by $\wt\Omega$,
			%$|\nabla \wt N_\sigma(\psi_0))|^q$, and $\pom\cap \wt\Omega$, respectively. Using also 
			%\rf{eqnatural2}, and we get
			%$$\|\partial_{\nu_A} \wt N_\sigma(\psi_0)\|_{L^{q}(4KQ\setminus \partial\wt\Omega)} \lesssim \wt C_q\|\psi_0\|_{L^{q}(\partial\wt\Omega)} 
			%\lesssim \wt C_q\,\sigma(KQ)^{1/q}.$$
			%Thus,
			%\begin{equation}\label{eq:Ib-conormal}
			%\|\partial_{\nu_A} v\|_{L^{p,1}(4KQ)} \lesssim \wt C_q\,\ve^{a_0}\,\sigma(KQ)^{1/p}\lesssim \wt C_q\,\ve^{a_0}\,C(K)\,\sigma(Q)^{1/p},
			%\end{equation}
			demonstrating   that $\partial_{\nu_A} v \in L^{p,1}(4KQ)$.

			We will now show that 
			\begin{align}\label{eq:Ib-interior}
				\bigg(\avint_{A(x_Q,K\ell(Q),4C_5K\ell(Q))\cap\wt\Omega}
				|\nabla v|^2\,dm\bigg)^{1/2}  \!\!\lesssim K^{-n-\alpha}.
			\end{align}
			For the sake of brevity we denote 
			$$
			A_Q:=A(x_Q,K\ell(Q),4C_5K\ell(Q)) \qquad \textup{and}\qquad \wt A_Q:=A(x_Q,0.5K\ell(Q),5C_5K\ell(Q)),
			$$
			where $A(x,r,R)$ stands for the open annulus with center $x$, inner radius $r$, and outer radius $R$.
			We use the triangle inequality  and estimate the corresponding terms separately.   To this end,  by the zero mean of $\psi(x)-m_{\sigma,G_{2Q}}(\psi)$ on $G_{2Q}$ and Theorem~\ref{teoneumann1}, we infer that, for all $y\in \wt A_Q \cap \Omega$,
			\begin{align*}
				|h(y)| & =\bigg|\int_{x\in G_{2Q}}  (N(y,x) - N(y,x_Q)) \,\big(\psi(x)-m_{\sigma,G_{2Q}}(\psi)\big)\,d\sigma(x)\bigg|\\
				&\lesssim \int_{x\in G_{2Q}}  \frac{|x-x_Q|^\alpha}{|x-y|^{n-1+\alpha}} \,\big|\psi(x)-m_{\sigma,G_{2Q}}(\psi)\big|\,d\sigma(x)\\
				&\lesssim \frac{\ell(Q)^\alpha}{\ell(KQ)^{n-1+\alpha}}\,\|\psi\|_{L^1(G_{2Q})}\lesssim
				\frac{K^{-\alpha}}{\ell(KQ)^{n-1}}\,\sigma(Q).
			\end{align*}
			Then, since $\partial_{\nu_A} h$ vanishes on $\wt A_Q \cap \pom$,  by Caccioppoli's inequality,  we get
			\begin{equation*}
				\bigg(\avint_{ A_Q\cap\Omega}
				|\nabla h|^2\,dm\bigg)^{1/2} \lesssim  \bigg(\frac1{\ell(KQ)}\avint_{ \wt A_Q \cap\Omega}
				|h|^2\,dm\bigg)^{1/2} \lesssim\frac{K^{-\alpha}}{\ell(KQ)^{n}}\,\sigma(Q) \approx K^{-n-\alpha}.
			\end{equation*}
			Arguing analogously,   taking into account now that the conormal derivative of $\wt h$ vanishes on $ \wt A_Q \cap \wpom$, we also obtain 
			\begin{align*}
				\bigg(\avint_{  A_Q \cap \Omega} |\nabla \wt h|^2\,dm\bigg)^{1/2}  \leq \bigg(\avint_{  A_Q \cap\wt\Omega} |\nabla \wt h|^2\,dm\bigg)^{1/2}  \!\!\lesssim K^{-n-\alpha},
			\end{align*}
			which, combined with the same  estimate for $h$,  readily proves \eqref{eq:Ib-interior}.

			Finally, using the solvability of $(N_{L^{p,1},L^{p}}(\rho))_\LL$ in $\om$ with constant  comparable to $C_*$ and the localization Lemma \ref{lemlocLp**}, we get
			\begin{align}\label{eqal627}
				&I_{1,b} = \int S_\rho(\chi_{KQ}\, g)(y) \,\partial_{t_y} v(y)\,d\sigma(y) \leq \|g\,\chi_{KQ}\|_{L^{p'}(\pom)} 
				\|S_\rho(\partial_{t}v)\|_{L^{p}(1.1 KQ)} \\
				& \lesssim C_*\,\|g\,\chi_{KQ}\|_{L^{p'}(\pom)} \bigg(\|\partial_{\nu_A} v\|_{L^{p,1}(4KQ)} + \bigg(\avint_{A(x_Q,K\ell(Q),4C_5K\ell(Q))\cap\Omega}
				\!|\nabla v|^2\,dm\bigg)^{1/2}\!\!\sigma(KQ)^{1/p}\bigg),\nonumber
			\end{align}
			with $C_5$ as in Lemma \ref{lemlocLp**}.  Plugging \eqref{eq:Ib-conormal} and \eqref{eq:Ib-interior} into \eqref{eqal627}, we conclude that  
			$$
			I_{1,b} \lesssim C_*\,(K^{-\alpha} + C(K)\,\wt C_q\,\ve^{a_0})\int_{Q} \cM_{\sigma,p'}g\,d\sigma.
			$$
			
		}
		
		\vv
		\subsection*{\bf Final estimates for $T_l$}
		Gathering the estimates obtained for $I_{1,a}$ and $I_{1,b}$, we get
		$$
		I_{1}\leq I_{1,a} + I_{1,b} \lesssim \wt C_q\,C(K)\int_Q \cM_{\sigma,p'}g\,d\sigma + C_*
		\,(K^{-\alpha} + C(K)\,\ve^{a_0})\int_{Q} \cM_{\sigma,p'}g\,d\sigma.
		$$
		Combining this with \rf{eqtl*} and \rf{eqI2}, we derive
		\begin{align*}
			\lambda\,T_l \lesssim \lambda\,(I_1+I_2) & \lesssim
			\wt C_q\,C(K)\int_Q \cM_{\sigma,p'}g\,d\sigma + C_*\,(K^{-\alpha} + C(K)\,\wt C_q\,\ve^{a_0})\int_{Q} \cM_{\sigma,p'}g\,d\sigma
			\\
			&\quad+ C_*\,C(K)\,\ve^{1/p}\int_{Q} \cM_{\sigma,p'}g\,d\sigma\\
			& \lesssim \wt C_q\,C(K)\int_Q \cM_{\sigma,p'}g\,d\sigma + C_*\,(K^{-\alpha} + C(K)\,\wt C_q\,\ve^{a_1})\int_{Q} \cM_{\sigma,p'}g\,d\sigma,
		\end{align*}
		where $a_1=\min(a_0,\frac1{p})$.
		
		\vv
		\subsection*{\bf End of the proof}
		The above bound  for $T_l$ in conjunction with the one in \rf{eqcas00} and the one for $T_f$ in \rf{eqtf94},  implies
		$$\lambda( T_l + T_f) \lesssim
		\wt C_q\,C(K)\int_Q \cM_{\sigma,p'}g\,d\sigma + C_*\,C(K)\,\wt C_q\,\ve^{a_1}\!\int_{Q} \cM_{\sigma,p'}g\,d\sigma
		+C_*\,K^{-\alpha}\!\int_{Q} \cM_{\sigma,p'}g\,d\sigma.
		$$
		Recalling \rf{eqEQ2}, we conclude that
		$$\sigma(E_Q) \lesssim 
		\frac1\lambda\,\big( 
		\wt C_q\,C(K)+ C_*\,C(K)\,\wt C_q\,\ve^{a_1}
		+C_*\,K^{-\alpha}\big) \int_{Q} \cM_{\sigma,p'}g\,d\sigma.
		$$
		Notice that, if we denote
		$$\gamma(K,\ve) := C(K)\,\wt C_q\,\ve^{a_1}+K^{-\alpha},$$
		then $\gamma(K,\ve)$ can be taken arbitrarily small, first choosing $K$ large enough and then $\ve$ small enough.
		In this way, we have
		\begin{align*}
			\sigma(V_{A\lambda}) = \sum_{Q\in\WW_\lambda}
			\sigma(E_Q) & \lesssim 
			\frac1\lambda\,\big( 
			\wt C_q\,C(K)  + C_*\,\gamma(K,\ve)\big) \sum_{Q\in\WW_\lambda}\int_{Q} \cM_{\sigma,p'}g\,d\sigma\\
			& = \frac1\lambda\,\big( 
			\wt C_q\,C(K) + C_*\,\gamma(K,\ve)\big) \int_{V_\lambda} \cM_{\sigma,p'}g\,d\sigma.
		\end{align*}
		By  Kolmogorov's inequality and the weak $(p',p')$ boundedness of $\cM_{\sigma,p'}$, we obtain 
		$$ \frac1\lambda\int_{V_\lambda} \cM_{\sigma,p'}g\,d\sigma \lesssim \frac1\lambda\,\sigma(V_\lambda)^{1/p}\,\|\cM_{\sigma,p'}g\|_{L^{p',\infty}(\pom)}
		\lesssim \frac1\lambda\,\sigma(V_\lambda)^{1/p}\,\|g\|_{L^{p'}(\pom)}.$$
		Using the inequality $a^{1/p}\,b^{1/p'}\leq \frac ap + \frac b{p'}$ for $a,b>0$, we can write,
		for arbitrary constants $\kappa\in (0,1)$ and   $d>0$, 
		$$\frac d\lambda \int_{V_\lambda} \cM_{\sigma,p'}g\,d\sigma 
		\leq \kappa\,\sigma(V_\lambda) + C(\kappa)\,\frac{d^{p'}}{\lambda^{p'}}\,\|g\|_{L^{p'}(\pom)}^{p'},$$
		with $C(\kappa)$ depending on $p,p'$ besides $\kappa$.
		Therefore, choosing $d=\wt C_q\,C(K) + C_*\,\gamma(K,\ve)$ (and changing $\kappa$ by $C\kappa$ if necessary), we derive
		$$
		\sigma(V_{A\lambda})  \leq  \kappa\,\sigma(V_\lambda) + 
		C(\kappa)\,\frac{\wt C_q^{p'}\,C(K)+  C_*^{p'}\,\gamma(K,\ve)^{p'}}{\lambda^{p'}}\,\|g\|_{L^{p'}(\pom)}^{p'}.$$
		Multiplying by $(A\lambda)^{p'}$, we obtain
		$$(A\lambda)^{p'}\,
		\sigma(V_{A\lambda})  \leq  \kappa\,A^{p'}\lambda^{p'}\,\sigma(V_\lambda) + 
		C(\kappa)\,A^{p'}\big(\wt C_q^{p'}\,C(K)+  C_*^{p'}\,\gamma(K,\ve)^{p'}\big)\,\|g\|_{L^{p'}(\pom)}^{p'},$$
		and,  if we choose $\kappa=1/(2A^{p'})$ and take supremum in $\lambda$,   we deduce
		\begin{align*}
			\sup_{\lambda>0} (\lambda^{p'}\,\sigma(V_\lambda) )=\|\cM_\sigma(u) \|_{L^{p',\infty}(\pom)}^{p'} &\leq \tfrac{1}{2} \|\cM_\sigma(u)\|_{L^{p',\infty}(\pom)} ^{p'}\\ 
			& +  C(A)\big(\wt C_q^{p'}\,C(K)+  C_*^{p'}\,\gamma(K,\ve)^{p'}\big)\,\|g\|_{L^{p'}(\pom)}^{p'}.
		\end{align*}
		Moving the first term on the right hand side of the inequality above to the left,  this implies that 
		$$
		\|u\|_{L^{p',\infty}(\pom)}^{p'} \leq C(A)\big(\wt C_q^{p'}\,C(K)+  C_*^{p'}\,\gamma(K,\ve)^{p'}\big)\,\|g\|_{L^{p'}(\pom)}^{p'}.
		$$
		%The left hand side above equals $\|\\cM_\sigma(u)\|_{L^{p',\infty}(\pom)} ^{p'}$ and so it greater or equal than $\|u\|_{L^{p',\infty}(\pom)}^{p'}$. Thus,
		%$$\|u\|_{L^{p',\infty}(\pom)}^{p'} \leq  C(A)\big(K+  C_*^{p'}\,\gamma(K,\ve)^{p'}\big)\,\|g\|_{L^{p'}(\pom)}^{p'}.$$
		Since this holds for any Lipschitz function $g\in L^{p'}(\pom)$ and for all tangent fields $t=t_{j,k}$, we get
		$$C_* ^{p'} \approx C(N^R_{L^{p'},L^{p',\infty}}(\rho))^{p'} \lesssim  C(A)\big(\wt C_q^{p'}\,C(K)+  C_*^{p'}\,\gamma(K,\ve)^{p'}\big).$$
		As remarked above, $\gamma(K,\ve)$ can be taken arbitrarily small for suitable choices of $K$ and $\ve$,  concluding  that
		$$C_* \equiv C_*(\rho) \lesssim C(p,A,K,\ve)\,\wt C_q,$$
		for suitable choices of $A,K,\ve$, uniformly with respect to $\rho\in (0,\diam(\pom))$. This completes the proof of the theorem. 
	\end{proof}

	\vv
	
	\begin{rem}
		If in the preceding proof we choose $U_{\xi,r}=\Omega$ (and so $\wt\Omega=\Omega$) for all $\xi\in\pom$, $r>0$,  we obtain a new proof of the fact that $(N_q)_\LL$ solvability implies $(N_p)_\LL$ solvability for $1<p<q$. Notice that in this situation, the term $I_{1,b}$ above vanishes and so the arguments become simpler.
		This new proof does not require to use the solvability of the Neumann problem in any suitable Hardy space, unlike the classical approach.
	\end{rem}
	\vv
	
	% *************************************************************************************************
	
	\appendix
	
	\section{Proof of \rf{eqnatural1} and \rf{eqnatural2}}\label{appendix1}
	
	It suffices to prove the following result.
	
	\begin{lemma}\label{lemapp}
		Let $\Omega\subset\R^{n+1}$ be a chord-arc domain. Let $B$ be an open ball centered in $\pom$ and let $\wt\Omega\subset\R^{n+1}$
		be another chord-arc domain that contains $3\overline B\cap \Omega$ and such that $G=\pom\cap \partial\wt\Omega\cap \overline B\neq \varnothing$.
		Let $A$ be an $(n+1)\times(n+1)$ matrix with Dini mean oscillation, let $\LL={\rm div}(A\nabla\cdot)$ and suppose that $(N_q)_{\LL}$ is solvable in $\wt\Omega$ with constant $\wt C_q$, for some $q\in (1,\infty)$.
		Let $\psi_0\in L^q(\sigma)$ be supported on $G$ with $\int \psi_0\,d\sigma=0$ and let $w= \wt N_\sigma(\psi_0)$. Then
		\begin{equation}\label{eqnatural1*} 
			\partial_{\nu_A} w(y) = \psi_0(y)\quad \mbox{ for $\sigma$-a.e.\ $y\in G$},
		\end{equation}
		%\begin{equation}\label{eqnatural2*}
		%\partial_{\nu_A} w(y) = A(y)^T \nu(y)\cdot \nabla\wt N_\sigma(\psi_0)(y)\quad \mbox{ for $\sigma$-a.e.\ $y\in\pom\cap B\setminus \partial\wt\Omega$},
		%\end{equation}
		and
		\begin{equation}\label{eqnatural3*}
			\|\partial_{\nu_A} w\|_{L^q(\sigma|_{2B})}\lesssim \wt C_q\,
			\|\psi_0\|_{L^q(\wt\sigma)}.
		\end{equation}
	\end{lemma}
	\vv
	
	In the lemma, we denote by $\sigma$ the surface measure on $\pom$, by $\nu$ the outer unit normal for $\Omega$, by
	$\partial_{\nu_A} w$ the conormal derivative of $w$ in $\pom\cap 3B$ (in the weak sense),
	and by $\wt N$ the Neumann function for $\wt\Omega$.
	
	Notice that $w$ is not defined in the whole $\Omega$, in general, since $\wt \Omega$ may not contain $\Omega$. 
	So, under the assumptions of the lemma, we can only define $\partial_{\nu_A} w$ locally in $\pom\cap 3B$. We say that
	$\partial_{\nu_A}w=g$ locally in $\pom\cap 3B$ in the weak sense if for any Lipschitz function supported on $3B$,
	$$\int A\nabla w\cdot\nabla \vphi\,dm = \int_\pom g\,\vphi\,d\sigma.$$
	The identity \rf{eqnatural1*}  should be understood in this sense.

	%\end{comment}
	
	\begin{proof}[Proof of Lemma \ref{lemapp}]
		\noi {\bf Existence of $\partial_{\nu_A} w$ and proof of \rf{eqnatural3*}.}
		First we check that the conormal derivative $\partial_{\nu_A} w$ exists locally in $\pom\cap 2B$ in the weak sense and it belongs to $L^q(\sigma|_{2B})$. 
		Let $\vphi:\pom\to\R$ be a Lipschitz function supported on $2B\cap\pom$,  extend it to a Lipschitz function in $\Omega$, which we still denote by $\vphi$, so that it vanishes in $\Omega\setminus 3B$,  and define
		$$T_w(\vphi) = \int_\Omega A\nabla w \cdot\nabla\vphi\,dm.$$
		Using the fact that $\LL w =0$ in $3B$, it easily follows  that the definition does not depend on the precise extension of $\vphi$.
		
		To check that $T_w$  extends to a bounded functional in $L^{q'}(\sigma|_{2B})$, consider a partition
		of $\Omega$ into a family $\WW(\Omega)$ of dyadic Whitney cubes as in Section \ref{secwhitney}, and let $\Omega_k$ be the interior of the closure of the cubes from $\WW(\Omega)$ with side length at least $2^{-k}$.
		From the fact that $\cN_{\wt\Omega}(\nabla w)\in L^q(\wt \sigma)$ (where $\wt \sigma$ is the surface measure on $\partial\wt\Omega$),
		it follows that $\nabla w\in L^q(\wt\Omega)$, and thus by dominated convergence,
		$$T_w(\vphi) = \lim_{k\to\infty}\int_{\Omega_k} A\nabla w \cdot\nabla\vphi\,dm.$$
		Since $\nabla w$ is uniformly continuous in $\Omega\cap 3B$  with modulus of continuity $\theta(t):=\int_0^t \omega_A(r)\,\frac{dr}r$ (by \cite{DK}), we can use the divergence theorem in \cite[Theorem 2.8]{HMT} applied to $\Omega_k$ and $\vphi A\nabla w$
		to derive
		$$\int_{\Omega_k} {\rm div}(\vphi A\nabla w)\,dm =
		\int_{\partial\Omega_k} \vphi A\nabla w\cdot\nu\,d\HH^n.$$
		Indeed, the assumptions in that theorem hold, since in the sense of distributions ${\rm div}(\vphi A\nabla w) = \nabla\vphi A\nabla w$ is continuous in a neighborhood of $\overline{\Omega_k}$ (because
		$A$ is continuous\footnote{In fact, in any compact set, $A$ agrees a.e.\ with a uniformly continuous function.}, and $\nabla\vphi$ and $\nabla w|_{3B}$ are continuous), and also $\cN_{\Omega_k}(\vphi A\nabla w)\in L^p(\HH^n|_{\pom_k})$ and the pointwise non-tangential trace of $\vphi A\nabla w|_{\pom_k}$ exists, using again the continuity of $A$ and of $\nabla w$ in a neighborhood of $\overline{\Omega_k}$.  
		Then we have
		\begin{align*}
			|T_w(\vphi)| &\leq \limsup_{k\to\infty}\bigg|\int_{\Omega_k} {\rm div}(\vphi A\nabla w)\,dm\bigg| =
			\limsup_{k\to\infty}\bigg|\int_{\partial\Omega_k} \vphi A\nabla w\cdot\nu_{\om_k}\,d\HH^n\bigg|\\
			& \lesssim \limsup_{k\to\infty}\int_{\partial\Omega_k\cap 2B} |\vphi\,\nabla w|\,d\HH^n.
		\end{align*}
		Now we intend to apply Lemma \ref{lemaux2}
		with  $\mu_k=\HH^n|_{\pom_k\cap\wt\Omega}$ and $\wt \Omega$ in place of $\Omega$. Remark that the maximal operator  $\cM_{\mu_k}$ is bounded from $L^{q'}(\mu_k)$ to $L^{q'}(\wt\sigma)$. Indeed,  for any $f\in L^{q'}(\mu_k)$ and $x\in\R^{n+1}$, by the Ahlfors regularity of $\pom_k$ (implied by the Ahlfors regularity of $\pom$), we have
		$$\cM_{\mu_k}(f)(x)\lesssim \sup_{r>0} \frac1{r^n}\,\int_{B(x,r)}|f|\,d\mu_k=: \cM_n(f)(x).$$
		Using the Ahlfors regularity (or just the polynomial growth) of $\wt\sigma$, it follows easily that $\cM_n$ is bounded from $L^1(\mu_k)$ to
		$L^{1,\infty}(\wt\sigma)$. Also, it is immediate that $\cM_{\mu_k}$ is bounded from $L^\infty(\mu_k)$ to $L^\infty(\wt\sigma)$. By interpolation then, it is bounded in from $L^{q'}(\mu_k)$ to $L^{q'}(\wt\sigma)$.
		
		Using also the  solvability of $(N_q)_\LL$ in $\wt\Omega$ and \rf{eqgradpunt}, we can now use Lemma \ref{lemaux2} to get
		\begin{align*}
			|T_w(\vphi)|&\lesssim \limsup_{k\to\infty}\int_{\partial\wt\Omega} \cN_{\wt\Omega}(\nabla w)\,\cM_{\mu_k}(\vphi)\,d\wt\sigma\\
			&\lesssim \wt C_q\,\|\partial_{\wt\nu_A}w\|_{L^q(\wt\sigma)}\,\limsup_{k\to\infty}\|\cM_{\mu_k}(\vphi)\|_{L^{q'}(\wt\sigma)}\lesssim\wt C_q\,
			\|\psi_0\|_{L^q(\wt\sigma)}\,\limsup_{k\to\infty}\|\vphi\|_{L^{q'}(\mu_k)}.
		\end{align*}
		Using that $\vphi$ is a Lipschitz function in $\overline{\Omega}$, it follows easily that
		$\limsup_{k\to\infty}\|\vphi\|_{L^{q'}(\mu_k)}\lesssim \|\vphi\|_{L^{q'}(\sigma)}.$
		Consequently, $$|T_w(\vphi)|\lesssim \wt C_q\,\|\partial_{\wt\nu_A}w\|_{L^q(\wt\sigma)}\,\|\vphi\|_{L^{q'}(\sigma)},$$ and
		by the Hahn-Banach theorem, $T_w$ extends to a bounded functional in $L^{q'}(\sigma|_{2B})$, as wished. Then, by the Riesz representation theorem, there exists some function 
		$g\in L^{q}(\sigma|_{2B})$ such that
		$$\int_\Omega A\nabla w \cdot\nabla\vphi\,dm = T_w(\vphi) = \int_{\pom} g\,\vphi\,d\sigma$$
		for any Lipschitz function $\vphi:\pom\to\R$ supported on $2B\cap\pom$, whose Lipschitz extension is supported in $3B$, and moreover
		$$\|g\|_{L^{q}(\sigma|_{2B})} = \|T_w\|_{L^{q}(\sigma|_{2B})\to\R} \lesssim \wt C_q\,\|\psi_0\|_{L^q(\wt\sigma)},$$
		which proves \rf{eqnatural3*}.
		
		\vv
		\noi {\bf Proof of \rf{eqnatural1*}.} 
		%By the Lebesgue differentiation theorem and the $n$-rectifiability of $G$, 
		%\begin{equation}\label{eqlimit3}
		%\lim_{r\to0} \frac{\HH^n(G\cap B(x,r))}{\HH^n(\pom\cap B(x,r))} = \lim_{r\to0} \frac{\HH^n(G\cap B(x,r))}{c_n\,r^n} =1\quad \mbox{ for $\HH^n$-a.e.\ $x\in G$,}
		%\end{equation}
		%where $c_n$ is the Lebesgue measure of the unit ball in $\R^n$. 
		The $n$-rectifiability and $n$-Ahlfors regularity of $\pom\cup\wpom$ also
		implies the existence of tangents $\HH^n$-a.e.\ in $\pom\cup\wpom$. That is, for any $\HH^n$-a.e.\  $\xi\in\pom\cup\wpom$ there exists a hyperplane $L_\xi$ through $\xi$ such that following holds. For any $\theta\in (0,1)$, there exists $r(\xi,\theta)>0$ such that
		\begin{equation}\label{eqtan5}
			(\pom\cup\wpom)\cap B(x,r(\xi,\theta)) \setminus X(\xi,L_\xi,\theta)=\varnothing,
		\end{equation}
		where $X(\xi,L_\xi,\theta)$ stands for the closed cone
		$$X(\xi,L_\xi,\theta)= \{x\in\R^{n+1}:\dist(y,L_\xi) \leq \theta\,|x-\xi|\}.$$
		The existence of tangents for $\HH^n$-a.e.\  $\xi\in\pom\cup\wpom$ follows from the fact that, being $\pom\cup\wpom$ $n$-rectifiable,
		there exists an approximate tangent for $\HH^n$-a.e.\  $\xi\in\pom\cup\wpom$ (see \cite[Chapter 15]{Mattila}, for example), and then the 
		Ahlfors regularity of $\pom\cup\wpom$ implies that any approximate tangent is also a tangent in the sense above.
		
		For a small $\theta>0$ to be fixed below, let $G_k =\{x\in G: r(x,\theta)\geq 1/k\}$, so that
		$$G = \bigcup_{k\geq1} G_k\cup Z,$$
		with $\HH^n(Z)=0$.
		%For each $k$, consider a subset $\wt G_k\subset G_k$ where the limits \rf{eqlimit3} are uniform. 
		To prove \rf{eqnatural1*}, it suffices
		to show that $\partial_{\nu_A} w(y) = \psi_0(y)$ for $\sigma$-a.e.\ $y\in  G_k$. To this end, we will prove that for any
		compact set $F\subset G_k$,
		\begin{equation}\label{eqigF0}
			\int_F \partial_{\nu_A} w\,d\sigma = \int_F \psi_0\,d\sigma. 
		\end{equation}
		
		For a compact set $F\subset G_k$, let $\vphi_{F,\delta}:\partial\wt\Omega:\to\R$ be a Lipschitz function such that
		$$\chi_F\leq \vphi_{F,\delta}\leq \chi_{U_\delta(F)\cap\wpom},$$ where  $U_\delta(F):=\{x \in \R^{n+1}: \dist(x,F) < \delta\}$ is the $\delta$-neighborhood of $F$. Notice that we have that $F\subset \overline{B}$ because $G_k\subset G\subset \overline{B}$.
		We denote by $\CC_{\wt\Omega}$ the following Carleson operator, acting over functions or vector fields
		$F:\wt\Omega\to\R$:
		$$\CC_{\wt\Omega}(F)(\xi) = \sup_{r>0}\frac1{r^n}\int_{\wt\Omega\cap B(\xi,r)}|F(y)|\,dy,\qquad \xi\in\wpom.$$

		\begin{claim}\label{claimff}
			For each $\delta>0$, there exists an extension $\wt \vphi_{F,\delta}:\overline{\wt\Omega}\to\R$ of $\vphi_{F,\delta}$ which is Lipschitz in 
			$\overline{\wt\Omega}$, such that
			\begin{itemize}
				\item[(a)] $0\leq \wt\vphi_{F,\delta}\leq 1$ and $\wt \vphi_{F,\delta}$ is supported on $\overline{\wt\Omega}\cap 3B$, 
				
				\item[(b)] $\wt\vphi_{F,\delta}\,\chi_\pom \to\chi_F$ in $L^{q'}(\sigma)$ as $\delta\to0$, and
				
				\item[(c)] $\|\CC_{\wt\Omega}(\chi_{\wt\Omega\setminus\Omega}\nabla \wt \vphi_{F,\delta})\|_{L^{q'}(\wt\sigma)}\to 0$ as $\delta\to0$.
			\end{itemize}
		\end{claim}
		
		Assume this claim for the moment and let us see how \rf{eqigF0} follows. By the properties of the Neumann function $\wt N$ and the  definition of $\partial_{\nu_A} w$, we have
		\begin{align}\label{eqal8fv}
			\int_\wpom \psi_0\, \vphi_{F,\delta}\,d\wt\sigma & = \int_{\wt\Omega} A \nabla w\,\nabla\wt\vphi_{F,\delta}\,dm
			= \int_{\Omega} A \nabla w\,\nabla\wt\vphi_{F,\delta}\,dm + \int_{\wt\Omega\setminus\Omega} A \nabla w\,\nabla\wt\vphi_{F,\delta}\,dm\\
			&= \int_{\pom} \partial_{\nu_A} w\,\,\wt\vphi_{F,\delta}\,d\sigma + \int_{\wt\Omega\setminus\Omega} A\nabla w\,\nabla\wt\vphi_{F,\delta}\,dm.
			\notag
		\end{align}
		By the definition of $\vphi_{F,\delta}$ and by (b) and the fact that $\|\partial_{\nu_A} w\|_{L^q(\sigma|_{2B})}<\infty$, we have, as $\delta\to0$, 
		$$\int_\wpom \psi_0\, \vphi_{F,\delta}\,d\wt\sigma\to \int_F \psi_0\, d\wt\sigma\quad \text{ and }\quad
		\int_{\pom} \partial_{\nu_A} w\,\wt\vphi_{F,\delta}\,d\sigma \to \int_F \partial_{\nu_A} w\,d\sigma.$$
		
		To bound the last integral on the right hand side of \rf{eqal8fv} we use the duality result of Hyt\"onen and Ros\'en as proved in \cite[Proposition 2.4]{MPT} (see \cite{HR} for the case when $\Omega=\R^{n+1}_+$) and the solvability of $(N_q)_\LL$. Indeed, 
		\begin{align*}
			\int_{\wt\Omega\setminus\Omega} |A\nabla w\,\nabla\wt\vphi_{F,\delta}|\,dm &\lesssim
			\|\cN_{\wt\Omega}(\nabla w)\|_{L^q(\wt\sigma)}\,\|\CC_{\wt\Omega}(\chi_{\wt\Omega\setminus\Omega}\nabla\wt\vphi_{F,\delta})\|_{L^{q'}(\wt\sigma)}\\
			&\lesssim \|\psi_0\|_{L^q(\wt\sigma)}\,\|\CC_{\wt\Omega}(\chi_{\wt\Omega\setminus\Omega}\nabla\wt\vphi_{F,\delta})\|_{L^{q'}(\wt\sigma)}.
		\end{align*}
		By (c) in the claim above, $\|\CC_{\wt\Omega}(\chi_{\wt\Omega\setminus\Omega}\nabla\wt\vphi_{F,\delta})\|_{L^{q'}(\wt\sigma)}\to0$
		as $\delta \to 0$, and so 
		$$\int_{\wt\Omega\setminus\Omega} |\nabla w\,\nabla\wt\vphi_{F,\delta}|\,dm \to 0 
		$$
		as $\delta\to0$.
		So letting $\delta\to0$ in the identity \rf{eqal8fv}, we obtain \rf{eqigF0}, which proves the lemma, modulo Claim \ref{claimff}.
	\end{proof}
	
	\vv
	In order to prove Claim \ref{claimff}, we will show first the following.
	
	\begin{lemma}\label{lemdyex9}
		Under the assumptions of Lemma \ref{lemapp}, let $F\subset  G_k$ be a compact set as in Claim \ref{claimff}. Then there exists a function $\psi_F:\wt\Omega\to\R$ satisfying the
		following:
		\begin{itemize}
			\item[(a)] $\psi_F\in {\rm Lip}_{loc}(\wt\Omega)$ and $0\leq\psi_F\leq1$.
			\item[(b)] $\supp\psi_F\subset\overline{\wt\Omega}\cap\overline{\Omega} \cap 3B$.
			\item[(c)] $\psi_F\to\chi_F$ non-tangentially $\wt\sigma$-a.e.\ in $\wt\Omega$.
		\end{itemize}
	\end{lemma}
	
	Recall that $G_k\subset G =\pom\cap \wpom\cap \overline B$, so that $F\subset \overline B$ too.
	
	\begin{proof}
		Consider the Whitney decomposition of $\wt\Omega$ described in Section \ref{secwhitney} and let $\WW(\wt\Omega)$ be the family of Whitney cubes.
		Consider a partition of unity in $\wt\Omega$ with a family of functions $\{\eta_Q\}_{Q\in\WW(\wt\Omega)}$ such that
		$$\chi_{\wt\Omega}=\sum_{Q\in\WW(\wt\Omega)}\eta_Q,$$
		with  $\eta_Q\in C^\infty$, supported 
		on $2Q$, such that $\|\nabla\eta_Q\|_\infty\lesssim 1/\ell(Q)$, for every $Q\in\WW(\wt\Omega)$. For a small $\tau\ll r(B)$ to be chosen below,
		let $\WW_\tau(\wt\Omega)$ be the family of the cubes $Q\in\WW(\wt\Omega)$ such that $\ell(Q)\leq \tau$ and $\dist(Q,B) \leq \tau$.
		Then we set 
		\begin{equation}\label{eqdefpsif}
			\psi_F= \sum_{Q\in \WW_\tau(\wt\Omega)} m_{b(Q),\wt\sigma}(\chi_F)\,\eta_Q,
		\end{equation}
		where $b(Q)$ is the boundary cube in $\wpom$ associated with $Q$ (see Subsection \ref{secwhitney}).
		It is clear that $\psi_F$ satisfies (a). Taking into account that $F\subset B$ and the definition of $\psi_F$, (c) follows easily using
		the Lebesgue differentiation theorem. See \cite[Lemma 3.1]{MZ} for more details, for example.
		
		So it remains to prove the property (b). From the choice of $\tau\ll r(B)$ and the fact that $F\subset B$, it follows that $\supp\psi_F\subset 2B$. So it suffices to
		show that $\supp\psi_F\subset\overline{\wt\Omega}\cap\overline{\Omega}$ if $\tau$ is taken small enough. First observe that since the 
		cubes $Q\in\WW_\tau(\wt\Omega)$ have side length at most $\tau$, we have that $m_{b(Q),\wt\sigma}(\chi_F)=0$ unless $\dist(Q,F)\lesssim\tau$. 
		This implies that for some fixed constant $C_9$ (depending on $n$ and the Ahlfors regularity of $\wpom$),
		$$\psi_F(x) =0 \quad \mbox{ for all $x\in\wt\Omega$ such that $\dist(x,F)\geq C_9\tau.$}$$
		
		Let $x_0\in\wt\Omega$ be such that $\psi_F(x_0)\neq0$. Our objective is to prove that $x_0\in \Omega$. Clearly, this implies
		that $\supp(\psi_F)\subset \overline \Omega$.
		Let $\xi\in F$ be such that $|x_0-\xi|=\dist(x_0,F)$.
		By the discussion above, $r:=|x_0-\xi|\leq C_9\tau$. 
		We claim that 
		\begin{equation}\label{eqclaim834}
			(\wt\Omega\setminus\Omega)\cap B(\xi,3r)\subset U_{C\theta r}(L_\xi),
		\end{equation}
		for $\tau$ small enough, where $L_\xi$ is the tangent hyperplane defined in \rf{eqtan5}. Indeed, the condition \rf{eqtan5} ensures that 
		$$(\pom\cup \wpom) \cap B(\xi,3r)\subset U_{C\theta r}(L_\xi)$$
		if $3r\leq 1/k$, by the definition of $G_k$. We assume $3C_9\tau\leq 1/k$, so that the preceding condition holds.
		Let $B_\xi^+$, $B_\xi^-$ be the two components of $B(\xi,3r)\setminus U_{C\theta r}(L_\xi)$. By a connectivity argument, any component $B_\xi^\pm$
		is either contained in $\Omega$ or in $\R^{n+1}\setminus \overline{\Omega}$. By the interior and exterior corkscrew conditions for $\Omega$, if $\theta$ is assumed to be small enough, one component, say $B_\xi^+$, is contained in $\Omega$ and the other, $B_\xi^-$, in $\R^{n+1}\setminus \overline{\Omega}$. The same argument applies to $\wt\Omega$: one of the components
		$B_\xi^\pm$ is contained in $\wt\Omega$ and the other in $\R^{n+1}\setminus \overline{\wt\Omega}$. The fact that $\Omega\cap B(\xi,r)\subset\wt\Omega\cap B(\xi,r)$ implies that $B_\xi^+$ is both contained in $\Omega$ and in $\wt\Omega$, while $B_\xi^-$ is contained both in $\R^{n+1}\setminus \overline{\Omega}$ and in $\R^{n+1}\setminus \overline{\wt\Omega}$.
		It is immediate to check that this implies \rf{eqclaim834}.
		
		Aiming for a contradiction, suppose that $x_0\in\wt\Omega\setminus\Omega$. By \rf{eqclaim834} and the discussion above, $x_0\in U_{C\theta r}(L_\xi)$. Since $B_\xi^-\subset \R^{n+1}\setminus \overline{\wt\Omega}$, we deduce that $$\dist(x_0,\wpom) = \dist(x_0,\R^{n+1}\setminus \overline{\wt\Omega})\leq C\theta r.$$
		For $\theta$ small enough, this implies that if $x_0\in 2Q$ for some $Q\in \WW_\tau(\wt\Omega)$, then 
		$$b(Q)\subset B\big(x_0,\tfrac12r\big)\cap \wpom \subset \wpom\setminus F,$$
		using that $\xi$ is the closest point from $F$ to $x_0$ and that $r=|x_0-\xi|$ for the last inclusion.
		Consequently, $m_{b(Q),\wt\sigma}(\chi_F)=0$, and so $\psi_F(x_0)=0$ by the definition of $\psi_F$ in \rf{eqdefpsif}.
		This contradicts the assumption that $\psi_F(x_0)\neq0$, and thus $x_0\not\in\wt\Omega\setminus\Omega$.
	\end{proof}
	\vv
	
	To prove Claim \ref{claimff} we will need the following result about Varopoulos type extensions from \cite{MZ} (see \cite{HR} for the case when $\Omega:=\R^{n+1}_+$).
	
	\begin{theorem}{\cite[Theorem 1.3]{MZ}}\label{teoextMZ}
		Let $\wt \Omega\subset\R^{n+1}$ be a chord-arc domain. For any $f\in L^p(\wt\sigma)$ with $p\in (1,\infty]$, there exists a function
		$u:\wt\Omega\to\R$ such that:
		\begin{itemize}
			\item[(a)] $u\in C^\infty(\Omega)$.
			\item[(b)] $\|\cN_{\wt\Omega}(u)\|_{L^p(\wt\sigma)} + \|\cN_{\wt\Omega}(\delta_{\wt\Omega}\nabla u)\|_{L^p(\wt\sigma)}\lesssim \|f\|_{L^p(\wt\sigma)}$.
			\item[(c)] $\|\CC_{\wt\Omega}(\nabla u)\|_{L^p(\wt\sigma)}\lesssim \|f\|_{L^p(\wt\sigma)}$.
			\item[(d)] For $\wt\sigma$-a.e.\ $\xi\in\wpom$,
			$${\rm nt}\text{-}\!\lim_{x\to\xi} \avint_{B(x,\delta_{\wt\Omega}(x)/2)}u\,dm = f(\xi).$$
		\end{itemize}
	\end{theorem}
	
	Above, ${\rm nt}\text{-}\!\lim$ stands for non-tangential limit. We remark that in \cite{MZ} the authors prove other similar results for domains more general than chord-arc domains. 
	
	\begin{rem}
		The property (d) in Theorem \ref{teoextMZ} can be strengthened: for $\wt\sigma$-a.e.\ $\xi\in\wpom$, it holds
		\begin{equation}\label{eqconv719}
			{\rm nt}\text{-}\!\lim_{x\to\xi} \avint_{B(x,\delta_{\wt\Omega}(x)/2)}|u - f(\xi)|\,dm = 0.
		\end{equation}
		To check this, we apply Poincar\'e's
		inequality to get
		\begin{align*}
			\avint_{B(x,\delta_{\wt\Omega}(x)/2)} & |u - f(\xi)|\,dm \\
			&\leq \avint_{B(x,\delta_{\wt\Omega}(x)/2)}|u - m_{B(x,\delta_{\wt\Omega}(x)/2)}(u)|\,dm + |m_{B(x,\delta_{\wt\Omega}(x)/2)}(u) - f(\xi)|\\
			& \lesssim \delta_{\wt\Omega}(x) \avint_{B(x,\delta_{\wt\Omega}(x)/2)}|\nabla u|\,dm + |m_{B(x,\delta_{\wt\Omega}(x)/2)}(u) - f(\xi)|.
		\end{align*}
		The second summand on the right hand side converges to $0$ non-tangentially by (d) in Theorem~\ref{teoextMZ}. To show that the first one also converges to $0$ non-tangentially $\wt\sigma$-a.e., we use the fact that the area functional defined by
		$$A_{\wt\Omega}^{(\beta)}(\nabla u)(\xi) = \int_{\gamma_\beta(\xi)}
		|\nabla u|\,\frac{dm}{\delta_{\wt\Omega}(x)^n}$$
		satisfies
		$$\|A_{\wt\Omega}^{(\beta)}(\nabla u)\|_{L^p(\wt\sigma)} \approx |\CC_{\wt\Omega}(\nabla u)\|_{L^p(\wt\sigma)},$$
		by \cite[Lemma 2.5]{MZ}.
		So $A_{\wt\Omega}^{(\beta)}(\nabla u)(\xi)<\infty$ for $\wt\sigma$-a.e.\ $\xi\in\wpom$, by the property (c) in Theorem \ref{teoextMZ}. Thus, for such points $\xi$ and for all $x\in\gamma_\alpha(\xi)$,
		with suitable $\beta$ depending on $\alpha$, we have
		$$ \delta_{\wt\Omega}(x) \avint_{B(x,\delta_{\wt\Omega}(x)/2)}|\nabla u|\,dm \lesssim \int_{\gamma_\beta(\xi)\cap B(\xi,C|x-\xi|)}
		|\nabla u|\,\frac{dm}{\delta_{\wt\Omega}(x)^n}\to 0$$
		as $|x-\xi|\to0$.
	\end{rem}

	\vv
	\begin{proof}[\bf Proof of Claim \ref{claimff}]
		Consider the function $\psi_F$ in Lemma \ref{lemdyex9}, and let $u_{F,\delta}$ the Varopoulos type extension for the function $f=
		\vphi_{F,\delta}-\chi_F$ given by Theorem \ref{teoextMZ}. We let 
		$$v_{F,\delta} = \psi_F + u_{F,\delta}.$$
		Observe that $v_{F,\delta}$ is locally Lipschitz in $\Omega$, it extends $\vphi_{F,\delta}$ to $\wt\Omega$, in the sense that 
		\begin{equation}\label{eqnont341}
			{\rm nt}\text{-}\!\lim_{x\to\xi} \avint_{B(x,\delta_{\wt\Omega}(x)/2)}|v_{F,\delta} -\vphi_{F,\delta}(\xi)|\,dm = 0
		\end{equation}
		for $\wt\sigma$-a.e.\ $\xi\in\wpom$ by Lemma \ref{lemdyex9} (c) and \rf{eqconv719}. Further, from the fact that
		$\supp\psi_F\subset\overline{\wt\Omega} \cap \overline{\Omega}\cap 3B$ we infer that $\chi_{\wt\Omega\setminus\Omega}\nabla \psi_F=0$ 
		a.e.\
		with respect to Lebesgue measure, and then by Theorem \ref{teoextMZ} (c), we get
		\begin{align}\label{eqprim57}
			\|\CC_{\wt\Omega}(\chi_{\wt\Omega\setminus\Omega}\nabla v_{F,\delta})\|_{L^{q'}(\wt\sigma)} &\leq \|\CC_{\wt\Omega}(\chi_{\wt\Omega\setminus\Omega}\nabla \psi_F)\|_{L^{q'}(\wt\sigma)}
			+ \|\CC_{\wt\Omega}(\nabla u_{F,\delta})\|_{L^{q'}(\wt\sigma)} \\
			& \lesssim 0 + \|\vphi_{F,\delta}-\chi_F\|_{L^{q'}(\wt\sigma)}
			\leq \wt\sigma(U_\delta(F)\setminus F)^{1/{q'}}\to 0\nonumber
		\end{align}
		as $\delta\to0$. By replacing $v_{F,\delta}$ by $\max(0,\min(1,v_{F,\delta}))$ if necessary, we can ensure that $0\leq v_{F,\delta}\leq 1$ and the
		preceding properties still hold.

		The function $v_{F,\delta}$ may not extend continuously to $\overline{\wt\Omega}$, and so we cannot choose $\wt\vphi_{F,\delta}$
		to be equal to $v_{F,\delta}$. So, inspired by the proof of \cite[Theorem 1.4]{MZ}, we will modify it as follows. Consider the family of Whitney cubes $\WW(\wt\Omega)$ in the
		proof of Lemma \ref{lemdyex9} and the associated partition of $\chi_{\wt\Omega}$ given by the functions $\{\eta_Q\}_{Q\in\WW(\wt\Omega)}$.
		Consider the regularized dyadic extension of $\vphi_{F,\delta}$ defined by
		$$h_{F,\delta} = \sum_{Q\in \WW(\wt\Omega)} m_{b(Q),\wt\sigma}(\vphi_{F,\delta})\,\eta_Q.$$
		Using that $\vphi_{F,\delta}$ is Lipschitz in $\wpom$, it follows easily that $h_{F,\delta}$ is a Lipschitz extension of $\vphi_{F,\delta}$ to $\overline{\wt\Omega}$.
		
		For a small $\ve\ll r(B)$,
		let $\WW^\ve(\wt\Omega)$ be the family of the cubes $Q\in\WW(\wt\Omega)$ such that $\ell(Q)\leq \ve$ and let $\chi_\ve$ be the function
		$$\chi_\ve = \sum_{Q\in \WW^\ve(\wt\Omega)} \eta_Q.$$
		Notice that $\chi_\ve$ is Lipschitz in $\wt\Omega$,  it equals $1$ in  $\wt\Omega\cap U_{c\ve}(\wpom)$,  for some $c>0$, and 
		$\supp(\nabla\chi_\ve)$ is contained in a family of cubes from $\WW(\wt\Omega)$ with side length comparable to $\ve$. For some $\ve\in (0,\delta)$ to be chosen below, we consider the function in $\wt\Omega$ defined by
		$$\vphi_{\delta,\ve} = \chi_\ve \,h_{F,\delta} + (1-\chi_\ve) \,v_{F,\delta}.$$
		We also set $\vphi_{\delta,\ve}=\vphi_{F,\delta}$ on $\wpom$.
		
		We will check now that we may take $\wt\vphi_{F,\delta} = \vphi_{\delta,\ve}$ in $\wt \Omega$ for some $\ve$ small enough. By construction, 
		$\vphi_{\delta,\ve}$ coincides with $h_{F,\delta}$ in a $(c\,\ve)$-neighborhood of $\wpom$ and taking into account that $v_{F,\delta}\in
		{\rm Lip}_{loc}(\wt\Omega)$, it follows that $\vphi_{\delta,\ve}$ is Lipschitz in $\Omega$. 
		Also, it is easily checked that $0\leq \vphi_{\delta,\ve}\leq 1$ and $\vphi_{\delta,\ve}$ is supported on $\overline{\wt\Omega}\cap 3B$. So
		it just remains to prove that $\vphi_{\delta,\ve}$ satisfies the properties (b) and (c) of Claim \ref{claimff}.
		First we will show (b), that is,
		$\vphi_{\delta,\ve}\,\chi_\pom \to\chi_F$ in $L^{q'}(\sigma)$ as $\delta\to0$. To this end, we write
		\begin{align*}
			\|\vphi_{\delta,\ve} - \chi_F\|_{L^{q'}(\sigma)} &= \|\vphi_{\delta,\ve}\|_{L^{q'}(\sigma|_{\pom\setminus F})}
			\leq 
			\|\chi_\ve \,h_{F,\delta}\|_{L^{q'}(\sigma|_{\pom\setminus F})} +\|v_{F,\delta}\|_{L^{q'}(\sigma|_{\pom\setminus \wpom})}.
		\end{align*}
		Observe now that $\chi_\ve h_{F,\delta}$ is supported in a $(C\ve)$-neighborhood of $\supp(\vphi_{F,\delta})$, and so in a 
		$(C'\delta)$-neighborhood of $F$, since $\ve\leq\delta$. Hence,
		$$\|\chi_\ve \,h_{F,\delta}\|_{L^{q'}(\sigma|_{\pom\setminus F})}\leq \sigma(U_{C'\delta}(F)\setminus F)^{1/q'}\to 0$$
		as $\delta\to0$. On the other hand, since $\psi_F$ vanishes in $\pom\setminus \wpom$,
		$$\|v_{F,\delta}\|_{L^{q'}(\sigma|_{\pom\setminus \wpom})} = \|\psi_F + u_{F,\delta}\|_{L^{q'}(\sigma|_{\pom\setminus \wpom})} = 
		\|u_{F,\delta}\|_{L^{q'}(\sigma|_{\pom\setminus \wpom})}.
		$$
		To estimate the last term on the right hand side we apply Lemma \ref{lemaux2} and Theorem \ref{teoextMZ} (b) (recall that
		$u_{F,\delta}$ is a Varopoulos extension of $\vphi_{F,\delta}-\chi_F$):
		$$\|u_{F,\delta}\|_{L^{q'}(\sigma|_{\pom\setminus \wpom})}\lesssim \|\cN_{\wt\Omega}(u_{F,\delta})\|_{L^{q'}(\wt\sigma)}
		\lesssim \|\vphi_{F,\delta}-\chi_F\|_{L^{q'}(\wt\sigma)}\leq \wt\sigma(U_\delta(F)\setminus F)^{1/q'}\to 0$$
		as $\delta\to0$. This completes of the proof of the fact that $\|\vphi_{\delta,\ve} - \chi_F\|_{L^{q'}(\sigma)}\to0$.
		
		\vv
		Next we turn our attention to the property (c) in Claim \ref{claimff}.
		We split
		$$\nabla\vphi_{\delta,\ve} = \nabla\chi_\ve (h_{F,\delta} - v_{F,\delta}) + \chi_\ve \,\nabla h_{F,\delta}
		+ (1-\chi_\ve) \,\nabla v_{F,\delta},$$
		so that
		\begin{align}\label{eqt1t2t3}
			\|\CC_{\wt\Omega}(\chi_{\wt\Omega\setminus\Omega}\nabla \wt \vphi_{\delta,\ve})\|_{L^{q'}(\wt\sigma)}
			& \leq
			\|\CC_{\wt\Omega}(\nabla\chi_\ve (h_{F,\delta} - v_{F,\delta}))\|_{L^{q'}(\wt\sigma)}
			+
			\|\CC_{\wt\Omega}(\chi_\ve \,\nabla h_{F,\delta})\|_{L^{q'}(\wt\sigma)}\\
			&\quad +
			\|\CC_{\wt\Omega}(\chi_{\wt\Omega\setminus\Omega}\nabla v_{F,\delta})\|_{L^{q'}(\wt\sigma)} \nonumber\\
			& =: T_1 + T_2+ T_3.\nonumber
		\end{align}
		We have already shown that $T_3\to0$ as $\delta\to 0$ in \rf{eqprim57}. To estimate $T_1$, we write, for $\xi\in \wpom$, 
		\begin{align*}
			\CC_{\wt\Omega}(\nabla\chi_\ve (h_{F,\delta} - v_{F,\delta}))(\xi) & =
			\sup_{r>0} \frac1{r^n}\int_{\wt\Omega\cap B(\xi,r)} |\nabla\chi_\ve (h_{F,\delta} - v_{F,\delta})|\,dm \\
			& \lesssim \sup_{r>0} \frac1{\ve\,r^n}\int_{\wt\Omega\cap B(\xi,r)\cap \supp(\nabla\chi_\ve)} |h_{F,\delta} - v_{F,\delta}|\,dm.
		\end{align*}
		Denote by $\WW^{(\ve)}(\wt\Omega)$ the family 
		family of cubes from $\WW(\wt\Omega)$ with side length comparable to $\ve$ such that 
		$\supp(\nabla\chi_\ve)$ is contained in the union of cubes from this family. 
		Also, for a function $u:\wt\Omega\to\R$, $\zeta\in\wpom$, and $t>0$, let
		$$\wh{\cN}_{\wt\Omega,t} u(\zeta) = \sup_{x\in\gamma_{\wt\Omega}(\zeta)\cap B(\zeta,t)}\avint_{B(x,\delta_\Omega(x)/2)}|u|\,dm,$$
		for a cone $\gamma_{\wt\Omega}(\xi)$ with big enough aperture.
		Then, for $0<r\leq \diam(\wt\Omega)$, we have
		\begin{align*}
			\frac1{\ve\,r^n}\int_{\wt\Omega\cap B(\xi,r)\cap \supp(\nabla\chi_\ve)} |h_{F,\delta} - v_{F,\delta}|\,dm
			& \leq \frac1{\ve\,r^n} \sum_{Q\in \WW^{(\ve)}(\wt\Omega)} \int_{Q \cap B(\xi,r)} |h_{F,\delta} - v_{F,\delta}|\,dm\\
			&\lesssim \frac1{r^n} \sum_{P\in \DD_{\mu,Q,\ve}} \inf_{\zeta \in P} \wh{\cN}_{\wt\Omega,C\ve}(|h_{F,\delta} - v_{F,\delta}|)(\zeta)\,\ell(P)^{n}\\
			&\lesssim \frac1{r^n}\int_{B(\xi,Cr)\cap\wpom}\wh{\cN}_{\wt\Omega,C\ve}(|h_{F,\delta} - v_{F,\delta}|)\,d\wt\sigma,
		\end{align*}
		where we denoted by $\DD_{\mu,Q,\ve}$ the family of cubes of the form $P=b(Q)$ for some $Q\in \WW^{(\ve)}(\wt\Omega)$ with $Q\cap B(\xi,r)\neq\varnothing$ (notice that the latter condition implies that $r\gtrsim\ve$). So we get
		$$\CC_{\wt\Omega}(\nabla\chi_\ve (h_{F,\delta} - v_{F,\delta}))(\xi) \lesssim \cM_{\wt\sigma}(\wh{\cN}_{\wt\Omega,C\ve}(|h_{F,\delta} - v_{F,\delta}|))(\xi)\quad\mbox{ for all $\xi\in\pom$},$$
		and therefore
		$$\|\CC_{\wt\Omega}(\nabla\chi_\ve (h_{F,\delta} - v_{F,\delta}))\|_{L^{q'}(\wt\sigma)}
		%\lesssim
		%\|\cM_{\wt\sigma}(\wh{\cN}_{\wt\Omega,C\ve}(|h_{F,\delta} - v_{F,\delta}|))\|_{L^{q'}(\wt\sigma)}
		\lesssim 
		\|\wh{\cN}_{\wt\Omega,C\ve}(|h_{F,\delta} - v_{F,\delta}|)\|_{L^{q'}(\wt\sigma)}.
		$$
		Notice now that $ \|\wh{\cN}_{\wt\Omega,C\ve}(|h_{F,\delta} - v_{F,\delta}|)\|_{L^{q'}(\wt\sigma)}<\infty$, because
		$h_{F,\delta}$, $v_{F,\delta}$, and thus $\wh{\cN}_{\wt\Omega,C\ve}(|h_{F,\delta} - v_{F,\delta}|)$, are uniformly bounded.
		Also, for any fixed $\delta>0$ we have
		$$\wh{\cN}_{\wt\Omega,C\ve}(|h_{F,\delta} - v_{F,\delta}|)(\xi) \leq 
		\wh{\cN}_{\wt\Omega,C\ve}(|h_{F,\delta} - \vphi_{F,\delta}(\xi)|)(\xi) + 
		\wh{\cN}_{\wt\Omega,C\ve}(|v_{F,\delta} - \vphi_{F,\delta}(\xi)|)(\xi)\to 0 $$
		as $\ve\to0$ for $\wt\sigma$-a.e.\ $\xi\in\wpom$, because of \rf{eqnont341} (which also holds with 
		$h_{F,\delta}$ in place of $v_{F,\delta}$).
		Thus,  by dominated convergence, $\|\wh{\cN}_{\wt\Omega,C\ve}(|h_{F,\delta} - v_{F,\delta}|)\|_{L^{q'}(\wt\sigma)}\to0$ as $\ve\to0$, and so we can pick $\ve$ small enough (depending on $\delta$) such that
		$$\|\CC_{\wt\Omega}(\nabla\chi_\ve (h_{F,\delta} - v_{F,\delta}))\|_{L^{q'}(\wt\sigma)}\leq\delta.$$
		\vv
		
		It only remains to show that the term $T_2=\|\CC_{\wt\Omega}(\chi_\ve \nabla h_{F,\delta})\|_{L^{q'}(\wt\sigma)}$ in \rf{eqt1t2t3} goes to $0$ as $\delta\to0$. Since $h_{F,\delta}$ is Lipschitz (with constant depending on $\delta$), for any $\xi\in\wpom$ and any $r>0$, we have
		\begin{align*}
			\CC_{\wt\Omega}(\chi_\ve \nabla h_{F,\delta})(\xi) & =\sup_{r>0}
			\frac1{r^n} \int_{\wt\Omega\cap B(\xi,r)}|\chi_\ve \,\nabla h_{F,\delta}|\,dm \\
			& \leq C(\delta)\,\sup_{r>0}\frac{m(U_{C\ve}(\wpom)\cap B(\xi,r))}{r^n}
			\lesssim C(\delta)\,\ve.
		\end{align*}
		Therefore, $T_2\leq C(\delta)\,\ve\,\wt\sigma(\wpom)^{1/q'}$ and thus we can choose $\ve$ small enough so that $T_2\leq\delta$.
		Altogether, we deduce that
		$$\|\CC_{\wt\Omega}(\chi_{\wt\Omega\setminus\Omega}\nabla \wt \vphi_{\delta,\ve})\|_{L^{q'}(\wt\sigma)}
		\leq T_1 + T_2+ T_3\to0$$
		as $\delta\to0$, which completes the proof of Claim \ref{claimff}.
	\end{proof}

	\vv
	
	% *************************************************************************************************
	% *************************************************************************************************

\end{document}